\numberwithin{equation}{section}
\numberwithin{figure}{section}
\theoremstyle{plain}
\newtheorem{thm}{\protect\theoremname}[section]
  \theoremstyle{definition}
  \newtheorem{defn}[thm]{\protect\definitionname}
\newtheorem*{defn*}{\protect\definitionname}
  \theoremstyle{remark}
  \newtheorem{rem}[thm]{\protect\remarkname}
  \theoremstyle{plain}
  \newtheorem{lem}[thm]{\protect\lemmaname}
  \theoremstyle{plain}
  \newtheorem{prop}[thm]{\protect\propositionname}
  \theoremstyle{definition}
  \newtheorem{example}[thm]{\protect\examplename}
  \theoremstyle{remark}
  \newtheorem*{rem*}{\protect\remarkname}
  \theoremstyle{plain}
  \newtheorem{cor}[thm]{\protect\corollaryname}
  \newtheorem{clm}[thm]{\protect\claimname}
  \newtheorem*{clm*}{\protect\claimname}
\newcommand{\e}{\mathrm{e}}
\newcommand{\N}{\mathbb{N}}
\newcommand{\R}{\mathbb{R}}
\newcommand{\C}{\mathbb{C}}
\renewcommand{\Pi}{\pi}
\renewcommand{\emptyset}{\varnothing}
\renewcommand{\hat}{\widehat}
\DeclareMathOperator*{\Int}{Int}
\newcommand\id{\mathrm{id}}
\newcommand\Chat{\hat{\mathbb{C}}}
\DeclareMathOperator*{\card}{card}
\DeclareMathOperator*{\supp}{supp}
\DeclareMathOperator*{\dist}{dist_{Rat}}
\DeclareMathOperator*{\Jpre}{\mathit{J}_{pre}}
\DeclareMathOperator*{\Fpre}{\mathit{F}_{pre}}
\DeclareMathOperator*{\Jur}{\mathit{J}_{ur}}
\DeclareMathOperator*{\Jr}{\mathit{J}_{r}}
\DeclareMathOperator*{\Aut}{Aut}
\DeclareMathOperator*{\CV}{CV}
\DeclareMathOperator*{\Rat}{Rat}
  \providecommand{\corollaryname}{Corollary}
  \providecommand{\definitionname}{Definition}
  \providecommand{\examplename}{Example}
  \providecommand{\lemmaname}{Lemma}
  \providecommand{\propositionname}{Proposition}
  \providecommand{\remarkname}{Remark}
\providecommand{\theoremname}{Theorem}
\providecommand{\claimname}{Claim}
\begin{document}
\author{Johannes Jaerisch}
\address{Department of Mathematics,  Faculty of Science and Engineering, Shimane University, Nishikawatsu 1060, Matsue, Shimane, 690-8504 Japan}
\email{jaerisch@riko.shimane-u.ac.jp}
\urladdr{http://www.math.shimane-u.ac.jp/$\sim$jaerisch/} \vspace{-2.5mm}
\author{Hiroki Sumi}
\address{Department of Mathematics, Graduate School of Science, Osaka University, 1-1 Machikaneyama, Toyonaka, Osaka, 560-0043 Japan }
\email{sumi@math.sci.osaka-u.ac.jp}
\urladdr{http://www.math.sci.osaka-u.ac.jp/$\sim$sumi/}
\title{Dynamics of infinitely generated nicely expanding rational
semigroups and the inducing method}

\keywords{Complex dynamical systems, rational semigroups, expanding
semigroups,
Julia set, Hausdorff dimension, Bowen's formula,
random complex dynamics, random iteration, iterated function systems,
self-similar sets}


\date{21th February 2017.  \quad To appear in Trans. Amer. Math. Soc. \quad 2010 \emph{Mathematics Subject Classification.} 30D05, 37F15.}

\maketitle
\begin{abstract}
We investigate the dynamics of semigroups of rational maps on the Riemann sphere.  To establish  a fractal theory of the  Julia sets of  infinitely generated semigroups of rational maps, we introduce a new class of  semigroups which we call nicely expanding rational semigroups.  More precisely, we  prove Bowen's formula for the Hausdorff dimension of the  pre-Julia sets, which we also introduce in this paper.  We apply our  results to the study of the  Julia sets of non-hyperbolic rational semigroups.  For these results, we do not assume the cone condition, which has been assumed in 
the study of infinite contracting iterated function systems.  Similarly, we show that Bowen's formula holds for the limit set of a contracting conformal iterated function system without the cone condition.

\end{abstract}

\section{Introduction and Statement of Results}
\label{Introduction}
Let $\Rat$ be the set of all non-constant rational maps on the Riemann
sphere $\Chat$. 
A subsemigroup
of $\Rat$ with semigroup operation being the functional composition is
called a rational semigroup. 
A semigroup of non-constant polynomial maps is called a polynomial semigroup. 
The work on the dynamics of rational semigroups was initiated 
by A. Hinkkanen and G. J. Martin (\cite{MR1397693}), 
who were interested in the role of the dynamics of polynomial semigroups 
while studying various one-complex-dimensional moduli spaces for discrete 
groups of M\"{o}bius transformations, and by F. Ren's group 
(\cite{ZR92}), who studied such semigroups from the perspective 
of random dynamical systems. 
  The theory of the dynamics of rational semigroups on $\Chat $ 
has developed in many directions since the 1990s (\cite{MR1397693, ZR92, MR2900562, MR2813416,  MR2773145, 
SUtransversality}, \cite{MR1625944} -- \cite{cooperation}).  
We recommend \cite{MR2900562} as an introductory article.

Throughout, let   $I$ be  a topological space. We consider a family  $\{ f_i:i\in I\}$ of  $\Rat$ such that  $f_i \in \Rat$  depends  continuously  on $i\in I$. We will use $G=\left\langle f_{i}:i\in I\right\rangle $ to denote the rational semigroup  generated by $\{ f_i:i\in I\}$,  i.e.,  $G= \{ f_{i_1} \circ   \dots \circ f_{i_n} : n\in \N, i_1,  \dots ,i_n \in I \}$.
The Fatou set $F\left(G\right)$ and the Julia set $J\left(G\right)$  of  $G$ are given
by 
\[
F\left(G\right):=\left\{ z\in\Chat:G\mbox{ is normal in a neighborhood of }z\right\}\quad \text{and} \quad  J\left(G\right):=\Chat\setminus F\left(G\right). 
\]
Since the Julia set $J(G)$ of a rational semigroup 
$G=\langle f_{1},\ldots ,f_{m}\rangle :=\langle f_{i}: i\in \{ 1,\ldots, m\} \rangle$ 
generated by finitely many elements $f_{1},\ldots ,f_{m}$ 
has backward self-similarity,  i.e.,  
\begin{equation}
\label{bsseq}
 J(G)=f_{1}^{-1}(J(G))\cup \cdots \cup f_{m}^{-1}(J(G)),
\end{equation}  
(see \cite{MR1625944, MR1767945}), rational semigroups 
can be viewed as a significant generalization and extension of 
both the theory of iteration of rational maps (see \cite{MR1128089, MR2193309}) 
and conformal 
iterated function systems (see \cite{MR1387085}). 
Indeed, because of (\ref{bsseq}), 
 for the analysis of the Julia sets of rational semigroups, we have to consider  
 ``backward iterated functions systems'', however since each map 
$f_{j}$ is not injective and may  have critical points in general, we have to deal with critical orbits and some 
qualitatively different extra effort in the case  of semigroups is needed. 
Also, since one semigroup has many kinds of maps, we have another difficulty. The theory of the dynamics of 
rational semigroups borrows and develops tools 
from both of these theories. It has also developed its own 
unique methods, notably the skew product approach 
(see \cite{MR1767945}--\cite{MR2237476}, \cite{MR2736899, MR2747724, cooperation, SUtransversality}). 
It is a very  exciting problem to estimate the Hausdorff dimension of  Julia sets of rational semigroups. 
Some studies of the Hausdorff dimension of  Julia sets of semi-hyperbolic finitely generated 
rational semigroups were given in  \cite{MR1827119}--\cite{MR2237476}, \cite{MR2773145, SUtransversality}. 

However, there have been no studies on the Hausdorff dimension of Julia sets of infinitely generated expanding rational semigroups $G=\langle  f_i : i\in I  \rangle$ (see the definition below), or non-semi-hyperbolic rational semigroups.  In this paper, we investigate the dynamics of infinitely generated expanding rational semigroups and non-hyperbolic rational semigroups.  If $I$ is
 countable (in this paper, a countable set is a set which is bijective to a subset of $\Bbb{N}$), then $I$ is endowed with the discrete topology. We endow $\Rat$ with distance $\dist$ given by $\dist\left(h_{1},h_{2}\right):=\sup_{z\in\Chat}d\left(h_{1}\left(z\right),h_{2}\left(z\right)\right)$,
where $d$ denotes the spherical distance on $\Chat$. We denote by $C\!\left(I,\Rat\right)$ the set of continuous maps
from $I$ to $\Rat$.

Let $\left(f_{i}\right)_{i\in I}\in C\!\left(I,\Rat\right)$. The
skew product associated to the  generator system $\left\{ f_{i}:i\in I\right\} $
of the rational semigroup $G=\left\langle f_{i}:i\in I\right\rangle $
is given by 
\[
\tilde{f}:I^{\N}\times\hat{\C}\rightarrow I^{\N}\times\hat{\C},\quad\tilde{f}\left(\omega,z\right):=\left(\sigma\left(\omega\right),f_{\omega_{1}}\left(z\right)\right),
\]
where $\sigma:I^{\N}\rightarrow I^{\N}$ denotes the left shift defined 
by $\sigma\left(\omega\right)_{i}=\omega_{i+1}$,  for each $\omega\in I^{\N}$ and
$i\in\N$.   For  $\gamma=(\gamma _{i})\in G^{\N}$ we set 
\[
F_{\gamma}:=\left\{ z\in\Chat:\left(\gamma_{n}\circ\gamma_{n-1}\circ\dots\circ\gamma_{1}\right)_{n\in\N}\mbox{ is normal in a neighborhood of }z\right\} \mbox{ and }J_{\gamma}:=\Chat\setminus F_{\gamma}.
\]
Also, for  $\omega\in I^{\N}$, we set $\gamma\left(\omega\right):=\left(f_{\omega_{i}}\right)_{i\in\N}$,   $F_{\omega}:=F_{\gamma\left(\omega\right)}$ 
and $J_{\omega}:=J_{\gamma\left(\omega\right)}$. We use $J^{\omega}$ to denote the set $\left\{ \omega\right\} \times J_{\omega}\subset I^{\N}\times\Chat$ 
and we set 
\[
J\left(\tilde{f}\right):=\overline{\bigcup_{\omega\in I^{\N}}J^{\omega}},\quad F\left(\tilde{f}\right):=\left(I^{\N}\times\hat{\C}\right)\setminus J\left(\tilde{f}\right),
\]
where the closure is taken with respect to the product topology on
$I^{\N}\times\Chat$. 
For a holomorphic map $h:\Chat\rightarrow\Chat$
and $z\in\Chat$, the norm of the derivative of $h$ at $z\in\Chat$
with respect to the spherical metric is denoted by $\left\Vert h'\left(z\right)\right\Vert $.

\label{d:expanding}
For $n\in \N$ and $(\tau_1,\dots,\tau_n)\in I^n$, we set $f_{(\tau_1,\dots,\tau_n)}:=f_{\tau_{n}}\circ f_{\tau_{n-1}}\circ\dots\circ f_{\tau_{1}}$.  For $\omega \in I^\N$ and $n\in \N$, we write $\omega =(\omega_1,\omega_2,\dots)$ and we set $\omega |_{n}:=(\omega_1,\dots,\omega_n)$.   For each $n\in\N$ and $\left(\omega,z\right)\in J\left(\tilde{f}\right)$, we set $\left(\tilde{f}^{n}\right)'\left(\omega,z\right):=(f_{\omega |_{n}})'(z)$.  We say that 
$\tilde{f}$ is expanding along fibers if $J\left(\tilde{f}\right)\neq\emptyset$
and if there exist constants $C>0$ and $\lambda>1$ such that for
all $n\in\N$, 
\[
\inf_{\left(\omega,z\right)\in J\left(\tilde{f}\right)}\Vert\left(\tilde{f}^{n}\right)'\left(\omega,z\right)\Vert\ge C\lambda^{n},
\]
where $\Vert\left(\tilde{f}^{n}\right)'\left(\omega,z\right)\Vert$
denotes the norm of the derivative of $f_{\omega_{n}}\circ f_{\omega_{n-1}}\circ\dots\circ f_{\omega_{1}}$
at $z$ with respect to the spherical metric. $G$ is called expanding
with respect to $\left\{ f_{i}:i\in I\right\} $ if $\tilde{f}$ is
expanding along fibers. 

For a rational semigroup $G$, we say that a subset $A$ of $\hat{\Bbb{C}}$ is $G$-forward invariant 
if $g(A)\subset A$ for each $g\in G.$

Our first main definition is the following. 
\begin{defn} \label{nicelyexpanding}
We say that  $G=\langle  f_i : i\in I  \rangle$ is nicely expanding,   if $G$ is expanding with respect to $\left\{ f_{i}:i\in I\right\} $ and if there exists a non-empty, compact, $G$-forward invariant set $P_{0}\left(G\right)\subset F\left(G\right)$ such that  $P(G)\subset P_0(G)$,  where $P\left(G\right)$
denotes the postcritical set of $G$ given by  $P\left(G\right):=\overline{\bigcup_{g\in G} \{ \mbox{all critical values of } 
g: \Chat \rightarrow \Chat \}      }$. Here, the closure is taken in $\Chat$.
\end{defn}
\begin{rem*}
It will follow from Proposition \ref{prop:nicelyexpanding-characterisation}
below, that the property of a rational semigroup to be nicely expanding
is in fact independent of the choice of the generator system. For
this reason, we do not  refer to the generator system, and
we  simply say that $G$ is nicely expanding.
\end{rem*}
We say that a rational semigroup $G=\left\langle f_{i}:i\in I\right\rangle $
(or the system $\left\{ f_{i}:i\in I\right\} $) is hyperbolic if
$P\left(G\right)\subset F\left(G\right)$. 
\begin{rem*}
We have   $P\left(G\right)=\overline{\bigcup_{g\in G\cup \{ \id \} }g\left(\bigcup_{i\in I}\CV\left(f_{i}\right)\right)}$, where $\CV$ denotes the set of critical values. Thus  $P(G)$ is $G$-forward invariant. \end{rem*}

We give some criteria for $G$ to be  nicely expanding in Proposition~\ref{prop:nicelyexpanding-characterisation} and  Lemma~\ref{lem:expandingness-osc}.  If $G$ is nicely expanding then we are able to control the distortion of inverse branches of maps in  $G$. Note that an expanding rational semigroup is not nicely expanding in general  (see Examples~\ref{ex:nonnice2}, \ref{ex:nonnice1}). 

Regarding the dynamics of  infinitely generated nicely expanding rational semigroups,   it turns out that 
we shall work on pre-Julia sets,  which is the second main definition of  this paper.
\begin{defn}
[Pre-Fatou and pre-Julia set]For a rational semigroup $G$, 
 the pre-Fatou set $\Fpre\left(G\right)$ and the pre-Julia set   $\Jpre\left(G\right)$ of $G$ are defined by 
\[
\Fpre\left(G\right):=\bigcap_{\gamma\in G^{\N}}F_{\gamma}  \mbox{ and } \Jpre\left(G\right):=\Chat\setminus\Fpre\left(G\right).
\]
\end{defn}
Note that if $G=\left\langle f_{i}:i\in I\right\rangle $, then  $\Jpre(G)=\bigcup_{\omega \in I^{\Bbb{N}}}J_{\omega }$ and
$\Jpre(G)=\bigcup _{i\in I}f_{i}^{-1}(\Jpre(G)).$
The pre-Julia sets of infinitely generated nicely  expanding rational semigroups of this paper 
correspond to the limit sets of infinite contracting conformal iterated function systems  in \cite{MR1387085}  
(see Remark~\ref{r:RSandIFS}).  Roughly speaking, those are the reasons why the pre-Julia sets are the right objects to study and many results (e.g. Bowen's formula) hold for
them.

We remark that the pre-Julia set is  not necessarily closed in $\Chat$.
In fact, by the density of the repelling fixed points (\cite{MR1397693}, \cite[Lemma 2.3 (g)]{MR1767945}),
we have that,  if $\card\left(J\left(G\right)\right)\ge3$,  then 
$\overline{\Jpre\left(G\right)}=J\left(G\right)$.
However, there are many examples of rational semigroups $G$,  
for which  $\dim _{H}(\Jpre(G))<\dim _{H}(J(G))$ or  $\Jpre(G)\neq J(G)$ (see Example~\ref{ex:dimHdimB}),  
even if we assume that $G$ is nicely expanding.  

In order to state the main result of this paper,  we define the following critical exponents  associated to rational semigroups. 
If $I$ is countable and $G=\left\langle f_{i}:i\in I\right\rangle $, then the critical exponent $s(G)$ of the Poincar\'{e}  series of $G$  and the critical exponent $t(I)$ are  for each $x\in \Chat$  given by 
 \[
s\left(G,x\right):=\inf\left\{ v \ge 0:\sum_{g\in G}\sum_{y\in g^{-1}(x)}\left\Vert g'\left(y\right)\right\Vert ^{-v}<\infty\right\} ,\quad s\left(G\right):=\inf\left\{ s\left(G,x\right):x\in\Chat\right\}, 
\]
\[
t\left(I,x\right):=\inf\left\{ v\ge 0:\sum_{n\in\N} \sum _{\omega \in I^{n}}\sum _{y\in f_{\omega }^{-1}(x)}
\| f_{\omega }'(y)\| ^{-\nu }   <\infty\right\} ,\quad t\left(I\right):=\inf\left\{ t\left(I,x\right):x\in\Chat\right\} .
\]
Here, the sums $\sum _{y\in g^{-1}(x)}$ and $\sum _{y\in f_{\omega }^{-1}(x)}$ 
count the multiplicities, and we set  $\inf\left\{ \emptyset\right\} :=\infty$ and $0^{-v}:=\infty$, for all $v\ge 0$.

Let $I\subset\N$ be the finite set $\left\{ 1,\dots,n\right\} $,
for some $n\in\N$, or let $I=\N$, endowed with the discrete topology.
Let $\left(f_{i}\right)_{i\in I}\in\Rat^{I}$ and let $\tilde{f}:J\left(\tilde{f}\right)\rightarrow J\left(\tilde{f}\right)$
be the associated skew product. Suppose that $\|f_{\omega_{1}}'\left(z\right)\|\neq0$,
for each $\left(\omega,z\right)\in J\left(\tilde{f}\right)$. 
We introduce the  Gurevi\v c pressure of the geometric potential $\tilde{\varphi}:J\left(\tilde{f}\right)\rightarrow\R$,
$\tilde{\varphi}\left(\omega,z\right):=-\log\|f_{\omega_{1}}'\left(z\right)\|$,
with respect to the skew product $\tilde{f}:J\left(\tilde{f}\right)\rightarrow J\left(\tilde{f}\right)$.
This notion of topological pressure  was introduced  in the context of countable Markov shifts 
by Sarig (\cite{MR1738951})  extending the notion of topological entropy
due to Gurevi\v c (\cite{MR0263162}).

The pressure function of the system $\{ f_i : i\in I\}$ is for each $t\in\R$ given by 
\[
\mathcal{P}\left(t\right):=\mathcal{P}\left(t\tilde{\varphi},\tilde{f}\right):=\sup_{K\subset J\left(\tilde{f}\right),K\,\text{compact},  \tilde{f}(K)=K}\mathcal{P}\left(t\tilde{\varphi}_{|K},\tilde{f}_{|K}\right).
\]
Here, for a continuous function $\psi:X\rightarrow\R$
on a compact metric space $X$, and for a continuous dynamical system
$T:X\rightarrow X$, we use $\mathcal{P}\left(\psi,T\right):=\sup\{h_\mu(T)+\int \psi d\mu\}$ to denote
the  classical notion of topological pressure introduced
by Walters (\cite{MR0390180})  following the work of Ruelle (\cite{MR0417391}), where the supremum is taken over all $T$-invariant Borel probability measures on $X$,  and $h_\mu(T)$ refers to the measure-theoretic entropy of the dynamical system $(T,\mu)$. Note that the classical pressure is independent of the choice of the metric on $X$ (\cite{MR648108}). 
\begin{defn*} We say that the rational semigroup $G=\left\langle f_{i}:i\in I\right\rangle $
(or the system $\left\{ f_{i}:i\in I\right\} $) satisfies the \emph{open
set condition} if there exists a non-empty open set $U\subset\hat{\C}$
such that $\left(f_{i}^{-1}\left(U\right)\right)_{i\in I}$ consists
of mutually disjoint subsets of $U$.
\end{defn*}
We refer to \cite{MR556580} (see also \cite{MR684247}) for the by now classical results on the relation between the  pressure and the Hausdorff dimension of associated limit sets, which is  known as Bowen's formula. 
We now present the main result of this paper, which establishes Bowen's formula for pre-Julia sets. 
\begin{thm}[Bowen's formula for pre-Julia sets: see 
Theorem~\ref{thm:bowen-for-prejulia}, Proposition~\ref{prop:critical-exponents}]
\label{i:thm:bowen-for-prejulia} 
Let $I$ be a countable set. 
Let $G=\langle f_{i}:i\in I\rangle $ be a nicely expanding rational semigroup. 
Then we have 
\begin{equation}
\label{upperBowen}
\dim_{H}\left(\Jpre\left(G\right)\right)\le  s\left(G\right)\le 
t\left(I\right)=\inf \{ \beta \in \R : \mathcal{P}(\beta )<0\}. 
\end{equation}
If $\{ f_{i}:i\in I\}$ additionally satisfies the open set condition, then all inequalities in  (\ref{upperBowen}) become equalities.
\end{thm}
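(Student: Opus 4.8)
The plan is to establish the chain (\ref{upperBowen}) one link at a time and then the reverse inequalities under the open set condition; we may assume $\card J(G)\ge 3$, the remaining case being trivial, so that $\overline{\Jpre(G)}=J(G)$. The single tool running through everything is the distortion control supplied by nice expansion: there is a compact, $G$-forward invariant $P_{0}(G)\subset F(G)$ with $P(G)\subset P_{0}(G)$, so the open set $\Omega:=\Chat\setminus P_{0}(G)$ contains $J(G)$, contains no critical value of any $g\in G$, and satisfies $g^{-1}(\Omega)\subset\Omega$ for all $g\in G$; hence every inverse branch of every $g\in G$ is univalent on $\Omega$ and, by Koebe's theorem, has bounded distortion on compact subsets of $\Omega$ --- this is the replacement, here and in the iterated function system version, for the classical cone condition. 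To prove $\dim_{H}(\Jpre(G))\le s(G)$ I would fix $v>s(G)$ and show $\mathcal H^{v}(\Jpre(G))=0$. Cover $J(G)$ by finitely many round balls $B_{1},\dots,B_{N}$ with closures in $\Omega$ and centres $b_{j}$ in the non-exceptional set, so that $s(G,b_{j})=s(G)$ by Proposition~\ref{prop:critical-exponents}. For $z\in\Jpre(G)$ choose $\omega$ with $z\in J_{\omega}$; then $f_{\omega|_{n}}(z)\in J_{\sigma^{n}\omega}\subset J(G)$ lies in some $B_{j(n)}$, and by expansion the component $U_{n}(z)$ of $f_{\omega|_{n}}^{-1}(B_{j(n)})$ through $z$ has $\diam U_{n}(z)\le C\|f_{\omega|_{n}}'(z)\|^{-1}\le C'\lambda^{-n}\to 0$. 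Given $\epsilon>0$, let $n(z)$ be minimal with $\diam U_{n(z)}(z)<\epsilon$; then $\{U_{n(z)}(z):z\in\Jpre(G)\}$ is a countable $\epsilon$-cover of $\Jpre(G)$ whose members are, after identifying repetitions, components of sets $g^{-1}(B_{j})$ ($g\in G$) of diameter $<\epsilon$, which forces $\|g'(y)\|\ge c/\epsilon$ for the corresponding $y\in g^{-1}(b_{j})$. By distortion,
\[
\sum_{U}(\diam U)^{v}\ \le\ C^{v}\sum_{j=1}^{N}\ \sum_{g\in G}\ \sum_{\substack{y\in g^{-1}(b_{j})\\ \|g'(y)\|\ge c/\epsilon}}\|g'(y)\|^{-v},
\]
which, since $v>s(G)=s(G,b_{j})$, is a tail of a convergent series and tends to $0$ as $\epsilon\to 0$; hence $\mathcal H^{v}(\Jpre(G))=0$. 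It is essential that this cover is indexed by $g\in G$, not by words: distinct words representing the same $g$ are counted once, which is exactly what replaces $t(I)$ by the sharper $s(G)$.

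The inequality $s(G)\le t(I)$ is pure overcounting: the map $\bigsqcup_{n\ge1}I^{n}\to G$, $\omega\mapsto f_{\omega}$, is surjective, so $\sum_{g\in G}\sum_{y\in g^{-1}(x)}\|g'(y)\|^{-v}\le\sum_{n}\sum_{\omega\in I^{n}}\sum_{y\in f_{\omega}^{-1}(x)}\|f_{\omega}'(y)\|^{-v}$ for all $x,v$, giving $s(G,x)\le t(I,x)$ and $s(G)\le t(I)$. For the identity $t(I)=\inf\{\beta:\mathcal P(\beta)<0\}$ I would fix a non-exceptional $x$ and compare the inverse-branch sums $Z_{n}(\beta):=\sum_{\omega\in I^{n}}\sum_{y\in f_{\omega}^{-1}(x)}\|f_{\omega}'(y)\|^{-\beta}$ with the thermodynamic partition functions of the geometric potential $\beta\tilde\varphi$ over the countable Markov shift $I^{\N}$: bounded distortion on $\Omega$ converts $-\beta\log\|f_{\omega}'(y)\|$ into $\beta S_{n}\tilde\varphi+O(1)$ along the fibre, and the Gurevi\v c--Sarig theory (\cite{MR1738951}) identifies $\lim_{n}\tfrac1n\log Z_{n}(\beta)$ with $\mathcal P(\beta)=\sup_{K}\mathcal P(\beta\tilde\varphi_{|K},\tilde f_{|K})$. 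Expansion forces $S_{n}\tilde\varphi\le-\log C_{0}-n\log\lambda$, so $\beta\mapsto\mathcal P(\beta)$ is finite, continuous, strictly decreasing and eventually negative; therefore $\sum_{n}Z_{n}(\beta)<\infty$ precisely when $\mathcal P(\beta)<0$.

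Under the open set condition it remains to prove $\dim_{H}(\Jpre(G))\ge t(I)$. For every finite $I'\subset I$ the semigroup $G_{I'}=\langle f_{i}:i\in I'\rangle$ is a finitely generated nicely expanding rational semigroup (the same $P_{0}(G)$ works) satisfying the open set condition, with $\Jpre(G_{I'})=\bigcup_{\omega\in(I')^{\N}}J_{\omega}\subset\Jpre(G)$. For such a semigroup Bowen's formula holds, $\dim_{H}(\Jpre(G_{I'}))=\theta_{I'}$ with $\theta_{I'}$ the unique zero of $\mathcal P_{I'}$: the Ruelle--Perron--Frobenius formalism on the compact skew product $J(\tilde f_{I'})$ (\cite{MR556580,MR1767945}) yields a conformal measure for the geometric potential, the open set condition makes it non-atomic, and the mass distribution principle gives ``$\ge$'' while the covering argument above gives ``$\le$''; moreover $\theta_{I'}=t(I')$ by the previous paragraph. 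Hence $\dim_{H}(\Jpre(G))\ge\sup_{I'}t(I')$, and it suffices to prove $\sup_{I'}t(I')=t(I)$. The decisive point is that every compact $\tilde f$-invariant subset of $I^{\N}\times\Chat$ projects onto a compact --- hence, $I$ being discrete, finite --- set of symbols in each coordinate, so $\mathcal P(\beta)=\sup_{I'}\mathcal P_{I'}(\beta)$ pointwise; since $\mathcal P$ is continuous, decreasing and eventually negative, the zero of the supremum equals the supremum of the zeros, i.e.\ $\sup_{I'}\theta_{I'}=t(I)$. Combined with (\ref{upperBowen}), this forces equality throughout.

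The hard part, and the novelty of the result, is the lower bound in the infinite case: the conformal-measure construction for the finite subsystems and the finite-approximation step $\sup_{I'}t(I')=t(I)$ carried out \emph{without} the cone condition that the classical theory of infinite conformal iterated function systems imposes (\cite{MR1387085}). I expect the resolution to be precisely the structural input isolated at the start --- nice expansion supplies a fixed region $\Omega$ on which all inverse branches of all elements of $G$ are simultaneously univalent and Koebe-controlled --- which provides exactly the uniform comparability of diameters, derivatives and conformal measures of balls for which the cone condition is classically used. Two auxiliary facts are needed along the way and are the content of Proposition~\ref{prop:critical-exponents}: the base-point independence $s(G,x)=s(G)$ off the exceptional set, used in the upper bound, and the structural properties of the Gurevi\v c pressure --- finiteness, convexity, and its control over the convergence of the Poincar\'e-type series --- that let one pass freely between the series $s(G)$, the pressure $\mathcal P$, and the Hausdorff dimension.
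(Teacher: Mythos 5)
Your overall architecture runs parallel to the paper's (an upper bound by covering plus Koebe distortion on the complement of $P_{0}(G)$, the identification of $t(I)$ with the pressure zero via exhaustion by finite sub-alphabets, and a lower bound under the open set condition by reduction to the finitely generated subsemigroups $G_{n}$ and $\sup_{F}t(F)=t(I)$, where the paper simply quotes \cite[Theorem B]{MR2153926} instead of re-running the Ruelle--Perron--Frobenius construction). But there is a genuine gap in your very first step, the bound $\dim_{H}(\Jpre(G))\le s(G)$. Your covering argument needs the semigroup-counted Poincar\'e series to converge, for $v>s(G)$, at \emph{every} chosen ball centre $b_{j}\in J(G)$, i.e.\ it needs $s(G,b_{j})=s(G)$ (or at least $s(G,b_{j})\le v$) at finitely many points covering $J(G)$. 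You attribute this to Proposition~\ref{prop:critical-exponents}, but that proposition (and Lemma~\ref{lem:critical-exponents}~(\ref{enu:criticalexponents-hyperbolic-independence-ofx-in-julia-3})) only gives base-point independence for the \emph{word-counted} exponent $t(I,\cdot)$, whose proof rests on the bounded distortion transfer $Z_{n+\ell}(I,t,y)\ge C^{-1}Z_{n}(I,t,x)$; that transfer works because prepending a fixed word $\tau$ is injective on words. The analogous transfer for $s(G,\cdot)$ would require $g\mapsto f_{\tau}\circ g$ to be injective on $G$, which fails for non-free semigroups (distinct $g_{1},g_{2}$ can satisfy $f_{\tau}g_{1}=f_{\tau}g_{2}$), so base-point independence of $s(G,\cdot)$ on $J(G)$ is neither proved in the paper nor obtainable by the same argument. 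Without it, your cover only yields $\dim_{H}(\Jpre(G))\le\max_{j}s(G,b_{j})$, which need not equal $s(G)=\inf_{x}s(G,x)$; since $s(G)\le t(I)$ is claimed \emph{without} the open set condition, you cannot fall back on freeness either.

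This is exactly the point the paper's route through $\delta$-subconformal measures is designed to avoid: for any $v>s(G)$ one builds a $v$-subconformal measure from a Patterson--Sullivan-type construction based at a \emph{single} point where the series converges, and subconformality then globalises the estimate, giving $\dim_{H}(\Jpre(G))\le u(G)\le s(G)$ (the second inequality cited from \cite[Theorem 4.2]{MR1625124}) with no base-point independence of $s(G,\cdot)$ needed. Two smaller inaccuracies: your assertion that $\mathcal{P}(\beta)$ is finite for all $\beta$ is false for infinitely generated systems (the paper only gets a closed convex function, strictly decreasing on its finiteness domain, with $\mathcal{P}(t(I))\le 0$), and the identification of $\lim_{n}\frac1n\log Z_{n}$ with $\mathcal{P}$ should not be waved through ``Gurevi\v c--Sarig theory'' for countable Markov shifts, since the skew product is not such a shift; the paper instead uses the compact-invariant-set exhaustion (which you do mention) together with \cite[Lemma 3.6]{MR2153926}. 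Neither of these is fatal, but the $s(G,b_{j})=s(G)$ step is.
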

We apply the above result to  the dynamics of non-hyperbolic rational semigroups 
by using the method of inducing.  We deal with critical orbits which do not appear in contracting iterated function systems. 
\begin{thm}[Inducing method: see Theorem~\ref{thm:inducing-prejulia}]
\label{i:thm:inducing-prejulia}
Let $I$ be a countable set and let $G=\left\langle f_{i}:i\in I\right\rangle $
be a rational semigroup. 
Suppose that there exists a decomposition
$I=I_{1}\cup I_{2}$ with $I_2 \neq \emptyset$, such that each of the following (1)--(4) holds for the
rational semigroups $G_{j}:=\left\langle f_{i}:i\in I_{j}\right\rangle $,
$j\in\left\{ 1,2\right\} $, and $H:=\left\langle H_{0}\right\rangle $  given by 
$$ 
H_{0}:=\left\{ f_{i}:i\in I_{2}\right\} \cup\left\{ f_{i}f_{j_{1}}\dots f_{j_{r}}:i\in I_{2},r\in\N,\left(j_{1},\dots,j_{r}\right)\in I_{1}^{r}\right\} ,\quad \langle H_{0}\rangle := \langle g: g\in H_{0}\rangle.
$$ 
\begin{enumerate}
\item There exists an  $H$-forward invariant non-empty compact set $L\subset F\left(H\right)$ 
such that  $P\left(G_{2}\right)\subset L$ and 
 $f_{i}\left(P\left(G_{1}\right)\right)\subset L$, for each $i\in I_{2}$. 
\item $\deg\left(g\right)\ge2$ for all $g\in G$. 
\item There exists a $G$-forward invariant non-empty compact set $ L_{0}\subset F(G)$. 
\item  $\left\{ f_{i}:i\in I\right\} $ satisfies the open set condition.
\end{enumerate}
Then we have that $H$ is nicely expanding (we endow $H_{0}$ with the discrete topology), $H_0$ satisfies the open set condition, $s(H)=s(G)$ and 
\[
\dim_{H}\left(\Jpre\left(G\right)\right)=\max\left\{ s\left(G\right),\dim_{H}\left(\Jpre\left(G_{1}\right)\right)\right\} .
\]
If in addition to the assumptions, we have  $\card(I)<\infty$,   $f_i$  is a polynomial   for each $i\in I_1$, and if there exists a compact $G_1$-forward invariant subset $K\subset F(G_1)$ such that $f_j(P(f_i))\subset K$  for all $i,j\in I_1$ with $i \neq j$, then 
\[
\dim_{H}\left(J\left(G\right)\right)=\max\big\{ s\left(G\right), \max_{i\in I_1} \{ \dim_{H}\left(J(f_i)\right) \} \big\} .
\]
\end{thm}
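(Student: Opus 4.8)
The plan is to transfer the estimate of the first part of the theorem to the closure $J(G)=\overline{\Jpre(G)}$ by splitting the skew-product Julia set $J(\tilde f)$ into a ``hyperbolic'' (radial) piece, which accounts for the term $s(G)$, and a ``non-hyperbolic'' piece which the interaction hypothesis on the generators of $G_1$ confines to finitely many backward images of the individual Julia sets $J(f_i)$, $i\in I_1$.

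First I would record what the first part gives: $H$ is nicely expanding, $H_0$ satisfies the open set condition, $s(H)=s(G)$, and $\dim_{H}(\Jpre(G))=\max\{s(G),\dim_{H}(\Jpre(G_1))\}$. Since $\deg(g)\ge 2$ for every $g\in G$, $J(G)$ is infinite, so $J(G)=\overline{\Jpre(G)}$, and because $I$ is finite we also have $J(G)=\pi(J(\tilde f))$, where $\pi\colon I^{\N}\times\Chat\to\Chat$ denotes the projection. The lower bound is then immediate: for $i\in I_1$ one has $\Jpre(\langle f_i\rangle)=J(f_i)$, and $\langle f_i\rangle\subset G_1\subset G$ yields $J(f_i)\subset\Jpre(G_1)\subset\Jpre(G)\subset J(G)$, so with $s(G)\le\dim_{H}(\Jpre(G))\le\dim_{H}(J(G))$ we get $\dim_{H}(J(G))\ge\max\{s(G),\max_{i\in I_1}\dim_{H}(J(f_i))\}$.

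For the reverse inequality I would decompose $J(\tilde f)=J_{\mathrm{rad}}\cup\big(J(\tilde f)\setminus J_{\mathrm{rad}}\big)$, where $J_{\mathrm{rad}}$ is the radial part: the union, over all compact $\tilde f$-invariant subsets of $J(\tilde f)$ on which $\tilde f$ is uniformly expanding, of the points whose forward orbit meets such a set infinitely often. On each such expanding piece the inverse branches of the $f_{\omega|_n}$ satisfy uniform Koebe-type distortion estimates, so the covering argument used for the upper bound in Bowen's formula (Theorem~\ref{i:thm:bowen-for-prejulia}) applies; taking the supremum over the pieces and invoking Proposition~\ref{prop:critical-exponents}, the open set condition (hypothesis~(4)) and $s(H)=s(G)$ gives $\dim_{H}(\pi(J_{\mathrm{rad}}))\le s(G)$. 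For the complementary part I would use the hyperbolicity of the interactions: by hypothesis there is a compact $G_1$-forward invariant $K\subset F(G_1)$ with $f_j(P(f_i))\subset K$ for all $i\ne j$ in $I_1$; since $K$ is $G_1$-forward invariant and the $f_i$ are polynomials, at every change of generator the previously accumulated critical orbit enters $K$, and the part of $P(G_1)$ outside any fixed neighbourhood of $K$ can accumulate only near $\bigcup_{i\in I_1}P(f_i)$. Consequently an orbit that is not in $J_{\mathrm{rad}}$ must eventually be governed by a single generator $f_i$, whence $\pi\big(J(\tilde f)\setminus J_{\mathrm{rad}}\big)$ is contained in the union of $\bigcup_{\tau}f_{\tau}^{-1}\big(\bigcup_{i\in I_1}J(f_i)\big)$, over all finite words $\tau$ over $I$, with a countable set. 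As a nonconstant rational map changes Hausdorff dimension under neither direct image nor preimage and countable unions do not increase it, $\dim_{H}\big(\pi(J(\tilde f)\setminus J_{\mathrm{rad}})\big)\le\max_{i\in I_1}\dim_{H}(J(f_i))$. Since $J(G)=\pi(J_{\mathrm{rad}})\cup\pi\big(J(\tilde f)\setminus J_{\mathrm{rad}}\big)$, combining the two bounds gives $\dim_{H}(J(G))\le\max\{s(G),\max_{i\in I_1}\dim_{H}(J(f_i))\}$ and completes the proof.

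The main obstacle is the treatment of the non-radial part. Because $G_1$ is not (nicely) expanding, the distortion machinery of the paper does not apply to it directly, and one must extract from the single interaction hypothesis $f_j(P(f_i))\subset K$ a quantitative dichotomy: every run of a fixed generator is eventually broken, after which the postcritical orbit returns to the forward invariant $K\subset F(G_1)$, so that the non-hyperbolic behaviour of the individual $f_i$ can survive in $J(G)$ only along orbits ultimately driven by one $f_i$, up to a finite initial segment. Carrying this out uniformly over the fibres of the skew product, so that the radial/non-radial split is genuinely exhaustive and the radial pressure bound legitimate, is the substantive step; the remaining bookkeeping with $\overline{\Jpre(G)}$, the projection $\pi$ and rational preimages is routine by comparison.
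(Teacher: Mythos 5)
Your write-up establishes only the easy lower bound; everything else is either assumed or deferred. You take the entire first half of the statement as given ($H$ nicely expanding, $H_{0}$ satisfying the open set condition, $s(H)=s(G)$, and $\dim_{H}(\Jpre(G))=\max\{s(G),\dim_{H}(\Jpre(G_{1}))\}$), but that is the bulk of the theorem: in the paper it requires Lemma \ref{lem:inducedsemigroup-is-nicelyexpanding} (via Proposition \ref{prop:nicelyexpanding-characterisation}), the decomposition $\Jpre(G)\setminus\Jpre(H)=\big(\bigcup_{g\in G\cup\{\id\}}g^{-1}(\Jpre(G_{1}))\big)\setminus\Jpre(H)$, Bowen's formula (Theorem \ref{thm:bowen-for-prejulia}) applied to $H$, and the splitting (\ref{poincareseries1}) of the Poincar\'e series to prove $s(G)=s(H)$. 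None of this appears in your proposal, so as it stands nothing beyond the lower bound is proved.

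For the part you do address, the radial/non-radial decomposition rests on two unproved claims, and the first is not just unproved but misaligned with the statement. Your $J_{\mathrm{rad}}$ (points returning infinitely often to some expanding compact $\tilde f$-invariant set) contains the hyperbolic subsets of each single-generator fibre $J_{i^{\infty}}=J(f_{i})$, whose dimensions are controlled only by $\dim_{H}(J(f_{i}))$ and not by $s(G)$; moreover the paper's upper-bound machinery (subconformal measures in Theorem \ref{thm:bowen-for-prejulia}, the $x$-independence of the critical exponent in Lemma \ref{lem:critical-exponents}) is available only for nicely expanding semigroups and does not apply to $G$, so the asserted bound $\dim_{H}(\pi_{\Chat}(J_{\mathrm{rad}}))\le s(G)$ has no support and your split does not actually separate the two terms of the maximum. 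The complementary claim --- that every non-radial orbit is eventually governed by a single generator, up to a countable set --- is exactly the substantive point, and you concede it is not carried out; note also that the hypothesis $f_{j}(P(f_{i}))\subset K$ concerns only $i,j\in I_{1}$ with $K$ merely $G_{1}$-forward invariant inside $F(G_{1})$ (not $F(G)$), so switches involving $I_{2}$ must be handled by hypothesis (1), which your sketch conflates with $K$; and since $J(\tilde f)$ is a closure one must control the sets $\hat{J}_{\omega,I}\supset J_{\omega}$. The paper avoids all of this by arguing symbolically: Lemmas \ref{lem:complement-of-prejulia-osc}--\ref{lem:complement-of-prejulia} reduce the problem to $\dim_{H}(J(G))=\max\{s(G),\dim_{H}(J(G_{1}))\}$ (Theorem \ref{thm:inducing-prejulia} (3)), and then the decoupling Lemmas \ref{decoupling-lemma} and \ref{polynomial-semigroup-prejulia} are applied to $G_{1}$ alone, using the identity $J(\tilde f)\cap P(\tilde f)=\bigcup_{i}\{(i^{\infty},x):x\in P(f_{i})\}$, the resulting uniform distance of the non-constant fibres from the postcritical set, and the fibered semi-hyperbolicity results of \cite{MR2237476} --- an input for which your sketch offers no substitute.
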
 
We point out that, even if the semigroup $G$ is finitely generated (e.g. $G$ has two generators), then the inducing method leads to an infinitely generated semigroup $H$, which is shown to be nicely expanding. This fact is one of the main motivations to develop Bowen's formula for infinitely generated semigroups. 

There are many applications of Theorem \ref{i:thm:inducing-prejulia}  (see Example \ref{inducing-example},
Theorem  \ref{t:c:B1main},   Corollary \ref{c:B1main}, Lemmas \ref{PBD-has-SOSC}, \ref{PBD-class}).
We are interested in the space ${\mathcal A}$ of couples $(f_{1},f_{2})$ of 
polynomials with $\deg (f_{i})\geq 2$  for each $i$,  for which 
the planar postcritical set $P(\langle f_{1},f_{2}\rangle )\setminus \{ \infty \} $ 
is bounded but the Julia set $J(\langle f_{1},f_{2}\rangle )$ is disconnected. 
It is well-known that for a polynomial $f$ with $\deg (f)\geq 2$, 
the Julia set $J(f)$ of $f$ is connected if and only if $P(f)\setminus \{ \infty \} $ is bounded 
(see \cite{MR2193309}).  
However, the space ${\mathcal A}$ is not empty, and this is a special phenomenon in the dynamics of polynomial semigroups. 
There have been some studies on the dynamics of the semigroups $\langle f_{1},f_{2}\rangle $ for elements $(f_{1},f_{2})\in \overline{\mathcal{A}}$ employing potential theory (see \cite{MR2773173, MR2736899, MR2553369, s10, twogenerator, MR2813416}). 
In this paper, we focus on elements $(f_{1},f_{2})\in \mathcal{A}$ and some elements $(f_{1},f_{2})\in \partial \mathcal{A}.$  
Applying Theorem~\ref{i:thm:inducing-prejulia}, we obtain the following. 
\begin{thm}[see Corollary~\ref{c:B1main}]
\label{t:c:B1main}
Let $f_{1}$ and $f_{2}$ be polynomials of
degree at least two. 
Let $G=\left\langle f_{1},f_{2}\right\rangle $. 
Suppose that all of the following hold.  
\selectlanguage{british}%
\begin{enumerate}
\item \label{i:enu:postcritically-bounded}$P(G)\setminus\left\{ \infty\right\} $
is a bounded subset of $\C$.
\item \label{i:enu:Kg1-in-interiorKg2}$K\left(f_{1}\right)\subset\Int K\left(f_{2}\right)$, where $K(f_{i})$ denotes 
the filled-in Julia set of $f_{i}.$ 
\item \label{i:enu:OSC} $\{f_1, f_2 \}$ satisfies the open set condition with the  open set   $\left( \Int K\left(f_{2}\right) \right) \setminus K\left(f_{1}\right)$.
\item \label{i:enu:.CVg2-in-interiorKg1}$\CV\left(f_{2}\right)\setminus\left\{ \infty\right\} \subset\Int K\left(f_{1}\right)$, 
where $\CV $ denotes the set of critical values. 
\end{enumerate} 
 Then we have \foreignlanguage{english}
{\textup{$\dim_{H}\left(J\left(G\right)\right)=\max \{ s\left(G\right), \dim _{H}(J(f_{1}))\} $. }}
\end{thm}  
 \vspace{-3mm}
\begin{rem*}A sufficient condition for $\dim _{H}(J(f_{1}))\leq s(G)$ is that $f_1$ is a 
non-recurrent critical point map (\cite{MR1279476}) or a Collet-Eckmann map (\cite{MR1407501}).  
\end{rem*}
Note that all elements $(f_{1},f_{2})\in \mathcal{A}$ and some elements $(f_{1},f_{2})\in \partial \mathcal{A}$ satisfy 
the assumptions of Theorem~\ref{t:c:B1main} (\cite{s10,Sumi11coliseum, twogenerator}). 
There are many examples of $(f_{1},f_{2})$ satisfying the assumption of 
Theorem~\ref{t:c:B1main}  (see Sections ~\ref{ExampleSection}, \ref{Inducing}). For example, for each polynomial $f_{1}$ such that
$J(f_{1})$ is connected and Int$K(f_{1})\neq \emptyset$,
there exists a polynomial $f_{2}$ such that
$(f_{1},f_{2})\in \mathcal{A}$ (\cite{MR2773173}).
Therefore even if $f_{1}$ with connected Julia set has a Siegel disk,
there exists  $f_{2}$ such that $(f_{1},f_{2})\in \mathcal{A}$
and  Theorem \ref{t:c:B1main} applies  to  $(f_{1},f_{2}).$ For the Julia sets of  the semigroups generated by  elements $\{ f_{1},f_{2}\} $ 
with $(f_{1},f_{2})\in \overline{\mathcal{A}}$, 
see Figures \ref{fig:Siegelcircle6} -- \ref{fig:caulicircle2}.  
\begin{rem}
\label{r:RSandIFS} 
Let $G=\langle f_{i}:i\in I\rangle $ be a rational semigroup with $G\subset \mbox{Aut}(\Chat)$, where $ \mbox{Aut}(\Chat)$ denotes the group of M\"{o}bius transformations on $\Chat$.
Suppose that $G$ satisfies the open set condition with a bounded connected open set $U$ in $\mathbb{C}.$ 
Suppose also that there exist two bounded open connected subsets $V_{1},V_{2}$ with $\overline{U}\subset V_{j}$, $j\in \{1,2\}$, 
 such that $f_{i}^{-1}(V_{1})\subset V_{2}$,  for each $i\in I.$ 
Suppose further that there exists a constant $0<s<1$ such that 
$|(f_{i}^{-1})'(z)|\leq s$,  for each $z\in \overline{U}$ and for each $i\in I.$ 
Then, $G$ is a nicely expanding rational semigroup and the system 
$\Phi =\{ f_{i}^{-1}:\overline{U}\rightarrow \overline{U}\} _{i\in I} $ is a contracting conformal 
iterated function system in the sense of \cite{MR1387085}, which does not necessarily satisfy the cone condition. Here, the cone condition refers to the property that there exist $\gamma,l>0$ such that, for every $z\in \overline{U}$, there exists an open cone $C$ with vertex $z$,  central angle $\gamma$ and altitude $l$ such that  $C\subset U$ (\cite[(2.7) on page 110]{MR1387085}). 
Moreover, 
we have that the pre-Julia set of $G$ is equal to the limit set of the system $\Phi .$
\end{rem}

\begin{rem}[see Section~\ref{Remarks}]
\label{i:r:cc}
For the results of this paper, the cone condition for the open set in the open set condition is not needed. We
will see in Section \ref{sec:Inducing-Methods} that there are many
examples of semigroups which do not satisfy the cone condition, and
for which our results can be applied. For such examples, see Section~\ref{sec:Inducing-Methods} and  
Figures~\ref{fig:caulicircle1}, \ref{fig:caulicircle2}. 
In \cite[Theorem 3.15]{MR1387085} it is proved that, for the Hausdorff
dimension of the limit set $J\left(\Phi\right)$ of an infinitely
generated contracting conformal iterated function system $\Phi$ satisfying 
the cone condition, we have 
\begin{equation}
\dim_{H}J\left(\Phi\right)=\inf\left\{ \delta:P\left(\delta\right)<0\right\} =\sup_{\Phi_F}\left\{ J\left(\Phi_{F}\right)\right\} .
\label{i:eq:cifs-bowen}
\end{equation}
Here, $P$ refers to the  associated pressure function,
and $\Phi_{F}$ runs over all finitely generated subsystems of $\Phi$.
By the methods employed in the proof of Theorem \ref{thm:bowen-for-prejulia} of this paper,
one can show that (\ref{i:eq:cifs-bowen}) holds, even if the cone condition
is not satisfied. Instead of the cone condition (2.7) in \cite{MR1387085}, we need to assume that $\vert\phi_i'(x)\vert \le s$, for each $x\in X$ in the notation of  \cite{MR1387085}. For the details, see Section~\ref{Remarks}.  
\end{rem}

By using Bowen's formula for pre-Julia sets for nicely expanding
rational semigroups which is established in this paper,
we will investigate the parameter dependence of
$\dim_{H}(\Jpre(G))$ for nicely expanding rational semigroups $G$ and
$\dim _{H}(J(S))$ for
non-hyperbolic finitely generated rational semigroups $S$,
which is a further interesting task.

Let us briefly comment on the history of the method of inducing. This method was used to investigate invariant measures of non-hyperbolic dynamical systems using  ideas of Schweiger (\cite{MR0384735}). In \cite{MR1107025} the method of inducing is used to develop the ergodic theory of  Markov-fibred systems with applications to parabolic rational maps. Using  results on the thermodynamic formalism for  symbolic dynamical systems with a countable alphabet (\cite{MR1387085, MR1738951}),   the method of inducing was used to prove Bowen's formula for parabolic iterated function systems (\cite{MR1786722}, see also \cite{MR2003772}). 

The theory of the dynamics of rational semigroups is intimately 
related to that of the random dynamics of rational maps. 
The first study of random complex dynamics was given in \cite{MR1145616}. 
For a  recent study, see \cite{MR1721617, MR2866474, MR2747724, MR2736899, 
s10, Sumi11coliseum, cooperation}.  
The deep relation between these fields 
(rational semigroups, random complex dynamics, and backward iterated function systems) 
is explained in detail in the papers (\cite{MR1827119}--\cite{twogenerator}) of the second author and in
\cite{SUtransversality}. For a random dynamical system generated by a family of 
 polynomial maps on $\Chat $, 
 let the function  $T_{\infty }:\Chat \rightarrow [0,1]$ be given by the probability of tending to $\infty \in \Chat .$ 
In \cite{MR2747724, Sumi11coliseum} 
it was shown that under certain conditions, 
$T_{\infty }$ is continuous on $\Chat $ and varies only on the Julia set of the associated rational semigroup 
(further results were announced in \cite{s10}).  
For example, there exists  a random dynamical system,  for which 
$T_{\infty }$ is continuous on $\Chat $ and the set of varying points of $T_{\infty }$ 
is equal to the Julia set of  Figure \ref{fig:Siegelcircle6},   Figure \ref{fig:caulicircle1} or Figure~\ref{fig:caulicircle2}, 
which is a thin fractal set.  
This function $T_{\infty }$ is a complex analogue of the devil's staircase (Cantor function) 
or Lebesgue's singular functions and this is called a ``devil's coliseum'' 
(see \cite{MR2747724, Sumi11coliseum, cooperation, s10, twogenerator}).   
From this point of view also, it is very interesting and important to 
investigate the figure, the properties
and the dimension of the Julia sets of rational semigroups.

 The outline of this paper is as follows. In Section 2, we give various interesting examples which 
motivate the study in this paper. In Section~\ref{Preliminaries}, we give  some basic  definitions and results on rational semigroups and their associated skew products.  In Section~\ref{Expanding}  we investigate (nicely) expanding rational semigroups.  Proposition \ref{prop:nicelyexpanding-characterisation} is the key to investigating infinitely generated nicely expanding rational semigroups. In Section~\ref{Topological}, we consider the associated skew products  systems whose phase spaces are not compact,  and we derive basic properties of two notions of topological
pressure.  We use results on equidistributional measures from \cite{MR1767945}. Also, we use some idea similar to the finitely primitive condition 
(\cite{MR2003772}) for topological Markov chains with an infinite alphabet. In Section~\ref{Bowen}, we establish  Bowen's formula for  pre-Julia sets of (possibly infinitely generated) nicely expanding rational semigroups 
and we prove the main theorem 
(Theorem~\ref{i:thm:bowen-for-prejulia}, Theorem~\ref{thm:bowen-for-prejulia}) of this paper. To verify the lower bound of the Hausdorff dimension in Bowen's formula, we use a reduction  to the finitely generated case and
we apply  \cite[Theorem B]{MR2153926}.
In Section~\ref{Inducing}, by applying the main theorem to the dynamics of non-hyperbolic rational semigroups 
and by using the method of inducing, we prove Theorems \ref{i:thm:inducing-prejulia}, 
\ref{thm:inducing-prejulia}, and \ref{t:c:B1main}. 
In Section~\ref{Remarks}, we give some remarks on the cone condition which has been assumed in the study of 
infinite contracting iterated function systems. 

Proposition \ref{prop:nicelyexpanding-characterisation} is not a simple generalization of finitely generated case.
We use a completely new idea based on careful observations 
on the hyperbolic metric in the proof. 
Proposition \ref{prop:nicelyexpanding-characterisation} is used to prove results in Section~\ref{Inducing}. 
In Section~\ref{Inducing}, we also use some observations on the family 
$\{J_{\omega }\} _{\omega \in I^{\N}}$ of 
fiberwise Julia sets. The ideas in the proof of Proposition \ref{prop:nicelyexpanding-characterisation}  and Section~\ref{Inducing} 
are new and have not been used in the study of iteration dynamics of holomorphic maps and conformal IFSs so far.  

\section{Examples}
\label{ExampleSection}
In this section we give various interesting examples of (nicely) expanding 
rational semigroups and non-hyperbolic rational semigroups which motivate 
the study of this paper.

First we give an example of an infinitely generated expanding M\"{o}bius
semigroup satisfying the open set condition, which is not nicely expanding.
Note that this does not happen for finitely generated rational semigroups. We say that $g\in \Aut(\hat{\Bbb{C}})\setminus \{ \id\} $ is loxodromic 
if $g$ has two fixed points for which the modulus of the multiplier is not 
equal to one.
\begin{example}
\label{ex:nonnice2}
Let $\left(a_{i}\right)\in\mathbb{D}^{\N}$ and $\left(b_{i}\right)\in\left(\C\setminus\mathbb{D}\right)^{\N}$
be two sequences of pairwise distinct points, which have a common
accumulation point $a_\infty \in \mathbb{S}:=\partial(\mathbb{D})$. For each $i\in\N$
we choose a loxodromic M\"{o}bius transformation $f_{i}$ with repelling
fixed point $a_{i}$ and attracting fixed point $b_{i}$. Then  there
exists a sequence $\left(n_{i}\right)\in\N^{\N}$ such that
$\{f_{i}^{n_{i}} : i\in \N \} $ satisfies the open set condition
with respect to $\mathbb{D}$, and such that $\sup_{x\in\overline{\mathbb{D}}}\Vert\big(f_{i}^{-n_{i}}\big)'\left(x\right)\Vert\le1/2$,
for each $i\in\N$. Set $G:=\left\langle f_{i}^{n_{i}}:i\in\N\right\rangle $. Clearly, we have that $J(G)\subset \overline{\mathbb{D}}$ and that  $G$ is expanding with respect to $\left\{ f_{i}^{n_{i}}:i\in\N\right\} $.  However, $G$ is not nicely expanding. To prove this, let $P_0$ denote a non-empty compact $G$-forward invariant subset of $F(G)$. Then we have  $\left\{ b_{n}:n\in\N\right\} \subset P_{0}$, which implies $a_\infty \in P_0$.  Moreover, since $a_\infty$ is an accumulation point of the repelling
fixed points $\left(a_{n}\right)$, we have  $a_\infty \in J\left(G\right)$. Hence,  $G$ is not nicely expanding. 
\end{example}

The following example shows that an infinitely generated expanding
rational semigroup, satisfying the open set condition, is  not necessarily hyperbolic.  In particular, such a rational semigroup is not nicely expanding. Note that this can not happen for finitely generated rational semigroup (cf. \cite[Remark 5]{MR2153926}). 
\begin{example}
\label{ex:nonnice1}
Let $\left(a_{i}\right)_{i\in\N}\in\mathbb{D}^{\N}$ be a sequence
of pairwise distinct points in the open unit disc $\mathbb{D}$, such that $\left(a_{i}\right)_{i\in\N}$
has an accumulation point $a_{\infty}\in \mathbb{S}$. Let $\left(r_{i}\right)\in\R^{\N}$
be a sequence such that the sets  $\overline{B\left(a_{i},r_{i}\right)}$, $i\in \N$, 
are pairwise disjoint. There exists a sequence $\left(b_{i}\right)\in\R^{\N}$
such that the quadratic polynomials $h_i$, given by  $h_{i}\left(z\right):=b_{i}\left(z-a_{i}\right)^{2}+a_{i}$, 
satisfy $J\left(h_{i}\right)=\partial B\left(a_{i},r_{i}\right)$, 
for each $i\in\N$. Set $V:=\mathbb{D}\setminus\bigcup_{i\in\N}\overline{B\left(a_{i},r_{i}\right)}$.
Observe that there exists a sequence $\left(n_{i}\right)\in\N^{\N}$
such that  $\{ f_i : i\in \N\}$, given by $f_{i}:=h_{i}^{n_{i}}$,
satisfies the open set condition with respect to $V$. Clearly, we have $J\left(f_{i}\right)=J\left(h_{i}\right)=\partial B\left(a_{i},r_{i}\right)$.
To show that $G:=\left\langle f_{i}:i\in I\right\rangle $ is
expanding, let $\tilde{f}:I^{\N}\times\Chat\rightarrow I^{\N}\times\Chat$
denote the associated skew product. Let $\omega\in I^{\N}$ and $z\in J_{\omega}$ be given. Since $J\left(G\right)\subset\overline{V}$,
we have  $z\in\overline{V}$ and $f_{\omega_{1}}\left(z\right)\in\overline{V}$.
Since $\left|f_{\omega_{1}}'\left(z\right)\right|\ge2$, and since
the spherical metric and the Euclidean metric are equivalent on $\overline{V}$,
we obtain the $G$ is expanding with respect to $\left\{ f_{i}:i\in I\right\} $.
Finally, we observe that $G$ is not hyperbolic, because  $a_\infty \in J\left(G\right)\cap P\left(G\right)$. 
\end{example}

In the next example, we show that there exists a nicely expanding  infinitely generated
rational semigroup  $G=\left\langle f_{i}:i\in I\right\rangle $,  for which
$2=\dim_{H}\left(J\left(G\right)\right)=\dim_{B}\left(J\left(G\right)\right)>s\left(G\right)=t\left(I\right)=\dim_{H}\left(\Jpre\left(G\right)\right)$
and $\dim_{H}\left(\Jpre\left(G\right)\right)<\overline{\dim_{B}}\left(\Jpre\left(G\right)\right)=\dim_{P}\left(\Jpre\left(G\right)\right)=\overline{\dim_{B}}\left(\overline{\Jpre\left(G\right)}\right)=\dim_{B}\left(J\left(G\right)\right)=2$, where  $\overline{\dim_{B}}$ denotes the upper box dimension and $\dim_{P}$ denotes the packing dimension. The idea is as follows: we put infinitely many repelling fixed points such that the Hausdorff dimension of the closure of the set of repelling fixed points is equal to two. Simultaneously, we can make the multipliers sufficiently large to make the critical exponent close to one.
\begin{example}
\label{ex:dimHdimB}
Let $V$ denote a bounded  open set in $\Bbb{C}$ for which $\dim_{H}\partial V=2$.
For convenience, suppose that $\overline{\mathbb{D}}\subset V$. Let
$\left(a_{i}\right)\in\left(V\setminus\overline{\mathbb{D}}\right)^{\N}$
be a sequence of pairwise distinct points such that $\partial V\subset\overline{\left\{ a_{i}:i\in\N\right\} }$. We assume that $\{ a_{i}:i\in\N\}$ is discrete in $V$. 
Let $\left\{ f_{i}:i\in\N\right\} $ be a generator system such that
the following holds for the rational semigroup $G:=\left\langle f_{i}:i\in \N \right\rangle $.
Let $f_{1}$ be given by $f_{1}\left(z\right)=z^{d}$, for some $d\ge2$
to be specified later. For each $j\in\N$, $j\ge2$, let $f_{j}$
be given by $f_{j}\left(z\right)=\alpha_{j}\left(z-a_{j}\right)+a_{j}$,
for some sequence $\left(\alpha_{j}\right)_{j=2}^\infty$ with
$\alpha_{j}>1$, such that $\left\{ f_{i}:i\in \N \right\} $ satisfies
the open set condition with respect to $V$. We may also assume that
$P\left(G\right)\cap\overline{V}=P\left(f_{1}\right)=\left\{ 0\right\} $.
Since $\left\{ f_{i}:i\in \N \right\} $ satisfies the open set condition
with respect to $V$, we have that $J\left(G\right)\subset\overline{V}$,
which implies that $G$ is hyperbolic. Further, we have that  $G$ is nicely  expanding by Lemma \ref{lem:expandingness-osc}.  Finally, since the repelling fixed
points $\left\{ a_{i}:i\in \N\right\} $ are contained in $J\left(G\right)$, we have  $\dim_{H}\left(J\left(G\right)\right)=\dim_{B}\left(J\left(G\right)\right)=2$. 

Next, we show that for each $t>1$ there exist $d\ge2$ and $\left(\alpha_{j}\right)_{j=2}^\infty$
such that $s\left(G\right)\le t$ (thus $G$ depends on $t$). In order to show it, let $x\in J(G)$. Since the spherical metric and the
Euclidean metric are equivalent on  $J\left(G\right)$,
there  exists   a constant $C>0$ such that,  for each $d\ge2$, 
\[
\eta (x):=\sum_{y\in f_{1}^{-1}\left(x\right)}\|f_{1}'\left(y\right)\|^{-t}+\sum_{j=2}^{\infty}\sum_{y\in f_{j}^{-1}\left(x\right)}\|f_{j}'\left(y\right)\|^{-t}\le Cdd^{-t}+C\sum_{j=2}^{\infty}\alpha_{j}^{-t}.
\]
Choose  $d\ge2$ and the sequence  $\left(\alpha_{j}\right)_{j=2}^\infty$ sufficiently
large such that  $\sup _{x\in J(G)}\eta (x)<1$.  Hence, we have $s\left(G\right)\le t$. Moreover, by Theorem \ref{i:thm:bowen-for-prejulia}, we have $s(G)=t(\N)=
\dim _{H}\Jpre(G).$ We have thus shown that $2=\dim_{H}\left(J\left(G\right)\right)=\dim_{B}\left(J\left(G\right)\right)>s\left(G\right)=t\left(\N\right)=\dim_{H}\left(\Jpre\left(G\right)\right)$. 
\end{example}

We give some  examples to which we can apply Theorems \ref{i:thm:inducing-prejulia} and \ref{t:c:B1main}. Recall that the filled-in Julia set $K(g)$ of a polynomial $g$ is defined by \vspace{-2mm} \[
K\left(g\right):=\left\{ z\in\C:\left(g^{n}\left(z\right)\right)_{n\in\N}\mbox{ is bounded}\right\} .
\]
\begin{example} \label{inducing-example} Let  $I:= \{ 1,\dots,m+\ell \}$, $m,\ell \in \N$,  and let  $\{ h_i : i\in I \}$ be polynomials of degree at least two. Set  $I_2:=\{m+1, \dots , m+\ell \}$ and  suppose that $h_i $ is hyperbolic, for  each $i\in I_2$. Suppose that $K(h_i)$ is connected, for each $i\in I$, and that $K(h_i)\cap K(h_j)=\emptyset$, for all $i,j\in I$ with $i\neq j$.  Let $R>0$ such that $K(h_i)\subset B(0, R)$, for each $i\in I$. Then there exists $N\in \N$ such that $h_i^{-N}(\overline{B(0,R)}) \subset B(0,R)$, for each $i\in I$, and that $\{h_i^N : i\in I \}$ satisfies the open set condition with open set $B(0,R)$. Set $f_i:=h_i^N$ and consider the rational semigroup $G:=\langle f_i : i\in I \rangle $.   Set $A:=\Chat \setminus B(0,R)$ and observe that $A$ is $G$-forward invariant.  Since $J(G)=\bigcup_{i\in I}f_i^{-1}(J(G))\subset \bigcup_{i\in I}f_i^{-1}(\overline{B(0,R)})\subset B(0,R)$, we have that $A\subset F(G)$. For all $i,j\in I$ with  $i\neq j$ we have $f_j(K(f_i))\subset A$, because $f_j (K(f_j))\subset B(0,R)$ and $f_i^{-1}(B(0,R)) \cap f_j^{-1}(B(0,R))= \emptyset  $.  To see that $G$ satisfies the assumption (1) in Theorem \ref{i:thm:inducing-prejulia}, we set $L:=A \cup P(G_2)$.   $L$ is $G$-forward invariant because $g(P(G_2)) \subset g( \bigcup_{i\in I_2}P(f_i)\cup A)\subset  \bigcup_{i\in I_2}P(f_i) \cup A \subset L$. To prove that $P(G_2)\subset  F(G)$, it suffices to prove that $P(f_i)\subset F(G)$, for each $i\in I_2$. We observe that $P(f_i)\setminus \{ \infty \}\subset \Int(K(f_i))$ because $K(f_i)$ is connected and  $f_i$ is hyperbolic, for each $i\in I_2$.  Since $g(K(f_i)) \subset A\cup  K(f_i)$,  for each $g\in G$, it follows from Montel's Theorem that $P(f_i)\subset F(G)$.  We have thus shown that $G$ satisfies the assumptions of Theorem \ref{i:thm:inducing-prejulia}. Hence, we have $\dim_{H}\left(J\left(G\right)\right)=\max\left\{ s\left(G\right), \max_{i\in \{ 1,\dots , m\}} \{\dim_{H}\left(J(f_i)\right)\} \right\}$. If additionally, each $f_i$ is a non-recurrent critical point map, then  $\dim_{H}\left(J\left(G\right)\right)=s(G)$ (see Figure \ref{fig:2cauli1circle1b} for an example).  Using this formula, the numerical value of the Hausdorff dimension may be computed but we do not do that in this paper.  Also, using this formula the parameter-dependence of $\dim _{H}(J(G))$ may be investigated which is a further interesting task. \end{example}
 \vspace{0mm}
\begin{figure}[h]
\includegraphics[scale=0.7]{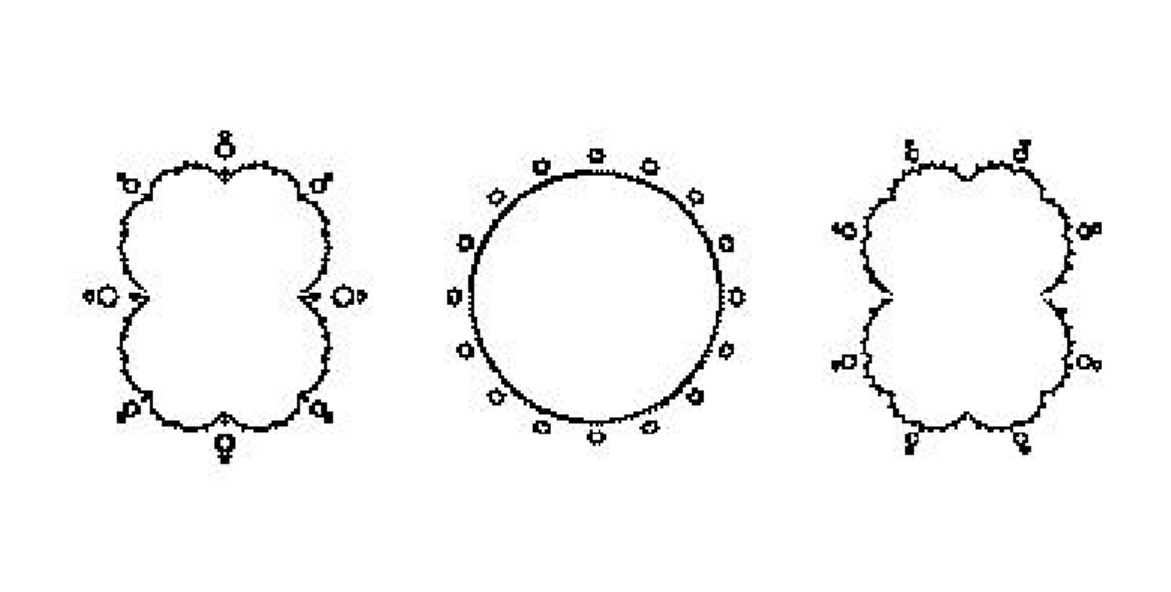}
 \vspace{-12mm}\caption{The Julia set of a $3$-generator polynomial semigroup $G$. $G=\left\langle f_1, f_2, f_3\right\rangle $ is given by  $f_i =h_i^3$, $i\in \{ 1,2,3\}$, where $h_1(z)=(z+3)^2+0.25 - 3$, $h_2(z)=z^2$ and $h_3(z)=(z-3)^2+0.25 + 3$.  $f_1$ and $f_3$ are not hyperbolic, but they are non-recurrent critical point maps. We have $\dim_{H}\left(J\left(G\right)\right)=s\left(G\right)$.}
\label{fig:2cauli1circle1b}
\end{figure}

\begin{defn}
[PB-D] We say that $G=\left\langle f_{1},f_{2}\right\rangle $
satisfies PB-D, if $f_{1}$ and $f_{2}$ are polynomials of
degree at least two, such that each of the following holds. 

\begin{enumerate}
\item $P(G)\setminus\left\{ \infty\right\} $
is a bounded subset of $\C$.
\item $J(G)$ is disconnected.
\end{enumerate}
\end{defn}

\begin{lem} \label{PBD-has-SOSC} If $G=\left\langle f_{1},f_{2}\right\rangle $ satisfies PB-D, then it also satisfies the assumptions of Theorem \ref{t:c:B1main}  (by renumbering $f_{1}$ and $f_{2}$
if necessary), and the  open set condition is satisfied.\end{lem}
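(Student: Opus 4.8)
The plan is to verify the five hypotheses of Theorem~\ref{t:c:B1main} for a semigroup $G=\langle f_1,f_2\rangle$ satisfying PB-D, after possibly interchanging the roles of $f_1$ and $f_2$. The starting point is the structure theory of polynomial semigroups $G$ with $P(G)\setminus\{\infty\}$ bounded and $J(G)$ disconnected, developed in the cited works of the second author (in particular \cite{twogenerator, Sumi11coliseum, s10} and related papers). The key structural fact from that theory is that when $J(\langle f_1,f_2\rangle)$ is disconnected but the planar postcritical set is bounded, there is a strict nesting of the filled-in Julia sets: after renumbering one has $K(f_1)\subset \Int K(f_2)$, and in fact $J(f_1)$ lies in a bounded component of the Fatou set of $G$ while the ``outer'' dynamics is governed by $f_2$. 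So the first step is to invoke this classification to obtain condition~(\ref{i:enu:Kg1-in-interiorKg2}), namely $K(f_1)\subset\Int K(f_2)$; condition~(\ref{i:enu:postcritically-bounded}) is immediate from the definition of PB-D.

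Next I would address the open set condition~(\ref{i:enu:OSC}) with the prescribed open set $U:=(\Int K(f_2))\setminus K(f_1)$. Here one checks that $f_1^{-1}(U)$ and $f_2^{-1}(U)$ are disjoint subsets of $U$. This again comes out of the nesting structure: $f_2^{-1}(K(f_2))=K(f_2)$ and $f_2^{-1}(\Int K(f_2))=\Int K(f_2)$, so $f_2^{-1}(U)\subset \Int K(f_2)$, and because $P(G)\setminus\{\infty\}$ is bounded one controls where the preimages of $K(f_1)$ sit; the disjointness is exactly the statement that the two inverse branches map $U$ into disjoint pieces, which is what the disconnectedness of $J(G)$ and the nesting provide. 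Conditions~(\ref{i:enu:disjoint-Jg1-Jg2}) and~(\ref{i:enu:.CVg2-in-interiorKg1}) — that $f_2^{-1}(J(f_1))\cap J(f_1)=\emptyset$ and that $\CV(f_2)\setminus\{\infty\}\subset\Int K(f_1)$ — should likewise follow from the same structural picture: the critical values of $f_2$ (other than $\infty$) are postcritical points, hence inside the innermost piece $\Int K(f_1)$, and $f_2^{-1}(J(f_1))$ lies strictly outside $K(f_1)$ hence cannot meet $J(f_1)\subset\partial K(f_1)$.

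The main obstacle I anticipate is establishing the precise nesting $K(f_1)\subset\Int K(f_2)$ together with $\CV(f_2)\setminus\{\infty\}\subset\Int K(f_1)$ from the bare hypotheses ``$P(G)\setminus\{\infty\}$ bounded'' and ``$J(G)$ disconnected.'' This is not a routine computation but relies on the fairly delicate topology of Julia sets of postcritically bounded polynomial semigroups — in particular the fact that $\hat{\mathbb C}\setminus J(G)$ has a unique unbounded component and that the bounded components are linearly ordered by inclusion of their ``hulls'', with the generators acting in a way compatible with this order. I would quote the relevant lemmas from \cite{twogenerator, Sumi11coliseum} rather than reprove them, and then the renumbering in the statement is precisely the choice that puts $f_1$ on the inside. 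Once the nesting is in hand, verifying~(\ref{i:enu:OSC}) directly is then a short topological argument, and the remaining conditions~(\ref{i:enu:disjoint-Jg1-Jg2}) and~(\ref{i:enu:.CVg2-in-interiorKg1}) are immediate consequences.
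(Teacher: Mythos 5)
Your proposal is correct and follows essentially the same route as the paper: the paper's proof of this lemma is a one-line citation to \cite{twogenerator} and \cite[Proof of Theorem 2.11, Claim 2]{Sumi11coliseum}, i.e.\ it delegates exactly the structural facts (nesting of filled-in Julia sets, location of the postcritical set, and the open set condition with $(\Int K(f_2))\setminus K(f_1)$) that you propose to quote from those same references. Your additional sketch of how the five conditions follow from that structure theory is consistent with the picture the paper itself develops after the definition of PB-OSC.
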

\begin{proof}
\cite{twogenerator} or \cite[Proof of Theorem 2.11, Claim 2]{Sumi11coliseum}.\end{proof}
\begin{lem}\label{PBD-class}
Let $f_{1}$ be a polynomial of degree at least two with $\Int\left(K\left(f_{1}\right)\right)\neq\emptyset$
such that $K\left(f_{1}\right)$ is connected. Let $b\in\Int\left(K\left(f_{1}\right)\right)$.
Let $d\in\N$ with $d\ge2$ such that $\left(\deg\left(f_{1}\right),d\right)\neq\left(2,2\right)$.
Then, there exists a  number  $c>0$ such that for each $a\in\C$ with $0<\left|a\right|<c$, setting $f_{2}:=a\left(z-b\right)^{d}+b$, the polynomial  semigroup $G=\left\langle f_{1},f_{2}\right\rangle $
satisfies PB-D.\end{lem}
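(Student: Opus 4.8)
The plan is to construct $f_2$ explicitly as a small perturbation of a map with superattracting fixed point at $b$, and then verify the two defining conditions of PB-D. First I would fix $R>0$ large enough that $K(f_1)\subset B(0,R)$ and $f_1^{-1}(\overline{B(0,R)})\subset B(0,R)$; such an $R$ exists since $\infty$ is a superattracting fixed point of $f_1$. For $f_2(z)=a(z-b)^d+b$ with $|a|$ small, the filled-in Julia set $K(f_2)$ is a small topological disk around $b$; more precisely, by continuity of filled-in Julia sets under this kind of perturbation (the multiplier at the fixed point $b$ is $0$, so $b$ stays superattracting and the escape radius for $f_2$ shrinks like a negative power of $|a|$), for all sufficiently small $|a|$ we have $K(f_2)\subset \Int(K(f_1))$, and in particular $K(f_2)\subset B(0,R)$ with $f_2^{-1}(\overline{B(0,R)})\subset B(0,R)$.

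Next I would check condition (1), that $P(G)\setminus\{\infty\}$ is bounded. Since $f_1$ has connected filled-in Julia set, $P(f_1)\setminus\{\infty\}\subset K(f_1)$, which is bounded; and $\CV(f_2)\setminus\{\infty\}=\{b\}\subset \Int(K(f_1))$. Because both $K(f_1)$ and $K(f_2)$ are contained in $\overline{B(0,R)}$ and are forward invariant (for their respective maps) inside $\overline{B(0,R)}$, and both $f_1,f_2$ map $\overline{B(0,R)}$ into itself, the set $\overline{B(0,R)}$ is $G$-forward invariant and contains all critical values of $f_1$ and $f_2$. Hence $\bigcup_{g\in G\cup\{\id\}} g\big(\CV(f_1)\cup\CV(f_2)\big)\subset \overline{B(0,R)}$, so $P(G)\setminus\{\infty\}$ is bounded, using the description of $P(G)$ recorded in the remark after Definition \ref{nicelyexpanding}.

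The main obstacle is condition (2): that $J(G)$ is disconnected. Here I would invoke the known structure theory for postcritically bounded polynomial semigroups. The idea is that $K(f_2)$ is a small disk strictly inside $\Int(K(f_1))$, while $K(f_1)$ is ``large'', and one shows the two backward images $f_1^{-1}(\widehat K)$ and $f_2^{-1}(\widehat K)$ of a suitable compact set $\widehat K\supset J(G)$ are disjoint and neither contains the other, forcing $J(G)=f_1^{-1}(J(G))\cup f_2^{-1}(J(G))$ to split; alternatively, and more robustly, one produces $f_2$ so that $f_2(J(f_1))$ lies in a bounded component of $\C\setminus J(f_1)$ disjoint from $J(f_1)$, which by the results of \cite{twogenerator} (or \cite{s10}) on the nerve/tree structure of $\{J(\langle f_1,f_2\rangle)\}$ implies $J(G)$ is disconnected. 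The hypothesis $(\deg(f_1),d)\neq(2,2)$ enters precisely at this point: it is needed to guarantee enough room — in the excluded case both Julia sets would be ``too simple'' and the semigroup Julia set can fail to disconnect — and this is exactly the degree restriction appearing in the cited structure theorems for PB-D semigroups. I would choose $c>0$ to be the minimum of the finitely many smallness thresholds extracted above (escape-radius bound, $K(f_2)\subset\Int K(f_1)$, disjointness/nesting of backward images), completing the construction.
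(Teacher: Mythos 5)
Your construction has the geometry backwards, and this breaks several of your intermediate claims. For $f_{2}(z)=a(z-b)^{d}+b$, conjugating by $w=z-b$ gives $w\mapsto aw^{d}$, whose filled-in Julia set is the closed disc of radius $|a|^{-1/(d-1)}$ about $0$; so as $|a|\to 0$ the set $K(f_{2})$ is a \emph{huge} disc about $b$, not a small one, and one gets $K(f_{1})\subset\Int K(f_{2})$ rather than $K(f_{2})\subset\Int K(f_{1})$ (this is exactly condition (2) of PB-OSC after the renumbering in Lemma \ref{PBD-has-SOSC}). Consequently your claims that $K(f_{2})\subset B(0,R)$ and that $\overline{B(0,R)}$ is $G$-forward invariant are false; moreover, with your choice of $R$ (namely $f_{1}^{-1}(\overline{B(0,R)})\subset B(0,R)$) one has $|f_{1}(z)|>R$ for $|z|\ge R$, so $f_{1}$ does not map $\overline{B(0,R)}$ into itself at all. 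The boundedness of $P(G)\setminus\{\infty\}$ should instead be obtained from a bounded set that really is forward invariant, e.g.\ $K(f_{1})$ itself: for $|a|$ small, $f_{2}$ maps a large ball containing $K(f_{1})$ into a tiny neighbourhood of $b\in\Int K(f_{1})$, so $K(f_{1})$ is invariant under both generators and contains all finite critical values.

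The more serious gap is the disconnectedness of $J(G)$, which is the whole point of the lemma. Your ``robust'' criterion --- that $f_{2}(J(f_{1}))$ lying in a bounded component of $\C\setminus J(f_{1})$ forces $J(G)$ to be disconnected --- is not a theorem and cannot be one: it holds just as well in the excluded case $\deg(f_{1})=d=2$, where by Sumi's results on two-generator postcritically bounded polynomial semigroups the Julia set $J(G)$ is connected. So your argument never genuinely uses the hypothesis $(\deg(f_{1}),d)\neq(2,2)$, and the vague appeal to the nerve/tree structure in \cite{twogenerator} does not identify a statement that closes this gap. The paper's proof is simply a citation of \cite[Proposition 2.40]{MR2773173} (together with Lemma \ref{PBD-has-SOSC}), which is precisely this construction with the degree restriction built in; if you want a self-contained argument you must either invoke that proposition or redo its separation estimates (comparing the scales $|a|^{-1/(\deg(f_{1})(d-1))}$, $|a|^{-1/d}$ and $|a|^{-1/(d-1)}$ of $f_{1}^{-1}(J(f_{2}))$, $f_{2}^{-1}(J(f_{1}))$ and $J(f_{2})$, where the inequality $(\deg(f_{1})-1)(d-1)>1$ is exactly what makes the relevant preimages disjoint), rather than the sketch you give.
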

\begin{proof}
\cite[Proposition 2.40]{MR2773173} and Lemma \ref{PBD-has-SOSC}.\end{proof}
\begin{rem}
If $f_{1}$ is a non-recurrent critical point map or a Collet-Eckmann map, 
then $\dim_{H}\left(J\left(f_{1}\right)\right)\le s\left(\left\langle f_{1}\right\rangle \right)$
by (\cite{MR1279476}) and (\cite{MR1407501}). Thus, if in addition to this assumption, $G=\left\langle f_{1},f_{2}\right\rangle$ satisfies the assumptions of Theorem \ref{t:c:B1main}, then
 $\dim_{H}\left(J\left(G\right)\right)=s\left(G\right)$. 
\end{rem}

\begin{figure}[h]
\includegraphics[scale=0.45]{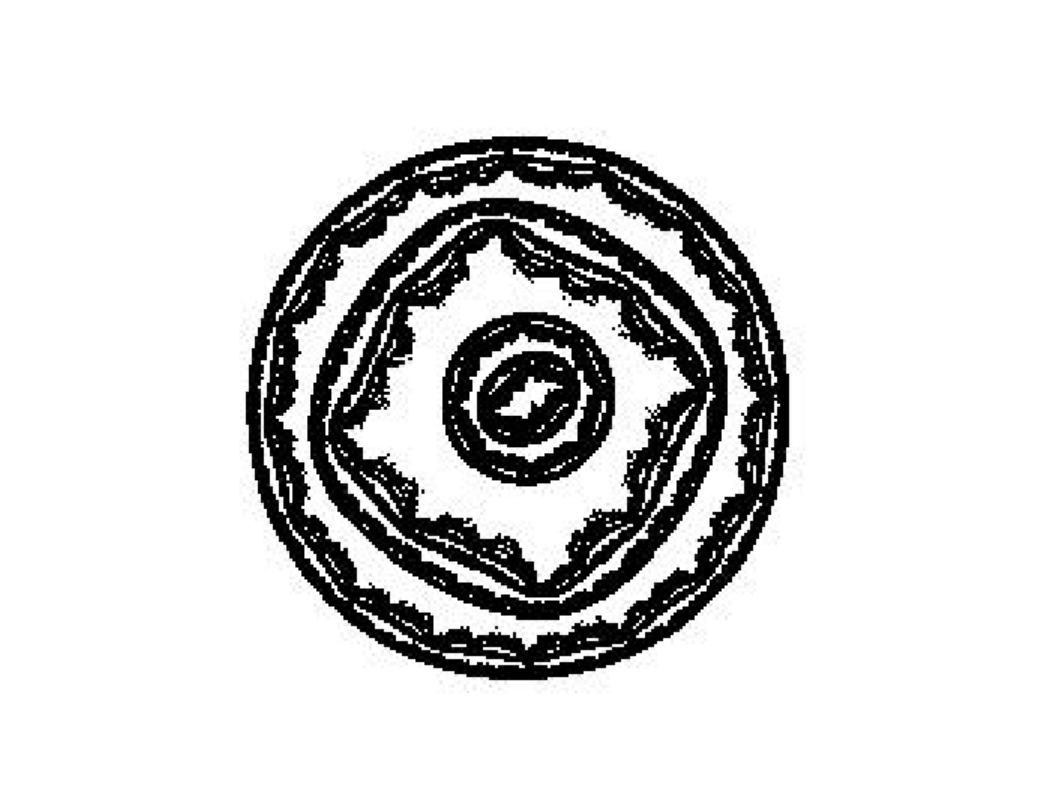}
 \vspace{-6mm}\caption{The Julia set of $G=\left\langle f_1, f_2 \right\rangle $,  where $f_1(z)=z^{2}+\e^{2 \pi i \sqrt[3]{0.25}}z$, $f_2= h_2^2$ and  $h_2(z)=0.1z^2$.   $G$ satisfies PB-D. $f_1$ has a Siegel disc with center in $0$. We have $\dim_{H}\left(J\left(G\right)\right)=\max \{ s\left(G\right), \dim _{H}(J(f_{1}))\}$.  }
\label{fig:Siegelcircle6}
\end{figure}

\begin{figure}[h]
\includegraphics[scale=0.25]{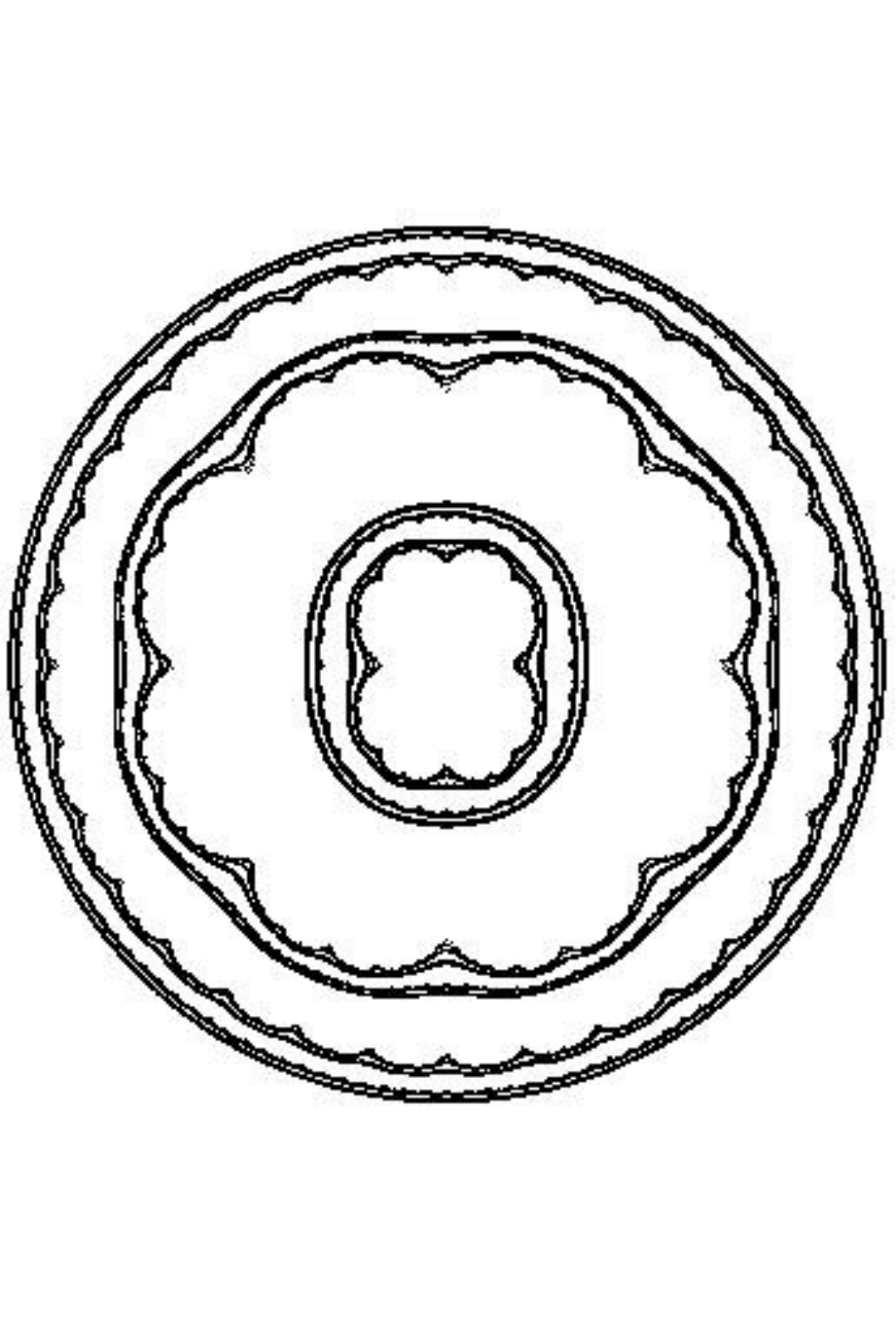}
\caption{The Julia set of $G=\langle f_1, f_2\rangle$ satisfying PB-D, where $f_1$ is a non-hyperbolic map (but is a non-recurrent critical point map).  $J\left(G\right)$ is disconnected. The
cone condition is not satisfied. We have $\dim_{H}\left(J\left(G\right)\right)=s\left(G\right)=s(\langle f_{2}\circ f_{1}^{r}: r\in \N\cup \{ 0\}\rangle )$. }
\label{fig:caulicircle1}
\end{figure}
 \vspace{-5mm}
\begin{figure}[h]
\includegraphics[scale=0.3]{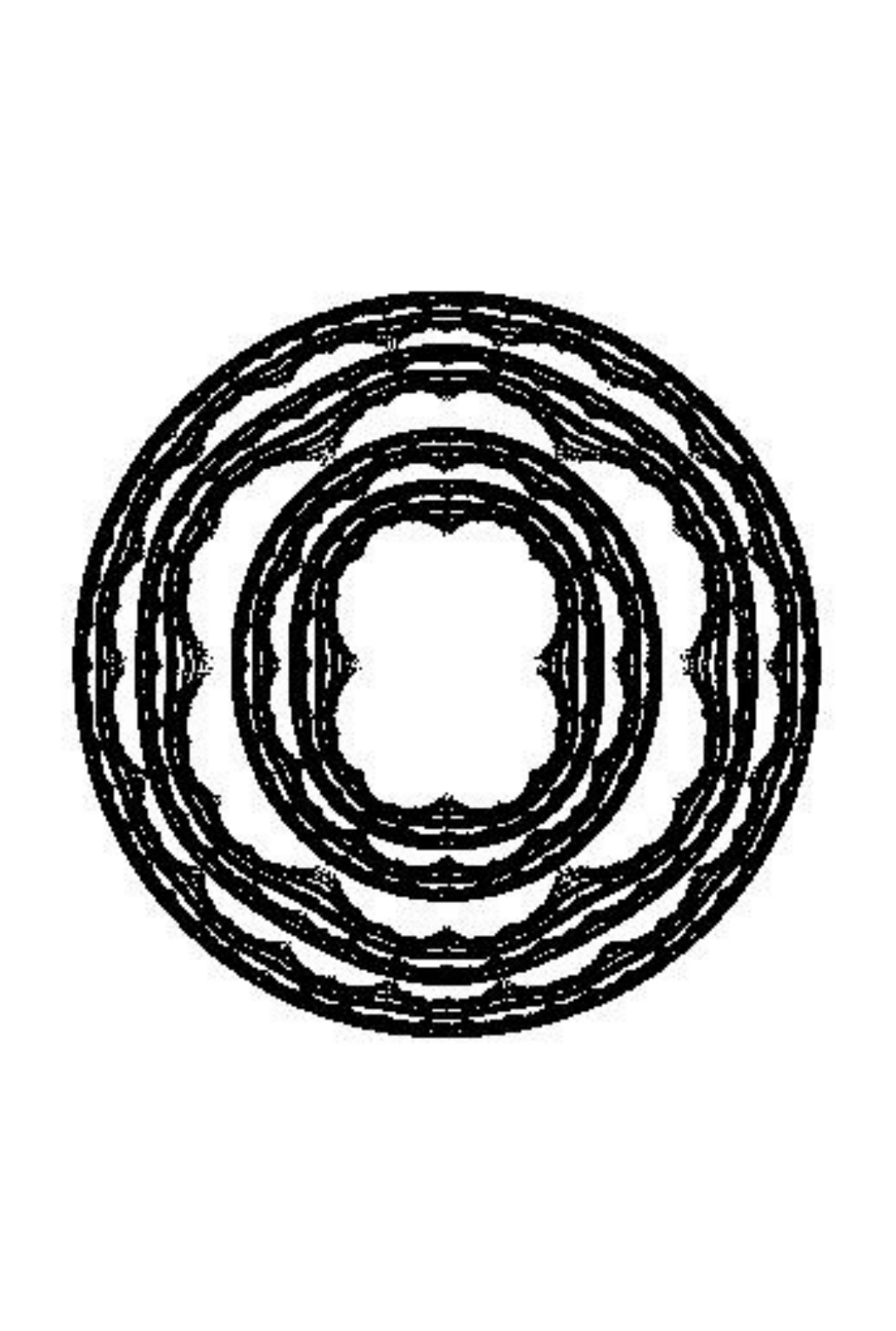}
 \vspace{-8mm}\caption{The Julia set of $G=\left\langle z^{2}+\frac{1}{4},az^{3}\right\rangle $, where $a\in \mathbb{C}$ is a complex number. $G$ satisfies the assumptions of Theorem \ref{t:c:B1main} and  $J\left(G\right)$ is connected.
The cone condition is not satisfied. We have $\dim_{H}\left(J\left(G\right)\right)=s\left(G\right)=s(\langle f_{2}\circ f_{1}^{r}: r\in \N\cup \{ 0\}\rangle )$.}
\label{fig:caulicircle2}
\end{figure}

\begin{rem*}
Regarding Figure \ref{fig:caulicircle2}, we remark there exists $a\in \mathbb{C}$ such that for
$f_{1}\left(z\right):=z^{2}+\frac{1}{4}$ and $f_{2}\left(z\right):=az^{3}$,
we have that $f_{1}^{-1}\left(J\left(G\right)\right)\cap f_{2}^{-1}\left(J\left(G\right)\right)\neq\emptyset$ (see \cite{twogenerator}).
Then, it follows from  \cite[Theorem 1.5, Theorem 1.7]{MR2553369}
that $J\left(G\right)$ is connected. Like this, for each polynomial $f_1$ of degree at least two with $\Int K(f_1) \neq \emptyset$ such that $K(f_1)$ is connected, there are many examples of $G=\langle f_1, f_2 \rangle$ satisfying the assumptions of Theorem \ref{t:c:B1main} for which  $J\left(G\right)$ is
connected (see \cite[Theorem 2.27 and its proof]{twogenerator}).
\end{rem*}

\section{Preliminaries on rational semigroups and skew products}
\label{Preliminaries}

In this section, we collect some of the basic results on rational
semigroups and the associated skew products.

\begin{defn}
Let $G$ be a rational semigroup and let  $z\in\Chat$. The backward orbit
$G^{-}\left(z\right)$ of $z$ and the set of exceptional points $E\left(G\right)$
are defined by $G^{-}\left(z\right):=\bigcup_{g\in G}g^{-1}\left(z\right)$
and $E\left(G\right):=\left\{ z\in\Chat:\card\left(G^{-}\left(z\right)\right)<\infty\right\} $.  We say that  a set $A\subset \Chat$ is $G$-backward invariant, if $g^{-1}(A)\subset A$, for each $g\in G$.
\end{defn}
\vspace{-3pt}We refer to \cite{MR1397693,MR1767945} for the fundamental properties of rational semigroups and their Julia sets.  

We review some of the basics of the skew product
associated to rational semigroups introduced in \cite{MR1767945}.
We will always assume that $I$ is a topological space. We denote by ${\mathcal S}$ the set of all non-constant polynomial maps on $\Chat $ endowed with 
the relative topology inherited from $\Rat$. Note that,  for each $d\in \mathbb{N}$,  the subspace $\Rat _{d}:=\{ f\in \Rat : \deg (f)=d\} $ of $\Rat $ is a connected component  
of $\Rat $,  and $\Rat _{d}$  is an open subset of $\Rat $.  Similarly, 
the subspace ${\mathcal S}_{d}:=\{ f\in {\mathcal S}: \deg (f)=d\}$ of 
${\mathcal S}$ is a connected component of ${\mathcal S}$,  and ${\mathcal S}_{d}$ is an open subset of ${\mathcal S}$.  
 A sequence $\{ f_{n}\} $ tends to $f$ in ${\mathcal S}$ if and only if 
there exists a number $N\in \mathbb{N}$ such that $\deg (f_{n})=\deg (f)$, 
for each $n\ge N$,   and if the coefficients of $f_{n}$ converge to those of $f$ appropriately. 
For the topology of $\Rat$ and ${\mathcal S}$, see \cite{MR1128089}.

Let $\pi_{1}:I^{\N}\times\Chat\rightarrow I^{\N}$ and
$\pi_{\Chat}:I^{\N}\times\Chat\rightarrow\Chat$ denote the canonical
projections. For each $F\subset I$, we also set 
\[J^{F}:=\bigcup_{\omega\in F^{\N}}J^{\omega} \mbox{ and }
J_{F}:=\bigcup_{\omega\in F^{\N}}J_{\omega}.
\] 
For a finite word $\omega =(\omega _{1},\omega _{2},\ldots ,\omega _{n})\in 
I^{n}$ and an infinite word $\alpha =(\alpha _{1},\alpha _{2},\ldots )\in I^{\N}$, we set 
$\omega \alpha =(\omega _{1},\ldots, \omega _{n}, \alpha _{1},\alpha _{2},\ldots )\in I^{\N}.$

\begin{prop} \label{prop:skewproduct-facts} Let $\left(f_{i}\right)_{i\in I}\in C\!\left(I,\Rat\right)$ and let
$\tilde{f}:I^{\N}\times\hat{\C}\rightarrow I^{\N}\times\hat{\C}$
be the skew product associated to the generator system $\left\{ f_{i}:i\in I\right\} $. 
Then we have the following. 
\begin{enumerate}
\item \label{enu:ratsemi-facts-1}$\tilde{f}\left(J^{\omega}\right)=J^{\sigma\omega}$
and $(\tilde{f}_{|\pi_{1}^{-1}\left(\omega\right)})^{-1}\left(J^{\sigma\omega}\right)=J^{\omega}$,
for each $\omega\in I^{\N}$. 
\item \label{enu:ratsemi-facts-2}$\tilde{f}\left(J\left(\tilde{f}\right)\right)=J\left(\tilde{f}\right),$
$\tilde{f}^{-1}\left(J\left(\tilde{f}\right)\right)=J\left(\tilde{f}\right)$,
$\tilde{f}\left(F\left(\tilde{f}\right)\right)=F\left(\tilde{f}\right),$
$\tilde{f}^{-1}\left(F\left(\tilde{f}\right)\right)=F\left(\tilde{f}\right)$. 
\selectlanguage{english}%
\item \textup{\label{enu:ratsemi-facts-3}Suppose that $I$ is a finite
set endowed with the  discrete topology. Let $G=\left\langle f_{i}:i\in I\right\rangle $
and suppose that $\card\left(J\left(G\right)\right)\ge3$. Then we
have $J\left(\tilde{f}\right)=\bigcap_{n\in\N_{0}}\tilde{f}^{-n}\left(I^{\N}\times J\left(G\right)\right)$
and $\pi_{\Chat}\left(J\left(\tilde{f}\right)\right)=J\left(G\right)$. Here, $\N_0 := \N \cup \{ 0\}$.}
\end{enumerate}
\end{prop}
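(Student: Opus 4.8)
Assertions \eqref{enu:ratsemi-facts-1} and \eqref{enu:ratsemi-facts-2} are essentially \cite[Lemma 2.3]{MR1767945}; I would reconstruct them as follows. The plan is to establish first the fibrewise identity $J_{\omega}=f_{\omega_{1}}^{-1}(J_{\sigma\omega})$ for every $\omega\in I^{\N}$. This rests on the elementary fact that, for a non-constant rational map $h$ and a family $\mathcal{F}$ of holomorphic self-maps of $\Chat$, the family $\mathcal{F}\circ h$ is normal in a neighbourhood of $z$ if and only if $\mathcal{F}$ is normal in a neighbourhood of $h(z)$ --- the substantive direction using that $h$ is open and locally proper, so that normality of $\mathcal{F}\circ h$ near $z$ is transported to normality of $\mathcal{F}$ near $h(z)$ through local holomorphic inverse branches of $h$. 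Applying this with $h=f_{\omega_{1}}$ and $\mathcal{F}=\{f_{\omega_{n}}\circ\cdots\circ f_{\omega_{2}}:n\ge 2\}$ (whether or not one prepends $f_{\omega_{1}}$ does not affect normality) yields $F_{\omega}=f_{\omega_{1}}^{-1}(F_{\sigma\omega})$, hence $J_{\omega}=f_{\omega_{1}}^{-1}(J_{\sigma\omega})$. Since $f_{\omega_{1}}$ is surjective, $\tilde f(J^{\omega})=\{\sigma\omega\}\times f_{\omega_{1}}(f_{\omega_{1}}^{-1}(J_{\sigma\omega}))=J^{\sigma\omega}$, and $(\tilde f_{|\pi_{1}^{-1}(\omega)})^{-1}(J^{\sigma\omega})=\{\omega\}\times f_{\omega_{1}}^{-1}(J_{\sigma\omega})=J^{\omega}$, which is \eqref{enu:ratsemi-facts-1}.

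For \eqref{enu:ratsemi-facts-2} I would first pass from single fibres to the whole union: since $\tilde f(\omega,z)\in\bigcup_{\omega'}J^{\omega'}$ iff $f_{\omega_{1}}(z)\in J_{\sigma\omega}$ iff $z\in J_{\omega}$, and since $\sigma$ is surjective, one gets $\tilde f^{-1}(\bigcup_{\omega'}J^{\omega'})=\bigcup_{\omega'}J^{\omega'}=\tilde f(\bigcup_{\omega'}J^{\omega'})$. Next I would observe that $\tilde f$ is a continuous open surjection of $I^{\N}\times\Chat$: continuity and surjectivity are immediate from $(f_{i})_{i}\in C(I,\Rat)$ together with surjectivity of $\sigma$ and of each $f_{i}$, and openness follows because $\tilde f$ carries a basic open set to an open set (surjectivity of $\sigma$ on cylinders, openness of each $f_{i}$). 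For a continuous open surjection $T$ with $T^{-1}(A)=A$ one has $T^{-1}(\overline A)=\overline A$ --- indeed $T^{-1}(\overline A)$ is closed and contains $A$, hence contains $\overline A$, while openness of $T$ gives the reverse inclusion --- and then $T(\overline A)=T(T^{-1}(\overline A))=\overline A$. With $T=\tilde f$ and $A=\bigcup_{\omega'}J^{\omega'}$ this gives $\tilde f^{-1}(J(\tilde f))=J(\tilde f)=\tilde f(J(\tilde f))$, and the statements for $F(\tilde f)=(I^{\N}\times\Chat)\setminus J(\tilde f)$ follow by complementation and surjectivity of $\tilde f$.

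For \eqref{enu:ratsemi-facts-3} I would first record that $J_{\omega}\subset J(G)$ for every $\omega$ (normality of $G$ near $z$ forces normality of the subfamily $\{f_{\omega_{n}}\circ\cdots\circ f_{\omega_{1}}\}_{n}$), so $J(\tilde f)=\overline{\bigcup_{\omega}J^{\omega}}\subset I^{\N}\times J(G)$, the right-hand side being closed; combined with $\tilde f^{n}(J(\tilde f))=J(\tilde f)$ from \eqref{enu:ratsemi-facts-2}, this gives $J(\tilde f)\subset\tilde f^{-n}(I^{\N}\times J(G))$ for all $n\in\N_{0}$, i.e.\ the inclusion ``$\subset$''. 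For the reverse, I would take $(\omega,z)$ with $f_{\omega|_{n}}(z)\in J(G)$ for all $n\in\N_{0}$ and show $(\omega,z)\in\overline{\bigcup_{\omega'}J^{\omega'}}$: given a basic neighbourhood $C_{k}\times U$ of $(\omega,z)$, where $C_{k}:=\{\alpha\in I^{\N}:\alpha_{j}=\omega_{j},\ 1\le j\le k\}$, the map $f_{\omega|_{k}}$ is open, so $f_{\omega|_{k}}(U)$ is open and meets $J(G)$ (it contains $f_{\omega|_{k}}(z)$); as $\card(J(G))\ge 3$, the repelling fixed points of elements of $G$ are dense in $J(G)$ (\cite{MR1397693}, \cite[Lemma 2.3]{MR1767945}), so there is a repelling fixed point $p\in f_{\omega|_{k}}(U)$ of some $g:=f_{\tau}\in G$, $\tau\in I^{m}$. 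Choosing $z'\in U$ with $f_{\omega|_{k}}(z')=p$ and writing $\tau^{\infty}:=\tau\tau\tau\cdots$, one has $p\in J(g)\subset J_{\tau^{\infty}}$ and, by iterating the identity from \eqref{enu:ratsemi-facts-1}, $J_{\omega|_{k}\tau^{\infty}}=f_{\omega|_{k}}^{-1}(J_{\tau^{\infty}})$; hence $z'\in J_{\omega|_{k}\tau^{\infty}}$ with $\omega|_{k}\tau^{\infty}\in C_{k}$, so $(C_{k}\times U)\cap J^{\omega|_{k}\tau^{\infty}}\neq\emptyset$ and $(\omega,z)\in J(\tilde f)$. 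Finally, $\pi_{\Chat}(J(\tilde f))\subset J(G)$ is immediate from $J(\tilde f)\subset I^{\N}\times J(G)$; conversely $\pi_{\Chat}(J(\tilde f))\supset\bigcup_{\omega}J_{\omega}=\Jpre(G)$, and since $I$ is finite, $I^{\N}$ is compact and $\pi_{\Chat}$ is a closed map, so $\pi_{\Chat}(J(\tilde f))$ is closed and thus contains $\overline{\Jpre(G)}=J(G)$ (again using $\card(J(G))\ge 3$).

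\textbf{Expected main difficulty.} The only step with genuine content is the reverse inclusion in \eqref{enu:ratsemi-facts-3} --- showing that every point whose whole forward orbit under $\tilde f$ remains over $J(G)$ lies in the closure of the fibrewise Julia sets. This is where the hypothesis $\card(J(G))\ge 3$ enters, through density of repelling cycles, together with the pullback of such a cycle along $f_{\omega|_{k}}$, which needs the openness of rational maps and the composition identity of \eqref{enu:ratsemi-facts-1}. Should one wish to prove \eqref{enu:ratsemi-facts-1} itself rather than cite \cite{MR1767945}, the remaining technical point is the transport of normality across the open, locally proper map $f_{\omega_{1}}$.
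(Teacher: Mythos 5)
Your proposal is correct. For assertions (\ref{enu:ratsemi-facts-1}) and (\ref{enu:ratsemi-facts-2}) it agrees in substance with the paper, which omits (\ref{enu:ratsemi-facts-1}) as straightforward and proves (\ref{enu:ratsemi-facts-2}) by a short neighbourhood argument resting on the same fibrewise identity $J_{\omega}=f_{\omega_{1}}^{-1}(J_{\sigma\omega})$; you prove that identity via transport of normality across $f_{\omega_{1}}$ and then handle the passage to closures by a clean general-topology lemma (a continuous open surjection $T$ with $T^{-1}(A)=A$ satisfies $T^{-1}(\overline{A})=\overline{A}=T(\overline{A})$), which gives all four equalities at once and is, if anything, tidier than the paper's compressed argument. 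One cosmetic caveat: at a critical point of $f_{\omega_{1}}$ there is no local inverse branch at the point itself, so the substantive direction of your normality-transport fact should be justified by the usual local-model/properness argument (the locally uniform limit of $g_{n_{k}}\circ h$ is constant on the fibres of $h$ near the critical point and hence descends to a limit of $g_{n_{k}}$ near $h(z)$); the fact itself is standard and true, so this is a phrasing issue, not a gap. For assertion (\ref{enu:ratsemi-facts-3}) your route genuinely differs from the paper, which simply cites \cite[Proposition 3.2 (b)]{MR1767945}: you give a self-contained proof, using forward invariance from (\ref{enu:ratsemi-facts-2}) and $J_{\omega}\subset J(G)$ for the inclusion $\subset$, and, for the reverse inclusion and the projection statement, the density of repelling fixed points in $J(G)$ (legitimate since $\card(J(G))\ge3$, and the same external input the paper itself invokes in the introduction) pulled back along $f_{\omega|_{k}}$ via the iterated identity from (\ref{enu:ratsemi-facts-1}), plus compactness of $I^{\N}$ to see that $\pi_{\Chat}(J(\tilde f))$ is closed. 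This buys independence from the cited reference at the cost of some length, and all steps check out.
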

\vspace{-6mm}
\begin{proof}
The proof of (\ref{enu:ratsemi-facts-1}) is straightforward and therefore
omitted. We give a proof of (\ref{enu:ratsemi-facts-2}), following
\cite[Lemma 2.4]{MR1827119}. The inclusion $\tilde{f}\left(J\left(\tilde{f}\right)\right)\subset J\left(\tilde{f}\right)$
is obvious. In order to prove $\tilde{f}^{-1}\left(J\left(\tilde{f}\right)\right)\subset J\left(\tilde{f}\right)$,
let $\left(\omega,x\right)\in\tilde{f}^{-1}\left(J\left(\tilde{f}\right)\right)$
be given. Clearly, $\left(\sigma\omega,f_{\omega_{1}}\left(x\right)\right)\in J\left(\tilde{f}\right)$.
For each neighborhood $U$ of $\sigma\omega$ and for each neighborhood 
$V$ of $f_{\omega_{1}}\left(x\right)$, there exists $\left(\alpha,y\right)\in U\times V$
such that $y\in J_{\alpha}$ and $f_{\omega_{1}}^{-1}\left(y\right)\subset J_{\omega_{1}\alpha}$.
Consequently, for each neighborhood $W$ of $\left(\omega,x\right)$,
we have $W\cap\left(\bigcup_{\rho\in I^{\N}}J^{\rho}\right)\neq\emptyset$.
The assertions on the $F\left(\tilde{f}\right)$ follow by taking
complements. The assertion in (\ref{enu:ratsemi-facts-3}) is proved
in \foreignlanguage{english}{(}\cite[Proposition 3.2 (b)]{MR1767945}\foreignlanguage{english}{). }
\end{proof}

\begin{rem*}Regarding Proposition \ref{prop:skewproduct-facts}  (\ref{enu:ratsemi-facts-3}), we remark that in general, we have $\Jpre\left(G\right)\subset\pi_{\Chat}\left(J\left(\tilde{f}\right)\right)\subset J\left(G\right)$. 
If $G$ is expanding,  then $\Jpre(G)=\pi _{\hat{\Bbb{C}}}(J(\tilde{f}))=
\pi _{\hat{\Bbb{C}}}(\cup _{\omega \in I^{\Bbb{N}}}J^{\omega })$ by Lemma \ref{lem:expanding-implies-skewproductjulia-is-closed}.
In particular, if $G$
is a finitely generated expanding rational semigroup, then $\Jpre\left(G\right)=J\left(G\right)$.
The inclusion $\cup _{\omega \in I^{\N}}J^{\omega }\subset J\left(\tilde{f}\right)$
can be strict, see \cite[Remark 2.8]{MR2736899} for an example. Theorem
\ref{thm:inducing-prejulia} provides examples of infinitely generated
rational semigroups for which the inclusion $\pi_{\Chat}\left(J\left(\tilde{f}\right)\right)\subset J\left(G\right)$
is strict. Namely, let $G=\langle f_{1},f_{2}\rangle $ be as in Lemma  \ref{PBD-has-SOSC}.
Then $G=\langle f_{1},f_{2}\rangle $ satisfies the assumptions of Theorem \ref{t:c:B1main} 
 (by renumbering $f_{1},f_{2}$ if necessary). For the infinitely generated rational semigroup
$H=\langle f_2 f_1^n : n\in \N \rangle $   we have
that $\emptyset \neq J(f_1) \subset J(G)\setminus \Jpre(H) = J(H)\setminus \Jpre(H)$ by Lemma \ref{PB-OSC-inducing}, 
Theorem \ref{thm:inducing-prejulia} (\ref{enu:inducingthm-prejuliaset-decomposition})
and Lemma  \ref{lem:juliasetof-H-equals-G}. Further, since $H$
is nicely expanding by Lemma \ref{lem:inducedsemigroup-is-nicelyexpanding},
we have  $\Jpre\left(H\right)=\pi_{\Chat}\left(J\left(\tilde{f}\right)\right)$
by Lemma \ref{lem:expanding-implies-skewproductjulia-is-closed},  where $\tilde{f}$ is the skew product 
associated to generator system $H_{0}=\{ f_{2}f_{1}^{r}: r\in \Bbb{N}\}$. 
Thus $\pi _{\hat{\Bbb{C}}}(J(\tilde{f}))\subsetneqq J(H)$. An example of a nicely expanding rational semigroup for which $\dim_H(\Jpre(G))<\dim_H(J(G))$ is given in Example \ref{ex:dimHdimB}. We remark that the pre-Julia set is a  continuous image of a Borel set. In particular,  $\Jpre\left(G\right)$ is a Suslin set and thus  universally measurable. For details on Suslin sets we refer to \cite[p.65--70]{MR0257325}. 
\end{rem*}

\section{Expanding Rational Semigroups}
\label{Expanding}
In this section, we  study the dynamics of expanding rational  semigroups.
For an expanding rational semigroup $G=\left\langle f_{i}:i\in I\right\rangle$ we have that  $J^{I}$ is a closed subset of  $I^{\N}\times\Chat$. 
\begin{lem}
\label{lem:expanding-implies-skewproductjulia-is-closed}Let $I$
be a topological space and let $\left(f_{i}\right)_{i\in I}\in C\!\left(I,\Rat\right)$.
If $G=\left\langle f_{i}:i\in I\right\rangle $ is expanding with
respect to $\left\{ f_{i}:i\in I\right\} $, then we have that $J\left(\tilde{f}\right)=J^{I}$.
In particular, we have that $\pi_{\Chat}\left(J\left(\tilde{f}\right)\right)=\Jpre\left(G\right)$. \end{lem}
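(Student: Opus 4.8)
The plan is to show $J(\tilde f)=J^{I}$, where we already know from Proposition~\ref{prop:skewproduct-facts} that $\bigcup_{\omega\in I^{\N}}J^{\omega}\subset J(\tilde f)$ and that $J(\tilde f)$ is closed; so the real content is the reverse inclusion, which amounts to showing that $J^{I}=\bigcup_{\omega\in I^{\N}}J^{\omega}$ is already closed, i.e.\ that adding closure points contributes nothing new. First I would set up the standard expansion estimate: fix $C>0$, $\lambda>1$ from the definition of expanding along fibers, and note that for $(\omega,z)\in J(\tilde f)$ and each $n$ one has $\Vert(f_{\omega|_{n}})'(z)\Vert\ge C\lambda^{n}$. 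The key consequence I want is a uniform bound on the size of the fiberwise Julia sets $J_{\omega}$ under pulling back: because the derivatives grow like $\lambda^{n}$ along $J(\tilde f)$, one obtains via a Koebe/distortion-type argument (valid once one passes to well-defined inverse branches, which one can arrange on small enough spherical balls avoiding the postcritical set locally, or more elementarily via an area/counting estimate) that the connected pieces of $f_{\omega|_{n}}^{-1}$ applied near $J(\tilde f)$ have spherical diameter decaying geometrically. This is the quantitative engine.

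Next I would prove that $J^{I}$ is closed directly. Take a sequence $(\omega^{(k)},z^{(k)})\in J^{I}$ with $z^{(k)}\in J_{\omega^{(k)}}$ and $(\omega^{(k)},z^{(k)})\to(\omega,z)$ in the product topology; I must show $z\in J_{\omega}$. Fix $n\in\N$. For all large $k$ we have $\omega^{(k)}|_{n}=\omega|_{n}=:(\tau_{1},\dots,\tau_{n})$ (discreteness of $I^{\N}$ on the first $n$ coordinates). By Proposition~\ref{prop:skewproduct-facts}(\ref{enu:ratsemi-facts-1}), $f_{\omega^{(k)}|_{n}}(z^{(k)})\in J_{\sigma^{n}\omega^{(k)}}\subset J(G)$ — here I should phrase things so as to use $\tilde f^{n}(\omega^{(k)},z^{(k)})\in J(\tilde f)$ — hence $w^{(k)}:=f_{(\tau_{1},\dots,\tau_{n})}(z^{(k)})$ satisfies $(\sigma^{n}\omega^{(k)},w^{(k)})\in J(\tilde f)$. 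Since $J(\tilde f)$ is closed and $\tilde f^{n}$ is continuous, passing to the limit gives $(\sigma^{n}\omega,f_{(\tau_{1},\dots,\tau_{n})}(z))\in J(\tilde f)$, i.e.\ $\tilde f^{n}(\omega,z)\in J(\tilde f)$. As $n$ was arbitrary, $(\omega,z)$ has its whole forward orbit in $J(\tilde f)$. Now I invoke the expansion estimate: along the orbit $\{\tilde f^{m}(\omega,z)\}_{m\ge 0}\subset J(\tilde f)$ the derivative of $f_{\omega|_{m}}$ at $z$ grows like $\lambda^{m}$, so the family $\{f_{\omega|_{m}}\}_{m}$ cannot be normal at $z$ (a normal limit would have to be constant by the blow-up, yet the spherical derivatives at $z$ are unbounded — one checks this is incompatible with equicontinuity). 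Therefore $z\in J_{\omega}$, proving $J^{I}$ closed, whence $J(\tilde f)=\overline{\bigcup_{\omega}J^{\omega}}=J^{I}$.

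For the ``in particular'': $\pi_{\Chat}(J(\tilde f))=\pi_{\Chat}(J^{I})=\bigcup_{\omega\in I^{\N}}J_{\omega}$, and by the remark in the introduction $\bigcup_{\omega\in I^{\N}}J_{\omega}=\Jpre(G)$; so $\pi_{\Chat}(J(\tilde f))=\Jpre(G)$.

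The main obstacle I anticipate is the passage from ``the forward orbit of $(\omega,z)$ stays in $J(\tilde f)$'' to ``$z\in J_{\omega}$'': one needs the expansion hypothesis to upgrade orbit-confinement to actual non-normality. The cleanest way is probably to argue that if $z\notin J_{\omega}$ then on a neighborhood $N$ of $z$ the iterates $f_{\omega|_{m}}$ form a normal family, so a subsequence converges locally uniformly; but uniform convergence on $N$ forces the spherical derivatives $\Vert(f_{\omega|_{m}})'(z)\Vert$ to stay bounded along that subsequence, contradicting $\Vert(f_{\omega|_{m}})'(z)\Vert\ge C\lambda^{m}$ (using $(\tilde f^{m})'(\omega,z)=(f_{\omega|_{m}})'(z)$ and $\tilde f^{m}(\omega,z)\in J(\tilde f)$). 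A minor subtlety is making sure all the inverse/composition bookkeeping with multiplicities and with the first-$n$-coordinates discreteness is handled correctly, but that is routine given the setup in Section~\ref{Preliminaries}.
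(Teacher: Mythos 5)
Your core mechanism is exactly the paper's proof, only buried at the end of a detour: the expansion hypothesis is an infimum over \emph{all} of $J(\tilde{f})$ (not merely over $\bigcup_{\omega}J^{\omega}$), so for \emph{any} $(\omega,z)\in J(\tilde{f})$ one has $\Vert (f_{\omega|_{n}})'(z)\Vert=\Vert(\tilde{f}^{n})'(\omega,z)\Vert\ge C\lambda^{n}\to\infty$, and since a locally uniformly convergent (spherical) subsequence of $(f_{\omega|_{n}})_{n}$ would keep the spherical derivatives at $z$ bounded, the family cannot be normal at $z$; hence $z\in J_{\omega}$ and $J(\tilde{f})\subset J^{I}$. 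That is the whole proof in the paper (three lines), and your final ``blow-up versus normality'' step is precisely it.

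The scaffolding you wrap around this step, however, does not work at the stated level of generality and is not needed. The lemma assumes only that $I$ is a topological space, so $I^{\N}\times\Chat$ need not be first countable; proving that $J^{I}$ is \emph{sequentially} closed therefore does not give $\overline{J^{I}}=J^{I}$, and your chain $J(\tilde{f})=\overline{\bigcup_{\omega}J^{\omega}}=J^{I}$ does not follow from the sequence argument. Likewise, the step ``for all large $k$, $\omega^{(k)}|_{n}=\omega|_{n}$'' uses discreteness of $I$, which is not among the hypotheses, and your first paragraph (Koebe distortion, inverse branches, geometric decay of pulled-back diameters) is never used afterwards and is not clearly justified here anyway, since this lemma assumes nothing about the postcritical set (that control only comes with nice expandingness). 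All of these issues vanish once you observe that your limit-point argument uses nothing but $(\omega,z)\in J(\tilde{f})$: drop the sequential closure framework, take an arbitrary $(\omega,z)\in J(\tilde{f})$, and apply the expansion bound at that point directly. The reverse inclusion $J^{I}\subset J(\tilde{f})$ is immediate from the definition of $J(\tilde{f})$, and the ``in particular'' statement $\pi_{\Chat}(J(\tilde{f}))=\bigcup_{\omega\in I^{\N}}J_{\omega}=\Jpre(G)$ is then handled as you say.
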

\begin{proof}
Let $\left(\omega,z\right)\in J\left(\tilde{f}\right)$.
Since $G$ is expanding with respect to $\left\{ f_{i}:i\in I\right\} $
we have $\lim_{n}\big\Vert\left(\tilde{f}^{n}\right)'\left(\omega,z\right)\big\Vert=\infty$. This gives $z\in J_\omega$ and hence, $\left(\omega,z\right)\in J^{\omega}$. 
\end{proof}

Our next aim is to prove the following characterization for a rational
semigroup to be nicely expanding. 
\begin{prop}
\label{prop:nicelyexpanding-characterisation}Let
$I$ be a topological space and let $\left(f_{i}\right)_{i\in I}\in C\!\left(I,\Rat\right)$.
For the rational semigroup $G=\left\langle f_{i}:i\in I\right\rangle $,
the following statements are equivalent.
\begin{enumerate}
\item $G$ is nicely expanding. 
\item $G$ is hyperbolic, each element $h\in G\cap\Aut(\Chat)$ is loxodromic,
$\id\notin\overline{G\cap\Aut(\Chat)}$ and there exists a non-empty
compact $G$-forward invariant set $P_{0}\left(G\right)\subset F\left(G\right)$. \end{enumerate}
\end{prop}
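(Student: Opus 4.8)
The plan is to prove the two implications separately, with the forward direction $(1)\Rightarrow(2)$ being essentially bookkeeping and the reverse direction $(2)\Rightarrow(1)$ containing the real work. For $(1)\Rightarrow(2)$: if $G$ is nicely expanding, then by definition $P(G)\subset P_0(G)\subset F(G)$, so $G$ is hyperbolic, and the required compact forward-invariant set $P_0(G)\subset F(G)$ is given. It remains to check the two statements about $G\cap\Aut(\Chat)$. If $h\in G\cap\Aut(\Chat)$ were not loxodromic (i.e. elliptic, parabolic, or the identity), then $\|(h^n)'\|$ would not grow at the required geometric rate on $J(\tilde f)$; more precisely, a non-loxodromic Möbius transformation has a point where the derivative norm stays bounded along iterates, contradicting expansion along fibers (one must locate such a point inside $J(\tilde f)$, using that $J(h)\neq\emptyset$ when $h$ is parabolic, and that elliptic/identity elements fix $J(G)$ pointwise-neighborhood issues — this needs the expanding hypothesis applied to the constant word $\omega=(i,i,\dots)$ when $h=f_i$, and more generally to $h=f_{\omega|_n}$). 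Similarly, if $\id\in\overline{G\cap\Aut(\Chat)}$, one extracts a sequence $h_k\to\id$ in $G$ with $\|h_k'(z_k)\|$ close to $1$ at suitable $z_k\in J(\tilde f)$, again contradicting the uniform lower bound $C\lambda^n$; here the subtlety is that the $h_k$ may be words of unbounded length, so one needs the expansion constant to be uniform in $n$, which it is.

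For $(2)\Rightarrow(1)$, the task is to upgrade hyperbolicity plus the Möbius conditions plus the existence of a compact forward-invariant $P_0(G)\subset F(G)$ into genuine expansion along fibers. The natural approach is to work with the hyperbolic metric on the open set $\Omega := \Chat\setminus P_0(G)$ (or on each connected component), which is well-defined provided $\Chat\setminus P_0(G)$ omits at least three points; since $P_0(G)\subset F(G)$ and $F(G)\neq\Chat$ (as $J(\tilde f)\neq\emptyset$ is forced — wait, that is what we are proving, so instead use that a hyperbolic semigroup with $\card J(G)\geq 3$, or handle the small-Julia-set case separately). Each $f_i$ maps $\Chat\setminus f_i^{-1}(P_0(G))$ into $\Omega$, and $f_i^{-1}(P_0(G))\supset P_0(G)$ by forward invariance applied to... actually one wants $f_i(\Omega)\subset\Omega$, which follows from $f_i(P_0(G))\subset P_0(G)$ only after taking complements carefully: $f_i^{-1}(\Omega)=\Chat\setminus f_i^{-1}(P_0(G))\subset\Chat\setminus P_0(G)=\Omega$. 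Then each $f_i:\Omega\to\Omega$ is a holomorphic self-map, hence does not increase the hyperbolic metric; the point is to get strict contraction of $f_i^{-1}$-branches, equivalently strict expansion of $f_i$ in the hyperbolic metric at points of $J_\omega$. This is where the Möbius conditions enter: if $f_i$ is not an automorphism it is a branched cover of degree $\geq 2$ and Schwarz–Pick gives strict contraction away from a neighborhood of the postcritical set; if $f_i\in\Aut(\Chat)$, loxodromicity and $\id\notin\overline{G\cap\Aut(\Chat)}$ give the uniform strict contraction. The hard part will be making the contraction factor uniform across the infinite family $\{f_i:i\in I\}$ and, more seriously, uniform along arbitrarily long compositions $f_{\omega|_n}$ — since $J(\tilde f)$ may a priori be empty, one should first show $J(\tilde f)=J^I\neq\emptyset$ is nonempty under hypothesis (2), or rather derive the expansion estimate on $\bigcup_\omega J^\omega$ directly and then invoke Lemma~\ref{lem:expanding-implies-skewproductjulia-is-closed}.

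Concretely, I would carry out $(2)\Rightarrow(1)$ in these steps: (i) reduce to the case $\card(J(G))\geq 3$, disposing of the degenerate cases where $J(G)$ is finite (then $G$ is essentially a group or has trivial dynamics, contradicting the Möbius hypotheses unless $J(G)=\emptyset$, which must be excluded or handled); (ii) fix a connected open neighborhood structure — pass to $\Omega=\Chat\setminus P_0(G)$ with its hyperbolic metric $\rho$, noting $J(G)\subset\Omega$; (iii) show that for each $i$, $f_i:\Omega\to\Omega$ and that there is $\lambda_i>1$ with $\|f_i'(z)\|_\rho\geq\lambda_i$ for $z\in J(G)$, using Schwarz–Pick together with the degree/loxodromic dichotomy, and crucially argue that $\inf_i\lambda_i=:\lambda>1$ — this uses the continuity $i\mapsto f_i$, compactness of $P_0(G)$, and the closedness hypothesis $\id\notin\overline{G\cap\Aut(\Chat)}$ to prevent the $\lambda_i$ from degenerating to $1$ as $i$ varies, even when $I$ is noncompact one exploits that any limit of the $f_i$ is still a holomorphic self-map of $\Omega$ and still cannot be the identity; (iv) chain: $\|(f_{\omega|_n})'(z)\|_\rho\geq\lambda^n$ for $z\in J_\omega$ by the chain rule and $f_i(J_\omega)=J_{\sigma\omega}$-type invariance; (v) convert the hyperbolic-metric estimate back to the spherical metric using that $J(G)$ is a compact subset of $\Omega$ on which $\rho$ and the spherical metric are bi-Lipschitz, picking up only a bounded constant $C$, giving $\inf_{(\omega,z)\in\bigcup J^\omega}\|(\tilde f^n)'(\omega,z)\|\geq C\lambda^n$; (vi) conclude $J(\tilde f)=J^I$ is this closure and is nonempty, so $\tilde f$ is expanding along fibers, and since $P(G)\subset F(G)\supset P_0(G)$ with $P_0(G)$ compact forward-invariant, $G$ is nicely expanding. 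The single most delicate point is step (iii)'s uniformity over the possibly-infinite index set, and I expect the proof's "completely new idea based on careful observations on the hyperbolic metric" (as the introduction advertises) to be exactly the mechanism that forces $\inf_i\lambda_i>1$ without a cone condition or any compactness of $I$.
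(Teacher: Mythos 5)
There is a genuine gap, and it sits exactly where you flag it: step (iii). Your route needs a uniform per-generator bound $\inf_{i\in I}\inf_{z}\Vert f_i'(z)\Vert_{\rho}\ge\lambda>1$ in the hyperbolic metric of $\Omega=\Chat\setminus P_0(G)$, and you justify it only by gesturing at Schwarz--Pick plus ``any limit of the $f_i$ is a holomorphic self-map of $\Omega$ and cannot be the identity''. That mechanism does not exist: $f_i$ does not map $\Omega$ into $\Omega$ (only $f_i^{-1}(\Omega)\subset\Omega$ holds, as you yourself correct mid-argument), and the family $\{f_i\}_{i\in I}$ need not be precompact in $\Rat$ at all --- $I$ is an arbitrary topological space and the degrees of the $f_i$ may be unbounded (this is precisely the situation for the induced system $H_0$ later in the paper), so no normal-families limit of the generators is available. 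Worse, the per-generator statement you want is stronger than what is true in general: a single generator may move a point of the Julia set between different ``pieces'' of the complement of $P_0(G)$ with hyperbolic derivative arbitrarily close to $1$; this is why the paper's final estimate has the form $C\lambda^n$ with $C$ possibly less than $1$, and why its proof only establishes uniform expansion for compositions of length $r$, where $r$ is the number of components of $F(G)$ meeting $P_0(G)$, with only $\Vert f_\rho'(z)\Vert\ge1$ for shorter words. So the chaining in your step (iv) cannot be fed by (iii), and the ``single most delicate point'' you defer to the paper's new idea is not the statement the paper proves.

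What the paper actually does for $(2)\Rightarrow(1)$ (after disposing of $\card(J(G))\le2$ via Lemma \ref{lem:atmosttwoelements-expandingness} --- note this case is a legitimate one with $\card(J(G))=1$, not a contradiction with the M\"obius hypotheses as your step (i) suggests) is a Fatou-side argument first: Claim \ref{expandingness-claim} shows a uniform contraction factor $c_i<1$ of the hyperbolic derivative on $U_i\subset V_i$ for \emph{every} $g\in G$ preserving the component $V_i$, proved by contradiction using normality, the existence of fixed points of each such $g$ in $P_0(G)\cap V_i$ (here loxodromicity for M\"obius elements and hyperbolicity for degree $\ge2$ elements enter), the classification of holomorphic self-maps of hyperbolic surfaces, $\id\notin\overline{G\cap\Aut(\Chat)}$ to force $A_0\neq\Chat$, and an inverse-branch argument at $\partial A_0$. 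Then a pigeonhole over the $r$ components gives $f_\omega(W)\subset K_1\subset\subset W$ for all $\omega\in I^r$, hence $f_\omega^{-1}(\Chat\setminus\overline{W})\subset K_2\subset\subset\Chat\setminus\overline{W}$ uniformly, and finally a covering-map/strict-inclusion (Pick) estimate on the components of $\Chat\setminus\overline{W}$ yields the uniform block expansion, converted to the spherical metric on $J(G)$. Your proposal contains the last ingredient in embryonic form (covering maps are local isometries, inclusions contract), but none of the contraction claim, the fixed-point mechanism, or the block/pigeonhole structure that make the estimate uniform over an infinite generator family; the direction $(1)\Rightarrow(2)$ you sketch is essentially correct modulo the bounded-word-length case in the $\id\notin\overline{G\cap\Aut(\Chat)}$ argument, which the paper handles by passing to powers.
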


In order to prove Proposition \ref{prop:nicelyexpanding-characterisation} we need the following Lemmas \ref{lem:moebiussemigroup-expanding-implies-loxodromic}--\ref{lem:hyperbolic-lox-pzero-implies-expanding}. We use the following classification of M\"{o}bius  transformations. Let $g\in \Aut(\Chat)\setminus \{ \id\}$. We say that $g$ is \emph{elliptic} if $g$ has two fixed points, for which the modulus of the multipliers is equal to one. If $g$ is neither loxodromic nor elliptic, then $g$ is \emph{parabolic}. 
\begin{lem}
\label{lem:moebiussemigroup-expanding-implies-loxodromic}Let $I$
be a topological space and let $\left(f_{i}\right)_{i\in I}\in C\!\left(I,\Rat\right)$.
If $G=\left\langle f_{i}:i\in I\right\rangle $ is expanding with
respect to $\left\{ f_{i}:i\in I\right\} $, then each element $g\in G\cap\Aut(\Chat)$
is loxodromic.\end{lem}
\begin{proof}
Let $g\in G\cap\Aut(\Chat)$ and  suppose by way of contradiction that $g$ is not loxodromic.  If $g$ is parabolic, then the parabolic
fixed point $z$ of $g$ satisfies $z\in J\left(g\right)$ and $\Vert(g^{n})'(z)\Vert=1$,
which contradicts that $G$ is expanding. Now suppose that $g$ is
elliptic or the identity map, and let $z_0 \in \Jpre(G)$.
For each $n\in\N$, set $z_{n}:=g^{-n}\left(z_0\right)$ and observe
that $z_{n}\in \Jpre(G)$. By conjugating $G$ by a M\"{o}bius  transformation, we may assume that $z_n \in \C$ for each $n\in \N$ and $g(z)=\e^{i \theta}z$, for some $\theta \in \R$. We see that  the modulus of
$\left(g^{n}\right)'\left(z_{n}\right)$ is equal to one. Letting
$n$ tend to infinity, contradicts that $G$ is expanding and completes
the proof. \end{proof}
\begin{lem}
\label{lem:expanding-identity-notin-closure}Let $I$ be a topological
space and let $\left(f_{i}\right)_{i\in I}\in C\!\left(I,\Rat\right)$.
If $G=\left\langle f_{i}:i\in I\right\rangle $ is expanding with
respect to $\left\{ f_{i}:i\in I\right\} $, then $\id\notin\overline{G\cap\Aut(\Chat)}$,  where the closure is taken in $\Aut(\Chat)$.\end{lem}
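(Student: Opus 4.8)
The plan is to argue by contradiction: suppose there is a sequence $\left(g_{k}\right)$ in $G\cap\Aut(\Chat)$ with $g_{k}\to\id$ in $\Aut(\Chat)$. The idea is to combine such maps with a fixed element near the Julia set so as to violate the expanding estimate with constants $C$ and $\lambda$. First I would pick any point $z_{0}\in\Jpre(G)$, which is nonempty since $J(\tilde{f})\neq\emptyset$ (by Lemma \ref{lem:expanding-implies-skewproductjulia-is-closed} together with the definition of expanding), and choose a word $\omega\in I^{\N}$ with $z_{0}\in J_{\omega}$. Since the $g_{k}$ lie in $G$, each $g_{k}=f_{\tau^{(k)}}$ for some finite word $\tau^{(k)}\in I^{m_{k}}$; the key point is that $g_{k}\to\id$ forces $\|g_{k}'(z)\|\to 1$ \emph{uniformly} on $\Chat$ (uniform convergence of M\"obius maps on the compact sphere implies uniform convergence of spherical derivatives, since both the maps and their inverses converge uniformly).

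Next I would produce, for each $k$, a point in $J(\tilde{f})$ at which the $m_{k}$-th fiber derivative is close to $1$, contradicting $\inf_{J(\tilde{f})}\|(\tilde{f}^{m_{k}})'\|\ge C\lambda^{m_{k}}\to\infty$. Concretely, for a point $z\in J_{g_{k}}$ (the fiberwise Julia set of the constant sequence $(g_{k},g_{k},\dots)$, which is nonempty — it is the Julia set $J(\langle g_{k}\rangle)$, a two-point or empty set; if empty one instead uses that $g_{k}$ has a fixed point $p_{k}$ and $p_{k}$ can be taken in $\Chat$ anyway) one has that the word $\tau^{(k)}\tau^{(k)}\cdots$ gives a point of $J(\tilde f)$ with controlled derivative. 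More robustly: every $g\in G\cap\Aut(\Chat)$ has a fixed point $p$, and since $G$ is expanding, Lemma \ref{lem:moebiussemigroup-expanding-implies-loxodromic} shows $g$ is loxodromic, so it has a repelling fixed point $p_{g}$ with $p_{g}\in J(\langle g\rangle)\subset J(G)$ and in fact $p_{g}\in\Jpre(G)$. Taking $\omega^{(k)}=\tau^{(k)}\tau^{(k)}\cdots\in I^{\N}$, the point $(\omega^{(k)},p_{g_{k}})$ lies in $J^{\omega^{(k)}}\subset J(\tilde f)$, and $\|(\tilde f^{m_{k}})'(\omega^{(k)},p_{g_{k}})\|=\|g_{k}'(p_{g_{k}})\|$.

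The final step is the contradiction: the expanding inequality at $(\omega^{(k)},p_{g_{k}})$ reads $\|g_{k}'(p_{g_{k}})\|\ge C\lambda^{m_{k}}$, but $\|g_{k}'(p_{g_{k}})\|\to 1$ because $g_{k}\to\id$ uniformly, so we would need $C\lambda^{m_{k}}$ to stay bounded, forcing the lengths $m_{k}$ to be bounded. If the $m_{k}$ are bounded, then after passing to a subsequence all $\tau^{(k)}$ have a fixed length $m$, and one applies the expanding hypothesis with $n=m$: $\|g_{k}'(p_{g_{k}})\|\ge C\lambda^{m}>1$ eventually, again contradicting $\|g_{k}'(p_{g_{k}})\|\to 1$ (note $C\lambda^{m}>1$ may require $m$ large, but boundedly many repelling multipliers bounded below by $C\lambda$ on a fixed iterate still cannot converge to $1$ — more carefully, since each $g_{k}$ is loxodromic its repelling multiplier has modulus $>1$, but expandingness gives the uniform lower bound $\|g_{k}'(p_{g_{k}})\|\ge C\lambda^{m}$ only when $C\lambda^{m}\ge 1$; if $C<1$ one first replaces $g_{k}$ by a suitable power $g_{k}^{N}$ with $NL$ large, where $L=\sup m_{k}$, using that $g_{k}^{N}\to\id$ as well). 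The main obstacle I anticipate is the bookkeeping in the case of bounded word length together with a small constant $C$; the clean fix is to pass from $g_{k}$ to iterates $g_{k}^{N_{k}}$ with $N_{k}$ chosen so that $C\lambda^{N_{k}m_{k}}\to\infty$ while still $g_{k}^{N_{k}}\to\id$ (possible by first extracting a subsequence along which $g_{k}\to\id$ fast enough), which uniformizes the argument and avoids any case distinction.
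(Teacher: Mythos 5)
Your proposal is correct and follows essentially the paper's own argument: assume $g_k\to\id$, apply the expanding estimate along the word representing $g_k$ at a suitable point of $J(\tilde f)$ to conclude that $C\lambda^{m_k}$ must stay bounded, and remove the bounded-word-length obstruction by the same diagonal passage to powers $g_{k_r}^{r}\to\id$ that the paper performs as its opening reduction. The only divergence is your choice of witness point --- the repelling fixed point $p_{g_k}$ paired with the periodic word $\tau^{(k)}\tau^{(k)}\cdots$, which legitimately invokes Lemma~\ref{lem:moebiussemigroup-expanding-implies-loxodromic} (proved before this lemma, so there is no circularity), whereas the paper simply pulls back an arbitrary $z_0\in J_\omega$ by $g_k^{-1}$ to obtain a point of $J_{\alpha\omega}$ and bounds $\sup_{z}\Vert g_k'(z)\Vert$, needing no fixed-point analysis (and, as a cosmetic slip on your side, the Julia set of a loxodromic M\"obius map is the single repelling fixed point, not ``a two-point or empty set'').
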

\begin{proof}
Since $G$ is expanding with respect to $\left\{ f_{i}:i\in I\right\} $,
there exist $C>0$ and $\lambda>1$ such that for all $n\in\N$ we
have $\inf_{(\omega,z)\in J\left(\tilde{f}\right)}\Vert\left(\tilde{f}^{n}\right)'\left(\omega, z\right)\Vert\ge C\lambda^{n}$.
Suppose by way of contradiction that there exists a sequence $\left(g_{n}\right)\in\big(G\cap\Aut(\Chat)\big)^{\N}$
such that $\lim_n \dist(g_n,\id)=0$. For each $n\in\N$, let $g_{n}$
be given by a product of $a_{n}$ generators in $\left\{ f_{i}:i\in I\right\} $,
for a sequence $\left(a_{n}\right)\in\N^{\N}$. We may  assume without
loss of generality that the sequence $\left(a_{n}\right)$
is unbounded. Otherwise, we choose for each $r\in\N$ an element $n_{r}\in\N$
such that $\sup_{z\in\Chat}d\left(g_{n_{r}}^{r}\left(z\right),z\right)<r^{-1}$.
Then we have $\lim_n \dist(g_{n_{r}}^{r},\id)=0$, as $r$ tends to infinity,
and $g_{n_{r}}^{r}$ is a product of $ra_{n_{r}}$ generators. By passing to a subsequence, we may assume that $\lim_{n}a_{n}=\infty$.
Choose an arbitrary $\left(\omega,z_{0}\right)\in J^I$
and write $g_{n}=f_{\alpha_{a_{n}}}\circ\dots\circ f_{\alpha_{1}}$, 
for some $\alpha\in I^{a_{n}}$. Clearly, $g_{n}^{-1}\left(z_{0}\right)\in J_{\alpha\omega}$.
 Consequently, as $n$ tends to infinity,  
\[
\sup_{z\in\Chat}\Vert g_{n}'\left(z\right)\Vert\ge\Vert g_{n}'\left(g_{n}^{-1}\left(z_{0}\right)\right)\Vert\ge\inf_{\left(\tau,y\right)\in J\left(\tilde{f}\right)}\Vert\left(\tilde{f}^{a_{n}}\right)'\left(\tau,y\right)\Vert\ge C\lambda^{a_{n}}\rightarrow\infty,
\]
which is impossible since $g_n$ tends uniformly to the identity  on $\Chat$. This contradiction finishes the proof. \end{proof}
\begin{lem}
\label{lem:atmosttwoelements-expandingness}Let $I$ be a topological
space and let $\left(f_{i}\right)_{i\in I}\in C\!\left(I,\Rat\right)$.
Suppose that $G=\left\langle f_{i}:i\in I\right\rangle $ is a  rational
semigroup such that $\card\left(J\left(G\right)\right)\le2$ and such
that each element in $G\cap\Aut(\Chat)$ is loxodromic. Further, assume 
that there exists a non-empty compact, $G$-forward invariant  subset $P_{0}\left(G\right)\subset F\left(G\right)$. Then  we have the following.
\begin{enumerate}
\item $\card\left(J\left(G\right)\right)=1$.
\item $G$ is expanding with respect to $\left\{ f_{i}:i\in I\right\} $
if and only if $\id\notin\overline{G\cap\Aut(\Chat)}$. 
\item If $I$ is finite or if $G$ satisfies the open set condition with
respect to $\left\{ f_{i}:i\in I\right\} $, then $G$ is expanding
with respect to $\left\{ f_{i}:i\in I\right\} $. 
\end{enumerate}
\end{lem}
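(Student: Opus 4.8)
The plan is to first pin down the structure of $G$, then prove the three assertions in order. Since every $g\in G$ with $\deg(g)\ge2$ has an infinite Julia set and $J(\langle g\rangle)\subset J(G)$, the hypothesis $\card(J(G))\le2$ forces $G\subset\Aut(\Chat)$; hence $G=G\cap\Aut(\Chat)$, every $g\in G$ is loxodromic, and its repelling fixed point $\beta_{g}$ satisfies $\beta_{g}\in J(\langle g\rangle)\subset J(G)$. (We use throughout that $I\neq\emptyset$, hence $G\neq\emptyset$.) For (1) the aim is to exclude $\card(J(G))=2$; combined with $\beta_{g}\in J(G)$ this also gives $\card(J(G))\ge1$, hence $\card(J(G))=1$. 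Assuming $J(G)=\{p,q\}$, backward invariance of $J(G)$ (see \cite{MR1397693,MR1767945}) together with bijectivity of each $g$ shows that $g$ permutes $\{p,q\}$; it cannot interchange $p$ and $q$, since $\beta_{g}\in\{p,q\}$ is a fixed point of $g$; so every $g\in G$ fixes both $p$ and $q$. After conjugating $\{p,q\}$ to $\{0,\infty\}$ (which changes none of the relevant properties), $G$ is a subsemigroup of $\{z\mapsto\lambda z:0<|\lambda|\neq1\}$ and $P_{0}(G)$ becomes a non-empty compact forward invariant subset of $F(G)=\C\setminus\{0\}$; but for any $g\in G$ and any $z_{0}$ in this set, $\lambda_{g}^{\,n}z_{0}$ leaves every compact subset of $\C\setminus\{0\}$ (toward $0$ if $|\lambda_{g}|<1$, toward $\infty$ if $|\lambda_{g}|>1$), contradicting forward invariance.

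For (2), by (1) conjugate so that $J(G)=\{\infty\}$; then $\beta_{g}=\infty$ for every $g$, so each $g$ is an affine contraction $g(z)=a_{g}z+b_{g}$ with $0<|a_{g}|<1$, whose attracting fixed point $z_{g}^{*}=b_{g}/(1-a_{g})$ lies in the (conjugated) compact forward invariant set $P_{0}\subset F(G)=\C$, because $g^{n}|_{P_{0}}\to z_{g}^{*}$. Setting $R:=\sup_{z\in P_{0}}|z|$, this yields the key bound $|b_{g}|\le R\,|1-a_{g}|$ (in particular $|b_{g}|\le2R$) for all $g\in G$. The core is then the chain
\[
G\text{ is expanding}\ \Longleftrightarrow\ \sup_{i\in I}|a_{i}|<1\ \Longleftrightarrow\ \id\notin\overline{G}.
\]
For the second equivalence: if $\sup_{i}|a_{i}|\le\rho<1$ then $|a_{g}|\le\rho$ for all $g\in G$, so every element of $\overline{G}$ fixes $\infty$ with leading coefficient of modulus $\le\rho<1$, hence is $\neq\id$; conversely, if $\sup_{i}|a_{i}|=1$, pick distinct $i_{n}$ with $a_{i_{n}}\to a_{*}$ ($|a_{*}|=1$) and $z_{i_{n}}^{*}\to z_{*}$ (the $z_{i_{n}}^{*}$ are bounded), so that, using $|b_{i_{n}}|\le R|1-a_{i_{n}}|$, $f_{i_{n}}\to\psi$ in $\Aut(\Chat)$ with $\psi(z)=a_{*}(z-z_{*})+z_{*}$ a Euclidean rotation about $z_{*}$; since $\overline{G}$ is a closed subsemigroup of $\Aut(\Chat)$ it contains all powers $\psi^{k}$, and some subsequence of these rotations converges to $\id$, whence $\id\in\overline{G}$. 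For the first equivalence: in the chart $w=1/z$ at $\infty$ one has $\|(\tilde{f}^{n})'(\omega,\infty)\|=\prod_{k=1}^{n}|a_{\omega_{k}}|^{-1}$, while boundedness of the coefficients of $f_{\omega|_{n}}\in G$ (namely $|B_{n}|\le2R$) makes $\{f_{\omega|_{n}}\}_{n}$ normal on $\C$, so $J_{\omega}\subset\{\infty\}$ for every $\omega$ and $\pi_{\Chat}(J(\tilde{f}))\subset\{\infty\}$, with $\infty\in J_{\omega}$ as soon as $\sum_{k}\log|a_{\omega_{k}}|^{-1}=\infty$ (then $\|(\tilde{f}^{n})'(\omega,\infty)\|\to\infty$, so $\{f_{\omega|_{n}}\}$ fails Marty's criterion at $\infty$). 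Hence if $\sup_{i}|a_{i}|\le\rho<1$ then $J(\tilde{f})=I^{\N}\times\{\infty\}\neq\emptyset$ and $\|(\tilde{f}^{n})'(\omega,\infty)\|\ge\rho^{-n}$, so $G$ is expanding; and if $G$ is expanding with constants $C,\lambda$, then applying the expansion estimate at $((i,i,i,\dots),\infty)\in J(\tilde{f})$ gives $|a_{i}|^{-n}\ge C\lambda^{n}$ for all $n$, hence $|a_{i}|\le\lambda^{-1}<1$ for each $i\in I$.

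For (3), if $I$ is finite then $\sup_{i}|a_{i}|=\max_{i}|a_{i}|<1$, so $G$ is expanding by (2). If instead the open set condition holds with open set $U$, I would show $\sup_{i}|a_{i}|<1$ as follows. We may assume $\card(I)\ge2$; then $\infty\notin U$ (otherwise $\infty\in f_{i}^{-1}(U)$ for every $i$, contradicting disjointness), so $U\subset\C$. Fix $x_{0}\in U$ and $r>0$ with $\overline{B(x_{0},r)}\subset U$; then the balls $f_{i}^{-1}(B(x_{0},r))=B\bigl(f_{i}^{-1}(x_{0}),\,r/|a_{i}|\bigr)\subset f_{i}^{-1}(U)$ are pairwise disjoint over $i\in I$. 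If $\sup_{i}|a_{i}|=1$, pick distinct $i_{n}$ with $a_{i_{n}}\to a_{*}$ ($|a_{*}|=1$) and $z_{i_{n}}^{*}\to z_{*}$; since $f_{i_{n}}^{-1}(x_{0})=z_{i_{n}}^{*}+(x_{0}-z_{i_{n}}^{*})/a_{i_{n}}\to y_{0}:=z_{*}+(x_{0}-z_{*})/a_{*}$ and $r/|a_{i_{n}}|\ge r$, the set $f_{i_{n}}^{-1}(B(x_{0},r))$ contains the fixed non-empty ball $B(y_{0},r/2)$ for all large $n$, which is impossible for infinitely many pairwise disjoint sets. Hence $\sup_{i}|a_{i}|<1$, and $G$ is expanding by (2).

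I expect the main obstacle to be the open set condition case of (3) together with the second equivalence in (2): a priori both ``$G$ expanding'' and ``$\id\notin\overline{G}$'' are far weaker than the rigid condition ``$\sup_{i}|a_{i}|<1$'' (for instance, dropping the hypothesis on $P_{0}$, the semigroup $\langle(1-\tfrac1n)z+1:n\ge2\rangle$ is hyperbolic with $\id\notin\overline{G}$ yet not expanding), and what forces all three to coincide is precisely the assumed non-empty compact forward invariant set $P_{0}\subset F(G)$, which confines the attracting fixed points of the elements of $G$ and thereby yields $|b_{g}|\le R|1-a_{g}|$; under the open set condition the contradiction must instead be extracted geometrically, from the incompatibility of mutual disjointness with balls of almost-unchanged radius accumulating on a common ball.
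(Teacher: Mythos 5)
Your proof is correct, and it relies on the same underlying mechanism as the paper's: the compact forward invariant set $P_{0}(G)\subset F(G)$ traps the attracting fixed points of the elements of $G$, while the common repelling fixed point makes up $J(G)$. The execution, however, differs in ways worth noting. The paper proves (1) by essentially your attracting-fixed-point contradiction; for (2) it invokes its general Lemma \ref{lem:expanding-identity-notin-closure} for the forward implication and, for the converse, conjugates each generator separately to a linear map and asserts that generators with $\Vert f_{i_{n}}'(0)\Vert\to1$ converge to $\id$; for (3) it reduces to ``$\id$ is not in the closure of the generator family'' and declares this clear under finiteness or the open set condition. You instead work globally in affine coordinates with $J(G)=\{\infty\}$, extract the quantitative bound $|b_{g}|\le R|1-a_{g}|$ from $P_{0}$, and route everything through the pivot $\sup_{i}|a_{i}|<1$ (the same quantity as the paper's $\inf_{i}\Vert f_{i}'(0)\Vert$ seen at the repelling fixed point). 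This buys you a self-contained proof of both directions of (2) without the earlier lemma; a careful treatment of the point the paper abbreviates, namely that $|c_{i_{n}}|\to1$ a priori only yields an elliptic (rotation) limit, which you then push to $\id$ by taking powers inside the closed semigroup $\overline{G\cap\Aut(\Chat)}$; and an explicit disjoint-ball argument for the open set condition case of (3), which moreover works for any nondegenerate limit of generators rather than only for a limit equal to the identity. The remaining points are bookkeeping that you handle at least implicitly: the conjugation preserves expansion, the open set condition (with conjugated open set) and the hypotheses on $P_{0}$; the indices $i_{n}$ in the OSC case can indeed be taken pairwise distinct because each $|a_{i}|<1$ while $\sup_{i}|a_{i}|=1$; and coefficientwise convergence of affine maps with bounded data gives convergence in $\dist$, which you use when passing to the limits $\psi$ and $\psi^{k_{j}}\to\id$.
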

\begin{proof}
It is clear that $G\subset \Aut(\Chat)$ because $\card(J(G))\le 2$. Let us start with the proof of (1).  Let $g\in G$. It follows from $g^{-1}\left(J\left(G\right)\right)\subset J\left(G\right)$  \cite[Theorem 2.1]{MR1397693}
and $\card(J(G))\le 2$, that  $g\left(J\left(G\right)\right)=J\left(G\right)$.
Since $g$ is loxodromic, we have  $g\left(x\right)=x$,
for each $x\in J\left(G\right)$. By way of contradiction, suppose
that  $J(G)$ consists of two points, say $J\left(G\right)=\left\{ a,b\right\} $. We may assume that $\Vert g'\left(a\right)\Vert>1$
and $\Vert g'\left(b\right)\Vert<1$. Now, for
each $z\in P_{0}\left(G\right)$, we have  $\lim_{n}g^{n}\left(z\right)=b$.
Since $g\left(P_{0}\left(G\right)\right)\subset P_{0}\left(G\right)$,
we conclude that $b\in P_{0}\left(G\right)\subset F\left(G\right)$,
which is a contradiction. Hence, $\card\left(J\left(G\right)\right)=1$.
For simplicity, we may assume that $J\left(G\right)=\left\{ 0\right\} $
in the following.

Next, we turn to the proof of (2). By Lemma \ref{lem:expanding-identity-notin-closure},
it remains to show that,  if $G$ is not expanding with respect to $\left\{ f_{i}:i\in I\right\} $,
then $\id\in\overline{G\cap\Aut(\Chat)}$.  Let $b_{i}$ denote the attracting fixed point of $f_{i}$. 
Since $f_{i}(P_{0}(G))\subset P_{0}(G)\subset F(G)$,
we have $b_{i}\in P_{0}(G)\subset F(G)$. 
Since $f_{i}(0)=0$, it follows that $\| f_{i}'(0)\| >1$
for each $i\in I$.   Let
$h_{i}\in\Aut(\Chat)$ be given by $h_{i}\left(z\right):=z/\left(b_{i}^{-1}z-1\right)$,
and observe that $h_{i}^{-1}f_{i}h_{i}\left(z\right)=c_i  z$, where $c_i$ denotes the multiplier of $f_i$ at $0$.
If $G$ is not expanding with respect to $\left\{ f_{i}:i\in I\right\} $,
then there exists a sequence $\left(i_{n}\right)\in I^{\N}$
tending to infinity such that $\lim_n \Vert f_{i_{n}}'\left(0\right)\Vert =1$.
After  passing to a subsequence, we may assume that  $\lim_n \dist(h_{i_n},h)=0$, 
for some $h\in\Aut(\Chat)$, which gives that $\lim_n \dist(f_{i_{n}},\id)=0$,
as $n$ tends to infinity. The proof of (2) is complete.

To prove (3), recall that by the proof of (2), we have that  $G$ is expanding if   $\id \notin \overline{ \bigcup_{i\in I} f_i}$. Clearly, if $I$ is finite or if   $\left\{ f_{i}:i\in I\right\} $ satisfies the open set condition, then $\id \notin \overline{ \bigcup_{i\in I} f_i}$. The proof is complete. \end{proof}
\begin{lem}
\label{lem:hyperbolic-lox-pzero-implies-expanding}Let $I$ be a finite
set endowed with the discrete topology. Let $G=\left\langle f_{i}:i\in I\right\rangle $
denote a hyperbolic rational semigroup such that each element in $G\cap\Aut\big(\Chat\big)$
is loxodromic and there exists a non-empty compact $G$-forward invariant set $P_{0}\left(G\right)\subset F\left(G\right)$.  Then $G$ is nicely expanding (see Definition \ref{nicelyexpanding}). \end{lem}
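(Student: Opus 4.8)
The plan is to reduce Lemma~\ref{lem:hyperbolic-lox-pzero-implies-expanding} to the already-established facts by a dichotomy on the size of $J(G)$. First I would treat the degenerate case $\card(J(G))\le 2$: here Lemma~\ref{lem:atmosttwoelements-expandingness} applies directly, giving $\card(J(G))=1$, and since $I$ is finite, part~(3) of that lemma yields that $G$ is expanding with respect to $\{f_i:i\in I\}$. Together with the hypothesis that $P_0(G)\subset F(G)$ is a non-empty compact $G$-forward invariant set containing $P(G)$ (note $P(G)\subset F(G)$ since $G$ is hyperbolic, and one must check $P(G)$ itself is contained in some such $P_0(G)$ — one can take $P_0(G)$ to be the given forward invariant set together with $P(G)$, using that $P(G)$ is $G$-forward invariant and compact, and that $P(G)\subset F(G)$), this gives nicely expanding by Definition~\ref{nicelyexpanding}.

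The substantive case is $\card(J(G))\ge 3$. Here I would show directly that $G$ is expanding along fibers, i.e.\ that there exist $C>0$, $\lambda>1$ with $\inf_{(\omega,z)\in J(\tilde f)}\|(\tilde f^n)'(\omega,z)\|\ge C\lambda^n$. Because $I$ is finite and $\card(J(G))\ge 3$, Proposition~\ref{prop:skewproduct-facts}(\ref{enu:ratsemi-facts-3}) gives $J(\tilde f)=\bigcap_{n}\tilde f^{-n}(I^{\N}\times J(G))$ and $\pi_{\Chat}(J(\tilde f))=J(G)$, so $J(\tilde f)$ is a compact subset of $I^{\N}\times\Chat$ on which $\tilde f$ acts. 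The key geometric input is that $G$ is hyperbolic with $P(G)$ contained in a compact forward-invariant subset $P_0(G)$ of the open set $F(G)$; hence all inverse branches of elements $f_\omega$ of $G$ are well-defined holomorphic maps on a fixed neighborhood of $\overline{\Jpre(G)}=J(G)$ (using density of repelling fixed points so that $\overline{\Jpre(G)}=J(G)$) staying a definite spherical distance away from $P(G)$. Standard hyperbolic-contraction arguments — working with the hyperbolic metric on $\Chat\setminus P_0(G)$, which dominates a constant multiple of the spherical metric on $J(G)$ since $J(G)$ is compact and disjoint from the compact set $P_0(G)$ — then show that each $f_i^{-1}$ is a uniform contraction in the hyperbolic metric on the relevant domain, and since $I$ is finite the contraction ratio is bounded away from $1$; iterating gives the exponential expansion estimate for $\tilde f^n$ along fibers. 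This is essentially the classical finitely-generated argument, and citing \cite{MR1625944, MR1767945, MR1827119} for the density of repelling fixed points and the hyperbolic-metric contraction of inverse branches should suffice.

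The main obstacle is the hyperbolic-metric estimate itself: one must ensure that the domain $\Chat\setminus P_0(G)$ (or a slightly larger forward-invariant open set whose complement is a neighborhood of $P_0(G)$) is genuinely hyperbolic, i.e.\ that $P_0(G)$ has at least three points, or handle separately the case where $P_0(G)$ is small. If $\card(P_0(G))\le 2$ one can enlarge $P_0(G)$ by adjoining a point of $J(G)$'s complement in a $G$-forward-invariant way, or observe that hyperbolicity of $G$ with $\card(J(G))\ge 3$ already forces enough structure; in any case this is a routine fix. Having established that $\tilde f$ is expanding along fibers, the remaining requirement of Definition~\ref{nicelyexpanding} — existence of a non-empty compact $G$-forward invariant $P_0(G)\subset F(G)$ with $P(G)\subset P_0(G)$ — is given by hypothesis (after replacing the hypothesized set by its union with $P(G)$, which is legitimate since $P(G)$ is compact, $G$-forward invariant by the remark following Definition~\ref{nicelyexpanding}, and contained in $F(G)$ by hyperbolicity). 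This completes the proof.
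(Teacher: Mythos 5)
Your proposal is correct and follows essentially the same route as the paper: the paper also splits on $\card(J(G))$, handles $\card(J(G))\le 2$ by Lemma \ref{lem:atmosttwoelements-expandingness}\,(3), and for $\card(J(G))\ge 3$ simply follows the proof of \cite[Theorem 2.6]{MR1625124} with $P(G)$ replaced by $P_{0}(G)$ -- which is exactly the classical hyperbolic-metric contraction argument you sketch (and your observation that one may replace the hypothesised set by its union with $P(G)$ to meet Definition \ref{nicelyexpanding} is a point the paper leaves implicit). The only caveat is that the uniform strictness of the contraction of inverse branches (where the loxodromic hypothesis and forward invariance of $P_{0}(G)$ really enter) is less automatic than your phrase ``standard hyperbolic-contraction arguments'' suggests, but since you, like the paper, defer this to the cited finitely generated argument, the level of rigour matches the paper's own proof.
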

\begin{proof}
If $\card\left(J\left(G\right)\right)\ge3$, then we can follow the
proof of \cite[Theorem 2.6]{MR1625124} by replacing $P\left(G\right)$
by $P_{0}\left(G\right)$. The remaining case $\card\left(J\left(G\right)\right)\le2$
follows from Lemma \ref{lem:atmosttwoelements-expandingness} (3). 
\end{proof}
We now give the proof of Proposition  \ref{prop:nicelyexpanding-characterisation}.
\begin{proof}[Proof of Proposition  \ref{prop:nicelyexpanding-characterisation}]
The proof  that $(1)$ implies $(2)$ follows from Lemma \ref{lem:moebiussemigroup-expanding-implies-loxodromic} and Lemma \ref{lem:expanding-identity-notin-closure}. \\
We now turn our attention to the proof of the converse implication.
Our aim is to show that $G$ is expanding with respect to $\{f_i:i\in I\}$. 
By Lemma \ref{lem:atmosttwoelements-expandingness} (2), we are left
to consider the case $\card\left(J\left(G\right)\right)\ge3$. Since
$G$ is hyperbolic, we may assume that $P\left(G\right)\subset P_{0}\left(G\right)$.
We denote by $V_{1},\dots,V_{r}$, $r\in\N$, the finitely many connected
components of $F\left(G\right)$ which have non-empty intersection
with the non-empty compact set $P_{0}\left(G\right)$. Since $\card\left(J\left(G\right)\right)\ge3$,
we have that each $V_i$ is a hyperbolic Riemann surface. We denote by $d_h$  the Poincar\'{e} metric on $V_{i}$ and we set $U_{i}:=\left\{ z\in V_{i}:d_{h}\left(z,P_{0}\left(G\right)\cap V_{i}\right)<1\right\} $. 
Our main task is to verify the following claim. For $g\in G$ with $g\left(V_{i}\right)\subset V_{i}$, for some $i\in\left\{ 1,\dots,r\right\} $, we denote by  $\Vert g'\left(z\right)\Vert_{h}$ the norm of the derivative of $g$ at $z$ with respect to the Poincar\'{e} metric on $V_i$. 

\begin{clm} \label{expandingness-claim} For each $i\in\left\{ 1,\dots,r\right\} $
there exists $0<c_{i}<1$ such that, for all $g\in G$ satisfying
$g\left(V_{i}\right)\subset V_{i}$, we have $\sup_{z\in U_{i}}\Vert g'\left(z\right)\Vert_{h}\le c_{i}$.
\end{clm}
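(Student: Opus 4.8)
The plan is to exploit the contraction properties of holomorphic self-maps of hyperbolic Riemann surfaces via the Schwarz--Pick lemma, combined with a compactness argument to pass from a strict pointwise inequality to a uniform bound over all of $G$.

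First I would fix $i\in\{1,\dots,r\}$ and recall that, by the Schwarz--Pick lemma, any holomorphic map $g:V_i\rightarrow V_i$ satisfies $\Vert g'(z)\Vert_h\le 1$ for all $z\in V_i$, with equality at some point if and only if $g$ is a biholomorphic automorphism of $V_i$. The key observation is that $g(V_i)\subset V_i$ together with $g(P_0(G)\cap V_i)\subset P_0(G)\cap V_i$ (which follows from $G$-forward invariance of $P_0(G)$ and the fact that $g$ cannot map a point of $P_0(G)\cap V_i$ out of $V_i$, since $P_0(G)\subset F(G)$ and $V_i$ is a component of $F(G)$ containing that point) forces $g$ to restrict to a holomorphic self-map of $V_i$ fixing the nonempty compact set $K_i:=P_0(G)\cap V_i$ setwise — more precisely, $g(K_i)\subseteq K_i$. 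Actually I should be careful: forward invariance gives $g(P_0(G))\subset P_0(G)$, and since $g(V_i)\subset V_i$, points of $K_i$ map into $P_0(G)\cap V_i=K_i$. Now $U_i$ is the $d_h$-unit neighborhood of $K_i$, which is relatively compact in $V_i$ because $K_i$ is compact and the hyperbolic metric is complete; hence $\overline{U_i}$ is a compact subset of $V_i$.

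The heart of the argument is then: for each individual $g\in G$ with $g(V_i)\subset V_i$, I claim $\sup_{z\in\overline{U_i}}\Vert g'(z)\Vert_h<1$. If $g$ restricted to $V_i$ is not an automorphism, strict inequality holds pointwise by Schwarz--Pick, and by compactness of $\overline{U_i}$ the supremum is $<1$. If $g$ restricted to $V_i$ \emph{is} an automorphism of $V_i$, then since $g$ extends to an element of $G\subset\Rat$ and $g(K_i)\subseteq K_i$ with $K_i$ compact, iterating gives a bounded orbit; but an automorphism of a hyperbolic surface with a bounded orbit that is not the identity would typically be elliptic or have a fixed point structure incompatible with $g$ being loxodromic (here I would invoke the hypothesis that every element of $G\cap\Aut(\Chat)$ is loxodromic, plus hyperbolicity) — and the identity is excluded because $\id\notin\overline{G\cap\Aut(\Chat)}$. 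So this automorphism case must be ruled out or handled separately; in the remaining genuine case we get, for each such $g$, a constant $c_i(g)<1$.

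The main obstacle — and the step requiring the "completely new idea based on careful observations on the hyperbolic metric" that the introduction advertises — is upgrading the per-$g$ bounds $c_i(g)<1$ to a \emph{single} uniform constant $c_i<1$ valid for all $g\in G$ with $g(V_i)\subset V_i$, since $G$ may be infinitely generated and $\{g|_{V_i}\}$ need not be a normal family a priori on all of $V_i$. The plan to overcome this: observe that all such $g$ map $\overline{U_i}$ into the relatively compact set $U_i\subset V_i$ (because $d_h(g(z),K_i)\le d_h(z,K_i)\le 1$ forces $g(\overline{U_i})\subset\{d_h(\cdot,K_i)\le1\}\subset\subset V_i$, using $g(K_i)\subseteq K_i$ and the Schwarz--Pick contraction $d_h(g(z),g(w))\le d_h(z,w)$), hence $\{g|_{U_i}:g\in G, g(V_i)\subset V_i\}$ is a normal family on $U_i$ by Montel. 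Any locally uniform limit $g_\infty$ is either a holomorphic self-map of $V_i$ with $g_\infty(\overline{U_i})\subset\overline{U_i}$, giving $\Vert g_\infty'\Vert_h\le1$ with strict inequality unless $g_\infty$ is an automorphism — and the automorphism/identity case is again excluded by the loxodromic hypothesis and $\id\notin\overline{G\cap\Aut(\Chat)}$ together with the observation that a limit of such $g$ fixing a compact set would have to be distance-nonincreasing and fix $K_i$. By normality, if no uniform $c_i<1$ existed there would be a sequence $g_n$ with $\sup_{\overline{U_i}}\Vert g_n'\Vert_h\to1$, whose limit $g_\infty$ would achieve $\Vert g_\infty'(z_0)\Vert_h=1$ at some $z_0\in\overline{U_i}$, forcing $g_\infty\in\Aut(V_i)$ — a contradiction. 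This yields the desired $c_i<1$, and taking $c:=\max_i c_i$ would then feed into the proof that $G$ is expanding (via a standard argument comparing the hyperbolic and spherical metrics on the compact set $\bigcup_i\overline{U_i}$ and iterating along fibers).
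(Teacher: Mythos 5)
Your opening moves are sound and coincide with the paper's: Pick's theorem gives $\Vert g'\Vert_{h}\le 1$ on $V_{i}$, forward invariance of $P_{0}(G)$ gives $g(K_{i})\subset K_{i}$ and hence $g(\overline{U_{i}})\subset\{z\in V_{i}:d_{h}(z,K_{i})\le1\}$, a compact subset of $V_{i}$, and normality lets you extract a limit $g_{\infty}$ from a sequence $g_{n}$ with $\sup_{\overline{U_{i}}}\Vert g_{n}'\Vert_{h}\to1$. The genuine gap is at the decisive step: you conclude that $\Vert g_{\infty}'(z_{0})\Vert_{h}=1$ forces ``$g_{\infty}\in\Aut(V_{i})$ --- a contradiction'', attributing the contradiction to the hypotheses that every element of $G\cap\Aut(\Chat)$ is loxodromic and that $\id\notin\overline{G\cap\Aut(\Chat)}$. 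Neither hypothesis applies to $g_{\infty}$: it is merely a locally uniform limit on $V_{i}$ of the restrictions $g_{n}|_{V_{i}}$, where the $g_{n}$ may all have degree at least two on $\Chat$; $g_{\infty}$ need not lie in $G$, need not extend to a M\"obius map, and equality in Pick's theorem only makes $g_{\infty}$ a locally isometric covering self-map of $V_{i}$ (e.g.\ an irrational rotation), not an automorphism that your quoted hypotheses could see. The same unjustified exclusion already appears in your per-$g$ discussion (``equality iff automorphism'' is also not the correct dichotomy).

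This missing step is exactly where the paper's proof does its real work. There, hyperbolicity of $G$ is used to produce for each $g_{n}$ a fixed point $w_{n}\in P_{0}(G)\cap V_{i}$ (attracting basins meet $P_{0}(G)\cap V_{i}$, and $g_{n}(V_{i})\subset V_{i}$ forces period one), so the limit satisfies $g_{\infty}(w_{\infty})=w_{\infty}$ and is a local isometry; the classification of holomorphic self-maps of hyperbolic surfaces then rules out the attracting and escape cases and leaves finite order or irrational rotation, producing a sequence $h_{n}\in G$ with $h_{n}\rightrightarrows\id$ on $V_{i}$. Crucially, this does \emph{not} yet contradict $\id\notin\overline{G\cap\Aut(\Chat)}$, which concerns uniform convergence on all of $\Chat$ of M\"obius elements; the paper must therefore run a further argument: Vitali's theorem upgrades the convergence to the whole normality component $A_{0}\supset V_{i}$, one shows $A_{0}\neq\Chat$, picks $x_{0}\in\partial A_{0}\subset J(G)$, and uses inverse branches $\gamma_{n}$ of $h_{n}$ near $x_{0}$ --- normal because their images avoid the $G$-forward invariant set $\bigcup_{i}U_{i}$ --- to show $\gamma_{n}\rightrightarrows\id$ and hence that $(h_{n})$ is normal at $x_{0}$, contradicting $x_{0}\in\partial A_{0}$. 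Without some substitute for this chain (fixed point, classification, and the boundary/inverse-branch contradiction), the rotation-type limit is not excluded and the uniform constant $c_{i}<1$ is not obtained, so the proposal as written does not prove the claim.
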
 \vspace{-5mm}
\begin{proof}[Proof of Claim \ref{expandingness-claim}]
 Suppose for a contradiction that the claim
is false. Since $V_{i}$ is hyperbolic and $g:V_{i}\rightarrow V_{i}$
is holomorphic, it follows by  Pick's Theorem (\cite[Theorem 2.11]{MR2193309})
that $\Vert g'\left(z\right)\Vert_{h}\le1$, for each $z\in V_{i}$.
Hence, by our assumption, there exist $i\in\left\{ 1,\dots,r\right\} $
and sequences $\left(g_{n}\right)\in G^{\N}$ and $\left(z_{n}\right)\in U_{i}^{\N}$
such that $g_{n}\left(V_{i}\right)\subset V_{i}$, for each $n\in\N$,
and $\lim_{n}\Vert g_{n}'\left(z_{n}\right)\Vert_{h}=1$. We may assume
that $\lim_{n}z_{n}=z_{\infty}\in\overline{U_{i}}$ by passing to
a subsequence. Since each family of holomorphic maps between hyperbolic
surfaces is normal (\cite[Corollary 3.3]{MR2193309}), we may assume
that there exists a holomorphic map $g_{\infty }:V_{i}\rightarrow 
\Chat$ such that  $g_{n}\rightrightarrows g_{\infty}$ on $V_{i}$, where $\rightrightarrows$
denotes uniform convergence on compact subsets of $V_{i}$, and $g_{\infty}:V_{i}\rightarrow V_{i}$
is holomorphic. We show that for each $g_{n}$ there exists a fixed
point $w_{n}\in P_{0}\left(G\right)\cap V_{i}$. This is clear in
the case that the degree of $g_{n}$ is equal to one by our assumption
that each element in $G\cap\mathrm{Aut}(\Chat)$ is loxodromic.
We consider the case that the degree of $g_{n}$ is at least two.
Since $g_{n}$ is hyperbolic, for each $x\in V_{i}\cap P_{0}\left(G\right)$,
the $g_{n}$-orbit of $x$ converges to some attracting $p$-periodic
point $w_{n}\in P_{0}\left(G\right)\cap V_{i}$  of $g_{n}$. Since
$g_{n}\left(V_{i}\right)\subset V_{i}$, we conclude that $p=1$.
We may assume that $\lim_{n}w_{n}=w_{\infty}\in P_{0}\left(G\right)\cap V_{i}$.
It then follows that $g_{\infty}\left(w_{\infty}\right)=w_{\infty}$
and $\Vert g_{\infty}'\left(z_{\infty}\right)\Vert_{h}=1$. From the
Classification Theorem (\cite[Theorem 5.2]{MR2193309}) for holomorphic
maps between hyperbolic surfaces, we have four possibilities for $g_{\infty}:V_{i}\rightarrow V_{i}$,
namely, attracting, escape, finite order and irrational rotation.
By Pick's Theorem and the fact that $\Vert g_{\infty}'\left(z_{\infty}\right)\Vert_{h}=1$
it follows that $g_{\infty}$ is a local isometry, which implies that
$\Vert g_{\infty}'\left(w_{\infty}\right)\Vert_{h}=1$. Thus, $g_{\infty}$
is not attracting. Escape is impossible since we have a fixed point.
We conclude that we have finite order or irrational rotation, hence,
in every case we have that there exists a sequence $\left(m_{j}\right)_{j\in\N}\in\N^{\N}$
such that $g_{\infty}^{m_{j}}\rightrightarrows\id_{V_{i}}$ on $V_{i}$.
Combining with $g_{n}\rightrightarrows g_{\infty}$, we conclude that
there exists $\left(h_{n}\right) \in G^{\N}$ such that $h_{n}\rightrightarrows\id_{V_{i}}$
on $V_{i}$. Let $A:=\big\{ z\in\Chat:\left(h_{n}\right)\mbox{ is normal in  a neighborhood of }z\big\} $
and let $A_{0}$ denote the connected component of $A$ containing
$V_{i}$. By Vitali's theorem \cite[ Theorem 3.3.3]{MR1128089} we conclude that $h_{n}\rightrightarrows\id_{A_{0}}$
in $A_{0}$. By our assumption that $\id\notin\overline{G\cap\mathrm{Aut}(\Chat)}$,
it follows that $A_{0}\neq\Chat$. 

We have  $\partial A_{0}\subset J\left(G\right)\subset\Chat\setminus P_{0}\left(G\right)$.
Let $x_{0}\in\partial A_{0}$ and choose $s>0$ such that $B\left(x_{0},s\right)\subset\Chat\setminus\bigcup_{i=1}^r U_{i}$.
There exists $y_{0}\in A_{0}$ and $s_{0}>0$ such that $B\left(y_{0},s_{0}\right)$
is a relatively compact subset of $B\left(x_{0},s\right)\cap A_{0}$.
Since $h_{n}\rightrightarrows\id_{A_{0}}$, we conclude that there
exists  $n_{0}\in\N$ such that for $n\ge n_{0}$ we have $h_{n}\left(B\left(y_{0},s_{0}\right)\right)\subset B\left(x_{0},s\right)$.
Now choose inverse branches $\gamma_{n}:B\left(x_{0},s\right)\rightarrow\Chat$
of $h_{n}$, that is, $h_{n}\circ\gamma_{n}=\id_{B\left(x_{0},s\right)}$
such that $\gamma_{n}\left(h_{n}\left(y_{0}\right)\right)=y_{0}$.
Since $\bigcup_{i=1}^r U_{i}$ is $G$-forward invariant, 
we have that $\gamma_{n}\left(B\left(x_{0},s\right)\right)\cap\bigcup_{i=1}^r U_{i}=\emptyset$,
which implies that $\left(\gamma_{n}\right)_{n\in\N}$ is normal in
$B\left(x_{0},s\right)$. From this and the equivalence 
between normality and equicontinuity \cite[ Theorem 3.3.2]{MR1128089}, we conclude that there exist
$\epsilon_{1},\epsilon_{2}>0$ such that for all $n\in\N$ we have
$\gamma_{n}\left(B\left(h_{n}\left(y_{0}\right),\epsilon_{1}\right)\right)\subset B\left(y_{0},\epsilon_{2}\right)\subset B\left(y_{0},s_{0}\right)$.
Now suppose that $\gamma_{n_{j}}\rightrightarrows\gamma_{\infty}$
on $B\left(x_{0},s\right)$, for some sequence $\left(n_{j}\right)$
tending to infinity and $\gamma_{\infty}:B(x_0,s)\rightarrow \Chat$ holomorphic. By \cite[p154]{Ah79} we obtain that $\gamma_{\infty}=\id$
on $B\left(y_{0},\epsilon_{3}\right),$ for some $\epsilon_{3}>0$
such that $B\left(y_{0},\epsilon_{3}\right)\subset B\left(h_{n}\left(y_{0}\right),\epsilon_{1}\right)$
for sufficiently large $n$. 
Hence, $\gamma_{n}\rightrightarrows\id_{B\left(x_{0},s\right)}$.
Combining this with $h_{n}\circ\gamma_{n}=\id_{B\left(x_{0},s\right)}$, 
we deduce that there is  $s_{1}<s_{0}$ such that  $h_{n}\left(B\left(x_{0},s_{1}\right)\right)\subset B\left(x_{0},s_{0}\right)$, for  sufficiently
large $n$. 
Hence, $\left(h_{n}\right)$ is normal in $B\left(x_{0},s_{1}\right)$ contradicting  the definition of $A_{0}$. The  claim
follows. 
\end{proof}
\vspace{-7pt}
We now continue the proof of  Proposition  \ref{prop:nicelyexpanding-characterisation}.  With $r\in \N$ and $U_1,\dots ,U_r$ from above, we set $W:=\bigcup_{i=1}^r U_{i}$. Next, we  verify that  there exists a compact set $K_{1}\subset W $
such that $f_{\omega}\left(W\right)\subset K_{1}$, for each $\omega\in I^{r}$.
To prove this, note that for each $i\in\left\{ 1,\dots,r\right\} $
and $\omega\in I^{r}$, there exist $j,k,q\in\left\{ 1,\dots,r\right\} $
with $k\le q$ such that $f_{\omega_{k-1},\dots,\omega_{1}}\left(U_{i}\right)\subset U_{j}$
and $f_{\omega_{q},\dots,\omega_{k}}\left(U_{j}\right)\subset U_{j}$,
where we set $f_{\emptyset}:=\id_{\Chat}$. Now it follows from Claim \ref{expandingness-claim} that, for each $j\in\left\{ 1,\dots,r\right\} $, there exists $0<c_{j}<1$ such that $f_{\omega_{q},\dots,\omega_{k}}\left(U_{j}\right)\subset\left\{ z\in V_{j}:d_{h}\left(z,P_{0}\left(G\right)\right)\le c_{j}\right\} $,
which is a compact subset of $U_{j}$. Hence, by Pick's Theorem
and using that $P_{0}\left(G\right)$ is $G$-forward invariant, we obtain
that, for each  $i\in\left\{ 1,\dots,r\right\}$ and $\omega \in I^r$, 
\begin{equation}
f_{\omega}\left(U_{i}\right)\subset \bigcup_{j=1}^r \big\{ z\in V_{j}:d_{h}\left(z,P_{0}\left(G\right)\right)\le\max\left\{ c_1,\dots,c_r\right\} \big\} :=K_{1}. \label{existenceK1}
\end{equation}
We have thus shown that  there exists a compact set  $K_{1}\subset W $
such that $f_{\omega}\left(W\right)\subset K_{1}$, for each $\omega\in I^{r}$.

\vspace{-5pt}
The next step is prove the existence of a compact set $K_{2}\subset\Chat\setminus\overline{W}$,
such that $f_{\omega}^{-1}\big(\Chat\setminus\overline{W}\big)\subset K_{2}$
for each $\omega\in I^{r}$. To prove this,  we verify that $a:=\inf_{\omega\in I^{r}}d\big(f_{\omega}^{-1}\big(\Chat\setminus\overline{W}\big),W\big)>0$,  where $d(A,B):=\inf_{a\in A, b\in B}d(a,b)$.
The claim then  follows by setting $K_{2}:=\left\{ z:d\left(z,W\right)\ge a\right\} $.
Assume for a contradiction that $a=0$. Then  there exist $\left(x_{n}\right)\in\big(\Chat\setminus\overline{W}\big)^{\N}$,
$(\omega^n)\in\left(I^{r}\right)^{\N}$
and $\left(y_{n}\right)_{n\in\N}$ with $y_{n}\in f_{\omega^{n}}^{-1}\left(x_{n}\right)$, 
for each $n\in\N$, such that $\lim_{n}d\left(y_{n},W\right)=0$.
After passing to a subsequence, we may assume  that there exists 
$y_{0}\in \overline{W}$ such that $\lim_{n}y_{n}=y_{0}$. By (\ref{existenceK1}), we conclude that $f_{\omega^{n}}\left(y_{0}\right)\in K_{1}$ for each $n\in \N$. Since $y_{0}\in F\left(G\right)$, we may assume
that $f_{\omega^{n}}\rightrightarrows f$,  as $n$ tending to infinity,  for some holomorphic
map $f$ in a neighborhood of $y_{0}$. Hence, $f\left(y_{0}\right)\in K_{1}\subset W$. On the other hand,
we have $f_{\omega^n}\left(y_{n}\right)=x_{n}\in\Chat\setminus W$,
for each $n\in\N$, which implies that $f\left(y_{0}\right)\in\Chat\setminus W$.
This contradiction shows that $a>0$. 

\vspace{-5pt}
We now turn to the final step of this proof. We denote by $A_{1},\dots,A_{\ell}$,
$\ell\in\N$, the connected components of $\Chat\setminus\overline{W}$ for which $A_{i}\cap J\left(G\right)\neq\emptyset.$
Clearly, we have $J\left(G\right)\subset\bigcup_{i=1}^{\ell}A_{i}$. 
Let $z\in\Jpre\left(G\right),$ $z\in J_{\tau}$, $\tau\in I^{\N}$,
and set $\omega:=\left(\tau_{1},\dots,\tau_{r}\right)$. There exist
$j_{1},j_{2}\in\N$ such that $z\in A_{j_{1}}$ and $f_{\omega}\left(z\right)\in A_{j_{2}}$. For a holomorphic map $h: S_1 \rightarrow S_2$ between hyperbolic  Riemann surfaces $S_1$ and $S_2$ and $w\in S_1$, we use   $\Vert h'(w) \Vert_{S_1,S_2}$ to denote the operator norm of the derivative $Dh(w):T_w S_1 \rightarrow T_{h(w)} S_2$ with respect to the norms  $\Vert \cdot \Vert_{S_1}$ and $\Vert \cdot \Vert_{ S_2}$ on the  tangent spaces $T S_1$ and $T S_2$,  given by the Poincar\'{e}  metrics. 
Now we consider $\Vert f_{\omega}'\left(z\right)\Vert_{A_{j_{1}},A_{j_{2}}}$.   Let $B_{j_{2}}$ denote the connected
component of $f_{\omega}^{-1}\left(A_{j_{2}}\right)$ which contains
$z$. By the previous step,  we have  $B_{j_{2}}\subset A_{j_{1}}\cap K_{2}$.
For each $j\in\left\{ 1,\dots,\ell\right\} $ let $D_{j}$ be an open connected
subset of $A_{j}$ such that $\overline{D_{j}}$ is compact in $A_{j}$
and $A_{j}\cap K_{2}\subset D_{j}$. For domains $\Omega_{1}\subset\Omega_{2}\subset\Chat$,
let $\iota_{\Omega_{1},\Omega_{2}}:\Omega_{1}\rightarrow\Omega_{2}$
denote the inclusion map. Note that, by Pick's Theorem, we have
that,  for each $j_{1},j_{2}\in\left\{ 1,\dots,r\right\} $, there exists
a constant $c_{j_{1},j_{2}}:=\sup_{z\in A_{j_{1}}\cap K_{2}}\|\iota_{D_{j_{1}},A_{j_{1}}}'\left(z\right)\|_{D_{j_{1}},A_{j_{1}}}<1$.
Consequently, for each $z\in B_{j_{2}}$, we have that 
\begin{align*}
\Vert\iota_{B_{j_{2}},A_{j_{1}}}'\left(z\right)\Vert_{B_{j_{2}},A_{j_{1}}} & =\Vert(\iota_{D_{j_{1}},A_{j_{1}}}\circ\iota_{B_{j_{2}},D_{j_{1}},})'\left(z\right)\Vert_{B_{j_{2}},A_{j_{1}}}\\
 & \le\Vert\iota_{D_{j_{1}},A_{j_{1}}}'\left(z\right)\Vert_{D_{j_{1}},A_{j_{1}}}\cdot\Vert\iota_{B_{j_{2}},D_{j_{1}}}'\left(z\right)\Vert_{B_{j_{2}},D_{j_{1}}}\le c_{j_{1},j_{2}}\cdot1.
\end{align*}
Finally, since $f_{\omega}:B_{j_{2}}\rightarrow A_{j_{2}}$ is a covering
map, it follows by Pick's Theorem that $f_{\omega}$ is locally
a Poincar\'{e} isometry, which implies that for $ v\in T_{z}B_{j_{2}}$ with $v\neq 0$ 
that $\Vert Df_{\omega}(z)(v)\Vert_{A_{j_{2}}}/\Vert v\Vert_{B_{j_{2}}}=1$.
Hence, we obtain that for each $ v\in T_{z}A_{j_{1}}$ with $v\neq 0$, 
\[
\Vert f_{\omega}'\left(z\right)\Vert_{A_{j_{1}},A_{j_{2}}}=\frac{\Vert Df_{\omega}(z)(v)\Vert_{A_{j_{2}}}}{\Vert v\Vert_{A_{j_{1}}}}=\frac{\Vert Df_{\omega}(z)(v)\Vert_{A_{j_{2}}}}{\Vert v\Vert_{B_{j_{2}}}}\cdot\frac{\Vert v\Vert_{B_{j_{2}}}}{\Vert v\Vert_{A_{j_{1}}}}= \|\iota_{B_{j_{2}},A_{j_{1}}}'\left(z\right)\|_{B_{j_{2}},A_{j_{1}}}^{-1}  \ge c_{j_{1},j_{2}}^{-1}>1.
\]
Using the same argument, one verifies that for each $z\in\Jpre\left(G\right),$
$z\in J_{\tau}$, $\tau\in I^{\N}$, $\ell\in\N$ with $\ell<r$, and $\rho:=\left(\tau_{1},\dots,\tau_{\ell}\right)$, if $z\in A_{j_{1}}, f_{\rho }(z)\in A_{j_{2}}$ then we have $\Vert f_{\rho}'\left(z\right)\Vert_{A_{j_{1}},A_{j_{2}}}\ge1$. 

To finish the proof, we remark that the  Poincar\'{e} metric and the
spherical metric are equivalent on the compact subset $J(G)\cap A_{i}$ of $A_{i}$,
which proves that there exist a constant $\lambda >1$ and a constant $C>0$ such that,  for
each $\left(\tau,z\right)\in J^{\tau}$, $\tau\in I^{\N}$ and  $n\in\N$,  we have $\Vert\left(\tilde{f}^{n}\right)'\left(\tau,z\right)\Vert\ge C\lambda^{n}$.
Finally, continuity of  $(\tilde{f}^n)'$ completes the proof of  Proposition  \ref{prop:nicelyexpanding-characterisation}.  
\end{proof}


In the following  lemma we give a sufficient condition for  an infinitely generated
rational semigroup to be nicely expanding in terms of the open set condition. 
\begin{lem}
\label{lem:expandingness-osc}Let $I$ be a countable set endowed
with the discrete topology. Let $G=\left\langle f_{i}:i\in I\right\rangle $
denote a hyperbolic rational semigroup. Suppose that each element in $G\cap\Aut\big(\Chat\big)$
is loxodromic,  there exists a non-empty compact, $G$-forward invariant subset $P_{0}\left(G\right)\subset F\left(G\right)$
 and that $\left\{ f_{i}:i\in I\right\} $ satisfies
the open set condition. Then $G$ is nicely expanding. \end{lem}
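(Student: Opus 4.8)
The plan is to reduce the assertion to Proposition~\ref{prop:nicelyexpanding-characterisation}. If $I$ is finite, then $G$ is a finitely generated hyperbolic rational semigroup, each of whose M\"obius elements is loxodromic, admitting a non-empty compact $G$-forward invariant set $P_{0}(G)\subset F(G)$; hence $G$ is nicely expanding by Lemma~\ref{lem:hyperbolic-lox-pzero-implies-expanding}, and we are done. So from now on I would assume that $I$ is infinite, so in particular $\card(I)\ge 2$. Comparing the hypotheses with statement~(2) of Proposition~\ref{prop:nicelyexpanding-characterisation}, three of its four conditions ($G$ hyperbolic, each element of $G\cap\Aut(\Chat)$ loxodromic, existence of a non-empty compact $G$-forward invariant $P_{0}(G)\subset F(G)$) are exactly our assumptions; thus $G$ is nicely expanding once we verify that $\id\notin\overline{G\cap\Aut(\Chat)}$, the closure being taken in $\Aut(\Chat)$.

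To prove this last point I would argue by contradiction: suppose there is a sequence $(g_{n})$ in $G\cap\Aut(\Chat)$ with $\dist(g_{n},\id)\to 0$; then also $\dist(g_{n}^{-1},\id)=\dist(g_{n},\id)\to 0$. Let $U\neq\emptyset$ be an open set as in the open set condition, and write $g_{n}=f_{\omega^{n}}$ with $\omega^{n}=(\omega^{n}_{1},\dots,\omega^{n}_{a_{n}})\in I^{a_{n}}$ (note that $\deg(g_{n})=1$ forces each $f_{\omega^{n}_{k}}$ to lie in $\Aut(\Chat)$). Iterating the inclusions $f_{i}^{-1}(U)\subset U$, $i\in I$, given by the open set condition, one obtains $g_{n}^{-1}(U)=f_{\omega^{n}}^{-1}(U)\subset f_{\omega^{n}_{1}}^{-1}(U)$. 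Since $\card(I)\ge 2$, choose $j_{n}\in I\setminus\{\omega^{n}_{1}\}$ taken from a fixed two-element subset of $I$; then $f_{j_{n}}^{-1}(U)$ is a non-empty open set (as $f_{j_{n}}$ is surjective) which, by the open set condition, is disjoint from $f_{\omega^{n}_{1}}^{-1}(U)$, hence from $g_{n}^{-1}(U)$. Passing to a subsequence along which $j_{n}=j$ is constant, and using that $g_{n}$ is a homeomorphism of $\Chat$, we conclude that the closed set $\overline{g_{n}^{-1}(U)}=g_{n}^{-1}(\overline{U})$ is contained in the closed set $\Chat\setminus f_{j}^{-1}(U)$ for every $n$ in this subsequence.

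The final step is to invoke the uniform convergence $g_{n}^{-1}\to\id$ on $\Chat$: this forces $g_{n}^{-1}(\overline{U})\to\overline{U}$ in the Hausdorff metric on the closed subsets of $\Chat$, so the limit $\overline{U}$ must again lie in the closed set $\Chat\setminus f_{j}^{-1}(U)$; but $\emptyset\neq f_{j}^{-1}(U)\subset U\subset\overline{U}$, which is absurd. This establishes $\id\notin\overline{G\cap\Aut(\Chat)}$, and Proposition~\ref{prop:nicelyexpanding-characterisation} then finishes the proof. The only genuinely infinite-generation issue here — and the reason one cannot simply exhaust $G$ by finitely generated subsemigroups and quote Lemma~\ref{lem:hyperbolic-lox-pzero-implies-expanding} — is that the expansion constants $C,\lambda$ obtained for finite subsystems need not be uniform over all of $G$; passing through the M\"obius-element characterization of Proposition~\ref{prop:nicelyexpanding-characterisation} is precisely what avoids this, and the one nontrivial input is the extraction of a contradiction from the open set condition via the disjointness of the sets $f_{i}^{-1}(U)$ together with the Hausdorff convergence just used.
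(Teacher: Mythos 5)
Your proof is correct, but the way you obtain the key point $\id\notin\overline{G\cap\Aut(\Chat)}$ is genuinely different from the paper's. Both arguments settle the finite case by Lemma \ref{lem:hyperbolic-lox-pzero-implies-expanding} and conclude via Proposition \ref{prop:nicelyexpanding-characterisation}, so the lemma reduces to excluding $\id$ from $\overline{G\cap\Aut(\Chat)}$. The paper does this analytically: it first reduces to $\card(J(G))\ge 3$, arranges the open set condition to hold with a bounded plane open set, proves via Koebe's distortion theorem the uniform decay (\ref{eq:expanding-lemma-derivativesthendtoinfinity}) of the generators' inverse derivatives on $J(G)$, and then uses normal families and the identity theorem to show that $\big\{h^{-1}:h\in G\cap\Aut(\Chat)\big\}$ is closed in $\Rat$, which together with the loxodromy hypothesis excludes the identity. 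You instead read the exclusion off the combinatorics of the open set condition: writing $g=f_{\omega}$ and iterating $f_{i}^{-1}(U)\subset U$ gives $g^{-1}(U)\subset f_{\omega_{1}}^{-1}(U)$ (correct with the paper's convention $f_{\omega}=f_{\omega_{n}}\circ\cdots\circ f_{\omega_{1}}$, so that the inverse of the first letter is applied last), hence $g^{-1}(U)$ is disjoint from the nonempty open set $f_{j}^{-1}(U)\subset U$ for any $j\ne\omega_{1}$; since $\dist\left(g^{-1},\id\right)=\dist\left(g,\id\right)$ for $g\in\Aut(\Chat)$, evaluating at a point of $f_{j}^{-1}(U)$ (or your Hausdorff-metric formulation, which amounts to the same thing) yields a uniform positive lower bound for $\dist\left(g,\id\right)$ over all $g\in G\cap\Aut(\Chat)$, once $j$ is drawn from a fixed two-element subset of $I$ and one passes to a constant subsequence; so no sequence in $G\cap\Aut(\Chat)$ can converge to $\id$. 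Your route is shorter and needs neither Koebe's theorem, nor the reduction to $\card(J(G))\ge3$, nor any normal-family argument at this step; what the paper's longer route buys are stronger intermediate facts (the generator-wise derivative decay (\ref{eq:expanding-lemma-derivativesthendtoinfinity}) and the closedness in $\Rat$ of the set of inverses of the M\"obius elements of $G$), which are not needed for the lemma itself, so your simplification is a complete and legitimate proof.
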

\begin{proof}
By Lemma \ref{lem:hyperbolic-lox-pzero-implies-expanding} we may assume that $I=\N$. By Lemma \ref{lem:atmosttwoelements-expandingness} (3), we may assume
that $\card\left(J\left(G\right)\right)\ge3$. Since $G$ is hyperbolic,
we may assume that $P\left(G\right)\subset P_{0}\left(G\right)$.
Let us first show that we can assume without loss of generality that
$\left\{ f_{i}:i\in I\right\} $ satisfies the open set condition
with respect to an open set $V$ such that $\overline{V}\subset\mathbb{C}$
compact. To prove this, first note that there exists a neighborhood
$W$ of $P_{0}\left(G\right)$ in $F\left(G\right)$ which is $G$-forward
invariant. By conjugating $G$ with an element of $\Aut\big(\Chat\big)$
we may also assume that $W$ contains infinity. Now, for any $g\in G$, since $g\left(\overline{W}\right)\subset\overline{W}$
we have that $g^{-1}\big(\Chat\setminus\overline{W}\big)\subset\Chat\setminus\overline{W}$.
Finally, if $G$ satisfies the open set condition with respect to
some open set $V'\subset\Chat$, then  $G$ satisfies
the open set condition with respect to $V:=V'\cap\big(\Chat\setminus\overline{W}\big)$
and $\overline{V}\subset\mathbb{C}$. 

Our next  aim is to verify that
\begin{equation}
\lim_{n\rightarrow\infty}\inf_{z\in f_{n}^{-1}\left(J\left(G\right)\right)}\Vert f_{n}'\left(z\right)\Vert=\infty.\label{eq:expanding-lemma-derivativesthendtoinfinity}
\end{equation}
Since $\card\left(J\left(G\right)\right)\ge3$, the Julia set  $J\left(G\right)$
is the smallest non-empty compact $G$-backward invariant subset of $\Chat$  (\cite{MR1397693, MR1767945}).
Hence, $J\left(G\right)\subset\overline{V}$. Since $P_{0}\left(G\right)\subset F\left(G\right)$,
 there exists $r_{1}>0$ such that  $\overline{B\left(x,r_{1}\right)}\subset\Chat\setminus P_{0}\left(G\right)$, for all $x\in J\left(G\right)$.
Using that $J\left(G\right)$ is compact we deduce the existence of $r_{2}>0$ with the property that,  for each $x\in J\left(G\right)$, 
there exists  $y_{x}\in V$ such that $B\left(y_{x},r_{2}\right)\subset B\left(x,r_{1}\right)\cap V$.
By the open set condition and Koebe's distortion theorem, it follows that,  for each $\epsilon>0$, 
there exists  $n_{0}$ such that $\Vert\gamma'\left(x\right)\Vert\le\epsilon$, for all $n\ge n_{0}$,  $x\in J\left(G\right)$
and for all inverse branches $\gamma$ of $f_{n}$ on $B\left(x,r_{1}\right)$.  The proof
of (\ref{eq:expanding-lemma-derivativesthendtoinfinity}) is complete.

We will now verify that 
$\id\notin\overline{G\cap\mathrm{Aut}\big(\Chat\big)}$, 
from which the lemma follows by  Proposition  \ref{prop:nicelyexpanding-characterisation}.  For the proof,  it suffices to show that $H:=\big\{ h^{-1}:h\in\mathrm{Aut}\big(\Chat\big)\cap G\big\} $ is closed in $\Rat$. Let $h\in\overline{H}$ and $\left(h_{n}\right)\in H^{\N}$
with $h_{n}\rightrightarrows h$ on $\Chat$ be given, where 
\[
h_{n}=f_{\omega_{1}^n}^{-1}\circ\dots\circ f_{\omega_{\ell_{n}}^n}^{-1},\quad  \omega^{n}=(\omega_{1}^{n},\dots,\omega_{\ell_{n}}^{n}) \in I^{\ell_{n}}, \ell_n \in \N.
\]
In order to show that $h\in H$, we will verify that $\sup \ell_n<\infty$ and that there exists a finite set $F\subset I$,  such that  $\omega^{n}\in F^{\ell_{n}}$, for all $n\in\N$. To prove this, we will show that each of the following assumptions (1) and (2)   gives a contradiction:
\begin{enumerate}
\item $\sup \ell_n=\infty$  and there exists a finite set $F\subset I$ such that  $\omega^{n}\in F^{\ell_{n}}$, for all $n\in \N$. 
\item There exists a sequence $\left(j_{n}\right)\in\N^{\N}$ with $j_{n}\le \ell_{n}$
such that $\lim_{n}\omega_{j_{n}}^{n}=\infty$.  
\end{enumerate}

Suppose for a contradiction that (1) holds.  Set $G_{F}:=\left\langle f_{i}:i\in F\right\rangle $.
We may assume that $\card\left(J\left(G_{F}\right)\right)\ge3$. Since
$G_{F}$ is a finitely generated hyperbolic rational semigroup, we have that $G_{F}$ is expanding with respect to $\left\{ f_{i}:i\in F\right\} $
by Lemma \ref{lem:hyperbolic-lox-pzero-implies-expanding}. Since $\sup l_n=\infty$, we have $h'=0$ on $J\left(G_{F}\right)$. Consequently, since $J\left(G_{F}\right)$ is perfect (\cite{MR1397693, MR1767945}),   the identity theorem gives that $h$ is a constant function, which contradicts the continuity of the degree function. 

To derive a contradiction from (2), we  assume
that $\lim_{n}\omega_{j_{n}}^{n}=\infty$. By (\ref{eq:expanding-lemma-derivativesthendtoinfinity})
we conclude that 
\begin{equation}
\lim_{n\rightarrow\infty}\sup_{z\in J\left(G\right)}\big\Vert\big(f_{\omega_{j_{n}}^{n}}^{-1}\big)'\left(z\right)\big\Vert=0.\label{eq:expandinglemma-inversederivativestendtozero}
\end{equation}
To deduce a contradiction, let $W$ denote a $G$-forward invariant relatively compact open neighborhood of $P_{0}\left(G\right)$ in $F\left(G\right)$. Then we have $g^{-1}\big(\Chat\setminus\overline{W}\big)\subset\Chat\setminus\overline{W}$,
for each $g\in G$, which implies that $H$ is normal in the  neighborhood $\Chat \setminus \overline W$ of $J\left(G\right)$.
After passing to a subsequence, combining (\ref{eq:expandinglemma-inversederivativestendtozero})
with the identity theorem gives  that $f_{\omega_{j_{n}}^{n}}^{-1}\rightrightarrows c_A$
on $A$, for each connected component $A$ of $\Chat\setminus\overline{W}$ and some $c_A\in J(G)$. Writing $h_{n}=r_{n}f_{\omega_{j_{n}}^{n}}^{-1}s_{n}$
with $r_{n},s_{n}\in H\cup\left\{ \id\right\} $,  for each $n\in\N$, we may assume
that  $r_{n}\rightrightarrows r$ and 
$s_{n}\rightrightarrows s$ 
in the  neighborhood $\Chat\setminus\overline{W}$ of $J\left(G\right)$. Consequently, $h$ is a
constant function, which gives the desired contradiction.
\end{proof}
Regarding the dynamics on the Fatou set of a nicely expanding rational
semigroup, we prove the following lemma, which will be
useful in Section \ref{sec:Inducing-Methods}.
\begin{lem}
\label{lem:fatou-forward-dynamics}Let $G=\langle f_{i}:i\in I\rangle$ be a nicely expanding rational semigroup with $G$-forward invariant set $P_{0}(G)$.
Suppose that $\card(J(G))>1$. Then,  for each $\omega \in I^{\Bbb{N}}$ and  $x\in F_{\omega }$, 
we have  $\lim_n{d(f_{\omega |_{n}}(x), P_{0}(G))}= 0$ and 
each limit function of $( f_{\omega |_{n}}) _{n\in \mathbb{N}}$ in a connected neighborhood of 
$x$ in $F_{\omega }$ is a constant function whose value is in $P_{0}(G)$.   

\end{lem}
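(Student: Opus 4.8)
The plan is to exploit the expanding structure together with the characterization of nicely expanding semigroups from Proposition \ref{prop:nicelyexpanding-characterisation}. First I would fix $\omega \in I^{\N}$ and $x \in F_{\omega}$, and let $D$ be a connected neighborhood of $x$ contained in $F_{\omega}$; by definition of $F_{\omega}$ the family $(f_{\omega|_n})_{n\in\N}$ is normal on $D$. The key observation is that, after conjugating by a M\"obius transformation, we may arrange that the compact $G$-forward invariant set $P_0(G) \subset F(G)$ is contained in a relatively compact, $G$-forward invariant open neighborhood $W \subset F(G)$ with $\overline{W} \subset \C$; the existence of $W$ uses that $G$ is hyperbolic and $P_0(G) \subset F(G)$, and is already extracted in the proof of Lemma \ref{lem:expandingness-osc}. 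Then $g^{-1}(\Chat \setminus \overline{W}) \subset \Chat \setminus \overline{W}$ for every $g \in G$, equivalently $g(\overline{W}) \subset \overline{W}$, so in particular $f_{\omega|_n}(\overline{W}) \subset \overline{W}$ for all $n$.

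Next I would show that every limit function of $(f_{\omega|_n})$ on $D$ is constant. Suppose $f_{\omega|_{n_k}} \rightrightarrows \varphi$ on $D$ for some subsequence, with $\varphi : D \to \Chat$ holomorphic. If $\varphi$ were non-constant, then by the identity theorem $\varphi$ is an open map, so $\varphi(D)$ is open; pick a point $y \in D$ with $\|\varphi'(y)\| > 0$. But the expanding property gives constants $C > 0$, $\lambda > 1$ with $\|(f_{\omega|_n})'(z)\| \ge C\lambda^n$ only on $J(\tilde f)$, so this alone is not the obstruction — instead, I would argue via the derivative bound in reverse: if $\varphi$ is non-constant near $y$, then $f_{\omega|_{n_k}}$ maps a small disc around $y$ with bounded distortion onto sets of comparable diameter, and composing such a map at stage $n_k$ with one further long block of the shifted sequence would, using expansion along fibers near the Julia set together with backward invariance, force the image to escape $\overline{W}$ — contradicting $f_{\omega|_n}(\overline W) \subset \overline W$ once we know $D$ meets $\overline W$. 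To make this clean, I would instead take the standard route: since $(f_{\omega|_n})$ is normal on $F_\omega$ and the orbit stays in the compact set $\overline W$ (once $D \subset \overline W$, which we may assume by shrinking $D$ and noting $F_\omega \supset W$), the limit $\varphi$ has image in $\overline W \subset F(G)$; if $\varphi$ is non-constant, pick $z_0 \in D$ and a small disc $B$ with $\varphi(B) \subset W$ of positive diameter. Applying $\tilde f^{m}$ to the fiber data, i.e. precomposing the relation $f_{\omega|_{n_k+m}} = f_{\sigma^{n_k}\omega|_m} \circ f_{\omega|_{n_k}}$, and using that on any neighborhood meeting $J(G)$ the maps expand, one derives that $\varphi$ cannot be an open map without the orbit leaving $F(G)$; the precise contradiction is obtained exactly as in the argument following Claim \ref{expandingness-claim}, using Pick's theorem on the hyperbolic components of $F(G)$ that meet $P_0(G)$.

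Finally, having established that each limit function is a constant $c \in \overline W \subset F(G)$, I would identify $c \in P_0(G)$ and deduce $d(f_{\omega|_n}(x), P_0(G)) \to 0$. For this, observe that the value $c$ is a limit point of the forward orbit of $x$; since $P_0(G)$ is $G$-forward invariant and compact, and since on the finitely many Fatou components $V_1, \dots, V_r$ meeting $P_0(G)$ the maps $f_{\omega|_n}$ are eventually contractions toward $P_0(G)$ in the Poincar\'e metric — this is precisely Claim \ref{expandingness-claim}, which shows $\sup_{z\in U_i}\|g'(z)\|_h \le c_i < 1$ whenever $g(V_i) \subset V_i$ — the orbit of $x$ is drawn into $U := \bigcup_{i=1}^r U_i$ after finitely many steps (using the compact set $K_1$ with $f_\omega(W) \subset K_1$ from \eqref{existenceK1}) and thereafter the Poincar\'e distance to $P_0(G)$ decays geometrically. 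Equivalence of the Poincar\'e and spherical metrics on the relevant compacta then yields $d(f_{\omega|_n}(x), P_0(G)) \to 0$, and since the sequence $(f_{\omega|_n}(x))$ accumulates only on $P_0(G)$, every limit function on a connected neighborhood of $x$ is a constant with value in $P_0(G)$.

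The main obstacle I anticipate is the first part — proving limit functions are constant — because the expanding hypothesis is a statement about $J(\tilde f)$, not about the Fatou fibers, so one cannot directly differentiate the orbit. The cleanest fix is to reuse the hyperbolic-geometry machinery of Proposition \ref{prop:nicelyexpanding-characterisation}: work inside the Fatou components $V_i$ meeting $P_0(G)$, push the orbit into them using the compact sets $K_1, K_2$, and invoke Claim \ref{expandingness-claim} both for the constancy (a non-constant limit would violate the strict Poincar\'e contraction $c_i < 1$) and for the convergence rate.
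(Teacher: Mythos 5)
Your overall strategy (push everything through the hyperbolic-metric machinery of Proposition \ref{prop:nicelyexpanding-characterisation} and Claim \ref{expandingness-claim}, together with a $G$-forward invariant neighborhood $W$ of $P_{0}(G)$) is the right circle of ideas, and it is indeed what the paper uses; but as written your argument has a genuine gap at its core. You reduce to the situation ``$D\subset\overline{W}$, which we may assume by shrinking $D$ and noting $F_{\omega}\supset W$'', and you later assert that the orbit of $x$ ``is drawn into $U=\bigcup_{i}U_{i}$ after finitely many steps (using the compact set $K_{1}$ with $f_{\omega}(W)\subset K_{1}$ from (\ref{existenceK1}))''. Neither step is available: $x$ is an \emph{arbitrary} point of $F_{\omega}$ (which contains $F(G)$ and may even meet $J(G)$), so nothing places $x$, a neighborhood $D$ of $x$, or any finite piece of its orbit inside $\overline{W}$; and (\ref{existenceK1}) only controls orbits of points that are already in $W$. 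The statement that the orbit eventually enters a neighborhood of $P_{0}(G)$ is precisely the conclusion of Lemma \ref{lem:fatou-forward-dynamics}, so invoking it (or $K_{1}$) at this stage is circular. Likewise, your first attempt (``bounded distortion onto sets of comparable diameter ... force the image to escape $\overline{W}$'') is not carried out, and the deferral ``the precise contradiction is obtained exactly as in the argument following Claim \ref{expandingness-claim}'' skips the actual mechanism.

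For comparison, the paper's proof does two things you are missing. First, for constancy of limit functions: if a subsequence $g_{j}=f_{\omega|_{n_{j}}}$ converges to a \emph{non-constant} $h$ near $x$, then Claim \ref{expandingness-claim} forces $g_{j}(x)\in\Chat\setminus\bigcup_{s=1}^{r}V_{s}$ for every $j$ (otherwise the strict Poincar\'e contraction on the $U_{i}$ would make the limit constant), and then the final-step argument of Proposition \ref{prop:nicelyexpanding-characterisation} (Poincar\'e expansion on the components of $\Chat\setminus\overline{W}$ meeting $J(G)$) yields $\Vert g_{j}'(x)\Vert\rightarrow\infty$, contradicting local uniform convergence to the holomorphic map $h$. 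Second, and this is entirely absent from your proposal, one must show that a constant limit $c$ actually lies in $P_{0}(G)$: the paper argues by contradiction, choosing $\delta>0$ with $\overline{B(c,\delta)}\cap P_{0}(G)=\emptyset$ (note $P(G)\subset P_{0}(G)$, so inverse branches $h_{j}$ of $g_{j}$ on $B(c,\delta)$ with $h_{j}(g_{j}(x))=x$ exist for large $j$); since the images of the $h_{j}$ omit a $G$-forward invariant neighborhood $W$ of $P_{0}(G)$, the family $(h_{j})$ is normal, hence equicontinuous, and then $g_{j}\circ h_{j}\rightarrow c$ on a small ball around $c$ contradicts $g_{j}\circ h_{j}=\id$. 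Once every subsequential limit value lies in $P_{0}(G)$, compactness of $\Chat$ gives $\lim_{n}d(f_{\omega|_{n}}(x),P_{0}(G))=0$; your geometric-decay picture is not needed and, as set up, is not justified.
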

\begin{proof}
By Lemmas \ref{lem:moebiussemigroup-expanding-implies-loxodromic} and \ref{lem:atmosttwoelements-expandingness} we have $\card(J(G))\geq 3$. Let  $V_1,\dots,V_r$, $r\in \N$,  denote the  connected components  of $F(G)$ which meet $P_{0}(G).$  Let $\omega \in I^{\N}$ and $x \in F_{\omega }.$ Then   the family $( f_{\omega |_{n}}) _{n}$ is normal in a neighborhood of $x$. 
Suppose for a contradiction that there exists a subsequence $( g_{j})$ of $( f_{\omega |_{n}}) $ which converges to a non-constant map $h$ in a neighborhood of $x$. Since $h$ is non-constant, it follows from Claim \ref{expandingness-claim} in the proof of  Proposition  \ref{prop:nicelyexpanding-characterisation} that  $g_{j}(x)\in \hat{\Bbb{C}}\setminus \cup _{s=1}^{r}V_{s}$, for each $j$.    By the method employed in the final step of the  proof of  Proposition  \ref{prop:nicelyexpanding-characterisation}, we can show that $\| g_{j}'(x)\| \rightarrow \infty $, as $j\rightarrow \infty$, which  is a contradiction. Thus, each limit function of $( f_{\omega |_{n}}) $ in a connected neighborhood of $x$ in $F_{\omega }$ is  constant.
Now suppose that a subsequence $( g_{j})$ of $( f_{\omega |_{n}}) $  converges to a constant $c$ in a neighborhood of $x$. We will show that   $c\in P_{0}(G)$. Otherwise,  there exists  $\delta>0$ such that $\overline{B(c,\delta)}\cap P_0(G) = \emptyset$ and,  for each large $j$, there exists  a well defined inverse branch $h_{j}:B(c,\delta)\rightarrow \Chat$ of $g_{j}$  such that  $h_{j}(g_{j}(x))=x$ and $g_j \circ h_j=\id$ on $B(c,\delta)$. Since there exists a $G$-forward invariant neighborhood $W$ of $P_{0}(G)$ with $W\cap B(c,\delta)=\emptyset $, we conclude that  $( h_{j}) $ is normal 
in a neighborhood $\Omega $ of $x$. Hence $(h_{j})$ is equicontinuous in 
$\Omega .$ Since $g_{j}(x)\rightarrow c$ as $j\rightarrow \infty $, there exist $\delta _{0}\in (0,\delta )$ and a relative compact subset $\Omega _{0}$ of 
$\Omega $  such that for each $j\in \Bbb{N}$, $h_{j}(B(c,\delta _{0}))\subset 
\Omega _{0}.$ Since $g_{j}\rightrightarrows c $ 
on $\Omega _{0}$ as $j\rightarrow \infty $, we have 
$g_{j}\circ h_{j}\rightarrow c$ on $B(c,\delta _{0})$ as 
$j\rightarrow \infty .$ However, this contradicts 
$g_{j}\circ h_{j}=id $ on $B(c,\delta )$ for each $j.$
 Therefore, $\lim_j{d(g_{j}(x),P_{0}(G))}=0$ and hence,  $\lim_n{d(f_{\omega |_{n}}(x), P_{0}(G))}=0$. 
\end{proof}

Finally, we prove some useful facts about the exceptional sets of
expanding rational semigroups.
\begin{lem}
\label{lem:moebiussemigroup-expanding-implies-goodexceptionalset}Let
$I$ be a topological space and let $\left(f_{i}\right)_{i\in I}\in C\!\left(I,\Rat\right)$.
Suppose that $G=\left\langle f_{i}:i\in I\right\rangle $
is expanding with respect to $\left\{ f_{i}:i\in I\right\} $ and
 $G\subset\Aut\big(\Chat\big)$. Let $G_{0}\subset G$ be a
subsemigroup such that $\card\left(J\left(G_{0}\right)\right)\ge3$.
Then  we have $E\left(G_{0}\right)\subset F\left(G\right)$. \end{lem}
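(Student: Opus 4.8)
The plan is to first determine $E(G_{0})$ exactly, and then to exclude $E(G_{0})\cap J(G)\neq\emptyset$ by exploiting the expansion of $G$ along fibers of the skew product $\tilde f$.

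\textbf{Step 1: $E(G_{0})$ is empty or a single common fixed point of $G_{0}$.} Since $G\subseteq\Aut(\Chat)$, every element of $G_{0}$ is loxodromic by Lemma~\ref{lem:moebiussemigroup-expanding-implies-loxodromic}, and each $g^{-1}(x)$ is one point. If $z\in E(G_{0})$ then $G_{0}^{-}(z)$ is finite and $G_{0}$-invariant, i.e. $h(G_{0}^{-}(z))=G_{0}^{-}(z)$ for every $h\in G_{0}$. A loxodromic M\"obius transformation mapping a finite set $F$ onto itself fixes $F$ pointwise: a power of it fixes $F$, forcing $\card(F)\le 2$, and it cannot interchange two points, as its square would be the identity. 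Applied to $F=G_{0}^{-}(z)$ this gives $g(g^{-1}(z))=g^{-1}(z)$, i.e. $g(z)=z$, for every $g\in G_{0}$; conversely a common fixed point is obviously exceptional. If $G_{0}$ had two common fixed points, conjugating them to $0,\infty$ would make every element of $G_{0}$ of the form $w\mapsto\lambda w$, whence $J(G_{0})\subseteq\{0,\infty\}$, contradicting $\card(J(G_{0}))\ge3$. So $E(G_{0})$ is empty (and then nothing is to prove) or equals a single common fixed point $z$ of $G_{0}$.

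\textbf{Step 2: $z$ is an attracting fixed point of every $\phi\in G$ with $\phi(z)=z$.} Fix expanding constants $C>0$, $\lambda>1$ for $G$. Let $\phi=f_{\alpha}\in G$ with $\phi(z)=z$, $|\alpha|=N$, and let $w$ be its repelling fixed point. With $\tau=\alpha\alpha\alpha\cdots\in I^{\N}$ the subsequence $(\phi^{m})_{m}$ of $(f_{\tau|_{n}})_{n}$ fails to be normal at $w$, so $w\in J_{\tau}$ and $(\tau,w)\in J(\tilde f)$; expandingness gives $\Vert\phi'(w)\Vert^{m}=\Vert(\tilde f^{mN})'(\tau,w)\Vert\ge C\lambda^{mN}$ for all $m$, hence $\Vert\phi'(w)\Vert\ge\lambda^{N}\ge\lambda$. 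As the multipliers of $\phi$ at its two fixed points are reciprocal, $\bigl|\log\Vert\phi'(z)\Vert\bigr|\ge\log\lambda$. Thus $L:=\{\log\Vert\phi'(z)\Vert:\phi\in G,\ \phi(z)=z\}$ is a nonempty additive subsemigroup of $\R$ disjoint from $(-\log\lambda,\log\lambda)$. If $L$ contained a positive $a$ and a negative $b$, then $\{ma+nb:m,n\ge1\}\subseteq L$: a rational value of $a/|b|$ puts $0\in L$, an irrational one makes $L$ dense near $0$ --- both impossible. So $L\subseteq(0,\infty)$ or $L\subseteq(-\infty,0)$. In the first case every $g\in G_{0}$ has $z$ repelling; conjugating $z$ to $\infty$, $G_{0}$ becomes a semigroup of affine contractions of $\C$, normal on $\C$, so $J(G_{0})\subseteq\{\infty\}$ --- impossible. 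Hence $L\subseteq(-\infty,0)$: every $\phi\in G$ fixing $z$ has $\Vert\phi'(z)\Vert\le\lambda^{-1}<1$, and no element of $G$ has $z$ as a repelling fixed point.

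\textbf{Step 3: $z\in F(G)$.} Suppose $z\in J(G)$. Since $G_{0}\subseteq G$ gives $\card(J(G))\ge3$, $J(G)$ is the closure of the set of repelling fixed points of elements of $G$ (\cite{MR1397693,MR1767945}); by Step 2, $z$ is not one of them, so there are $h_{k}=f_{\beta_{k}}\in G$ with repelling fixed points $r_{k}\to z$, $r_{k}\neq z$. Fix any $\phi=f_{\alpha}\in G_{0}$ with $|\alpha|=m$, so $\phi(z)=z$ and $\Vert\phi'(z)\Vert\le\lambda^{-1}$. Let $\rho_{k}=\beta_{k}\beta_{k}\cdots\in I^{\N}$, so $r_{k}\in J_{\rho_{k}}$, and put $\omega_{n,k}=\underbrace{\alpha\cdots\alpha}_{n}\,\rho_{k}$. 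By Proposition~\ref{prop:skewproduct-facts}(1), $J_{\omega_{n,k}}=\phi^{-n}(J_{\rho_{k}})\ni\phi^{-n}(r_{k})$, so $(\omega_{n,k},\phi^{-n}(r_{k}))\in J(\tilde f)$, and expandingness yields $\Vert(\phi^{n})'(\phi^{-n}(r_{k}))\Vert\ge C\lambda^{nm}$. Letting $k\to\infty$ we get $\phi^{-n}(r_{k})\to\phi^{-n}(z)=z$, so $\Vert\phi'(z)\Vert^{n}=\Vert(\phi^{n})'(z)\Vert\ge C\lambda^{nm}$; letting $n\to\infty$ gives $\Vert\phi'(z)\Vert\ge\lambda^{m}\ge\lambda>1$, contradicting $\Vert\phi'(z)\Vert\le\lambda^{-1}$. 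Hence $z\in F(G)$, which completes the proof.

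The main obstacle is Step 3: one has to realize the approximating repelling fixed points $r_{k}$ inside fiber Julia sets after pulling them back by a high iterate of one \emph{fixed} element of $G_{0}$, and then play the limit $k\to\infty$ (which collapses $\phi^{-n}(r_{k})$ to $z$) against the limit $n\to\infty$ in the uniform expansion bound. The subsemigroup dichotomy at the end of Step 2 is the other point that needs a little care.
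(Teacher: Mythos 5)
Your proof is correct, and its final mechanism is the same one the paper uses: assume the exceptional point lies in $J(G)$, pull points of fibre Julia sets accumulating at it back under high iterates of an element fixing it, and contradict the uniform fibrewise expansion of $\tilde f$ (the paper does this with $g_{1}^{-2n_{k}}$ applied to points $a_{n}\in\Jpre(G)$ near $z_{0}$; you do it with $\phi^{-n}$ applied to repelling fixed points $r_{k}\to z$, playing $k\to\infty$ against $n\to\infty$). Where you genuinely diverge is in how the exceptional point is identified as an attracting fixed point. The paper gets there in two lines: it quotes $\card(E(G_{0}))\le2$ and backward invariance of $E(G_{0})$ to conclude $g_{1}^{2}(z_{0})=z_{0}$ for an element $g_{1}\in G_{0}$ having a repelling fixed point $z_{1}\neq z_{0}$ in $J(G_{0})$ (such $z_{1}$ exists by density of repelling fixed points and perfectness of $J(G_{0})$), so $z_{0}$ is automatically the attracting fixed point of the loxodromic map $g_{1}^{2}$. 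You instead prove from scratch (Step 1) that $E(G_{0})$ is a single common fixed point of $G_{0}$, and then (Step 2) run an additive-semigroup argument on the set $L$ of log-multipliers at $z$ of all elements of $G$ fixing $z$, using the gap $(-\log\lambda,\log\lambda)$ forced by Lemma~\ref{lem:moebiussemigroup-expanding-implies-loxodromic}-type multiplier bounds, plus a normal-family argument for affine maps to exclude the ``all repelling'' case. This is correct but heavier than necessary: note that your Step 3 already shows that if $z\in J(G)$ then every $\phi\in G_{0}$ satisfies $\Vert\phi'(z)\Vert\ge\lambda>1$, so the affine normal-family argument from the first case of Step 2 (conjugate $z$ to $\infty$, obtain $J(G_{0})\subset\{z\}$) would finish the proof directly, making the rational/irrational density dichotomy for $L$ dispensable. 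What your route buys is self-containedness (you do not invoke $\card(E(G_{0}))\le2$) and the stronger structural fact that the exceptional point is a common fixed point of $G_{0}$; what the paper's route buys is brevity, at the price of citing the known facts about exceptional sets and repelling fixed points of $G_{0}$ itself.
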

\begin{proof}
Suppose for a contradiction that there exists $z_{0}\in E\left(G_{0}\right)\cap J\left(G\right)$.
Since $\card\left(J\left(G_{0}\right)\right)\ge3$, it follows from
the density  of the repelling fixed points in the Julia set 
and the perfectness of the Julia set 
(\cite[Theorem 3.1, Lemma 3.1]{MR1397693}, \cite[Lemma 2.3]{MR1767945}) 
 that there exist $z_{1}\in J\left(G_{0}\right)$
and $g_{1}\in G_{0}$, such that $z_{1}\neq z_{0}$, $g_{1}\left(z_{1}\right)=z_{1}$
and $\Vert g_{1}'\left(z_{1}\right)\Vert>1$. Furthermore, we have
 $\card\left(E\left(G_{0}\right)\right)\le2$ (\cite[Lemma 3.3]{MR1397693}, \cite[Lemma 2.3]{MR1767945}) . Combining with the fact
that $g^{-1}\left(E\left(G_{0}\right)\right)\subset E\left(G_{0}\right)$ 
for each $g\in G_{0}$, we conclude that $g_{1}^{2}\left(z_{0}\right)=z_{0}$.
Since $G$ is expanding, we have that $g_1$ is loxodromic by Lemma
\ref{lem:moebiussemigroup-expanding-implies-loxodromic}. Thus, $z_{0}$
is the attracting fixed point of $g_{1}^{2}$. Let $V$ be a neighborhood of $z_0$ and let $0<c<1$ such that $g_1^2(V)\subset V$ and $\Vert (g_1^2)'(z)\Vert <c$, for each $z\in V$.  Since the Julia set is perfect and by the density of the repelling fixed points in the Julia set (\cite{MR1397693, MR1767945}),  there exists a sequence $(a_n)$ with $a_n \in \Jpre(G)\setminus \{ z_0\}$  such that $\lim_n a_n =z_0$.  Then there exists a sequence $(n_k)\in \N^{\N}$ tending to infinity, such that $g_1^{-2n_k}(a_{n_k})\in V$. Hence, $\lim_k \Vert (g_1^{2n_k})'(g_1^{-2n_k}(a_{n_k}))\Vert  \le\lim_k c^{n_k}=0$.   Moreover, write  $g_{1}^{2}=f_\alpha$, 
for some $m\in\N$ and $\alpha\in I^{m}$, and denote by $\alpha^n:=(\alpha \dots \alpha)\in I^{mn}$ the $n$-fold concatenation of $\alpha$. Let $(\beta_k)\in I^\N$ with $(\beta_k, a_{n_k})\in J(\tilde{f})$.  Then $\big(\alpha^{n_k}\beta_k, (g_1^{-2n_k})(a_{n_k})\big)\in J(\tilde{f})$.  This contradicts that $G$ is expanding and finishes the proof.  
\end{proof}
\begin{lem}
\label{lem:moebiussemigroup-expanding-implies-J-not-twoelements}Let
$I$ be a topological space and let $\left(f_{i}\right)_{i\in I}\in C\!\left(I,\Rat\right)$.
Suppose  that $G=\left\langle f_{i}:i\in I\right\rangle $
is expanding with respect to $\left\{ f_{i}:i\in I\right\} $ and
 $1\le\card\left(J\left(G\right)\right)\le2$. Then we have $\card\left(J\left(G\right)\right)=1$. \end{lem}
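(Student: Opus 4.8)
The plan is to rule out $\card(J(G))=2$ by contradiction. Assume $J(G)=\{a,b\}$. As noted in the proof of Lemma~\ref{lem:atmosttwoelements-expandingness}, $\card(J(G))\le 2$ forces $G\subset\Aut(\Chat)$, and then Lemma~\ref{lem:moebiussemigroup-expanding-implies-loxodromic} shows every $g\in G$ is loxodromic. Since $g^{-1}(J(G))\subset J(G)$ (\cite{MR1397693}) and $g$ is injective, each $g\in G$ satisfies $g(\{a,b\})=\{a,b\}$; and no $g\in G$ interchanges $a$ and $b$, because such a $g$ would be a Möbius involution, hence elliptic, contradicting loxodromy. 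Thus every $g\in G$ fixes both $a$ and $b$, and after conjugating $G$ by a Möbius transformation I may assume $a=0$ and $b=\infty$, so that $f_i(z)=\lambda_i z$ with $\lambda_i\in\C^{*}$, $|\lambda_i|\neq 1$, for each $i\in I$.

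For such a diagonal system the relevant quantities are explicit. Writing $\Lambda_n^{\omega}:=\prod_{k=1}^{n}\lambda_{\omega_k}$ for $\omega\in I^{\N}$, $n\in\N$, we have $f_{\omega|_{n}}(z)=\Lambda_n^{\omega}z$, and a direct computation with the spherical metric gives $\|(\tilde{f}^{n})'(\omega,0)\|=|\Lambda_n^{\omega}|$ and $\|(\tilde{f}^{n})'(\omega,\infty)\|=|\Lambda_n^{\omega}|^{-1}$. By compactness of $\Chat$, a family of maps of the form $z\mapsto cz$ ($c\in\C^{*}$) can fail to be normal only at the common fixed points $0$ and $\infty$, and one checks that $0\in J_\omega$ iff $\sup_n|\Lambda_n^{\omega}|=\infty$ while $\infty\in J_\omega$ iff $\inf_n|\Lambda_n^{\omega}|=0$; applying the same observation to $G$ itself, $0\in J(G)$ iff the multipliers of the elements of $G$ at $0$ are unbounded in modulus, and $\infty\in J(G)$ iff their moduli have infimum $0$. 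Since $J(G)=\{0,\infty\}$, both occur, and because the multiplier of $f_{(i_1,\dots,i_m)}$ at $0$ has modulus $\prod_{k}|\lambda_{i_k}|$, there must exist indices $i_0,i_1\in I$ with $|\lambda_{i_0}|<1<|\lambda_{i_1}|$.

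The heart of the argument is then to produce one sequence $\omega$ lying over both $0$ and $\infty$. Using only the letters $i_0$ and $i_1$, build $\omega$ from alternating blocks, the $j$-th consisting of $n_j$ copies of $i_1$ followed by $m_j$ copies of $i_0$; since $|\lambda_{i_1}|>1>|\lambda_{i_0}|$, the lengths can be chosen inductively so that $|\Lambda_n^{\omega}|>j$ at the end of the $j$-th $i_1$-block and $|\Lambda_n^{\omega}|<1/j$ at the end of the $j$-th $i_0$-block. Then $\sup_n|\Lambda_n^{\omega}|=\infty$ and $\inf_n|\Lambda_n^{\omega}|=0$, so by the above both $(\omega,0)$ and $(\omega,\infty)$ lie in $J^{\omega}\subset J(\tilde{f})$. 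Expandingness now yields $C>0$ and $\lambda>1$ with $\|(\tilde{f}^{n})'(\omega,0)\|\ge C\lambda^{n}$ and $\|(\tilde{f}^{n})'(\omega,\infty)\|\ge C\lambda^{n}$ for all $n$; multiplying and using $\|(\tilde{f}^{n})'(\omega,0)\|\cdot\|(\tilde{f}^{n})'(\omega,\infty)\|=1$ gives $1\ge C^{2}\lambda^{2n}$ for every $n$, which is impossible. Hence $\card(J(G))=1$. I expect the only slightly technical points to be the normality characterizations for a diagonal family and the elementary block construction; the one genuinely non-routine step is the idea of choosing $\omega$ so that its partial products oscillate between large and small values, placing it simultaneously over $0$ and $\infty$.
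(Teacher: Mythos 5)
Your proof is correct, and after the common reduction (all elements of $G$ are M\"obius and loxodromic, each fixes both points of $J(G)$, conjugate to $f_i(z)=\lambda_i z$ with $J(G)=\{0,\infty\}$) it diverges from the paper's argument in the way the contradiction is produced. The paper assumes, after possibly swapping the two fixed points, that some generator expands at $0$, and then uses expandingness along fibers to obtain a \emph{uniform} bound $\Vert f_i'(0)\Vert\ge c_0>1$ for all $i\in I$; this forces every element of $G$ to contract at $\infty$ uniformly, so $\infty\in F(G)$, contradicting $\infty\in J(G)$. (That uniform bound itself requires testing the expansion estimate along suitably chosen itineraries, a point the paper leaves implicit.) You instead characterize $0,\infty\in J(G)$ and $0,\infty\in J_\omega$ through the supremum and infimum of the partial products of the $|\lambda_i|$, extract generators $i_0,i_1$ with $|\lambda_{i_0}|<1<|\lambda_{i_1}|$, and build an oscillating word $\omega$ so that both $(\omega,0)$ and $(\omega,\infty)$ lie in $J(\tilde{f})$; the identity $\Vert(\tilde{f}^{n})'(\omega,0)\Vert\cdot\Vert(\tilde{f}^{n})'(\omega,\infty)\Vert=1$ then contradicts the expansion estimate $\ge C\lambda^{n}$ at both points. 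Your route avoids the WLOG relabelling and the uniform multiplier bound entirely, at the modest cost of the normality characterizations for diagonal families and the block construction, both of which you justify correctly; the paper's route is shorter once the uniform bound is granted. Either argument is a complete proof of the lemma.
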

\begin{proof}
Clearly, we have $G\subset\Aut\left(\Chat\right)$ and  each
element of $G$ is loxodromic by Lemma \ref{lem:moebiussemigroup-expanding-implies-loxodromic}. Now, suppose by way of contradiction that $J\left(G\right)=\left\{ a,b\right\} $.
Without loss of generality, we may assume that $a=0$ and $b=\infty$.
Since $J(G)$ is $G$-backward invariant, we have $g\left(a\right)=a$ and $g\left(b\right)=b$.
Thus, there exists a sequence $\left(c_{i}\right)\in\C^{I}$
such that $f_{i}\left(z\right)=c_{i}z$, for each $z\in\Chat$ and
$i\in I$. We may assume that there exists $i_0 \in I$ such that $\Vert f_{i_0}'(a) \Vert >1$. Since $G$ is expanding with respect to $\left\{ f_{i}:i\in I\right\} $,
there exists a constant $c_0>1$ such that $\Vert f_{i}'\left(a\right)\Vert =\left|c_{i}\right|\ge c_0>1$,  for all $i\in I$ and $z\in\C$. Hence, we have  $\Vert f_{i}'\left(b\right)\Vert  \le c_0^{-1}<1$, for all $i\in I$, which gives that $b\in F\left(G\right)$.
This contradiction proves the lemma. \end{proof}
\begin{lem}
\label{exceptionalset-good}
Let $I$ be a topological space and let $\left(f_{i}\right)_{i\in I}\in C\!\left(I,\Rat\right)$.
Suppose that $G=\left\langle f_{i}:i\in I\right\rangle $
is expanding with respect to $\left\{ f_{i}:i\in I\right\} $, $\card\left(J\left(G\right)\right)>1$
and  $G\subset\Aut\big(\Chat\big)$. Then we have the following.
\begin{enumerate}
\item $E\left(G\right)\subset F\left(G\right)$. 
\item If $E\left(G\right)\neq\emptyset$, then we have $g(x)=x$ and $\Vert g'\left(x\right)\Vert <1$, for all   $g\in G$ and $x\in E\left(G\right)$. 
\end{enumerate}
\end{lem}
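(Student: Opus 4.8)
The plan is to exploit the structure we already know for expanding Möbius semigroups with a two-or-more point Julia set. First I would invoke Lemma \ref{lem:moebiussemigroup-expanding-implies-J-not-twoelements} to upgrade the hypothesis $\card(J(G))>1$ to $\card(J(G))\ge 3$, since the case $\card(J(G))=2$ cannot occur for an expanding $G\subset\Aut(\Chat)$. With $\card(J(G))\ge 3$, part (1) is essentially a special case of Lemma \ref{lem:moebiussemigroup-expanding-implies-goodexceptionalset} applied with $G_0 = G$: the hypothesis $\card(J(G_0))\ge 3$ is met, so $E(G)=E(G_0)\subset F(G)$. (Strictly, one should note $J(G_0)=J(G)$ when $G_0=G$.) So part (1) requires almost no new work.

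For part (2), assume $E(G)\neq\emptyset$. By \cite[Lemma 3.3]{MR1397693} (or \cite[Lemma 2.3]{MR1767945}), $\card(E(G))\le 2$, and $E(G)$ is $G$-backward invariant: $g^{-1}(E(G))\subset E(G)$ for each $g\in G$. Combined with $g\in\Aut(\Chat)$ (so $g^{-1}(E(G))$ has the same cardinality as $E(G)$), this forces $g(E(G))=E(G)$ for each $g\in G$; hence each $g$ permutes the at-most-two points of $E(G)$, so $g^2$ fixes each point of $E(G)$ pointwise. Now fix $x\in E(G)$ and $g\in G$. Since $g$ is loxodromic (Lemma \ref{lem:moebiussemigroup-expanding-implies-loxodromic}), $g$ itself has exactly two fixed points; because $g(x)=x$ would follow if $g$ does not swap the two exceptional points — and in any case $g^2(x)=x$ — I would argue first that $g(x)=x$. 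Indeed if $E(G)=\{x\}$ this is immediate; if $E(G)=\{x,y\}$ with $g(x)=y$, $g(y)=x$, then $g$ is an involution up to the loxodromic dynamics, but a loxodromic map has a unique attracting and a unique repelling fixed point and cannot swap two points (its two fixed points are the only points of period $\le 2$, and both are fixed, not swapped); so $g(x)=x$ after all. Thus $g(x)=x$ for all $g\in G$, $x\in E(G)$.

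It remains to show $\Vert g'(x)\Vert<1$ for $g\in G$, $x\in E(G)$. Here I would use part (1): $x\in E(G)\subset F(G)$, so $x\notin J(G)$. Since $g$ is loxodromic with fixed point $x$, either $\Vert g'(x)\Vert>1$ or $\Vert g'(x)\Vert<1$; the former would make $x$ the repelling fixed point of $g$, hence $x\in J(\langle g\rangle)\subset J(G)$, contradicting $x\in F(G)$. (Alternatively: conjugate so $x=0$; then $g(z)=cz$ near $0$ with $|c|\ne 1$, and if $|c|>1$ the orbit $g^{-n}$ of a nearby point of $\Jpre(G)$ approaches $0$ while the expanding estimate forces derivatives to blow up, contradicting $\Vert (g^{n})'\Vert$ staying bounded on that orbit — the same device as in Lemma \ref{lem:moebiussemigroup-expanding-implies-goodexceptionalset}.) Hence $\Vert g'(x)\Vert<1$, completing the proof.

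The main obstacle I anticipate is the bookkeeping in part (2) around the backward invariance of $E(G)$: one must be careful that $g^{-1}(E(G))\subset E(G)$ together with $|E(G)|<\infty$ and injectivity of $g$ gives $g(E(G))=E(G)$, and then rule out the "swap" scenario using that a non-identity Möbius transformation has at most two fixed points and a loxodromic one cannot act as a transposition on a $2$-point set while remaining loxodromic. Everything else reduces cleanly to Lemmas \ref{lem:moebiussemigroup-expanding-implies-loxodromic}, \ref{lem:moebiussemigroup-expanding-implies-goodexceptionalset}, and \ref{lem:moebiussemigroup-expanding-implies-J-not-twoelements} already established above.
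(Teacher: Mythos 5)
Your proposal is correct and follows essentially the same route as the paper: part (1) by combining Lemma \ref{lem:moebiussemigroup-expanding-implies-J-not-twoelements} with Lemma \ref{lem:moebiussemigroup-expanding-implies-goodexceptionalset} applied to $G_{0}=G$, and part (2) from $\card(E(G))\le 2$, the resulting $g(E(G))=E(G)$, loxodromicity of each $g\in G$ (Lemma \ref{lem:moebiussemigroup-expanding-implies-loxodromic}), and part (1) to exclude the repelling case. The only cosmetic difference is that the paper first deduces $\card(E(G))=1$, whereas you argue pointwise and rule out the swap of two exceptional points via the (correct) observation that a loxodromic map has no genuine $2$-cycles; both variants rest on the same facts.
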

\begin{proof}
By  Lemma \ref{lem:moebiussemigroup-expanding-implies-J-not-twoelements}
we have  $\card\left(J\left(G\right)\right)\ge3$. Hence, the assertion in (1) follows from Lemma \ref{lem:moebiussemigroup-expanding-implies-goodexceptionalset}.
Further, $\card\left(J\left(G\right)\right)\ge3$ implies that $\card\left(E\left(G\right)\right)\le2$ (\cite{MR1397693, MR1767945}). Hence,  $g\left(E\left(G\right)\right)=E\left(G\right)$,
for each $g\in G$. Since $G$ is expanding, we have that each element
$g\in G$ is loxodromic by Lemma \ref{lem:moebiussemigroup-expanding-implies-loxodromic}.
By the first assertion, we can now conclude that,  if $E\left(G\right)$
is non-empty, then $\card\left(E\left(G\right)\right)=1$. Moreover,
for each $g\in G$, the element in $E\left(G\right)$ is the attracting
fixed point of $g$, which proves (2).
\end{proof}
\section{Topological Pressure}
\label{Topological}

In this section we derive basic properties of two notions of topological
pressure associated with the dynamics of rational semigroups. We start
with a preparatory lemma. The property derived in this lemma is similar to the finitely primitive condition (\cite{MR2003772}) for  topological Markov chains with an infinite alphabet.
\begin{lem}
\label{lem:finitely_primitivity}Let $I$ be a topological space and
let $\left(f_{i}\right)_{i\in I}\in C\!\left(I,\Rat\right)$. Suppose
that either (1) $G=\left\langle f_{i}:i\in I\right\rangle $ is a
hyperbolic rational semigroup which contains an element of degree
at least two or (2) $G$ is nicely expanding. Then for each finite
family $\left(U_{i}\right)_{i\in\left\{ 1,\dots,s\right\} }$ of non-empty
open subsets of $J\left(G\right)$, there exists $\ell_0\in\N$ and a finite
set $I_{0}\subset I$ such that for each $z\in J(G)$, for each $\ell \in \Bbb{N}$ with $\ell \geq \ell _{0}$ and
for each $i\in\left\{ 1,\dots,s\right\} $ there is $\omega\in I_{0}^{\ell}$
such that $f_{\omega}^{-1}\left(z\right)\cap U_{i}\neq\emptyset$. \end{lem}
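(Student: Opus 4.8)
The statement is a "finitely primitive"-type property: from any point $z\in J(G)$ we want to hit any prescribed open set $U_i$ in exactly $\ell$ steps, using only words over a fixed finite alphabet $I_0$, provided $\ell$ is large enough. The plan is to first reduce to the case $\card(J(G))\ge 3$, then exploit the density of repelling periodic points together with the backward self-similarity $J(G)=\bigcup_{i\in I} f_i^{-1}(J(G))$ of a finitely generated subsemigroup.

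First I would dispose of the degenerate cases. If $\card(J(G))\le 2$, then in case (1) $G$ consists of Möbius maps but contains an element of degree $\ge 2$ — impossible — and in case (2), by Lemmas~\ref{lem:moebiussemigroup-expanding-implies-loxodromic} and \ref{lem:atmosttwoelements-expandingness}, $\card(J(G))=1$, and then the $U_i$'s are all the single point $J(G)$ and the claim is trivial (take any single $f_i$ and $\ell_0=1$). So assume $\card(J(G))\ge 3$. Then $J(G)$ is perfect and equals the closure of the repelling fixed points of $G$ (Hinkkanen--Martin, Stankewitz--Sumi), and for any finitely generated subsemigroup $G_F=\langle f_i:i\in F\rangle$ with $\card(J(G_F))\ge 3$ we also have $J(G_F)=\bigcup_{i\in F} f_i^{-1}(J(G_F))$ and $J(G_F)\subset J(G)$.

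Next, the core construction. For each $i\in\{1,\dots,s\}$ pick a repelling fixed point $p_i\in U_i$ of some $g_i\in G$; writing $g_i$ as a word in the generators, let $F_0\subset I$ be the (finite) set of all letters appearing in $g_1,\dots,g_s$, and enlarge $F_0$ to a finite $F\subset I$ so that $\card(J(G_F))\ge 3$ (possible because $J(G)=\overline{\Jpre(G)}$ forces infinitely many letters to be "Julia-active"; concretely, add finitely many letters whose inverse images already separate three points — here I would lean on $J(G)=\overline{\Jpre(G)}$ from the introduction and the fact that $\Jpre(G)=\bigcup_i f_i^{-1}(\Jpre(G))$). Now use a standard covering/compactness argument on $J(G_F)$: since $J(G_F)=\bigcup_{\omega\in F^n} f_\omega^{-1}(J(G_F))$ for every $n$, and the pieces $f_\omega^{-1}(J(G_F))$ shrink (by expansion in case (2), or by the classical semi-hyperbolic/hyperbolic contraction estimate in case (1)), there is $n_0$ such that for every $z\in J(G_F)$ some $n_0$-block pulls a neighborhood of $z$ into a small neighborhood of $p_i$ inside $U_i$; more directly, one shows every point of $J(G_F)$ lies in some $f_\omega^{-1}(J(G_F))$ of diameter small enough to be mapped into $U_i$ by the appropriate word. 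Combining a word that lands near $p_i$ with a suitable power of $g_i$ (to absorb fixed-point-multiplier room) and with "filler" words over $F$ that bring the count up to any prescribed $\ell$, and using that $z\in J(G)\supset J(G_F)$ together with $f_\omega^{-1}(z)\ne\emptyset$ for all $\omega$ and all $z$ (since $f_i$ is non-constant rational), one gets the conclusion with $I_0:=F$ and a single $\ell_0$.

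The main obstacle is making the length exactly $\ell$ for every large $\ell$ and uniformly in $z$. This requires a "filler" step: I need words over $I_0$ of every sufficiently large length that map $J(G_F)$ into a fixed small neighborhood of some point of $U_i$ (or, dually, whose inverse images stay controlled). In the expanding case this is clean — concatenating the contracting block $f_\omega$ with arbitrarily many copies of a fixed $f_\tau$ with $\tau\in F$, $f_\tau(J(G_F))\subset$ a small set, keeps the image small and increases the count by one each time; but one must check such a $\tau$ exists, which follows from the fact that $J(G_F)=\bigcup f_\tau^{-1}(J(G_F))$ has pieces of diameter $\to 0$, so some one-letter piece already fits. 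In the merely hyperbolic case (1), contraction of inverse branches is not immediate without expansion, so there I would instead argue on forward images using that $G_F$ is finitely generated hyperbolic hence expanding (Lemma~\ref{lem:hyperbolic-lox-pzero-implies-expanding}) — i.e. reduce case (1) to case (2) at the level of the finitely generated subsemigroup $G_F$. Once that reduction is in place, both cases are handled by the same expansion-based covering argument, and the remaining bookkeeping (choosing $\ell_0$ uniformly, handling the finitely many $i$'s simultaneously by taking a common $\ell_0$) is routine.
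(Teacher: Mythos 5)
Your opening moves (disposing of $\card(J(G))\le 2$, using density of repelling fixed points to pick a finite alphabet whose semigroup meets each $U_i$) coincide with the paper's first step, but the engine of your argument has a genuine gap. The conclusion is equivalent to a blowing-up statement: for every $\ell\ge\ell_0$ the sets $f_{\omega}(U_i)$, $\omega\in I_0^{\ell}$, must cover \emph{all} of $J(G)$. The mechanism you invoke --- shrinking of the backward pieces $f_{\omega}^{-1}(J(G_F))$ --- does not yield this: it places each point of $J(G_F)$ in a small piece but says nothing about preimages of a given $z$ coming near $p_i$; and, more seriously, your argument lives on $J(G_F)$ while the statement quantifies over all $z\in J(G)$, which for infinite $I$ is typically strictly larger. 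You dismiss this with ``$f_{\omega}^{-1}(z)\neq\emptyset$'', which gives no control on where those preimages lie. Supplying exactly this uniformity is what the paper's proof is about: it shows $J(G)$ avoids the exceptional set of the finitely generated subsemigroup ($E(G_0)\subset P(G)\subset F(G)$ in case (1), Lemma \ref{lem:moebiussemigroup-expanding-implies-goodexceptionalset} in case (2)), verifies $J(G_0)\subset F\big(\{h^{-1}:h\in\Aut(\Chat)\cap G_0\}\big)$ by a hyperbolic-metric/Montel argument, and then invokes the equidistribution theorem \cite[Theorem 4.3]{MR1767945} (re-derived via almost periodicity and Arzel\`a--Ascoli in the all-M\"obius case), which gives the hitting property uniformly in $z\in J(G)$ and in all $\ell\ge \ell_0$. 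An elementary route along your lines would instead need a blow-up property at a repelling fixed point: a ball $B_i\subset \tilde U_i$ with $\overline{B_i}\subset g_i(B_i)$ and $J(G)\subset g_i^{n}(B_i)$ for all large $n$, which in turn requires checking that the attracting fixed point (resp.\ exceptional set) of $g_i$ misses $J(G)$; none of this appears in your sketch.

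Two further concrete errors. Your ``filler'' step is wrong as written: $f_{\tau}(J(G_F))$ is never a small set (it contains $J(G_F)$, since $J(G_F)=\bigcup_{\tau}f_{\tau}^{-1}(J(G_F))$ and $f_{\tau}$ is surjective), and there is no reason a one-letter piece $f_{\tau}^{-1}(J(G_F))$ is small; the exact-length bookkeeping should instead be done by appending arbitrary letters on the \emph{outer} side of the word, where preimages of $z$ automatically stay in $J(G)$ and no contraction is needed --- your arrangement requires contraction from the filler and does not justify it. Finally, your reduction of case (1) to the expanding case via Lemma \ref{lem:hyperbolic-lox-pzero-implies-expanding} is invalid: that lemma assumes every M\"obius element of the subsemigroup is loxodromic, which hyperbolicity alone does not provide (for instance $\langle z^2,\e^{i\theta}z\rangle$ with $\theta/\pi$ irrational is hyperbolic and contains a degree-two element, yet contains elliptic elements and is not expanding along fibers). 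The paper's case (1) argument deliberately works with hyperbolicity directly rather than passing through expansion.
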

\begin{proof}
Let us first suppose that $G$ satisfies the assumptions in (1).  By the density of the
repelling fixed points in the Julia set (\cite[Theorem 3.1]{MR1397693}),  we have that
for each $i\in\left\{ 1,\dots,s\right\} $ there exists $g\in G$
such that $J\left(g\right)\cap U_{i}\neq\emptyset$. Hence, there
exists a finitely generated subsemigroup $G_{0}:=\left\langle f_{i}:i\in I_{0}\right\rangle $, 
including an element of degree at least  two, such that $J\left(G_{0}\right)\cap U_{i}\neq\emptyset$, for
all $i\in\left\{ 1,\dots,s\right\} $.
Since $G_{0}$ contains an element of degree at least two, we have
 $E\left(G_{0}\right)\subset P\left(G\right)$. Combining with our assumption that  $G$ is
hyperbolic, we obtain that   $E\left(G_{0}\right)\subset F\left(G\right)$. 
In particular, we have  $J\left(G\right)=\Chat\setminus F\left(G\right)\subset\Chat\setminus E\left(G_{0}\right)$. 

Our aim is to apply \cite[Theorem 4.3]{MR1767945} to the finitely
generated semigroup $G_{0}$. We have seen that $E\left(G_{0}\right)\subset F\left(G\right)\subset F\left(G_{0}\right)$.
We now verify that $J\left(G_{0}\right)\subset F\big(\big\{ h^{-1}:h\in\mathrm{Aut}(\Chat)\cap G_{0}\big\} \big)$.
To prove this, let $V_{1},\dots,V_{r}$, $r\in\N$, denote the finitely
many connected components of $F\left(G\right)$ which have non-empty
intersection with the compact set $P\left(G\right)$. Since $G$ contains
an element of degree at least two, we have that each $V_{i}$ is hyperbolic
and we denote by $d_h$ the Poincar\'{e} metric on $V_{i}$. Set
$W_{i}:=\left\{ z\in V_{i}:d_{h}\left(z,P\left(G\right)\cap V_i\right)<1\right\} $
and note that $A:=\bigcup_{i=1}^r W_{i}$ is $G$-forward invariant. Hence,  we have  $g^{-1}\big(\Chat\setminus\overline{A}\big)\subset\Chat\setminus\overline{A}$,
for each $g\in G$, which implies that $\Chat\setminus\overline{A}\subset F\big(\big\{ h^{-1}:h\in\mathrm{Aut}(\Chat)\cap G_{0}\big\} \big)$
by Montel's Theorem.  We have thus shown that  $J\left(G_{0}\right)\subset J\left(G\right)\subset\Chat\setminus\overline{A}\subset F\big(\big\{ h^{-1}:h\in\mathrm{Aut}(\Chat)\cap G_{0}\big\} \big)$. 

Set $K:=J\left(G\right)$ and observe that $K$ is a  compact subset of $\Chat\setminus E\left(G_{0}\right)$, which is $G_0$-backward invariant.   Let $\mu$ denote the Borel probability measure on $J\left(G_{0}\right)$
corresponding to the equidistribution on the generators $\{ f_{i}:i\in I_{0}\}$ of $G_{0}$, 
which exists by \cite[Theorem 4.3]{MR1767945} and whose topological support $\supp\mu$ is equal to $J(G_{0})$. By \cite[Theorem 4.3]{MR1767945} there exists $\ell_0 \in \N$ such that, for each $z\in J(G)$, for all $\ell \ge \ell_0$ and for each $i\in \{1,\dots, s \}$,  there exists $\omega\in I_{0}^{\ell}$ such that $f_{\omega}^{-1}\left(z\right)\cap U_{i}\neq\emptyset$.
The proof of the first case is complete.

Now suppose that $G$ is nicely expanding and  $G\subset\Aut(\Chat)$.
Since the theorem is obviously true in the case that $\card\left(J\left(G\right)\right)=1$,
we may assume that  $\card\left(J\left(G\right)\right)\ge3$ by Lemma
\ref{lem:moebiussemigroup-expanding-implies-J-not-twoelements}. Choose a finitely
generated subsemigroup $G_{0}$ of $G$ such that $\card\left(J\left(G_{0}\right)\right)\ge3$.
Hence, we have $J\left(G\right)\subset\Chat\setminus E\left(G_{0}\right)$
by Lemma \ref{lem:moebiussemigroup-expanding-implies-goodexceptionalset}.
By substituting $P\left(G\right)$ by $P_{0}\left(G\right)$ in the
proof of the first case, we verify that $J\left(G_{0}\right)\subset F\big(\big\{ h^{-1}:h\in\mathrm{Aut}(\Chat)\cap G_{0}\big\} \big)$.
 Set $\tilde{K}:=\pi_{\Chat}^{-1}\left(J\left(G\right)\right)$. Since
$\tilde{K}\subset\pi_{\Chat}^{-1}\left(\Chat\setminus E\left(G_{0}\right)\right)$,
it follows from \cite[Proof of Lemma 4.6]{MR1767945} that each unitary
eigenvector of the bounded linear  operator $\tilde{B}:{C}\left(\tilde{K}\right)\rightarrow{C}\left(\tilde{K}\right)$,
 given by $\tilde{B}\left(\tilde{\varphi}\right)\left(\tau,z\right):=\card\left(I_{0}\right)^{-1}\sum_{\omega\in I_{0}}\tilde{\varphi}\left(\omega\tau,f_{\omega}^{-1}\left(z\right)\right)$, for each $\tilde{\varphi}\in{C}\left(\tilde{K}\right)$
and $\left(\tau,z\right)\in\tilde{K}$, 
is a constant function on $\tilde{K}$. Finally, since $J\left(G_{0}\right)\subset F\big(\big\{ h^{-1}:h\in\mathrm{Aut}(\Chat)\cap G_{0}\big\} \big)$, we have that $\left(\tilde{B}^{n}\left(\tilde{\varphi}\right)\right)_{n\in\N}$
is a family of equicontinuous functions on $\tilde{K}$. Hence, we have that $\tilde{B}$ is an almost
periodic operator by
the Arzel\`{a}-Ascoli Theorem and  the result of \cite[Theorem 4.3]{MR1767945}
follows from the proof of \cite[Theorem 4.3]{MR1767945}.  The rest of the proof runs as in case (1). 
\end{proof}

\subsection{Bounded distortion property and topological pressure }

Throughout this subsection, let $I$ be a countable set
and let $\left(f_{i}\right)_{i\in I}\in\Rat^{I}$. For each $n\in\N$,  $x\in\Chat$ and $t\in \R$, we define
\[
Z_{n}\left(I,t,x\right):=\sum_{\omega\in I^{n}}\sum_{y\in f_{\omega}^{-1}\left(x\right)}\left\Vert f_{\omega}'\left(y\right)\right\Vert ^{-t},\,\,\,\, P\left(I,t,x\right):=\limsup_{n\rightarrow\infty}\frac{1}{n}\log Z_{n}\left(I,t,x\right).
\]
Here, the sum $\sum _{y\in f_{\omega }^{-1}(x)}$ counts the multiplicities, 
and we set $0^{-t}=\infty $ if $t\geq 0$, and we set 
$0^{-t}=0$ if $t<0.$
Using Lemma \ref{lem:finitely_primitivity} the proof of the following lemma is a straightforward application of Koebe's distortion theorem.
\begin{lem}
[Bounded distortion lemma]\label{lem:bounded_distortion_lemma}Let
$I$ be a  countable set and let $\left(f_{i}\right)_{i\in I}\in\Rat^{I}$.
Suppose that either (1) $G=\left\langle f_{i}:i\in I\right\rangle $
is a hyperbolic rational semigroup which contains an element of degree
at least two or (2) $G=\left\langle f_{i}:i\in I\right\rangle $ is
nicely expanding. Then, for each $t\in\R$, there exist 
$C>1$,  $\ell\in\N$  and a finite set $I_{0}\subset I$,  such that
for all $I_{1}\subset I$ with $I_{0}\subset I_{1}$, for all $n\in\N$
and for all $x,y\in J\left(G\right)$ we have 
\begin{equation}
Z_{n+\ell}\left(I_{1},t,y\right)\ge C^{-1}Z_{n}\left(I_{1},t,x\right).\label{eq:bounded-distortion-lowerbound}
\end{equation}
Furthermore, for each $x\in J\left(G\right)$ we have that $P\left(I_{1},t,x\right)<\infty$  if and only if $\sup_{y\in J\left(G\right)}Z_{1}\left(I_{1},t,y\right)<\infty$.
In particular, if $P\left(I,t,x_{0}\right)<\infty$  for some $x_{0}\in J\left(G\right)$,
then there exists  $C'>1$ such that,  for all $n\in \N$ and for all $x,y\in J\left(G\right)$, we have
\begin{equation}
(C')^{-1}Z_{n}\left(I_{1},t,x\right)\le Z_{n}\left(I_{1},t,y\right)\le C'Z_{n}\left(I_{1},t,x\right)<\infty.\label{eq:bounded-distortion-property}
\end{equation}
Moreover, if $P\left(I_{1},t,x_{0}\right)=\infty$  for some $x_{0}\in J\left(G\right)$,
then for all sufficiently large $n\in\N$ and for all $x\in J\left(G\right)$
we have $Z_{n}\left(I_{1},t,x\right)=\infty$.
\end{lem}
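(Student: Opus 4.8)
The plan is to establish everything as a consequence of Koebe's distortion theorem together with the combinatorial input from Lemma~\ref{lem:finitely_primitivity}, which in both case (1) and case (2) provides a finite set $I_{0}\subset I$ and an integer $\ell_{0}\in\N$ such that every point of $J(G)$ has, for each $\ell\ge\ell_{0}$, a preimage under some $f_{\omega}$, $\omega\in I_{0}^{\ell}$, inside any prescribed small ball meeting $J(G)$. First I would fix $t\in\R$ and record the two geometric ingredients. The first is a uniform Koebe distortion estimate: since $G$ is hyperbolic (in case (1)) or nicely expanding (in case (2)), the postcritical set $P(G)$ (resp.\ $P_{0}(G)$) lies in a compact subset of $F(G)$, so there is $r_{0}>0$ such that every ball $B(x,r_{0})$ with $x\in J(G)$ is disjoint from the postcritical set; hence for every $\omega\in I^{n}$ and every $y\in f_{\omega}^{-1}(x)$ the branch of $f_{\omega}^{-1}$ sending $x$ to $y$ extends holomorphically and univalently to $B(x,r_{0})$, and Koebe gives a constant $K\ge1$, independent of $n,\omega,x$, with $K^{-1}\le\|(f_{\omega}^{-1})'(z_{1})\|/\|(f_{\omega}^{-1})'(z_{2})\|\le K$ for $z_{1},z_{2}$ in a fixed smaller ball $B(x,r_{1})$. (Passing between the spherical and Euclidean metrics on the compact set $J(G)$ costs only another bounded factor.) The second ingredient is that the finitely many words in $I_{0}^{\ell}$, $\ell_{0}\le\ell\le 2\ell_{0}$ say, used in Lemma~\ref{lem:finitely_primitivity} have derivatives bounded above and below on $J(G)$, because $I_{0}$ is finite; call the resulting two-sided bound $D\ge1$.

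**The lower bound \eqref{eq:bounded-distortion-lowerbound}.** Given $x,y\in J(G)$, apply Lemma~\ref{lem:finitely_primitivity} with $\ell=\ell_{0}$ and the single open set $U_{1}=B(x,r_{1})\cap J(G)$: there is $\omega^{*}\in I_{0}^{\ell_{0}}\subset I_{1}^{\ell_{0}}$ and a point $x'\in f_{\omega^{*}}^{-1}(y)\cap B(x,r_{1})$. For every word $\tau\in I_{1}^{n}$ and every $z\in f_{\tau}^{-1}(x)$, the concatenation $\tau\omega^{*}\in I_{1}^{n+\ell_{0}}$ has, among its preimages of $y$, a point $z'$ obtained by first taking the branch of $f_{\omega^{*}}^{-1}$ through $x'$ and then the branch of $f_{\tau}^{-1}$ through $z$ (here one uses that $B(x,r_{1})$ avoids the critical values of $f_{\tau}$). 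By the chain rule, $\|f_{\tau\omega^{*}}'(z')\|=\|f_{\tau}'(z)\|\cdot\|f_{\omega^{*}}'(x')\|$ evaluated after transporting $z$ to the corresponding point near $x'$; Koebe bounds $\|f_{\tau}'(\cdot)\|$ on $B(x,r_{1})$ within a factor $K$ of $\|f_{\tau}'(z)\|$, and $\|f_{\omega^{*}}'(x')\|$ lies in $[D^{-1},D]$. Summing the contributions $\|f_{\tau\omega^{*}}'(z')\|^{-t}$ over all $\tau$ and $z$ (using $|t|$ to absorb the sign of the exponent into the constant) yields $Z_{n+\ell_{0}}(I_{1},t,y)\ge C^{-1}Z_{n}(I_{1},t,x)$ with $C=C(t)=K^{|t|}D^{|t|}$ depending only on $t$, $\ell=\ell_{0}$, and $I_{0}$. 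This is \eqref{eq:bounded-distortion-lowerbound}.

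**From \eqref{eq:bounded-distortion-lowerbound} to the remaining assertions.** Given the lower bound, the equivalence ``$P(I_{1},t,x)<\infty$ iff $\sup_{y}Z_{1}(I_{1},t,y)<\infty$'' follows: if $Z_{1}$ is bounded, say by $M$, then submultiplicativity up to Koebe error, $Z_{n+m}(I_{1},t,x)\le K^{|t|}\sup_{y}Z_{n}(I_{1},t,y)\,\sup_{y}Z_{m}(I_{1},t,y)$, forces $Z_{n}\le (K^{|t|}M)^{n}$, hence $P\le\log(K^{|t|}M)<\infty$; conversely, if some $Z_{1}(I_{1},t,y_{0})=\infty$ then \eqref{eq:bounded-distortion-lowerbound} with $n=1$ gives $Z_{1+\ell}(I_{1},t,x)=\infty$ for every $x$, and iterating (using that adding generators only increases the sums) gives $Z_{n}(I_{1},t,x)=\infty$ for all large $n$ and all $x$, which is the final displayed assertion and also shows $P(I_{1},t,x)=\infty$. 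Finally, if $P(I,t,x_{0})<\infty$ for some $x_{0}$, then in particular $\sup_{y}Z_{1}(I,t,y)<\infty$ by what we just proved, so all $Z_{n}(I_{1},t,\cdot)$ are finite; now applying \eqref{eq:bounded-distortion-lowerbound} in both directions—$Z_{n+\ell}(I_{1},t,y)\ge C^{-1}Z_{n}(I_{1},t,x)$ and $Z_{n+\ell}(I_{1},t,x)\ge C^{-1}Z_{n}(I_{1},t,y)$—combined with the two-sided comparison of $Z_{n+\ell}$ and $Z_{n}$ afforded by appending/removing a fixed word of length $\ell$ from $I_{0}$ (again a bounded factor), yields \eqref{eq:bounded-distortion-property} with a single constant $C'=C'(t)>1$ uniform in $n$ and in $x,y\in J(G)$.

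**Main obstacle.** The routine part is Koebe; the delicate point is the bookkeeping that makes the constant $C$ (and later $C'$) depend only on $t$ and not on the point $x$, on $n$, or on which enlargement $I_{1}\supset I_{0}$ one takes. This is exactly where Lemma~\ref{lem:finitely_primitivity} is essential: it supplies the connecting word $\omega^{*}$ from the \emph{finite} alphabet $I_{0}$, so its derivative bound $D$ is uniform, and because $\omega^{*}\in I_{1}^{\ell}$ for every $I_{1}\supseteq I_{0}$ the same estimate is valid simultaneously for all such $I_{1}$. One must also be careful that $B(x,r_{1})$ genuinely avoids the critical values of every $f_{\omega}$, so that univalent inverse branches exist—this is where hyperbolicity/nice expansion (placement of the postcritical set in the Fatou set) enters, and it is the only structural hypothesis used beyond Lemma~\ref{lem:finitely_primitivity}.
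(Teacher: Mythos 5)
Your route is essentially the paper's: Lemma \ref{lem:finitely_primitivity} supplies a connecting word over a finite alphabet, Koebe's distortion theorem is applied on balls centred on $J(G)$ and disjoint from $P(G)$ (resp.\ $P_{0}(G)$), and the three remaining assertions are bootstrapped from (\ref{eq:bounded-distortion-lowerbound}). There is, however, a genuine gap at exactly the point you call delicate. You invoke Lemma \ref{lem:finitely_primitivity} with the single open set $U_{1}=B(x,r_{1})\cap J(G)$ \emph{after} $x$ has been given, while continuing to use a previously fixed pair $(I_{0},\ell_{0})$. The lemma does not allow this order of quantifiers: the finite family of open sets must be prescribed first, and $\ell_{0}$ and $I_{0}$ are produced in dependence on that family. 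Read literally, your construction makes $I_{0}$, $\ell_{0}$, the derivative bound $D$, and hence $C$ depend on $x$, whereas in the statement $C$, $\ell$, $I_{0}$ are chosen before $x,y\in J(G)$, and this uniformity is precisely what is needed later (for instance in the proof of Proposition \ref{prop:exhaustion_principle-etc}, where a single finite $I_{0}$ must work for all $I_{1}\supset I_{0}$ and all $n$). The strengthened form quoted in your plan --- one pair $(I_{0},\ell_{0})$ serving ``any prescribed small ball'' --- is not what the lemma asserts; indeed, for fixed $\ell=\ell_{0}$ the set $\bigcup_{\omega\in I_{0}^{\ell_{0}}}f_{\omega}^{-1}(y)$ is finite, so such a statement can only be true for a fixed radius and must itself be proved.

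The repair is the covering step the paper performs. Cover the compact set $J(G)$ by finitely many balls $B(x_{1},r_{1}/2),\dots,B(x_{s},r_{1}/2)$ with centres in $J(G)$ and apply Lemma \ref{lem:finitely_primitivity} once, to the fixed finite family $U_{j}:=B(x_{j},r_{1}/2)\cap J(G)$. Given $x,y\in J(G)$, choose $j$ with $d(x,x_{j})<r_{1}/2$; the lemma then yields $\omega^{*}\in I_{0}^{\ell_{0}}$ and a point of $f_{\omega^{*}}^{-1}(y)$ in $U_{j}\subset B(x,r_{1})$, after which your pointwise chain-rule argument runs verbatim with constants depending only on $t$, $I_{0}$, $\ell_{0}$ and $r_{1}$. (The paper's variant instead connects the centres $x_{j},x_{k}$ and transfers to arbitrary $x,y$ via the Koebe comparison (\ref{eq:koebe-Z_n}) of $Z_{n}(I_{1},t,\cdot)$ at points of $J(G)$ at distance less than $r$; both give the required uniformity in $x$, $y$, $n$ and $I_{1}\supset I_{0}$.) Also note that the lower bound on $\Vert f_{\omega^{*}}'\Vert$ should be taken over the compact sets $f_{\omega}^{-1}(J(G))$, $\omega\in I_{0}^{\ell_{0}}$, where it is positive because critical values lie in $P(G)$, rather than over all of $J(G)$. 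With these corrections the rest of your outline --- the $Z_{1}$-criterion for finiteness of $P(I_{1},t,\cdot)$, the two-sided bound (\ref{eq:bounded-distortion-property}) via $Z_{n+\ell}(I_{1},t,y)\le Z_{n}(I_{1},t,y)\sup_{z}Z_{\ell}(I_{1},t,z)$, and the final divergence statement --- agrees in substance with the paper's proof; a minor point is that in the converse direction of the $Z_{1}$-criterion you should argue from $\sup_{y}Z_{1}(I_{1},t,y)=\infty$ rather than from divergence at a single $y_{0}$, which your inequality $Z_{1+\ell}(I_{1},t,x)\ge C^{-1}Z_{1}(I_{1},t,y)$, valid for every $y$, already covers.
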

\vspace{-0.7cm}
\begin{proof}
We will verify the lemma under the assumptions given in (1). From this, one can deduce that the lemma holds under the assumptions in (2) by replacing $P(G)$ by $P_0(G)$.  Our first aim is to define the finite
set $I_{0}\subset I$. Since $G$ is hyperbolic, we have  $d\left(J\left(G\right),P\left(G\right)\right)>0$, which allows us to fix some $0<r<d\left(J\left(G\right),P\left(G\right)\right)/2$.
Since $J\left(G\right)$ is compact, there exist $s\in\N$ and $x_{1},\dots,x_{s}\in J\left(G\right)$, 
such that $J\left(G\right)\subset\bigcup_{j=1}^{s}B\left(x_{j},r\right)$.
By Lemma \ref{lem:finitely_primitivity} applied to the open sets
$U_{i}:=B\left(x_{i},r\right)$,  for $i\in\left\{ 1,\dots,s\right\} $,
we obtain that there exist $\ell\in\N$ and a finite set $I_{0}\subset I$
such that,  for all $j,k\in\left\{ 1,\dots,s\right\} $,  there exist
$\tau\left(j,k\right)\in I_{0}^{\ell}$ and $y_{j,k} \in\Chat$
with the property that $y_{j,k} \in f_{\tau\left(j,k\right)}^{-1}\left(x_{k}\right)\cap B\left(x_{j},r\right)$. 

In the following, let $I_{1}$ denote an arbitrary subset of $I$
containing $I_{0}$. For each $n\in\N$ and for all $j,k\in\left\{ 1,\dots,s\right\} $
we then have that 
\begin{eqnarray}
Z_{n+\ell}\left(I_{1},t,x_{k}\right) & = & \sum_{\omega\in I_{1}^{\ell}}\sum_{y\in f_{\omega}^{-1}\left(x_{k}\right)}\left\Vert f_{\omega}'\left(y\right)\right\Vert ^{-t}Z_{n}\left(I_{1},t,y\right)\ge\big\Vert f_{\tau\left(j,k\right)}'(y_{j,k} )\big\Vert ^{-t}Z_{n}\left(I_{1},t,y_{j,k} \right).\label{eq:boundeddistortion-proof-1}
\end{eqnarray}
We will now combine the previous estimate with the following consequence
of Koebe's distortion theorem. There exists a constant $C_{1}=C_{1}\left(r\right)$
such that, for each $m\in \N$,  $I'\subset I$ and for all  $z,z'\in J\left(G\right)$
with $d\left(z,z'\right)<r$,
\begin{equation}
C_{1}^{-1}Z_{m}\left(I',t,z'\right)\le Z_{m}\left(I',t,z\right)\le C_{1}Z_{m}\left(I',t,z'\right).\label{eq:koebe-Z_n}
\end{equation}
Combining (\ref{eq:boundeddistortion-proof-1}),  (\ref{eq:koebe-Z_n})
 and  $d\left(y_{j,k} ,x_{j}\right)<r$, we
obtain  $Z_{n+\ell}\left(I_{1},t,x_{k}\right)\ge\big\Vert f_{\tau\left(j,k\right)}'\left(y_{j,k} \right)\big\Vert ^{-t}C_{1}^{-1}Z_{n}\left(I_{1},t,x_{j}\right)$. Setting $S:=\min_{j,k\in\left\{ 1,\dots,s\right\} }\big\Vert f_{\tau\left(j,k\right)}'\left(y_{j,k} \right)\big\Vert ^{-t}>0$
and combining $J\left(G\right)\subset\bigcup_{j=1}^{s}B\left(x_{j},r\right)$
with (\ref{eq:koebe-Z_n}), we deduce that  $Z_{n+\ell}\left(I_{1},t,y\right)\ge SC_{1}^{-3}Z_{n}\left(I_{1},t,x\right)$, for all $n\in\N$ and  for all $x,y\in J\left(G\right)$. We have thus shown that  (\ref{eq:bounded-distortion-lowerbound}) holds with $C:=S^{-1}C_{1}^{3}$. 

\vspace{-7pt}
We now turn to the proof of the second assertion of the lemma. Let $x\in J\left(G\right)$. First suppose that $P\left(I_{1},t,x\right)<\infty$. Clearly, there exists
$n\ge2$ such that $Z_{n+\ell}\left(I_{1},t,x\right)<\infty$. By (\ref{eq:bounded-distortion-lowerbound})
we  have  $Z_{n}\left(I_{1},t,y\right)\le CZ_{n+\ell}\left(I_{1},t,x\right)<\infty$, 
for all $y\in J\left(G\right)$. Consequently, fixing one element  $a\in I_{0}$, we have for all $y\in J\left(G\right)$,
\begin{eqnarray*}
 &  & Z_{1}\left(I_{1},t,y\right)\big(\min_{z\in f_{a}^{-1}\left(J\left(G\right)\right)}\left\Vert f_{a}'\left(z\right)\right\Vert ^{-t}\big)^{n-1}\\
 & \le & \sum_{\omega\in I{}_{1}}\sum_{z\in f_{\omega}^{-1}\left(y\right)}\left\Vert f_{\omega}'\left(z\right)\right\Vert ^{-t}Z_{n-1}\left(I_{1},t,z\right)=Z_{n}\left(I_{1},t,y\right)\le CZ_{n+\ell}\left(I_{1},t,x\right)<\infty,
\end{eqnarray*}
which proves  $\sup_{y\in J\left(G\right)}Z_{1}\left(I_{1},t,y\right)<\infty$.
On the other hand, if $\sup_{y\in J\left(G\right)}Z_{1}\left(I_{1},t,y\right)<\infty$,
then 
\[
P\left(I_{1},t,x\right)=\limsup_{n\rightarrow\infty}\frac{1}{n}\log Z_{n}\left(I_{1},t,x\right)\le\log\sup_{y\in J\left(G\right)}Z_{1}\left(I_{1},t,y\right)<\infty,
\]
which finishes the proof of the second assertion.

In order to prove  (\ref{eq:bounded-distortion-property}), suppose that 
$P\left(I_1,t,x_{0}\right)<\infty$,  for some $x_{0}\in J\left(G\right)$,  and let $n\in \N$ and $x,y\in J(G)$. By (\ref{eq:bounded-distortion-lowerbound}) we have
\[
Z_{n}\left(I_{1},t,x\right)\le CZ_{n+\ell}\left(I_{1},t,y\right)=C\sum_{\omega\in I{}_{1}^{n}}\sum_{z\in f_{\omega}^{-1}\left(y\right)}\left\Vert f_{\omega}'\left(z\right)\right\Vert ^{-t}Z_{\ell}\left(I_{1},t,z\right)\le CZ_{n}\left(I_{1},t,y\right)\sup_{z\in J\left(G\right)}Z_{\ell}\left(I_{1},t,z\right).
\]
Now, by the second assertion of the lemma,  we have  $\sup_{z\in J\left(G\right)}Z_{\ell}\left(I_{1},t,z\right)<\infty$.
Hence,  the estimates in (\ref{eq:bounded-distortion-property}) hold  with $C':=C\sup_{z\in J\left(G\right)}Z_{\ell}\left(I_{1},t,z\right)$. 

Next, we prove the final assertion of the lemma. Suppose that $P\left(I_{1},t,x_{0}\right)=\infty$,
for some $x_{0}\in J\left(G\right)$. By the second assertion of the lemma, we have  $\sup_{y\in J\left(G\right)}Z_{1}\left(I_{1},t,y\right)=\infty$.
Let $x\in J(G)$. By (\ref{eq:bounded-distortion-lowerbound}) we have 
\[
Z_{1+\ell}\left(I_1,t,x\right)\ge C^{-1}\sup_{y\in J(G)} Z_{1}\left(I_{1},t,y\right)=\infty.
\]
Finally, using the estimate $Z_{1+\ell+1}\left(I_{1},t,x\right)\ge Z_{1+\ell}\left(I_1,t,x\right)\min_{z\in f_{a}^{-1}\left(J\left(G\right)\right)}\left\Vert f_{a}'\left(z\right)\right\Vert ^{-t}=\infty$,
one inductively verifies that $Z_{n}\left(I_{1},t,x\right)=\infty$, for all $n\ge 1+\ell$.  The proof 
is complete. 
\end{proof}
The third  assertion in the following proposition shows that an exhaustion
principle holds for $P\left(I,t,x\right)$. Recall that we say that
$\eta:\R\rightarrow\R\cup\left\{ \infty\right\} $ is convex if its epigraph $\mbox{epi}\left(\eta\right):=\left\{ \left(x,y\right)\in\R^{2}:y\ge\eta\left(x\right)\right\} $
is a convex set, and we say  that $\eta$ is closed if $\mbox{epi}\left(\eta\right)$
is closed subset of $\R^2$. Note that $\mbox{epi}(\eta)$ is closed if and only if $\eta$ is lower semicontinuous (\cite[Theorem 7.1]{rockafellar-convexanalysisMR0274683}). The following properties are well-known for countable topological Markov chains satisfying the finite primitivity condition (\cite{MR2003772}). The proof of Proposition \ref{prop:exhaustion_principle-etc} (\ref{enu:exhaustion-principle}) is inspired by (\cite[Theorem 2.1.5]{MR2003772}) .
\begin{prop}
\label{prop:exhaustion_principle-etc}Let $G=\left\langle f_{i}:i\in I\right\rangle $
be a rational semigroup. Suppose that either (1) $G$ is a hyperbolic
rational semigroup which contains an element of degree at least two 
or (2)  $G$ is nicely expanding. Then,  for
each $t\in\R$ and for each $x\in J\left(G\right)$,  the following holds. 
\begin{enumerate}
\item \label{enu:pressure-limitexists}If $P\left(I,t,x_{0}\right)<\infty$,
for some $x_{0}\in J\left(G\right)$, then 
$P\left(I,t,x\right)=\lim_{n\rightarrow\infty}n^{-1}\log Z_{n}\left(I,t,x\right)$.

\item \label{enu:P-I-is-locally-constant}There exists a neighborhood $V$
of $J\left(G\right)$, $V\subset\Chat\setminus P\left(G\right)$,
such that $z\mapsto P\left(I,t,z\right)$ is constant on $V$. 
\item \label{enu:exhaustion-principle}
$ \sup_{F\subset I,\,\card\left(F\right)<\infty}P\left(F,t,x\right)=P\left(I,t,x\right)$

\item \label{enu:pressure-is-convex}The map $s\mapsto P\left(I,s,x\right)$
is a closed convex function with values in $\R\cup\left\{ \infty\right\} $. 
\item \label{enu:pressure-poincareseries}For all $I_{1}\subset I$ with
$I_{0}\subset I_{1}$, where $I_{0}$ is the finite set in Lemma \ref{lem:bounded_distortion_lemma},
we have 
\[
P\left(I_{1},t,x\right)=\inf\left\{ \beta\in\R:\sum_{n\in\N}Z_{n}\left(I_{1},t,x\right)\e^{-\beta n}<\infty\right\} .
\]

\end{enumerate}
\end{prop}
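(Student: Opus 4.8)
The plan is to deduce all five assertions from the Bounded distortion lemma (Lemma~\ref{lem:bounded_distortion_lemma}), whose hypotheses are exactly those of the proposition, together with the chain rule for the partition functions: for any $I_{1}\subset I$, any $x\in\Chat$ and any $n,m\ge1$,
\[
Z_{n+m}\left(I_{1},t,x\right)=\sum_{\delta\in I_{1}^{m}}\sum_{v\in f_{\delta}^{-1}\left(x\right)}\left\Vert f_{\delta}'\left(v\right)\right\Vert ^{-t}Z_{n}\left(I_{1},t,v\right).
\]
First I record two preliminary observations. In both cases $G$ is hyperbolic (in case~$(2)$ by Proposition~\ref{prop:nicelyexpanding-characterisation}); writing $P_{\ast}$ for $P(G)$ in case~$(1)$ and for $P_{0}(G)$ in case~$(2)$, we have $P_{\ast}\subset F(G)$, so $d(J(G),P_{\ast})>0$. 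Also, $J(G)$ is $G$-backward invariant, so $f_{\omega}^{-1}(x)\subset J(G)$ when $x\in J(G)$; and since $\bigcup_{i}\CV(f_{i})\subset P(G)\subset P_{\ast}$, $P_{\ast}$ is $G$-forward invariant, and $P_{\ast}\cap J(G)=\emptyset$, no point of $f_{\omega}^{-1}(x)$ is a critical point of $f_{\omega}$ for $x\in J(G)$; hence every summand $\left\Vert f_{\omega}'(y)\right\Vert^{-t}$ occurring in $Z_{n}(\cdot,t,x)$ with $x\in J(G)$ is a genuine positive real (the convention $0^{-t}=\infty$ is never triggered there).

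For $(1)$, assume $P(I,t,x_{0})<\infty$ for some $x_{0}\in J(G)$. By \eqref{eq:bounded-distortion-property} there is $C'>1$ with $Z_{n}(I,t,v)\ge (C')^{-1}Z_{n}(I,t,x)$ for all $v,x\in J(G)$ and all $n$, and all $Z_{n}(I,t,x)$ are finite. Inserting this into the identity above (with $I_{1}=I$, $v\in f_{\delta}^{-1}(x)\subset J(G)$) gives $Z_{n+m}(I,t,x)\ge (C')^{-1}Z_{n}(I,t,x)Z_{m}(I,t,x)$; thus $c_{n}:=(C')^{-1}Z_{n}(I,t,x)$ is supermultiplicative, and Fekete's lemma yields $\lim_{n}n^{-1}\log c_{n}=\sup_{n}n^{-1}\log c_{n}=\limsup_{n}n^{-1}\log Z_{n}(I,t,x)=P(I,t,x)$. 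For $(5)$ I use the elementary identity: for every nonnegative sequence $(a_{n})$ one has $\inf\{\beta\in\R:\sum_{n}a_{n}\e^{-\beta n}<\infty\}=\limsup_{n}n^{-1}\log a_{n}$ (the root test, interpreting a term $a_{n}=\infty$ as forcing divergence), applied with $a_{n}=Z_{n}(I_{1},t,x)$; here the restriction $I_{0}\subset I_{1}$ is what lets me invoke the dichotomy of Lemma~\ref{lem:bounded_distortion_lemma} — either $Z_{n}(I_{1},t,x)<\infty$ for all $n$, or $Z_{n}(I_{1},t,x)=\infty$ for all large $n$ — so that both sides are simultaneously finite or $+\infty$.

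For $(2)$, fix $0<r<d(J(G),P_{\ast})$ and, using compactness, points $x_{1},\dots,x_{k}\in J(G)$ with $J(G)\subset\bigcup_{j}B(x_{j},r/2)=:V$. Then $V\cap P(G)=\emptyset$, since each $B(x_{j},r/2)$ lies at distance $\ge r/2$ from $P_{\ast}\supset P(G)$. As $B(x_{j},r)\cap P(G)=\emptyset$, every inverse branch of every $f_{\omega}$ on $B(x_{j},r)$ is univalent, so by the same application of Koebe's distortion theorem as in the proof of Lemma~\ref{lem:bounded_distortion_lemma} (now comparing $z\in B(x_{j},r/2)$ with the centre $x_{j}$) there is $C_{1}=C_{1}(r)$ with $C_{1}^{-\left|t\right|}Z_{m}(I,t,x_{j})\le Z_{m}(I,t,z)\le C_{1}^{\left|t\right|}Z_{m}(I,t,x_{j})$ for all $m$ and all $z\in B(x_{j},r/2)$. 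Taking $\limsup_{m}m^{-1}\log(\cdot)$ gives $P(I,t,z)=P(I,t,x_{j})$ for $z\in V$, while $z\mapsto P(I,t,z)$ is constant on $J(G)$ itself by \eqref{eq:bounded-distortion-property} when it is finite and by the last assertion of Lemma~\ref{lem:bounded_distortion_lemma} when it is $+\infty$; hence it is constant on $V$.

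For $(3)$, the inequality $\sup_{F}P(F,t,x)\le P(I,t,x)$ is immediate from $Z_{n}(F,t,x)\le Z_{n}(I,t,x)$. For the reverse one exploits the \emph{uniform} constant in \eqref{eq:bounded-distortion-lowerbound}: for every finite $F$ with $I_{0}\subset F\subset I$, the identity together with $Z_{n+\ell}(F,t,v)\ge C^{-1}Z_{n}(F,t,x)$ (valid as $v\in J(G)$) gives $Z_{n+m+\ell}(F,t,x)\ge C^{-1}Z_{n}(F,t,x)Z_{m}(F,t,x)$ with $C,\ell$ independent of $F$; iterating with $m=n$ gives $Z_{kn+(k-1)\ell}(F,t,x)\ge C^{-(k-1)}Z_{n}(F,t,x)^{k}$, hence, letting $k\to\infty$, $P(F,t,x)\ge (n+\ell)^{-1}(\log Z_{n}(F,t,x)-\log C)$ for every $n\ge1$. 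Since $Z_{n}(I,t,x)=\sup\{Z_{n}(F,t,x):F\subset I\text{ finite}\}$ (and one may restrict to $F\supset I_{0}$, which is cofinal), it follows that $\sup_{F}P(F,t,x)\ge (n+\ell)^{-1}(\log Z_{n}(I,t,x)-\log C)$ for every $n$, and since $(n+\ell)^{-1}\log Z_{n}(I,t,x)$ and $n^{-1}\log Z_{n}(I,t,x)$ have the same $\limsup$, passing to $\limsup_{n}$ gives $\sup_{F}P(F,t,x)\ge P(I,t,x)$. Finally, for $(4)$: for finite $F$ the map $s\mapsto\log Z_{n}(F,s,x)$ is the logarithm of a finite sum of exponentials $\e^{-s\log\left\Vert f_{\omega}'(y)\right\Vert }$ with real exponents, hence real-valued and convex (H\"{o}lder), and a routine estimate using $d(J(G),P(G))>0$ shows $n^{-1}\log Z_{n}(F,s,x)$ is bounded above and below, so $P(F,\cdot,x)$ is a finite-valued convex function on $\R$, therefore continuous and closed; by $(3)$, $P(I,\cdot,x)=\sup_{F\ \mathrm{finite}}P(F,\cdot,x)$ is a supremum of closed convex functions, hence closed and convex. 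The hardest step will be $(3)$: squeezing out of Lemma~\ref{lem:bounded_distortion_lemma} the comparison of $Z_{n}(F,t,x)$ for finite $F$ with a constant \emph{uniform in $F$}, which ultimately rests on the finitely-primitive-type property furnished by Lemma~\ref{lem:finitely_primitivity}.
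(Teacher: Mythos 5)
Your proposal is correct and takes essentially the same route as the paper: all five assertions are squeezed out of Lemma \ref{lem:bounded_distortion_lemma} together with the chain-rule identity for $Z_{n+m}$, via a Fekete-type argument for (1), the Koebe comparison on balls near $J(G)$ for (2), iteration of the almost-multiplicativity inside finite subsystems containing $I_{0}$ for (3), suprema of closed convex functions for (4), and the growth-rate/abscissa-of-convergence correspondence for (5). The only local deviations are that you use the supermultiplicative (lower-bound) form of the distortion estimate in (1) where the paper uses almost subadditivity, you handle the finite- and infinite-pressure cases of (3) uniformly through the $\ell$-shifted bound (\ref{eq:bounded-distortion-lowerbound}) instead of the paper's two-case split, and in (5) you replace the paper's appeal to a tail-series characterization from the first author's dissertation by a direct Cauchy--Hadamard argument, invoking the finiteness dichotomy of Lemma \ref{lem:bounded_distortion_lemma} exactly where the paper does.
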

\begin{proof}
We give the proof under the assumption that $G$ is hyperbolic and contains an element of degree at least two. By replacing $P(G)$ by $P_0(G)$, the proposition can be proved under the assumption that $G$ is nicely expanding. 

First note that, for all
$m,n\in\N$,  we have $Z_{n+m}\left(I,t,x\right)=\sum_{\omega\in I{}^{n}}\sum_{z\in f_{\omega}^{-1}\left(x\right)}\left\Vert f_{\omega}'\left(z\right)\right\Vert ^{-t}Z_{m}\left(I,t,z\right)$.
Since $P\left(I,t,x_{0}\right)<\infty$, Lemma \ref{lem:bounded_distortion_lemma}
(\ref{eq:bounded-distortion-property}) implies that there exists
a constant $C'>1$ such that 
\[
\sum_{\omega\in I{}^{n}}\sum_{z\in f_{\omega}^{-1}\left(x\right)}\left\Vert f_{\omega}'\left(z\right)\right\Vert ^{-t}Z_{m}\left(I,t,z\right)\le C'\sum_{\omega\in I{}^{n}}\sum_{z\in f_{\omega}^{-1}\left(x\right)}\left\Vert f_{\omega}'\left(z\right)\right\Vert ^{-t}Z_{m}\left(I,t,x\right)=C' Z_{n}\left(I,t,x\right)Z_{m}\left(I,t,x\right).
\]
Hence, the sequence $\left(a_{n}\right)\in\R^{\N}$, given by $a_{n}:=$$\log Z_{n}\left(I,t,x\right)$, $n\in \N$, 
is almost subadditive in the sense that  $a_{n+m}\le a_{n}+a_{m}+\log C'$,
for all $n,m\in\N$. Hence, $\lim_{n}a_{n}/n$
exists and the proof of (\ref{enu:pressure-limitexists}) is complete. 

For the proof of (\ref{enu:P-I-is-locally-constant}), one first observes
that $z\mapsto P\left(I,t,z\right)$
is constant on $J\left(G\right)$  by Lemma \ref{lem:bounded_distortion_lemma}. Since $G$ is hyperbolic, there
exists $r>0$ such that $B\left(y,r\right)\subset\Chat\setminus P\left(G\right)$, for each $y\in J\left(G\right)$.
Set $V:=\bigcup_{y\in J\left(G\right)}B\left(y,r\right)$. An application
of Koebe's distortion theorem shows that there exists a constant $C_{1}=C_{1}\left(r\right)$
such that $Z_{n}\left(I,t,y\right)C_{1}\ge Z_{n}\left(I,t,z\right)\ge C_{1}^{-1}Z_{n}\left(I,t,y\right)$,  for each $y\in J\left(G\right)$, $z\in B\left(y,r\right)$
and $n\in\N$.
It follows that $z\mapsto P\left(I,t,z\right)$ is constant on $V$. 

Let us turn to the proof of  (\ref{enu:exhaustion-principle}).  Clearly, 
we have $P\left(F,t,x\right)\le P\left(I,t,x\right)$, for each $F\subset I$. Hence,  we have  $\sup_{F\subset I}P\left(F,t,x\right)\le P\left(I,t,x\right)$.
For the opposite inequality, we consider two cases. First suppose
that $P\left(I,t,x\right)<\infty$ for any $x\in J(G)$ and let $\epsilon>0$. There exists $n\in\N$,  such that $n^{-1}\log Z_{n}\left(I,t,x\right)>P\left(I,t,x\right)-\epsilon$
and  $n^{-1}\log\left(C'\right)<\epsilon$, where $C'>1$
is the constant from (\ref{eq:bounded-distortion-property}) in Lemma \ref{lem:bounded_distortion_lemma}.
Let $I_{0}$ denote the finite subset of $I$ given by Lemma \ref{lem:bounded_distortion_lemma}.
Choose a finite set  $F$ with $I_{0}\subset F\subset I$ such that
$n^{-1}\log Z_{n}\left(F,t,x\right)>P\left(I,t,x\right)-2\epsilon$.
Let $k\in \N$. By (\ref{eq:bounded-distortion-property})
we have 
\begin{eqnarray*}
Z_{kn}\left(F,t,x\right) & = & \sum_{\omega_{k}\in F^{n},y_{k}\in f_{\omega_{k}}^{-1}\left(x\right)}\left\Vert f_{\omega_{k}}'\left(y_{k}\right)\right\Vert ^{-t}\sum_{\omega_{k-1}\in F^{n},y_{k-1}\in f_{\omega_{k-1}}^{-1}\left(y_{k}\right)}\big\Vert f_{\omega_{k-1}}'\left(y_{k-1}\right)\big\Vert ^{-t}\quad\dots\\
 &  & \quad\dots\quad\sum_{\omega_{1}\in F^{n},y_{1}\in f_{\omega_{1}}^{-1}\left(y_{2}\right)}\left\Vert f_{\omega_{1}}'\left(y_{1}\right)\right\Vert ^{-t}\,\,\,\ge\,\,\, (C')^{-k}\left(Z_{n}\left(F,t,x\right)\right)^{k},
\end{eqnarray*}
which gives 
\begin{eqnarray*}
\frac{1}{kn}\log Z_{kn}\left(F,t,x\right) & \ge & -\frac{1}{n}\log C'+\frac{1}{n}\log Z_{n}\left(F,t,x\right)\ge-\epsilon+P\left(I,t,x\right)-2\epsilon=P\left(I,t,x\right)-3\epsilon.
\end{eqnarray*}
We get  $P\left(F,t,x\right)\ge P\left(I,t,x\right)-3\epsilon$, because the previous estimate holds for every $k\in \N$. 
Since $\epsilon$ was chosen to be arbitrary, the proof is complete in the case
that $P\left(I,t,x\right)<\infty$ for any $x\in J(G)$. Now, we consider
the remaining case $P\left(I,t,x\right)=\infty$ for any $x\in J(G)$. Let $N\in\N$. Clearly,
there exists $n\in\N$ such that $\left(n+\ell\right)^{-1}\log Z_{n}\left(I,t,x\right)>N-\epsilon$
and such that $\left(n+\ell\right)^{-1}\log\left(C\right)<\epsilon$,
where $C>1$ and $\ell\in \N$ are  given by (\ref{eq:bounded-distortion-lowerbound}) in Lemma \ref{lem:bounded_distortion_lemma}. Choose a finite
set $F$ with  $I_{0}\subset F\subset I$ such that $\left(n+\ell\right)^{-1}\log Z_{n}\left(F,t,x\right)>N-2\epsilon$.
Let $k\in \N$. By  (\ref{eq:bounded-distortion-lowerbound}),
we have 
\begin{eqnarray*}
Z_{k\left(n+\ell\right)}\left(F,t,x\right) & = & \sum_{\omega_{k}\in F^{n+\ell},y_{k}\in f_{\omega_{k}}^{-1}\left(x\right)}\left\Vert f_{\omega_{k}}'\left(y_{k}\right)\right\Vert ^{-t}\sum_{\omega_{k-1}\in F^{n+\ell},y_{k-1}\in f_{\omega_{k-1}}^{-1}\left(y_{k}\right)}\big\Vert f_{\omega_{k-1}}'\left(y_{k-1}\right)\big\Vert ^{-t}\quad\dots\\
 &  & \quad\dots\quad\sum_{\omega_{1}\in F^{n+\ell},y_{1}\in f_{\omega_{1}}^{-1}\left(y_{2}\right)}\left\Vert f_{\omega_{1}}'\left(y_{1}\right)\right\Vert ^{-t}\,\,\,\ge\,\,\, C{}^{-k}\left(Z_{n}\left(F,t,x\right)\right)^{k},
\end{eqnarray*}
which gives
\begin{eqnarray*}
\frac{1}{k\left(n+\ell\right)}\log Z_{k\left(n+\ell\right)}\left(F,t,x\right) & \ge & -\frac{1}{n+\ell}\log C+\frac{1}{n+\ell}\log Z_{n}\left(F,t,x\right)\ge N-3\epsilon.
\end{eqnarray*}
We obtain  $P\left(F,s,x\right)>N-3\epsilon$,  and letting $N$ tend to infinity, finishes the proof of (\ref{enu:exhaustion-principle}). 

\vspace{-5pt}
To prove  (\ref{enu:pressure-is-convex}), let $x\in J\left(G\right)$.
If $\card\left(I\right)$ is finite, then it is standard to deduce (\ref{enu:pressure-is-convex}) from   H\"older's inequality. 
 Now, suppose that $I=\N$. Let $\left(F_{n}\right)_{n\in \N}$
be a sequence of finite subsets of $I$ such that $F_{n}\subset F_{n+1}$,
for each $n\in\N$, and $\bigcup_{n\in\N}F_{n}=I$. For each $n\in\N$,
we define $g_{n}:\R\rightarrow\R$, given by $g_{n}\left(s\right):=P\left(F_{n},s,x\right)$,
and $g:\R\rightarrow\R\cup\left\{ \pm\infty\right\} $ given by $g\left(s\right):=P\left(I,s,x\right)$.
For each $n\in\N$,  $g_{n}$ is a real-valued convex function. In particular,  $\mbox{epi}\left(g_{n}\right)$ is a closed convex subset of $\R^{2}$.
By Proposition \ref{prop:exhaustion_principle-etc} (\ref{enu:exhaustion-principle})
we have that $g_{n}(s)\le g_{n+1}(s)$, for each $s\in \R$ and $n\in \N$,  which implies  $\mbox{epi}\left(g\right)=\bigcap_{n\in \N}\mbox{epi}\left(g_{n}\right)$. Hence,  $\mbox{epi}\left(g\right)$ is  closed and convex. Since $g_n$ is real-valued and $g_n \le g$, we have $g(s)>-\infty$, for each $s\in \R$. 

\vspace{-7pt}
Finally, to prove  (\ref{enu:pressure-poincareseries}),  a straightforward
modification of the proof of \cite[Theorem 3.16]{JaerischDissertation11}
(see also \cite{JaerischKessebohmer10})  shows that 
\[
P\left(I_{1},t,x\right)=\inf\Big\{ \beta\in\R:\limsup_{T\rightarrow\infty}\sum_{n\in\N,n>T}Z_{n}\left(I_{1},t,x\right)\e^{-\beta n}<\infty\Big\} .
\]
Further, we clearly have that 
\begin{equation}
\inf\Big\{ \beta\in\R:\limsup_{T\rightarrow\infty}\sum_{n\in\N,n>T}Z_{n}\left(I_{1},t,x\right)\e^{-\beta n}<\infty\Big\} \le\inf\Big\{ \beta\in\R:\sum_{n\in\N}Z_{n}\left(I_{1},t,x\right)\e^{-\beta n}<\infty\Big\} .\label{eq:tailpoincare-vs-poincare}
\end{equation}
Hence, in the case that $P\left(I_{1},t,x\right)=\infty$, the assertion
in (\ref{enu:pressure-poincareseries}) follows. If $P\left(I_{1},t,x\right)<\infty$, then  (\ref{eq:bounded-distortion-property})
gives  $Z_{n}\left(I_{1},t,x\right)<\infty$,  for all $n\in\N$. Hence,
equality in (\ref{eq:tailpoincare-vs-poincare}) holds and the assertion
in (\ref{enu:pressure-poincareseries}) follows. 
\end{proof}

\subsection{The Gurevi\v c pressure}
\label{Gurevic}
Throughout this subsection, let $I\subset\N$ be the finite set $\left\{ 1,\dots,n\right\} $,
for some $n\in\N$, or let $I=\N$, endowed with the discrete topology.
Let $\left(f_{i}\right)_{i\in I}\in\Rat^{I}$ and let $\tilde{f}:J\left(\tilde{f}\right)\rightarrow J\left(\tilde{f}\right)$
be the associated skew product. Suppose that $G=\langle f_{i}:i\in I\rangle $ is expanding with respect to 
$\{ f_{i}:i\in I\}.$  For each $n\in\N$,  we set 
\[
\mathcal{P}_{n}\left(t\right):=\mathcal{P}\left(t\tilde{\varphi}_{|J^{\left\{ 1,\dots,n\right\} \cap I}},\tilde{f}_{|J^{\left\{ 1,\dots,n\right\} \cap I}}\right).
\]
\begin{lem}
\label{lem:pressures-coincide}Suppose that $G=\left\langle f_{i}:i\in I\right\rangle $
is a nicely expanding rational semigroup. Then, for each $t\in\R$
and for each $x\in J\left(G\right)$, we have
$\mathcal{P}\left(t\right)=\lim_{n\rightarrow\infty}\mathcal{P}_{n}\left(t\right)=P\left(I,t,x\right)$. 
\end{lem}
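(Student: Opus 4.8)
Write $I_n:=\{1,\dots,n\}\cap I$, let $G_n:=\langle f_i:i\in I_n\rangle$, and let $\tilde f_n$ denote the skew product associated to $\{f_i:i\in I_n\}$, so that $J(\tilde f_n)=J^{I_n}=J^{\{1,\dots,n\}\cap I}$ and $\mathcal{P}_n(t)=\mathcal{P}(t\tilde\varphi_{|J^{I_n}},\tilde f_{|J^{I_n}})$. The first observation is that each $G_n$ is again nicely expanding: it is expanding along fibers because $J(\tilde f_n)=J^{I_n}\subset J^I=J(\tilde f)$ by Lemma~\ref{lem:expanding-implies-skewproductjulia-is-closed}, it is hyperbolic because $P(G_n)\subset P(G)\subset F(G)\subset F(G_n)$, and $P_0(G)$ is a non-empty compact $G_n$-forward invariant subset of $F(G_n)$ containing $P(G_n)$. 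Moreover $\emptyset\neq J(f_1)\subset J(G_n)\subset J(G)$, where $J(f_1)\neq\emptyset$ since $f_1\in G$ is either of degree $\ge 2$ or loxodromic (Lemma~\ref{lem:moebiussemigroup-expanding-implies-loxodromic}). The plan is to prove the two asserted equalities separately, using the finitely generated subsystems as a bridge.

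First I would establish that $\mathcal{P}_n(t)=P(I_n,t,x)$ for every $x\in J(G)$ and every $n$ large enough. By the thermodynamic formalism for finitely generated expanding rational semigroups (see, e.g., \cite{MR2153926}; cf.\ also \cite{MR1827119, MR2237476}), applied to the Hölder continuous geometric potential $t\tilde\varphi$ over the expanding system $\tilde f_n\colon J(\tilde f_n)\to J(\tilde f_n)$, one has $\mathcal{P}_n(t)=\lim_m m^{-1}\log\sum_{\tilde f_n^m(\omega,y)=(\tau,\xi)}\exp(S_mt\tilde\varphi(\omega,y))=P(I_n,t,\xi)$ for any $\xi\in J(G_n)$, the sum being exactly $Z_m(I_n,t,\xi)$ by the chain rule $\exp(S_mt\tilde\varphi(\omega,y))=\|f_{\omega|_m}'(y)\|^{-t}$. (In the degenerate case $\card(J(G_n))=1$ this is the elementary identity $Z_m(I_n,t,\xi)=(\sum_{i\in I_n}\|f_i'(\xi)\|^{-t})^m$; by Lemma~\ref{lem:moebiussemigroup-expanding-implies-J-not-twoelements} the case $\card(J(G_n))=2$ does not occur.) Taking $n$ so large that $I_0\subset I_n$, where $I_0$ is the finite set of Lemma~\ref{lem:bounded_distortion_lemma} attached to $G$ and $t$, the Bounded Distortion Lemma applied with $I_1=I_n$ shows that $x\mapsto P(I_n,t,x)$ is constant on $J(G)$; since $J(G_n)\subset J(G)$, we conclude $\mathcal{P}_n(t)=P(I_n,t,\xi)=P(I_n,t,x)$ for all $x\in J(G)$.

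Next, both $(\mathcal{P}_n(t))_n$ and $(P(I_n,t,x))_n$ are non-decreasing: $J^{I_n}\subset J^{I_{n+1}}$ gives monotonicity of classical pressure under passage to subsystems, and $I_n\subset I_{n+1}$ gives $Z_m(I_n,t,x)\le Z_m(I_{n+1},t,x)$. Having shown they agree for all large $n$, they have a common limit, which by the exhaustion principle (Proposition~\ref{prop:exhaustion_principle-etc}\,(\ref{enu:exhaustion-principle})) equals $\sup_{F\subset I,\ \card(F)<\infty}P(F,t,x)=P(I,t,x)$, because every finite $F\subset I$ is contained in some $I_n$. This proves $\lim_n\mathcal{P}_n(t)=P(I,t,x)$, and in particular gives $\lim_n\mathcal{P}_n(t)\le\mathcal{P}(t)$ since trivially each $\mathcal{P}_n(t)\le\mathcal{P}(t)$ (each $J^{I_n}$ is a compact $\tilde f$-invariant subset of $J(\tilde f)$).

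It remains to prove $\mathcal{P}(t)\le\lim_n\mathcal{P}_n(t)$. Let $K\subset J(\tilde f)$ be compact with $\tilde f(K)=K$. Since $\pi_1\circ\tilde f=\sigma\circ\pi_1$, the set $\pi_1(K)$ is compact and satisfies $\sigma(\pi_1(K))=\pi_1(K)$; hence the set of all symbols occurring in some coordinate of an element of $\pi_1(K)$ equals the first-coordinate projection of $\pi_1(K)$, which is a compact — therefore finite, as $I$ is discrete — subset $A\subset I$. Thus $K\subset(A^{\N}\times\Chat)\cap J(\tilde f)=J^A\subset J^{I_N}$ for $N:=\max A$ (or $N=\card(I)$ if $I$ is finite), so $\tilde f_{|K}$ is a subsystem of $\tilde f_{|J^{I_N}}$ and $\mathcal{P}(t\tilde\varphi_{|K},\tilde f_{|K})\le\mathcal{P}(t\tilde\varphi_{|J^{I_N}},\tilde f_{|J^{I_N}})=\mathcal{P}_N(t)\le\lim_m\mathcal{P}_m(t)$. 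Taking the supremum over all such $K$ yields $\mathcal{P}(t)\le\lim_m\mathcal{P}_m(t)$, and combining with the previous paragraph completes the proof. The main obstacle is the first step: the finitely generated thermodynamic formalism only directly yields the preimage-sum formula at points of the invariant set $J(\tilde f_n)$ (equivalently $J(G_n)$), and one genuinely needs the Bounded Distortion Lemma for the nicely expanding $G$ to transport the identity to all of $J(G)$; the symbol-finiteness argument for $\tilde f$-invariant compact sets, which uses the discreteness of $I$ crucially, is the other point requiring care.
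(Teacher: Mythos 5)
Your proposal is correct and follows essentially the same route as the paper: the finitely-many-symbols argument for compact $\tilde f$-invariant sets yielding $\mathcal{P}(t)=\lim_n\mathcal{P}_n(t)$, the identity $\mathcal{P}_n(t)=P(I_n,t,\cdot)$ for finitely generated expanding subsystems imported from \cite{MR2153926}, and the exhaustion principle of Proposition \ref{prop:exhaustion_principle-etc}. The only (harmless) difference is that you transport $P(I_n,t,\cdot)$ from $J(G_n)$ to all of $J(G)$ by applying Lemma \ref{lem:bounded_distortion_lemma} with $I_1=I_n$, whereas the paper evaluates everything at one point $x_0\in J(f_1)\subset\bigcap_n J(\langle f_1,\dots,f_n\rangle)$ and uses constancy of $P(I,t,\cdot)$ on $J(G)$; also, your justification that $G_n$ is expanding should cite the closedness of $J^I$ (so that $J^{I_n}=J^I\cap(I_n^{\N}\times\Chat)$ is already closed and equals $J(\tilde f_n)$) rather than Lemma \ref{lem:expanding-implies-skewproductjulia-is-closed} applied to $G_n$ itself, which is a trivially repairable phrasing issue.
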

\begin{proof}
We may assume that $I=\N$. Let $t\in \R$. Our first aim is to prove that
\begin{equation} \label{exhaust1}
\mathcal{P}\left(t\right)=\sup_{F\subset I,\,\card\left(F\right)<\infty}\mathcal{P}\left(t\tilde{\varphi}_{|J^{F}},\tilde{f}_{|J^{F}}\right).
\end{equation}
To prove (\ref{exhaust1}), it suffices to verify that, if $K\subset J(\tilde{f})$ is compact and $\tilde{f}(K)=K$, then there exists $F\subset I$ finite
such that $K\subset J^{F}$ and $J^F$ is compact. Let $p_{k}:I^{\N}\rightarrow I$, $p_{k}\left(\omega\right):=\omega_{k}$,
denote the projection on the $k$th symbol for each $k\in\N$. As the set 
$p_{1}(\pi_{1}\left(K\right))\subset I$ is compact, we have that $F:=p_{1}(\pi_{1}\left(K\right))$
is finite. Using that  $\tilde{f}\left(K\right)= K$, we conclude
that $p_{k}(\pi_{1}\left(K\right))=p_{1}(\pi_{1}(\tilde{f}^{k-1}\left(K\right)))= p_{1}(\pi_{1}\left(K\right))= F$, for each $k\in \N$.
Hence, $\pi_{1}\left(K\right)\subset F^{\N}$. Since
$G$ is expanding, we have  $J\left(\tilde{f}\right)=\bigcup_{\omega\in I^{\N}}J^{\omega}$ by Lemma \ref{lem:expanding-implies-skewproductjulia-is-closed}.
Combining this with $\pi_{1}\left(K\right)\subset F^{\N}$, we see that  $K\subset J^{F}$. Since $J^I$ is closed in $I^\N \times \Chat$ by Lemma \ref{lem:expanding-implies-skewproductjulia-is-closed}, we obtain that $J^{F}=J^{I}\cap\big(F^{\N}\times\Chat\big)$
is compact, which completes the proof of (\ref{exhaust1}). By  (\ref{exhaust1}) we have $\mathcal{P}(t)=
\lim _{n\rightarrow \infty }\mathcal{ P}_{n}(t).$

To prove $\lim_{n}\mathcal{P}_{n}\left(t\right)=P(I,t,x)$, for $x\in J(G)$, it suffices to consider one point $x_0\in J(G)$. If $F$ is finite and $x\in J_F$, then it follows from  \cite[Lemma 3.6 (2) and (4)]{MR2153926} that 
\begin{equation} \label{exhaust2}
\mathcal{P}\left(t\tilde{\varphi}_{|J^{F}},\tilde{f}_{|J^{F}}\right)=P(F,t,x).
\end{equation}
Now fix some $x_0 \in J(f_1)\subset \bigcap_{k \in \N} J(\langle f_1,\dots,f_k\rangle)$. By (\ref{exhaust2}), we have, for each $n\in \N$, 
$\mathcal{P}_n\left(t\right)=P(\{1,\dots, n\},t,x_0)$. Letting $n$ tend to infinity, we obtain  $\mathcal{P}\left(t\right)=\lim_{n}P(\{1,\dots, n\},t,x_0)$ by (\ref{exhaust1}). 
By Proposition \ref{prop:exhaustion_principle-etc}
(\ref{enu:exhaustion-principle}) we have  $\lim_{n\rightarrow\infty}P\left(\{1,\dots,n\},t,x_{0}\right)=P\left(I,t,x_{0}\right)$, which completes the proof. 
\end{proof}

\section{Bowen's Formula for Pre-Julia sets}
\label{Bowen}
In this section we prove Bowen's formula for pre-Julia sets,
which is one of the main results of this paper.

\begin{defn}[\cite{{MR1625124}}]
Let $G$ be a rational semigroup and let $\delta \geq 0.$ A Borel probability measure $\mu$ on $\Chat$ is called
$\delta$-subconformal if,  for each $g\in G$ and for each Borel set
$A\subset \Chat$,  we have  
\[
\mu\left(g\left(A\right)\right)\le\int_{A}\left\Vert g'\right\Vert ^{\delta}d\mu.
\]
Also, we define the following critical exponent 
\[
u\left(G\right):=\inf\left\{ v\ge 0:\mbox{ there exists a }v\mbox{-subconformal measure for }G\right\} .
\]
\end{defn}
\begin{lem}
\label{lem:critical-exponents}Let $I$ be a countable set and let $G=\left\langle f_{i}:i\in I\right\rangle $ be a rational semigroup. 
\begin{enumerate}
\item \label{enu:criticalexponents-easy-inequalities-1}

\begin{itemize}
\item $s\left(G,x\right)\le t\left(I,x\right)$, for each $x\in\Chat$. 
\item $s\left(G\right)\le t\left(I\right)$.
\item $u\left(G\right)\le s\left(G\right)$. 
\end{itemize}
\item \label{enu:criticalexponents-freesemigroup-2}If $G$ is a free semigroup,
then for each $x\in\Chat$, 
\[
s\left(G,x\right)=t\left(I,x\right)\mbox{ and }s\left(G\right)=t\left(I\right).
\]
\item \label{enu:criticalexponents-hyperbolic-independence-ofx-in-julia-3}If
$G$ is hyperbolic containing an element of degree at least two (resp.
nicely expanding), then there exists a neighborhood $V$ of $J\left(G\right)$
with $V\subset\Chat\setminus P\left(G\right)$ (resp. $V\subset\Chat\setminus P_{0}\left(G\right)$)
such that $t\left(I\right)=t\left(I,x\right)$, for each $x\in V$. 
\end{enumerate}
\end{lem}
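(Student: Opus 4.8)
The plan is to prove the three parts in order, starting from the elementary comparisons. For part (\ref{enu:criticalexponents-easy-inequalities-1}): the inequality $s(G,x)\le t(I,x)$ follows because every $g\in G$ can be written as $g=f_\omega$ for some finite word $\omega\in I^n$ (possibly in several ways), so the single sum $\sum_{g\in G}\sum_{y\in g^{-1}(x)}\|g'(y)\|^{-v}$ is bounded above by the double sum $\sum_{n\in\N}\sum_{\omega\in I^n}\sum_{y\in f_\omega^{-1}(x)}\|f_\omega'(y)\|^{-v}$; hence finiteness of the latter forces finiteness of the former, giving $s(G,x)\le t(I,x)$. Taking the infimum over $x\in\Chat$ yields $s(G)\le t(I)$. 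For the third bullet, assuming $G$ is countable and $v>s(G)$, I would fix $x$ with $s(G,x)<v$ and define a measure $\mu:=Z^{-1}\sum_{g\in G}\sum_{y\in g^{-1}(x)}\|g'(y)\|^{-v}\delta_y$, where $Z$ is the (finite, by choice of $v$) normalizing constant; one then checks directly that $\mu$ is $v$-subconformal: for a Borel set $A$, $\mu(g(A))=Z^{-1}\sum_{h\in G}\sum_{y\in h^{-1}(x),\,y\in g(A)}\|h'(y)\|^{-v}$, and reindexing along $h=h'\circ g$ together with the chain rule $\|h'(y)\|=\|h'{}'(g(w))\|\cdot\|g'(w)\|$ for $y=g(w)$ gives the bound $\int_A\|g'\|^v\,d\mu$ (the only subtlety being that not every preimage point of $x$ under $h$ need lie in $g(A)$, which only helps the inequality). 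This shows $u(G)\le v$, hence $u(G)\le s(G)$.

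For part (\ref{enu:criticalexponents-freesemigroup-2}): when $G$ is free, the map $\N\times\bigcup_n I^n\to G$ that assigns to $(n,\omega)$ the element $f_\omega$ is a bijection onto $G\setminus\{\text{nothing}\}$ — more precisely every $g\in G$ has a unique representation as a word $\omega\in I^{n}$ for a unique $n$ — so the single and double Poincaré series are literally the same series, term by term. Hence $s(G,x)=t(I,x)$ for every $x$, and taking infima gives $s(G)=t(I)$.

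For part (\ref{enu:criticalexponents-hyperbolic-independence-ofx-in-julia-3}): the key observation is that $t(I,x)$ is, by definition, $\inf\{v\ge 0:\sum_{n\in\N}Z_n(I,v,x)<\infty\}$ in the notation of Section \ref{Topological}, and by Proposition \ref{prop:exhaustion_principle-etc}(\ref{enu:pressure-poincareseries}), for $I_1=I$ (using $I\supset I_0$) this infimum equals $P(I,v,x)$ in the sense that $\sum_n Z_n(I,v,x)\e^{-\beta n}<\infty$ iff $\beta>P(I,v,x)$; in particular $t(I,x)=\inf\{v\ge 0: P(I,v,x)\le 0\}$, so $t(I,x)$ is determined by the function $v\mapsto P(I,v,x)$. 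Now Proposition \ref{prop:exhaustion_principle-etc}(\ref{enu:P-I-is-locally-constant}) provides a neighborhood $V$ of $J(G)$ with $V\subset\Chat\setminus P(G)$ (resp. $\Chat\setminus P_0(G)$) on which $z\mapsto P(I,v,z)$ is constant for each fixed $v$; combining these, $v\mapsto P(I,v,x)$ is independent of $x\in V$, hence so is $t(I,x)$, and it equals $t(I,x)$ for $x\in J(G)$, which one identifies with $t(I)=\inf_x t(I,x)$ since the infimum defining $t(I)$ is attained on $J(G)$ (points off $\Chat\setminus\overline{\Jpre(G)}$ contribute $t(I,x)=0$ only when $x\notin\overline{G^-(\Chat)}$, and otherwise $x$ has a preimage in $V$, reducing to the $V$ case via a uniform Koebe distortion estimate). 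The main obstacle I anticipate is this last bookkeeping step — pinning down that the global infimum $t(I)=\inf\{t(I,x):x\in\Chat\}$ is realized by the common value on $V$ and is not lowered by exceptional points $x$ with a finite or empty backward orbit; this requires invoking that $G$ hyperbolic-with-an-element-of-degree-$\ge 2$ (resp. nicely expanding) forces $E(G)\subset F(G)$ and a covering/distortion argument to transfer finiteness of the Poincaré series from a point of $V$ to an arbitrary point of $\Chat$ whose backward orbit meets $V$.
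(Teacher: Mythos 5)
Your parts (1) and (2) are fine. Part (2) is immediate from unique factorization in a free semigroup, exactly as in the paper, and for the third bullet of (1) you give a direct Patterson--Sullivan-type construction of a $v$-subconformal measure for each $v>s(G)$, where the paper simply cites \cite[Theorem 4.2]{MR1625124}; your construction works (right cancellation $h_{1}\circ g=h_{2}\circ g\Rightarrow h_{1}=h_{2}$ holds because $g$ is surjective, a point $x$ with $s(G,x)<v$ automatically avoids all critical values of elements of $G$, and multiplicities only increase), though the reindexing should be stated as sending the term indexed by $(h,y)$ with $y=g(w)\in g(A)$ to the term indexed by $(h\circ g,w)$ in $\int_{A}\Vert g'\Vert^{v}d\mu$, rather than factoring $h$ through $g$.

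Part (3) is where the real content lies, and here your plan has a genuine gap. First, the identity $t(I,x)=\inf\{v\ge0:P(I,v,x)\le0\}$ is not justified: Proposition \ref{prop:exhaustion_principle-etc} (\ref{enu:pressure-poincareseries}) only yields the sandwich $\inf\{v:P(I,v,x)\le0\}\le t(I,x)\le\inf\{v:P(I,v,x)<0\}$, and the two infima coincide only if $v\mapsto P(I,v,x)$ is strictly decreasing, which the paper establishes (Proposition \ref{prop:critical-exponents}) only for nicely expanding $G$, not for the merely hyperbolic alternative. This detour is unnecessary and repairable: inequality (\ref{eq:bounded-distortion-lowerbound}) of Lemma \ref{lem:bounded_distortion_lemma} compares the partial sums $Z_{n}$ at any two points of $J(G)$, and the Koebe estimate in the proof of Proposition \ref{prop:exhaustion_principle-etc} (\ref{enu:P-I-is-locally-constant}) extends this to a neighbourhood $V$ of $J(G)$, so $t(I,\cdot)$ is constant on $V$ directly. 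The serious gap is the step you yourself flag as the main obstacle: proving that this common value equals $t(I)=\inf_{y\in\Chat}t(I,y)$, i.e.\ that $t(I,y)\ge t(I,x_{0})$ for every $y\in\Chat$ and $x_{0}\in J(G)$. Your sketch states the transfer in the wrong direction (what is needed is that convergence of the series at $y$ forces convergence at a backward-orbit point of $y$ lying in $V$, equivalently that divergence propagates from $V$ to $y$), and it does not treat exceptional points at all: if $y\in E(G)$ the backward orbit of $y$ is finite and need never meet $V$, so no covering or distortion argument applies; the paper instead shows that such a $y$ is fixed by some $g\in G$ with $\Vert g'(y)\Vert<1$ (or $=0$), using $E(G)\subset F(G)$, loxodromy of M\"obius elements (Lemma \ref{lem:moebiussemigroup-expanding-implies-goodexceptionalset}) and Beardon's theorem, whence $Z_{nm}(I,v,y)\ge\Vert(g^{n})'(y)\Vert^{-v}\ge1$ and the Poincar\'e series at $y$ diverges for every $v\ge0$, so $t(I,y)=\infty$. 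You also need the density $J(G)\subset\overline{G^{-}(y)}$ for $y\notin E(G)$ (valid when $\card(J(G))\ge3$) to guarantee that the backward orbit of $y$ meets $V$, and a separate argument in the degenerate case $\card(J(G))=1$, which does occur for nicely expanding M\"obius semigroups (cf.\ Lemma \ref{lem:moebiussemigroup-expanding-implies-J-not-twoelements} and Case (2) of the paper's proof). Without these ingredients the proof of (3) is incomplete.
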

\begin{proof}
The assertions in (\ref{enu:criticalexponents-easy-inequalities-1})
were proved in \cite[Theorem 4.2]{MR1625124}. The assertions in (\ref{enu:criticalexponents-freesemigroup-2})
follow immediately from the definition.  In order to prove (\ref{enu:criticalexponents-hyperbolic-independence-ofx-in-julia-3}),
we  verify the following claim.
\begin{clm*} Let $t\geq 0$. If $\sum_{n\in\N}Z_{n}\left(I,t,x\right)=\infty$,
for some  $x\in J\left(G\right)$, then $\sum_{n\in\N}Z_{n}\left(I,t,y\right)=\infty$,
for every $y\in\Chat$. 
\end{clm*}
\begin{proof}[Proof of Claim]
Suppose that $\sum_{n\in\N}Z_{n}\left(I,t,x\right)=\infty$,
for some  $x\in J\left(G\right)$. We first show that
$\sum_{n\in\N}Z_{n}\left(I,t,y\right)=\infty$ for each $y\in J\left(G\right)$.
Since $G$ is hyperbolic,  containing an element of degree at least
two (resp. nicely expanding), we can apply Lemma \ref{lem:bounded_distortion_lemma}.
 If $P\left(I,t,x\right)<\infty$, then we have $\sum_{n\in\N}Z_{n}\left(I,t,y\right)=\infty$, for each $y\in J(G)$, because there
exists a constant $C'>1$ such that $Z_{n}\left(I,t,y\right)\ge C'^{-1}Z_{n}\left(I,t,x\right)$. If  $P\left(I,t,x\right)=\infty$, then it follows
from the last assertion in Lemma \ref{lem:bounded_distortion_lemma} that $Z_{n}\left(I,t,y\right)=\infty$,  for each $y\in J(G)$ and 
for all sufficiently large $n\in\N$. The next step is to show that $\sum_{n\in\N}Z_{n}\left(I,t,y\right)=\infty$
for each $y$ in a neighborhood  $V$ of $J\left(G\right)$. Since $G$
is hyperbolic (resp. nicely expanding), there exists $r>0$ such that
$B\left(y,r\right)\subset\Chat\setminus P\left(G\right)$ (resp. $B\left(y,r\right)\subset\Chat\setminus P_{0}\left(G\right)$),
for each $y\in J\left(G\right)$. Set $V:=\bigcup_{y\in J\left(G\right)}B\left(y,r\right)$.
An application of Koebe's distortion theorem shows that there exists
a constant $C_{1}=C_{1}\left(r\right)$ such that $\sum_{n\in\N}Z_{n}\left(I,t,z\right)\ge C_{1}^{-1}\sum_{n\in\N}Z_{n}\left(I,t,y\right)=\infty$,
for each $y\in J\left(G\right)$ and $z\in B\left(y,r\right)$.

In order to finish the proof of the claim, we will distinguish two cases.

\emph{Case (1): $\card\left(J\left(G\right)\right)>1$.} In this case, we have $\card(J(G))\geq 3$ by Lemma \ref{lem:moebiussemigroup-expanding-implies-J-not-twoelements}. First, let
$y\in\Chat\setminus E\left(G\right)$ be given. Then there exists $\omega\in I^{*}$ such that $f_{\omega}^{-1}\left(y\right)\cap V\neq\emptyset$ (\cite[Lemma 3.2]{MR1397693}, \cite[Lemma 2.3]{MR1767945}).
Since $\sum_{n\in\N}Z_{n}\left(I,t,z\right)=\infty$, for each $z\in V$,
we conclude that also $\sum_{n\in\N}Z_{n}\left(I,t,y\right)=\infty$.
Finally, by Proposition \ref{prop:nicelyexpanding-characterisation},  each element of $G \cap \Aut(\Chat)$ is loxodromic. Since  $\card\left(J\left(G\right)\right)\ge 3$,  we have $\card(E(G))\le 2$ (\cite[Lemma 3.3]{MR1397693}, \cite[Lemma 2.3]{MR1767945}). Therefore,  by Lemma \ref{lem:moebiussemigroup-expanding-implies-goodexceptionalset} and  \cite[Theorem 4.1.2]{MR1128089},  for each  $y\in E\left(G\right)$,  there is $g\in G$ such that
$g\left(y\right)=y$ and $\left\Vert g'\left(y\right)\right\Vert<1$. Hence,
$\sum_{n\in\N}Z_{n}\left(I,t,y\right)=\infty$. 

\emph{Case (2): $\card\left(J\left(G\right)\right)=1$}. Let $g\in G$. By Lemma \ref{lem:moebiussemigroup-expanding-implies-loxodromic}, we have that $g$ is loxodromic. Let $a$ denote the repelling fixed point of $g$, and let $b$ denote the attracting fixed point of $g$. Since $a \in J(G)$, we have that,  for each $y\in\Chat\setminus\left\{ b\right\} $, there
exists $n\in\N$ such that $g^{-n}\left(z\right)\in V$. Hence, 
$\sum_{n\in\N}Z_{n}\left(I,t,y\right)=\infty$. Finally, since $g\left(b\right)=b$
and $\left\Vert g'\left(b\right)\right\Vert <1$, we have  $\sum_{n\in\N}Z_{n}\left(I,t,b\right)=\infty$.
The proof of the claim is complete.
\end{proof}

Let us now complete the proof of the lemma. It follows from the  claim that  $t\left(I,x\right)\le t\left(I,y\right)$, for each
$x\in J\left(G\right)$ and $y\in\Chat$. In particular,  we have $t\left(I\right)=t\left(I,x\right)$,  for every $x\in J\left(G\right)$.
Again, by Koebe's distortion theorem, letting $V$ be the neighborhood of $J(G)$ in the proof of Claim, we conclude that $t\left(I\right)=t\left(I,x\right)=t\left(I,y\right)$
for all $x,y\in V$, which proves the  assertion in (\ref{enu:criticalexponents-hyperbolic-independence-ofx-in-julia-3}).  Thus we have proved Lemma \ref{lem:critical-exponents}.
\end{proof}
\begin{prop}
\label{prop:critical-exponents}Let $I$ be a  countable
set. Let $G=\left\langle f_{i}:i\in I\right\rangle $ denote a nicely
expanding rational semigroup. Then  $\beta\mapsto\mathcal{P}\left(\beta\right)$
is strictly decreasing on $\mathcal{F}:=\left\{ x\in\R:\mathcal{P}\left(x\right)<\infty\right\} $,
and we have  
\begin{equation}
t\left(I\right)=\inf\left\{ \beta\in\R:\mathcal{P}\left(\beta\right)\le0\right\} =\inf\left\{ \beta\in\R:\mathcal{P}\left(\beta\right)<0\right\} =\sup_{F\subset I,\card\left(F\right)<\infty}t\left(F\right).\label{eq:criticalexponent-vs-pressure}
\end{equation}
Furthermore, we have that $\mathcal{P}\left(t\left(I\right)\right)\le0$. \end{prop}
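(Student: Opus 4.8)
The plan is to transfer everything to a single point of $J(G)$ and then exhaust the Gurevi\v{c} pressure $\mathcal{P}$ by the pressures of the finite subsystems. After relabelling I may assume $I=\N$, and I would fix the point $x_{0}\in J(f_{1})=J_{(1,1,\dots)}$; it is non-empty and lies in $\Jpre(G)\cap J(G)$, and also in $\Jpre(G_{F})\cap J(G_{F})$ for every finite $F\subset I$ with $1\in F$, where $G_{F}:=\langle f_{i}:i\in F\rangle$. By Proposition~\ref{prop:nicelyexpanding-characterisation} each $G_{F}$ is again nicely expanding, because hyperbolicity, loxodromy of the M\"{o}bius elements, and $\id\notin\overline{G_{F}\cap\Aut(\Chat)}$ are all inherited from $G$ while $P_{0}(G)$ serves as a witnessing forward-invariant set. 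Hence Lemma~\ref{lem:pressures-coincide} applies to $G$ and to each $G_{F}$, so that $\mathcal{P}(v)=P(I,v,x_{0})=\limsup_{n\rightarrow\infty}n^{-1}\log Z_{n}(I,v,x_{0})$ and, writing $\mathcal{P}_{F}(v):=\mathcal{P}(v\tilde{\varphi}_{|J^{F}},\tilde{f}_{|J^{F}})$, $\mathcal{P}_{F}(v)=P(F,v,x_{0})=\limsup_{n\rightarrow\infty}n^{-1}\log Z_{n}(F,v,x_{0})$; moreover $t(I)=t(I,x_{0})$ and $t(F)=t(F,x_{0})$ by Lemma~\ref{lem:critical-exponents}~(\ref{enu:criticalexponents-hyperbolic-independence-ofx-in-julia-3}), and the exhaustion identity proved inside Lemma~\ref{lem:pressures-coincide} gives $\mathcal{P}(v)=\sup_{F}\mathcal{P}_{F}(v)$, the supremum over all finite $F$ (which we may take to contain $1$, by monotonicity). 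If $t(I)=\infty$ one checks directly that all four quantities in (\ref{eq:criticalexponent-vs-pressure}) equal $\infty$ and $\mathcal{F}=\emptyset$, so from now on I assume $t(I)<\infty$.

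The second step is a uniform expansion estimate. Since $x_{0}\in\Jpre(G)$, for $\omega\in I^{n}$ and $y\in f_{\omega}^{-1}(x_{0})$ one has $(\omega\tau,y)\in J(\tilde{f})$ for a suitable $\tau\in I^{\N}$, hence $\|f_{\omega}'(y)\|=\|(\tilde{f}^{n})'(\omega\tau,y)\|\ge C\lambda^{n}$, where $C>0$ and $\lambda>1$ are the expansion constants of $\tilde{f}$; in particular every such $y$ is non-critical, so $Z_{n}(F,v,x_{0})<\infty$ for finite $F$. Consequently, for $v_{1}<v_{2}$,
\[
Z_{n}(\,\cdot\,,v_{2},x_{0})\;\le\;C^{-(v_{2}-v_{1})}\lambda^{-n(v_{2}-v_{1})}\,Z_{n}(\,\cdot\,,v_{1},x_{0})
\]
for $\,\cdot\,=I$ and for $\,\cdot\,$ a finite $F$, and taking $\limsup_{n\rightarrow\infty}n^{-1}\log$ yields $\mathcal{P}(v_{2})\le\mathcal{P}(v_{1})-(v_{2}-v_{1})\log\lambda$ whenever $\mathcal{P}(v_{1})<\infty$, as well as $\mathcal{P}_{F}(v_{2})\le\mathcal{P}_{F}(v_{1})-(v_{2}-v_{1})\log\lambda$ for every finite $F$. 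The first inequality is precisely the claimed strict monotonicity of $\mathcal{P}$ on $\mathcal{F}$.

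Next I would analyse the finite subsystems and assemble the equalities. For finite $F\ni 1$ the function $\mathcal{P}_{F}$ is real-valued (crude exponential upper bounds for $Z_{n}(F,\,\cdot\,,x_{0})$ in terms of $\sum_{i\in F}\deg f_{i}$ and $\max_{i\in F}\sup_{\Chat}\|f_{i}'\|$), hence continuous because it is convex by Proposition~\ref{prop:exhaustion_principle-etc}~(\ref{enu:pressure-is-convex}); by the previous step it is strictly decreasing with $\mathcal{P}_{F}(v)\to-\infty$ as $v\to\infty$, and $\mathcal{P}_{F}(0)=h_{\mathrm{top}}(\tilde{f}_{|J^{F}})\ge0$. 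Thus $\mathcal{P}_{F}$ has a unique zero, and comparing the growth rate $\limsup_{n}n^{-1}\log Z_{n}(F,v,x_{0})$ with the convergence of $\sum_{n}Z_{n}(F,v,x_{0})$ identifies that zero with $t(F)$; in particular $\mathcal{P}_{F}(v)\le0$ if and only if $v\ge t(F)$. Now set $T:=\sup_{F}t(F)$, the supremum over finite $F$. For $v>T$ and any finite $F\ni1$ we have $\mathcal{P}_{F}(v)\le\mathcal{P}_{F}(t(F))-(v-t(F))\log\lambda\le-(v-T)\log\lambda$, so $\mathcal{P}(v)=\sup_{F}\mathcal{P}_{F}(v)\le-(v-T)\log\lambda<0$; for $v<T$ there is $F$ with $t(F)>v$, whence $\mathcal{P}(v)\ge\mathcal{P}_{F}(v)>0$. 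Therefore $\inf\{\beta:\mathcal{P}(\beta)\le0\}=\inf\{\beta:\mathcal{P}(\beta)<0\}=T$, and since $\mathcal{P}_{F}(T)\le0$ for every $F$ also $\mathcal{P}(T)\le0$. Finally, $Z_{n}(F,v,x_{0})\le Z_{n}(I,v,x_{0})$ gives $t(F)\le t(I,x_{0})=t(I)$, hence $T\le t(I)$, while for $v>T$ the inequality $\mathcal{P}(v)=\limsup_{n}n^{-1}\log Z_{n}(I,v,x_{0})<0$ forces $\sum_{n}Z_{n}(I,v,x_{0})<\infty$ and so $v\ge t(I)$, giving $t(I)\le T$. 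Combining everything yields (\ref{eq:criticalexponent-vs-pressure}) together with $\mathcal{P}(t(I))\le0$.

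The argument is essentially bookkeeping, but there are three points that need care and that I expect to be the main obstacles: choosing a single point $x_{0}$ (namely in $J(f_{1})$) that lies in the Julia and pre-Julia sets of $G$ \emph{and} of every finite subsystem simultaneously, so that Lemmas~\ref{lem:pressures-coincide} and~\ref{lem:critical-exponents}~(\ref{enu:criticalexponents-hyperbolic-independence-ofx-in-julia-3}) apply uniformly; verifying, via Proposition~\ref{prop:nicelyexpanding-characterisation}, that every finite subsystem of a nicely expanding semigroup is itself nicely expanding; and---most delicately---obtaining a rate of decrease of $\mathcal{P}_{F}$ that is \emph{uniform in $F$} (here $\log\lambda$, coming from the uniform expansion constant), since it is this uniformity that lets the finite-level zeros $t(F)$ control $\mathcal{P}$ from above near $v=T$ and hence forces $\mathcal{P}(T)\le0$. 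The distinction between ``$\mathcal{P}\le0$'' and ``$\mathcal{P}<0$'' in (\ref{eq:criticalexponent-vs-pressure}) is then settled by the continuity (convexity) of $\mathcal{P}_{F}$ together with the strict monotonicity from the second step.
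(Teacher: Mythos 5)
Your proposal is correct and follows essentially the same route as the paper: it identifies $\mathcal{P}$ with $P(I,\cdot,x_{0})$ at a point $x_{0}\in J(f_{1})$ via Lemma~\ref{lem:pressures-coincide}, uses Lemma~\ref{lem:critical-exponents}~(\ref{enu:criticalexponents-hyperbolic-independence-ofx-in-julia-3}) for base-point independence, derives strict monotonicity from the fiberwise expansion, and exhausts by finite subsystems exactly as in the paper's argument for $t(I)=\sup_{F}t(F)$. The only notable (and harmless) deviation is at the end: you obtain $\mathcal{P}(t(I))\le 0$ from the uniform decrease rate $\log\lambda$ together with $\sup_{F}\mathcal{P}_{F}(T)\le 0$, whereas the paper invokes the closedness of the convex function $\beta\mapsto\mathcal{P}(\beta)$ from Proposition~\ref{prop:exhaustion_principle-etc}~(\ref{enu:pressure-is-convex}).
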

\begin{proof}
We may assume that $I=\{ 1,\dots ,n\}$, for some $n\in \N$, or $I=\N$. Using that  $G$ is expanding,  it is straightforward to verify that   the map $\beta\mapsto\mathcal{P}\left(\beta\right)$
is strictly decreasing on $\mathcal{F}:=\left\{ \beta \in\R:\mathcal{P}\left(\beta \right)<\infty\right\} $.
Let $x\in J\left(G\right)$. 
 Since $G$ is nicely expanding,  Lemma \ref{lem:pressures-coincide} yields  $\mathcal{P}\left(\beta\right)=P\left(I,\beta,x\right)$.  By definition of $t\left(I,x\right)$  we have 
\[
\inf\left\{ \beta\in\R:\mathcal{P}\left(\beta\right)\le0\right\} \le t\left(I,x\right)\le\inf\left\{ \beta\in\R:\mathcal{P}\left(\beta\right)<0\right\} .
\]
Since $\beta\mapsto\mathcal{P}\left(\beta\right)$ is strictly decreasing
on $\mathcal{F}$, we have $\inf\left\{ \beta\in\R:\mathcal{P}\left(\beta\right)\le0\right\} =\inf\left\{ \beta\in\R:\mathcal{P}\left(\beta\right)<0\right\} $,
which proves that $t\left(I,x\right)=\inf\left\{ \beta\in\R:\mathcal{P}\left(\beta\right)\le0\right\} =\inf\left\{ \beta\in\R:\mathcal{P}\left(\beta\right)<0\right\} $. By Lemma \ref{lem:critical-exponents}
(\ref{enu:criticalexponents-hyperbolic-independence-ofx-in-julia-3})
we have  $t\left(I,x\right)=t\left(I\right)$. 
It remains to show that  $t\left(I\right)=\sup_{F\subset I,\card\left(F\right)<\infty}t\left(F\right)$.
Clearly, we have  $t\left(F\right)\le t\left(I\right)$,  for each
$F\subset I$. Hence,   $\sup_{F\subset I,\card\left(F\right)<\infty}t\left(F\right)\le t\left(I\right)$.
For the reversed inequality, let $\epsilon>0$. Since $t\left(I\right)=\inf\left\{ \beta\in\R:\mathcal{P}\left(\beta\right)\le0\right\} $,
we have  $\mathcal{P}\left(t\left(I\right)-\epsilon\right)>0$.
Hence, by Lemma \ref{lem:pressures-coincide}, there exists $n\in\N$
such that $\mathcal{P}_{n}\left(t\left(I\right)-\epsilon\right)>0$.
Therefore, we have that 
\[
\sup_{F\subset I,\card\left(F\right)<\infty}t\left(F\right)\ge t\left(I\cap\left\{ 1,\dots,n\right\} \right)=\inf\left\{ \beta\in\R:\mathcal{P}_{n}\left(\beta\right)\le0\right\} \ge t\left(I\right)-\epsilon.
\]
Letting  $\epsilon$ tend to zero, finishes the proof of  (\ref{eq:criticalexponent-vs-pressure}).

Next, we will show that $\mathcal{P}\left(t\left(I\right)\right)\le0$.
Since $t\left(I\right)=\inf\left\{ \beta\in\R:\mathcal{P}\left(\beta\right)\le0\right\} $
and $\beta\mapsto\mathcal{P}\left(\beta\right)$ is a closed function
by Proposition \ref{prop:exhaustion_principle-etc} (\ref{enu:pressure-is-convex}),
the claim follows. The proof is complete. 
\end{proof}
In order to state the main theorem of this section, we  give
the  definition of the following subsets of $\Jpre\left(G\right)$. 
\begin{defn}
For a rational semigroup $G$, 
we set 
\[
\Jur\left(G\right):=\bigcup_{H\text{ finitely generated subsemigroup of }G}\Jpre\left(H\right)\text{ and }\Jr\left(G\right):=\bigcup_{\gamma \in G^{\N}:\exists g\in G:\gamma_{i}=g\text{ infinitely often}}J_{\gamma}.
\]
 
\end{defn}
\begin{rem*} If $G=\left\langle f_{i}:i\in I\right\rangle $ is
a rational semigroup,  where $I=\{ 1,\dots ,n\}$, for some $n\in \N$, or $I=\N$, then 
\[
\Jur\left(G\right)=\bigcup_{F\subset I:\card\left(F\right)<\infty}J_{F} \text{ and } \Jr\left(G\right)=\bigcup_{\omega\in I^{\N}:\liminf_{i}\omega_{i}<\infty}J_{\omega}.
\]
We remark that the subscript \emph{ur} in  $\Jur$ means uniformly radial, and the  subscript \emph{r} in  $\Jr$ means radial. 
\end{rem*}

\begin{thm}
\label{thm:bowen-for-prejulia}For a nicely expanding rational semigroup
$G=\left\langle f_{i}:i\in I\right\rangle $,  the following holds.
\begin{enumerate}
\item The $u\left(G\right)$-dimensional outer Hausdorff measure of $\Jpre$$\left(G\right)$
is finite. In particular, we have that \linebreak $\dim_{H}\left(\Jpre\left(G\right)\right)\le u\left(G\right)$.
\item If $I$ is countable, then 
$\dim_{H}\left(\Jpre\left(G\right)\right)\le u\left(G\right)\le s\left(G\right)\le 
t\left(I\right)=\inf \{ \beta \in \R : \mathcal{P}(\beta )<0\} $. 
\item If $I$ is  countable, and if $\left\{ f_{i}:i\in I\right\} $
satisfies the open set condition, then we have 
\[
\dim_{H}\left(\Jur\left(G\right)\right)=\dim_{H}\left(\Jr\left(G\right)\right)=\dim_{H}\left(\Jpre\left(G\right)\right)=u\left(G\right)=s\left(G\right)=
t\left(I\right) =\inf \{ \beta \in \mathbb{R} : \mathcal{P}(\beta )<0\}.
\]

\end{enumerate}
\end{thm}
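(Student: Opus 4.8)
The plan is to prove the three statements of Theorem~\ref{thm:bowen-for-prejulia} in order, using the machinery developed in Sections~\ref{Expanding}--\ref{Bowen}. For part (1), I would construct a $u(G)$-subconformal measure $\mu$ (or work with one within $\epsilon$ of the infimum defining $u(G)$, then let $\epsilon\to 0$) and show directly that it dominates, up to a multiplicative constant, the $u(G)$-dimensional Hausdorff measure restricted to $\Jpre(G)$. The key geometric input is the bounded distortion property for inverse branches of elements of $G$, which is available because $G$ is nicely expanding: one controls the distortion of $f_\omega^{-1}$ on balls centered at points of $J(G)$ using the fact that $P(G)\subset P_0(G)$ stays a definite spherical distance from $J(G)$ together with Koebe's distortion theorem, exactly as set up in the proof of Lemma~\ref{lem:bounded_distortion_lemma}. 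Since $\Jpre(G)=\bigcup_{\omega\in I^\N}J_\omega=\bigcup_{i\in I}f_i^{-1}(\Jpre(G))$ and the expansion gives that the diameters of the cylinder sets $f_{\omega|_n}^{-1}(\text{nbhd of }J(G))$ shrink geometrically, a standard mass-distribution / covering argument yields $\mathcal{H}^{u(G)}(\Jpre(G))<\infty$ and hence $\dim_H(\Jpre(G))\le u(G)$.

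For part (2), the inequality $u(G)\le s(G)$ is Lemma~\ref{lem:critical-exponents}(\ref{enu:criticalexponents-easy-inequalities-1}) (noting $G$ is countable when $I$ is), $s(G)\le t(I)$ is again Lemma~\ref{lem:critical-exponents}(\ref{enu:criticalexponents-easy-inequalities-1}), and $t(I)=\inf\{\beta:\mathcal{P}(\beta)<0\}$ is Proposition~\ref{prop:critical-exponents}; combined with part (1) this chain is immediate. The real work is the lower bound in part (3): under the open set condition I must show $\dim_H(\Jur(G))\ge t(I)$, since then the sandwich $\dim_H(\Jur(G))\le\dim_H(\Jr(G))\le\dim_H(\Jpre(G))\le u(G)\le s(G)\le t(I)$ collapses to equality everywhere. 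Here I would use the exhaustion principle: by Proposition~\ref{prop:critical-exponents}, $t(I)=\sup_{F\subset I,\,\card(F)<\infty}t(F)$, and $\Jur(G)\supset\Jpre(G_F)=J_F$ for every finite $F\subset I$, so it suffices to prove $\dim_H(\Jpre(G_F))\ge t(F)$ for each finite subsystem $G_F=\langle f_i:i\in F\rangle$. For a finitely generated system this is the reduction alluded to in the introduction: $G_F$ is a finitely generated expanding rational semigroup satisfying the open set condition (inherited from $\{f_i:i\in I\}$), so $\Jpre(G_F)=J(G_F)$ and one applies \cite[Theorem B]{MR2153926}, which gives $\dim_H(J(G_F))=\inf\{\beta:\mathcal{P}(\beta\tilde\varphi_{|J^F},\tilde f_{|J^F})\le 0\}=t(F)$ (the last equality via Lemma~\ref{lem:pressures-coincide} applied to the finite system, i.e.\ equation~(\ref{exhaust2})). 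Letting $F$ exhaust $I$ and taking suprema finishes the lower bound.

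To assemble part (3) cleanly I would first record that $\Jur(G)\subset\Jr(G)\subset\Jpre(G)$ (from the definitions: a uniformly radial point uses only finitely many generators, hence some generator infinitely often; a radial point lies in some $J_\omega\subset\Jpre(G)$), so that all Hausdorff dimensions in the chain are monotone. Then the lower bound $\dim_H(\Jur(G))\ge\sup_F t(F)=t(I)$ together with the upper bound $\dim_H(\Jpre(G))\le t(I)$ from part (2) forces all six quantities to equal $t(I)=\inf\{\beta:\mathcal{P}(\beta)<0\}$. One should also double-check that each finite subsystem $G_F$ to which \cite[Theorem B]{MR2153926} is applied genuinely satisfies the hypotheses of that theorem — in particular that it is expanding (it is, as a subsemigroup of a nicely expanding semigroup, or directly by Lemma~\ref{lem:hyperbolic-lox-pzero-implies-expanding}) and that $\|f_{\omega_1}'(z)\|\ne 0$ on $J(\tilde f)$ — but these are routine given the standing assumptions.

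The main obstacle is the lower-bound argument in part (3), specifically ensuring the reduction to finite subsystems is valid and invoking \cite[Theorem B]{MR2153926} with all its hypotheses verified in the present (possibly Möbius, possibly small-Julia-set) setting; the cases $\card(J(G))\le 2$ versus $\card(J(G))\ge 3$ may need to be separated, and one must confirm that the open set condition for the full system $\{f_i:i\in I\}$ restricts to an open set condition for each $\{f_i:i\in F\}$ so that $\Jpre(G_F)=J(G_F)$ and the dimension formula for the finite case applies without loss. The construction of the subconformal measure and the distortion estimates in part (1), while technical, follow well-trodden paths once Lemma~\ref{lem:bounded_distortion_lemma} and the nicely-expanding property are in hand.
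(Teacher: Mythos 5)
Your proposal is correct and follows essentially the same route as the paper: the upper bound via a $u(G)$-subconformal measure (obtained as a weak limit) and a mass-distribution/Koebe-distortion argument along inverse branches, the inequality chain in (2) from Lemma \ref{lem:critical-exponents} and Proposition \ref{prop:critical-exponents}, and the lower bound in (3) by exhausting $I$ with finite subsets, applying \cite[Theorem B]{MR2153926} to each finite subsystem, and using $J(G_F)=\Jur(G_F)\subset\Jur(G)\subset\Jr(G)\subset\Jpre(G)$ together with $\sup_F t(F)=t(I)$. The only caveat is that for the finiteness of the $u(G)$-dimensional outer Hausdorff measure (not merely the dimension bound) you do need an actual $u(G)$-subconformal measure with full support on $J(G)$, which the paper secures by a weak-limit argument, rather than the $\epsilon$-approximation fallback you mention.
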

\begin{proof}
We start with the proof of (1). Let $\delta:=u\left(G\right)$ and let $\left(\delta_{n}\right)_{n\in\N}$
be a sequence such that $\lim _{n}\delta _{n}=\delta $, 
 such that $\delta _{n}>\delta $ for each $n\in \N$, 
and such that there exists a $\delta _{n}$-subconformal measure 
$\mu _{\delta _{n}}$ for each $n\in \N$. We may assume that $\left(\mu_{\delta_{n}}\right)_{n\in\N}$
converges weakly to a Borel probability  measure $\mu_{\delta}$ on $\Chat$. Then the measure  $\mu_{\delta}$ is  $\delta$-subconformal.
By \cite[Proposition 4.3]{MR1625124},  Lemmas \ref{lem:moebiussemigroup-expanding-implies-J-not-twoelements},  \ref{exceptionalset-good}, 
\cite[Lemma 2.3]{MR1767945},  and \cite[Theorem 4.1.2]{MR1128089}   we have  $\supp\left(\mu_{\delta}\right)\supset J\left(G\right)$.
In order to show that the $\delta$-dimensional Hausdorff
measure of $\Jpre$$\left(G\right)$ is finite, we will show that
there exists a constant $C>0$ such that,  for all $z\in\Jpre\left(G\right)$, 
\begin{equation}
\limsup_{r\rightarrow0}\frac{\mu_{\delta}\left(B\left(z,r\right)\right)}{r^{\delta}}\ge C.\label{eq:localdimension-lowerbound}
\end{equation}
It then follows from  the uniform mass distribution principle (\cite[Proposition 4.9 (b)]{falconerfractalgeometryMR2118797})
and its proof that the $\delta$-dimensional outer Hausdorff
measure of $\Jpre\left(G\right)$ is finite. Hence, $\dim_{H}\left(\Jpre\left(G\right)\right)\le\delta$.
In order to prove (\ref{eq:localdimension-lowerbound}) we extend
 \cite[Proof of Theorem 3.4]{MR1625124} to our setting.
Since $G$ is nicely expanding, we have  $d\left(J\left(G\right),P_{0}\left(G\right)\right)>0$,
and we can fix some $0<a<d\left(J\left(G\right),P_{0}\left(G\right)\right)/2$.
Let $z\in\Jpre\left(G\right)$. Then there exists $\omega\in I^{\N}$ such
that $z\in J_{\omega}$. Hence,  we have $f_{\omega|_{n}}\left(z\right)\in J_{\sigma^{n}\omega}\subset\Jpre\left(G\right)$, for each $n\in\N$. We set $g_{n}:=f_{\omega|_{n}}$,
$z_{n}:=g_{n}\left(z\right)$ and we denote by $S_{n}$ the unique
holomorphic branch of $g_{n}^{-1}$ on $B\left(z_{n},a\right)$ such
that $S_{n}\left(g_{n}\left(z\right)\right)=z$. It follows from Koebe's
distortion theorem that there is $c_{0}>1$ such that  $c_{0}^{-1}\le\left\Vert S_{n}'\left(x\right)\right\Vert /\left\Vert S_{n}'\left(y\right)\right\Vert \le c_{0}$, for all $z\in\Jpre\left(G\right)$,
$n\in\N$ and for all $x,y\in B\left(z_{n},a\right)$. 
We conclude that 
\[
S_{n}\left(B\left(z_{n},a\right)\right)\subset B\left(z_{0},ac_{0}\left\Vert S_{n}'\left(z_{n}\right)\right\Vert \right).
\]
Since $J\left(G\right)$ is compact and since $\supp\left(\mu_{\delta}\right)\supset J\left(G\right)$, 
we  have $M\left(a\right):=\inf_{z\in J\left(G\right)}\mu_{\delta}\left(B\left(z,a\right)\right)>0$.
Setting $r_{n}:=ac_{0}\left\Vert S_{n}'\left(z_{n}\right)\right\Vert $
and using that $\mu_{\delta}$ is $\delta$-subconformal, we estimate
\begin{eqnarray*}
\mu_{\delta}\left(B\left(z_{0},r_{n}\right)\right) & \ge & \mu_{\delta}\left(S_{n}\left(B\left(z_{n},a\right)\right)\right)\ge\int_{B\left(z_{n},a\right)}\left\Vert S_{n}'\right\Vert ^{\delta}d\mu_{\delta}\\
 & \ge & c_{0}^{-\delta}\left\Vert S_{n}'\left(z_{n}\right)\right\Vert ^{\delta}\mu_{\delta}\left(B\left(z_{n},a\right)\right)\ge c_{0}^{-\delta}\left(ac_{0}\right)^{-\delta}r_{n}^{\delta}M\left(a\right).
\end{eqnarray*}
Since $G$ is expanding, $r_{n}$ tends to zero as $n$ tends to infinity.
Hence, (\ref{eq:localdimension-lowerbound}) follows with $C:=c_{0}^{-\delta}\left(ac_{0}\right)^{-\delta}M\left(a\right)$. The proof of (1)  is complete. 

The assertion in (2) follows by combining (1) with  Lemma \ref{lem:critical-exponents} (\ref{enu:criticalexponents-easy-inequalities-1}) and Proposition \ref{prop:critical-exponents}. To prove (3),  suppose that $G$ satisfies the
open set condition and set $G_{n}:=\left\langle f_{i}:i\in I\cap\left\{ 1,\dots,n\right\} \right\rangle $,
for each $n\in\N$. By \cite[Theorem B]{MR2153926} we have  $\dim_{H}\left(J\left(G_{n}\right)\right)=t\left(I\cap\left\{ 1,\dots,n\right\} \right)$,
for each $n\in\N$. Since $J\left(G_{n}\right)=\Jur\left(G_{n}\right)\subset\Jur\left(G\right)\subset\Jr\left(G\right)\subset\Jpre\left(G\right)$,
for each $n\in\N$, we obtain that $t\left(I\cap\left\{ 1,\dots,n\right\} \right)=\dim_{H}\left(J\left(G_{n}\right)\right)\le\dim_{H}\left(\Jur\left(G\right)\right)\le\dim_{H}\left(\Jr\left(G\right)\right)\le\dim_{H}\left(\Jpre\left(G\right)\right)$.
By Proposition \ref{prop:critical-exponents} we have  $\lim_{n}t\left(I\cap\left\{ 1,\dots,n\right\} \right)=t\left(I\right).$
Combining with the upper bound in (2), finishes the proof of (3). The proof is complete.
\end{proof}

\section{Applications to non-hyperbolic rational semigroups\label{sec:Inducing-Methods}}
\label{Inducing}

In this section we apply the results of Section \ref{Bowen}  to give estimates for  the Hausdorff dimension of the \linebreak (pre-)Julia sets of   non-hyperbolic rational semigroups which possess an inducing structure.

\subsection{General Setting}
Throughout this section we assume that $I$ is   countable. For a subset $\Lambda$ of $\Rat$, we 
denote by $\langle \Lambda \rangle $ the rational semigroup generated by 
$\Lambda. $ Thus $\langle \Lambda \rangle =\langle g: g\in \Lambda \rangle 
=\{ g_{1}\circ \cdots \circ g_{n}: 
n\in \N, g_{j}\in \Lambda, j=1,\ldots, n\} .$
\begin{defn}
[Inducing structure] \label{def-setting-a} Let $G=\left\langle f_{i}:i\in I\right\rangle $ denote a rational semigroup. Suppose that $\deg(g)\ge 2$ for each $g\in G$. We say that  $G=\left\langle f_{i}:i\in I\right\rangle $ 
 has an  inducing structure (with respect to $\{ I_1, I_2\}$) if  there exists a decomposition
$I=I_{1}\cup I_{2}$ with $I_2 \neq \emptyset$, such that the following holds for the
rational semigroups $G_{j}:=\left\langle f_{i}:i\in I_{j}\right\rangle $,
$j\in\left\{ 1,2\right\} $, and $H:=\left\langle H_{0}\right\rangle $ given by 
$$H_{0}:=\left\{ f_{i}:i\in I_{2}\right\} \cup\left\{ f_{i}f_{j_{1}}\dots f_{j_{r}}:i\in I_{2},r\in\N,\left(j_{1},\dots,j_{r}\right)\in I_{1}^{r}\right\}.$$ There exists an  $H$-forward invariant non-empty compact set $L\subset F\left(H\right)$ such that  $P\left(G_{2}\right)\subset L$ and   $f_{i}\left(P\left(G_{1}\right)\right)\subset L$, for    $i\in I_{2}$.
\end{defn}
In the following, when $G=\langle f_{i}:i\in I\rangle $ has an 
inducing structure with respect to $\{ I_{1},I_{2}\}$, 
let $H_{0}$ and $H$ be as in Definition \ref{def-setting-a}. We endow $H_{0}$ with the 
discrete topology.
\begin{lem}
\label{lem:inducedsemigroup-is-nicelyexpanding} Suppose that  $G=\left\langle f_{i}:i\in I\right\rangle $
 has an inducing structure. Then $H$ is nicely expanding. \end{lem}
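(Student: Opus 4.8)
The plan is to verify that $H$ satisfies the sufficient condition for nice expansion established in Lemma~\ref{lem:expandingness-osc} (or, where that does not directly apply, Lemma~\ref{lem:hyperbolic-lox-pzero-implies-expanding}), namely that $H$ is hyperbolic, every element of $H\cap\Aut(\Chat)$ is loxodromic, there is a non-empty compact $H$-forward invariant set in $F(H)$, and $H_0$ satisfies the open set condition. The compact $H$-forward invariant set in $F(H)$ is handed to us directly: it is the set $L$ from the inducing structure. The degree hypothesis $\deg(g)\ge 2$ for every $g\in G$ (part of Definition~\ref{def-setting-a}) forces $\deg(h)\ge 2$ for every $h\in H$, since every generator in $H_0$ is a composition of elements of $G$; hence $H\cap\Aut(\Chat)=\emptyset$ and the loxodromy condition is vacuous. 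So the real content is (a) checking $H_0$ satisfies the open set condition and (b) checking $H$ is hyperbolic, i.e. $P(H)\subset F(H)$.

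For (a), I would work from the open set condition for $\{f_i:i\in I\}$. If $U\subset\Chat$ is the open set witnessing the OSC for $\{f_i:i\in I\}$, so the sets $f_i^{-1}(U)$, $i\in I$, are pairwise disjoint subsets of $U$, then for a generator $g=f_i f_{j_1}\cdots f_{j_r}\in H_0$ with $i\in I_2$, $(j_1,\dots,j_r)\in I_1^r$, we have $g^{-1}(U)=f_{j_1}^{-1}\cdots f_{j_r}^{-1}f_i^{-1}(U)\subset f_{j_1}^{-1}(U)\subset U$. Distinct words in $H_0$ (including the length-one words $f_i$, $i\in I_2$) have disjoint preimages of $U$: if two such words share a first letter they differ at a later position, where the disjointness of the $f_k^{-1}(U)$ kicks in; if they have different first letters the preimages are already disjoint at that stage. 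One must be slightly careful because a word in $I_2\cdot I_1^*$ could a priori coincide as a semigroup element with another such word even if the letter-strings differ, but the open-set-condition disjointness is formulated at the level of preimages of $U$ and is preserved, so the argument goes through; I would spell out the nesting $g^{-1}(U)\subset f_{j_1}^{-1}(U)$ carefully to see that the relevant preimages are pairwise disjoint. Hence $H_0$ satisfies the OSC with the same open set $U$ (or $U\cap(\Chat\setminus\overline W)$ if one wants a relatively compact one, as in the proof of Lemma~\ref{lem:expandingness-osc}).

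For (b), hyperbolicity of $H$, I would use the identity from the remark after Definition~\ref{nicelyexpanding}, $P(H)=\overline{\bigcup_{h\in H\cup\{\id\}}h(\bigcup_{g\in H_0}\CV(g))}$. The critical values of a generator $g=f_i f_{j_1}\cdots f_{j_r}$ are $\CV(f_i)\cup f_i(\CV(f_{j_1}\cdots f_{j_r}))$, i.e. $\CV(g)\subset\CV(f_i)\cup f_i(P(G_1))$. By the inducing hypothesis, $\CV(f_i)\subset P(G_2)\subset L$ and $f_i(P(G_1))\subset L$ for each $i\in I_2$; and for the length-one generators $f_i$, $i\in I_2$, again $\CV(f_i)\subset L$. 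So $\bigcup_{g\in H_0}\CV(g)\subset L$, and since $L$ is $H$-forward invariant and compact we get $P(H)=\overline{\bigcup_{h\in H\cup\{\id\}}h(\cdots)}\subset\overline{L}=L\subset F(H)$. Thus $H$ is hyperbolic.

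Finally I would assemble these: $H$ is hyperbolic, $H_0$ (with the discrete topology) satisfies the open set condition, $H\cap\Aut(\Chat)=\emptyset$ so every element is trivially loxodromic, and $L\subset F(H)$ is a non-empty compact $H$-forward invariant set; hence Lemma~\ref{lem:expandingness-osc} applies and $H$ is nicely expanding. I expect the main obstacle to be the bookkeeping in step (a): making the disjointness of $\{g^{-1}(U):g\in H_0\}$ genuinely rigorous, since $H_0$ is an infinite generating set indexed by words of varying length, and one must be sure that the nesting $g^{-1}(U)\subset f_{j_1}^{-1}(U)$ combined with the original OSC really does separate all pairs, including the subtle case of two words that could represent the same semigroup element. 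A minor secondary point is handling the degenerate cases $\card(J(H))\le 2$, where one falls back on Lemma~\ref{lem:atmosttwoelements-expandingness}~(3) exactly as in the proof of Lemma~\ref{lem:expandingness-osc}.
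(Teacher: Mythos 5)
Your computation of the postcritical set is correct and coincides with the first half of the paper's proof: for a generator $g=f_i\circ f_{j_1}\circ\cdots\circ f_{j_r}\in H_0$ one has $\CV(g)\subset\CV(f_i)\cup f_i\big(P(G_1)\big)$, hence $P(H)\subset\overline{\bigcup_{h\in H\cup\{\id\}}h\big(P(G_2)\cup\bigcup_{i\in I_2}f_i(P(G_1))\big)}\subset L\subset F(H)$, so $H$ is hyperbolic and $L$ is a non-empty compact $H$-forward invariant subset of $F(H)$.

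The gap is in how you conclude. The lemma assumes only that $G$ has an inducing structure in the sense of Definition \ref{def-setting-a}, i.e.\ the degree condition and the existence of $L$; the open set condition for $\{f_i:i\in I\}$ is \emph{not} a hypothesis here (in the paper it enters only later, as hypothesis (4) of Theorem \ref{i:thm:inducing-prejulia}, in Lemma \ref{lem:inducedsemigroup-satisfies-osc}, and in parts (3),(4) of Theorem \ref{thm:inducing-prejulia}). So your step (a), deriving the open set condition for $H_0$ from that of $\{f_i:i\in I\}$, uses a hypothesis you do not have, and consequently Lemma \ref{lem:expandingness-osc} is not applicable; Lemma \ref{lem:hyperbolic-lox-pzero-implies-expanding} is not applicable either, since $H_0$ is infinite whenever $I_1\neq\emptyset$. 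The tool that closes the argument without any open set condition --- and the one the paper uses --- is Proposition \ref{prop:nicelyexpanding-characterisation}, implication $(2)\Rightarrow(1)$: every element of $H$ lies in $G$ and hence has degree at least two, so $H\cap\Aut(\Chat)=\emptyset$ and the loxodromy condition and $\id\notin\overline{H\cap\Aut(\Chat)}$ hold vacuously; combined with the hyperbolicity of $H$ and the compact $H$-forward invariant set $L\subset F(H)$ you already produced, the proposition yields that $H$ is nicely expanding. With that substitution your argument becomes the paper's proof, and your step (a) is not needed for this lemma (it is the content of the separate Lemma \ref{lem:inducedsemigroup-satisfies-osc}, invoked later only under the additional OSC hypothesis).
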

\begin{proof}
By Definition \ref{def-setting-a}  we have 
\[
P\left(H\right)=\overline{\bigcup_{h\in H\cup \{ \id \}}h\left(\bigcup_{f\in H_{0}}\CV\left(f\right)\right)}\subset\overline{\bigcup_{h\in H\cup \{ \id \}}h\left(P\left(G_{2}\right)\cup\bigcup_{i\in I_{2}}f_{i}\left(P\left(G_{1}\right)\right)\right)}\subset L\subset F\left(H\right).
\]
Since $P(G_2)\subset P(H)$ and $P(G_2)\neq \emptyset$,   we have that  $H$ is nicely expanding by Proposition \ref{prop:nicelyexpanding-characterisation}. 
\end{proof}
The proof of the next lemma is straightforward and therefore omitted.
\begin{lem}
\label{lem:inducedsemigroup-satisfies-osc}Suppose that  $G=\left\langle f_{i}:i\in I\right\rangle $
has an inducing structure. If $\left\{ f_{i}:i\in I\right\} $  satisfies the open
set condition with open set $U$, then $H_0$ satisfies the open set condition with open set $U$.
\end{lem}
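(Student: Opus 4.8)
The plan is to verify directly that the same open set $U$ witnessing the open set condition for $\{f_{i}:i\in I\}$ also witnesses it for $H_{0}$. Every $g\in H_{0}$ has the form $g=f_{i}\circ f_{j_{1}}\circ\cdots\circ f_{j_{r}}$ with $i\in I_{2}$, $r\ge 0$ and $j_{1},\dots,j_{r}\in I_{1}$, so in the notation $f_{(\tau_{1},\dots,\tau_{n})}=f_{\tau_{n}}\circ\cdots\circ f_{\tau_{1}}$ we have $g=f_{\tau}$ for the word $\tau:=(j_{r},j_{r-1},\dots,j_{1},i)\in I^{r+1}$, that is, a word whose last letter lies in $I_{2}$ and all of whose earlier letters lie in $I_{1}$. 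Since the decomposition $I=I_{1}\cup I_{2}$ is disjoint, it suffices to argue at the level of words of this shape; as will be seen below, distinct such words have disjoint $U$-preimages and hence index $H_{0}$ faithfully.

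First I would check that $f_{\tau}^{-1}(U)\subset U$ for each word $\tau$ of the above shape. Writing $n:=|\tau|$, we have $f_{\tau}^{-1}(U)=f_{\tau_{1}}^{-1}\big(f_{\tau_{2}}^{-1}\big(\cdots f_{\tau_{n}}^{-1}(U)\big)\big)$; since each $\tau_{k}\in I$ and the open set condition for $\{f_{i}:i\in I\}$ gives $f_{m}^{-1}(A)\subset f_{m}^{-1}(U)\subset U$ whenever $A\subset U$ and $m\in I$, an induction on the nesting yields $f_{\tau}^{-1}(U)\subset U$. Next I would prove that $f_{\tau}^{-1}(U)\cap f_{\rho}^{-1}(U)=\emptyset$ whenever $\tau$ and $\rho$ are distinct words of the above shape, of lengths $p\le q$ say. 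First, $\tau$ is not an initial segment of $\rho$: this is immediate when $p=q$ since $\tau\neq\rho$, and when $p<q$ the letter $\rho_{p}$ lies in $I_{1}$ (being among the first $q-1$ letters of $\rho$) whereas $\tau_{p}\in I_{2}$, so $\tau_{p}\neq\rho_{p}$ by disjointness of $I_{1}$ and $I_{2}$. Hence there is a least index $k$ with $\tau_{k}\neq\rho_{k}$, and $\tau$, $\rho$ agree in positions $1,\dots,k-1$. By the inclusion just proved, $f_{\tau}^{-1}(U)\subset f_{\tau_{1}}^{-1}\big(\cdots f_{\tau_{k-1}}^{-1}\big(f_{\tau_{k}}^{-1}(U)\big)\big)$ and $f_{\rho}^{-1}(U)\subset f_{\tau_{1}}^{-1}\big(\cdots f_{\tau_{k-1}}^{-1}\big(f_{\rho_{k}}^{-1}(U)\big)\big)$; since $\tau_{k},\rho_{k}\in I$ are distinct, the open set condition for $\{f_{i}:i\in I\}$ gives $f_{\tau_{k}}^{-1}(U)\cap f_{\rho_{k}}^{-1}(U)=\emptyset$, and pulling this back through the common maps $f_{\tau_{k-1}}^{-1},\dots,f_{\tau_{1}}^{-1}$, using that preimages commute with intersections, gives $f_{\tau}^{-1}(U)\cap f_{\rho}^{-1}(U)=\emptyset$. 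Finally, since each $f_{\tau}$ is a non-constant rational map and $U\neq\emptyset$, the set $f_{\tau}^{-1}(U)$ is nonempty; hence distinct words of the above shape give distinct elements of $H_{0}$, so $H_{0}$ is faithfully indexed by them and the open set condition holds for $H_{0}$ with the open set $U$.

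There is no genuine difficulty here; the only point requiring a little care is the bookkeeping of the word attached to an element of $H_{0}$, together with the essential use of the disjointness of $I_{1}$ and $I_{2}$ to rule out the case that one word is an initial segment of the other — without this disjointness the statement would in fact be false.
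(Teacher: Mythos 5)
Your proof is correct; the paper in fact omits the argument as ``straightforward'', and your word-level verification (monotonicity of preimages giving $f_{\tau}^{-1}(U)\subset U$, plus the first-disagreement index and the disjointness of $I_{1}$ and $I_{2}$ to rule out one word being a prefix of another) is exactly the intended routine proof. Your closing remark that nonemptiness of $f_{\tau}^{-1}(U)$ forces distinct words to yield distinct elements of $H_{0}$, and that the disjointness of the decomposition is essential, is a correct and worthwhile observation.
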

The following lemma holds for arbitrary finitely generated rational
semigroups of degree at least two. 
\begin{lem}
\label{lem:subsemi-has-samejuliaset}Let $\Gamma$ denote  a  rational semigroup with $\deg(g)\ge 2$,
for each $g\in \Gamma$.  Let $\Gamma_0$ be a finitely generated subsemigroup of $\Gamma$.  Let $\Omega$ denote a subsemigroup of $\Gamma$
with the property that,  for each $g\in\Gamma$,  there exists $h\in\Gamma_0$
such that $hg\in\Omega$. Then we have $J\left(\Gamma\right)=J\left(\Omega\right)$. \end{lem}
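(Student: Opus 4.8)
The plan is to prove the two inclusions $J(\Omega)\subseteq J(\Gamma)$ and $J(\Gamma)\subseteq J(\Omega)$ separately. The first is immediate: since $\Omega\subseteq\Gamma$, any subfamily of a family that is normal in a neighbourhood of a point stays normal there, so $F(\Gamma)\subseteq F(\Omega)$, i.e. $J(\Omega)\subseteq J(\Gamma)$. I would also record at the outset that, because every element of $\Gamma$ has degree at least two, $J(\Gamma)$ contains the (perfect, uncountable) Julia set of every single element of $\Gamma$, so $\card(J(\Gamma))\ge 3$; moreover the hypothesis applied to any $g\in\Gamma$ shows $\Omega\neq\emptyset$, whence likewise $\card(J(\Omega))\ge 3$.

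For the reverse inclusion I would use the classical fact (Hinkkanen--Martin; \cite{MR1397693}, \cite[Lemma 2.3]{MR1767945}) that when the Julia set has at least three points, the backward orbit $\Gamma^{-}(z)=\bigcup_{g\in\Gamma}g^{-1}(z)$ of any $z\in J(\Gamma)\setminus E(\Gamma)$ is dense in $J(\Gamma)$, and symmetrically $\overline{\Omega^{-}(z)}=J(\Omega)$ for $z\in J(\Omega)\setminus E(\Omega)$. Since $E(\Gamma)\cup E(\Omega)$ is finite while $J(\Omega)$ is uncountable, I can fix one point $z\in J(\Omega)\setminus\big(E(\Gamma)\cup E(\Omega)\big)$; then simultaneously $J(\Gamma)=\overline{\Gamma^{-}(z)}$ and $J(\Omega)=\overline{\Omega^{-}(z)}$, so it suffices to show $\Gamma^{-}(z)\subseteq\overline{\Omega^{-}(z)}$. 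Given $w\in\Gamma^{-}(z)$, write $g(w)=z$ with $g\in\Gamma$ and choose $h\in\Gamma_{0}$ with $hg\in\Omega$; then $(hg)(w)=h(z)$, so $w\in (hg)^{-1}(h(z))\subseteq\Omega^{-}(h(z))$. One checks readily (using that each $\phi\in\Omega$ is open and continuous, and $\psi\phi\in\Omega$ for $\psi,\phi\in\Omega$) that $\Omega^{-}(y)\subseteq\overline{\Omega^{-}(z)}$ for every $y\in\overline{\Omega^{-}(z)}=J(\Omega)$ that lies off $E(\Omega)$. Hence, provided $h(z)\in J(\Omega)\setminus E(\Omega)$, we get $w\in\overline{\Omega^{-}(z)}$, which is exactly what we want.

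Thus the whole argument reduces to choosing the point $z$, and for each $g\in\Gamma$ the prefix $h=h_{g}\in\Gamma_{0}$ with $h_{g}g\in\Omega$, so that $h_{g}(z)\in J(\Omega)\setminus E(\Omega)$ for every $g$; this is the heart of the matter and the step I expect to be the main obstacle. A naive choice of $z$ will not do, since $J(\Omega)$ need not be forward invariant under the elements of $\Gamma_{0}$ in general, so the point $h_g(z)$ can escape $J(\Omega)$. Here is where the finite generation of $\Gamma_{0}$ should be exploited. First, applying the hypothesis to an element of $\Gamma_{0}$ produces an element $\phi_{0}\in\Gamma_{0}\cap\Omega$, so $\langle\phi_{0}\rangle\subseteq\Gamma_{0}\cap\Omega$. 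I would then take $z$ to be a repelling fixed point of a suitable iterate of $\phi_{0}$, chosen to avoid the countable set $E(\Gamma)\cup E(\Omega)\cup\bigcup_{h\in\Gamma_{0}}h^{-1}(E(\Omega))$, and use the freedom available in selecting $h_{g}$, together with the fact that in the finitely generated $\Gamma_{0}$ the elements of any bounded degree form a finite set, to arrange that the prefixes actually used all fix $z$ (for instance, all lying in the cyclic semigroup generated by a single element of $\Gamma_0$ fixing $z$); then $h_{g}(z)=z\in J(\Omega)\setminus E(\Omega)$ and the argument closes. Establishing that such compatible choices of $z$ and $(h_{g})_{g\in\Gamma}$ always exist is the delicate point of the proof; an alternative, essentially equivalent route would be to reduce instead to proving $J(g)\subseteq J(\Omega)$ for each $g\in\Gamma$ via $J(g)=J(g^{n})$ and the maps $h_{n}g^{n}\in\Omega$, where the corresponding technical core is the convergence $J(h_{n}g^{n})\to J(g)$ as $n\to\infty$.
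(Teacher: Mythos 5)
Your first inclusion and the general reduction are fine, but the heart of your main route is exactly where it breaks, and you concede as much: you need, for every $g\in\Gamma$ with $g(w)=z$, \emph{some} admissible prefix $h$ (i.e.\ $hg\in\Omega$) with $h(z)\in J(\Omega)$, and the hypothesis gives you no control whatsoever over which $h\in\Gamma_{0}$ works for a given $g$ -- it may be unique and completely unrelated to your chosen $\phi_{0}$, so there is no way to ``arrange that the prefixes actually used all fix $z$''. The only genuine freedom is to replace an admissible $h$ by $\psi h$ with $\psi\in\Omega$, which moves the image point along the forward $\Omega$-orbit of $h(z)$; since for rational semigroups only backward invariance of the Julia set holds ($g^{-1}(J)\subset J$, not $g(J)\subset J$), the point $h(z)$ may well lie in $F(\Omega)$ and its forward $\Omega$-orbit need never meet $J(\Omega)$, so the argument does not close. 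Thus the step you flag as ``the delicate point'' is a genuine gap, not a technicality.

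The ``alternative, essentially equivalent route'' you mention in your last sentence is in fact the paper's proof, and its technical core is precisely what you leave unproved. The paper first uses density of repelling fixed points (valid since every element has degree $\ge 2$, so $\card(J(\Gamma))\ge 3$) to reduce to showing $J(g)\subset J(\Omega)$ for each $g\in\Gamma$. It then works in the skew product over the \emph{finite} alphabet $A$ generating $\langle\Gamma_{0}\cup\{g\}\rangle$ with $g\in A$: for the sequences $\gamma(n)=(\alpha(n),\beta(n),\alpha(n),\beta(n),\dots)$, where $\alpha(n)$ spells $g^{n}$ and $\beta(n)$ spells $h_{n}$ (with $h_{n}g^{n}\in\Omega$), one has $J_{\gamma(n)}=J(h_{n}g^{n})\subset J(\Omega)$ and $\gamma(n)\to(g,g,\dots)$ in the product topology; lower semicontinuity of the fiberwise Julia sets $\omega\mapsto J_{\omega}$ (\cite[Proposition 2.2]{MR2237476}) then yields $J(g)=J_{(g,g,\dots)}\subset\overline{\bigcup_{n}J(h_{n}g^{n})}\subset J(\Omega)$. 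This is where the finite generation of $\Gamma_{0}$ is actually used (compactness of $A^{\N}$), and it is the ingredient your proposal would need to supply; without it, or a replacement for your prefix-selection step, the proof is incomplete.
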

\begin{proof}
Clearly have  $J\left(\Omega\right)\subset J\left(\Gamma\right)$, and it remains to show the opposite  inclusion. By the density of the repelling
fixed points in the Julia set (\cite[Corollary 3.1]{MR1397693}), we have  $J\left(\Gamma\right)=\overline{\bigcup_{g\in\Gamma}J\left(g\right).}$
Hence, it suffices to prove that $J\left(g\right)\subset J\left(\Omega\right)$
for each $g\in\Gamma$.  Let $g\in\Gamma$ and let $A$ be a finite set of generators of $\langle \Gamma_0 \cup \{ g\} \rangle$ with $g\in A$. Let  $\gamma \in A^\N$ be given by $(g,g,\dots)$. For each  $n\in\N$, by our assumptions on $\Omega$ and $\Gamma$, there exists   $h_{n}\in\Gamma_0$ such that $h_{n}g^{n}\in\Omega$. For each $n\in\N$,  let $\alpha(n)\in A^n$ be given by $(g,g,\dots,g)$. Let $\beta(n) \in A^{m_n}$ be given by $\beta(n)_{m_n}\circ \dots \circ \beta(n)_1=h_n$.  Further, for each $n\in \N$,  we define the sequence $\gamma(n) \in A^\N$, given by $\gamma(n):=(\alpha(n),\beta(n), \alpha(n), \beta(n),\dots) \in A^\N$. 
We observe that $\gamma(n)\rightarrow\gamma$ with respect to the
product topology on $A^{\N}$, as $n$ tends to infinity.  By lower semicontinuity of
the Julia set (\cite[Proposition 2.2]{MR2237476}), we conclude that, 
for each $z\in J_{\gamma}$,  there is a sequence $\left(y_{n}\right)\in\Chat^{\N}$
with $y_{n}\in J_{\gamma(n)}$ such that $\lim_{n}y_{n}=z$. Consequently,
we have $J\left(g\right)=J_{\gamma}\subset\overline{\bigcup_{n\in\N}J_{\gamma(n)}}\subset J\left(\Omega\right)$, where the last inclusion holds, because we have  $J_{\gamma(n)}=J\left(h_{n}g^{n}\right)$
and $h_{n}g^{n}\in\Omega$,  for each $n\in\N$. The proof is complete.\end{proof}
\begin{lem}
\label{lem:juliasetof-H-equals-G}Suppose that  $G=\left\langle f_{i}:i\in I\right\rangle $
has an inducing structure.  Then  $J\left(G\right)=J\left(H\right)$.\end{lem}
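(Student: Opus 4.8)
The plan is to prove $J(G)=J(H)$ by combining the two obvious facts that (a) $H$ is a subsemigroup of $G$ and (b) every element of $G$ can be ``completed'' by left-multiplication with an element of $G$ to land in $H$, and then invoke Lemma \ref{lem:subsemi-has-samejuliaset}. The crucial observation is that $H$ is \emph{not} literally a subsemigroup of $G$ unless the words $f_if_{j_1}\cdots f_{j_r}$ are interpreted as elements of $G$; but they are, since $i\in I_2\subset I$ and $j_1,\dots,j_r\in I_1\subset I$, so every generator in $H_0$ is a genuine element of $G$, hence $H\subset G$ and consequently $J(H)\subset J(G)$.

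For the reverse inclusion I would apply Lemma \ref{lem:subsemi-has-samejuliaset} with $\Gamma:=G$, $\Omega:=H$, and $\Gamma_0$ a suitable finitely generated subsemigroup of $G$. The hypotheses of that lemma require: $\deg(g)\ge2$ for each $g\in\Gamma$, which holds by the assumption $\deg(g)\ge2$ for each $g\in G$ built into Definition \ref{def-setting-a} of an inducing structure; $\Gamma_0$ finitely generated; and that for each $g\in G$ there is $h\in\Gamma_0$ with $hg\in H$. To arrange the last point, fix any index $i_0\in I_2$ (possible since $I_2\neq\emptyset$) and set $\Gamma_0:=\langle f_{i_0}\rangle$. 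Given $g\in G$, write $g=f_{j_1}\circ\cdots\circ f_{j_n}$ (using the paper's convention $f_{(\tau_1,\dots,\tau_n)}=f_{\tau_n}\circ\cdots\circ f_{\tau_1}$, so the first map applied is $f_{j_1}$). Then $f_{i_0}\circ g = f_{i_0}\circ f_{j_1}\circ\cdots\circ f_{j_n}$. This is precisely a word of the form $f_{i}f_{j_1'}\cdots f_{j_r'}$ with $i=i_0\in I_2$ only if $j_1,\dots,j_n\in I_1$; in general the indices $j_1,\dots,j_n$ lie in $I=I_1\cup I_2$, so I would instead decompose $g$ as a concatenation of blocks, each block being a (possibly empty) run of $I_1$-indices followed by a single $I_2$-index, with a possible leftover tail of $I_1$-indices at the end; prepending $f_{i_0}$ converts that leftover tail into one more block. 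Each block is an element of $H_0$, so $f_{i_0}\circ g\in H$. Thus with $h:=f_{i_0}\in\Gamma_0$ we have $hg\in H=\Omega$.

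Having verified all hypotheses, Lemma \ref{lem:subsemi-has-samejuliaset} gives $J(G)=J(H)$, completing the proof. I expect the only real subtlety to be the bookkeeping in the previous paragraph: showing that \emph{every} $g\in G$, after prepending one fixed $I_2$-generator, decomposes as a finite composition of elements of $H_0$. The key case-analysis point is that a word in $I$ of the form (arbitrary string ending with an $I_2$-letter) is already a product of $H_0$-elements, and prepending $f_{i_0}$ handles the case where $g$'s word ends in an $I_1$-letter — indeed in that situation $f_{i_0}\circ g$ reads as $f_{i_0}$ composed after a string of mixed letters, and grouping from the right (each maximal run of $I_1$-letters together with the next $I_2$-letter to its left, reading in composition order) exhibits it as a word in $H_0$. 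One must be slightly careful that a leading run of $I_1$-letters in $g$ (before the first $I_2$-letter) gets absorbed: prepending $f_{i_0}$ is exactly what ensures there \emph{is} an $I_2$-letter to its left, so no $I_1$-tail is orphaned. Everything else is immediate from the definitions and from Lemma \ref{lem:subsemi-has-samejuliaset}.
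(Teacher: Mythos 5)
Your proposal is correct and is essentially the paper's own proof: the paper also fixes $h=f_{i_0}$ with $i_0\in I_2$, notes that $hg\in H$ for every $g\in G$ (the block decomposition you spell out), and then applies Lemma \ref{lem:subsemi-has-samejuliaset} with $\Gamma=G$, $\Gamma_0=\langle f_{i_0}\rangle$, $\Omega=H$. The only blemish is a notational slip where you write $g=f_{j_1}\circ\cdots\circ f_{j_n}$ and call $f_{j_1}$ the first map applied, but your later grouping argument uses the correct order, so nothing is affected.
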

\begin{proof}
Let $h\in I_2$ be an element. By Definition \ref{def-setting-a}, we have that $hg\in H$, for each  $g\in G$.  Now, the lemma follows from Lemma  \ref{lem:subsemi-has-samejuliaset} applied to $\Gamma:=G$,   $\Gamma_0:=\langle h \rangle$ and  $\Omega:=H$.\end{proof}

\begin{defn} Let $I$ be a topological space and let $(f_i)_{i\in I}\in C(I,\Rat)$. Let $G=\left\langle f_{i}:i\in I\right\rangle $   and let $\tilde{f}:J\left(\tilde{f}\right)\rightarrow I^\N \times \Chat \rightarrow I^\N \times \Chat$
be the associated skew product. For each $K \subset I$ and $\omega \in   K^\N$, we set $\hat{J}_{\omega,K}:=\pi_{\Chat}( \overline{J^{K}} \cap \pi_1^{-1}(\{\omega\}) )$, where the closure is taken in $I^\N \times \Chat$. 
\end{defn}

\begin{lem}[\cite{MR2736899}, Lemma 3.5] \label{lem:fibrejulia-is-intersection}  Let $G=\left\langle f_{i}:i\in I\right\rangle $ be a rational semigroup with  $\card(I)<\infty$.  Let  $K \subset I$ and  set $G_K:=\left\langle f_{i}:i\in K\right\rangle $. For each  $\omega \in   K^\N$, we have $\hat{J}_{\omega,K}=\bigcap_{n=1}^\infty f_{\omega |_n}^{-1}(J(G_K))$.
\end{lem}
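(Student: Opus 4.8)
This is \cite[Lemma 3.5]{MR2736899}; we outline the plan of a proof.

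The plan is to pass to the finitely generated subsemigroup $G_{K}$ and its associated skew product $\tilde f_{K}\colon K^{\N}\times\Chat\rightarrow K^{\N}\times\Chat$. By definition $\overline{J^{K}}=\overline{\bigcup_{\omega\in K^{\N}}J^{\omega}}=J(\tilde f_{K})$, and since $I$ is finite and discrete (so that $K^{\N}$ is closed in $I^{\N}$) it does not matter whether the closure is taken in $K^{\N}\times\Chat$ or in $I^{\N}\times\Chat$. Hence $\hat J_{\omega,K}$ is precisely the fibre of $J(\tilde f_{K})$ over $\omega$, and moreover $\tilde f_{K}^{\,n}(\omega,z)=\big(\sigma^{n}\omega,f_{\omega|_{n}}(z)\big)$ for all $n\in\N_{0}$, with $f_{\omega|_{0}}:=\id_{\Chat}$.

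First I would record the forward inclusion $\hat J_{\omega,K}\subset\bigcap_{n\ge 1}f_{\omega|_{n}}^{-1}(J(G_{K}))$, which holds with no restriction on $\card(J(G_{K}))$: since $\{f_{\tau|_{n}}:n\in\N\}\subset G_{K}$ forces $J_{\tau}\subset J(G_{K})$ for every $\tau$, and $K^{\N}\times J(G_{K})$ is closed, we get $J(\tilde f_{K})\subset K^{\N}\times J(G_{K})$; as $\tilde f_{K}$ maps $J(\tilde f_{K})$ into itself (Proposition~\ref{prop:skewproduct-facts}\,(\ref{enu:ratsemi-facts-2})), $(\omega,z)\in J(\tilde f_{K})$ implies $f_{\omega|_{n}}(z)\in J(G_{K})$ for every $n\ge 0$.

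For the reverse inclusion, the main case is $\card(J(G_{K}))\ge 3$: here I would apply Proposition~\ref{prop:skewproduct-facts}\,(\ref{enu:ratsemi-facts-3}) to $G_{K}$, obtaining $J(\tilde f_{K})=\bigcap_{n\in\N_{0}}\tilde f_{K}^{-n}\big(K^{\N}\times J(G_{K})\big)$, so that every $(\omega,z)$ with $f_{\omega|_{n}}(z)\in J(G_{K})$ for all $n\ge 0$ lies in $J(\tilde f_{K})$. Since backward self-similarity $(\ref{bsseq})$ for $G_{K}$ gives $f_{\omega_{1}}^{-1}(J(G_{K}))\subset J(G_{K})$, the constraint $n=0$ is already implied by the case $n=1$; projecting to $\Chat$ then finishes this case. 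The remaining case $\card(J(G_{K}))\le 2$ is the only delicate point, and is where I expect to spend the most effort, since Proposition~\ref{prop:skewproduct-facts}\,(\ref{enu:ratsemi-facts-3}) is no longer available: one notes that $G_{K}\subset\Aut(\Chat)$, so backward invariance of the finite set $J(G_{K})$ together with injectivity of the generators gives $f_{\omega|_{n}}^{-1}(J(G_{K}))=J(G_{K})$ for all $n$, and one then verifies by a direct approximation argument in $K^{\N}\times\Chat$ that the fibre of $\overline{J^{K}}$ over $\omega$ equals $J(G_{K})$. In every application of this lemma in the present paper one has $\deg(g)\ge 2$ for all $g\in G$, hence $\card(J(G_{K}))\ge 3$, and this case never actually arises; alternatively one simply invokes \cite{MR2736899}.
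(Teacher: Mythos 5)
The paper itself gives no argument for this lemma: it is quoted verbatim from \cite{MR2736899} (Lemma 3.5), so there is no in-paper proof to compare yours against. Judged on its own terms, your reconstruction of the main case is correct and complete from the ingredients of this paper: the identification of $\hat J_{\omega,K}$ with the fibre of $J(\tilde f_K)$ (legitimate because $K^{\N}\times\Chat$ is closed in $I^{\N}\times\Chat$ when $I$ is finite and discrete), the forward inclusion via $J_{\tau}\subset J(G_K)$, closedness of $K^{\N}\times J(G_K)$ and Proposition~\ref{prop:skewproduct-facts}~(\ref{enu:ratsemi-facts-2}), and the reverse inclusion via Proposition~\ref{prop:skewproduct-facts}~(\ref{enu:ratsemi-facts-3}) together with $f_{\omega_{1}}^{-1}(J(G_K))\subset J(G_K)$ to absorb the $n=0$ constraint. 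The one thin spot is the case $\card(J(G_K))\le 2$: there the assertion that the fibre of $\overline{J^{K}}$ over $\omega$ equals $J(G_K)$ is exactly the nontrivial content, and ``a direct approximation argument'' does not yet supply it. What is needed is that for every $w\in J(G_K)$ and every prefix $\omega|_{m}$ there is a continuation $\tau\in K^{\N}$ of $\omega|_{m}$ with $w\in J_{\tau}$; since precomposition with the fixed M\"obius map $f_{\omega|_{m}}$ preserves (non-)normality at the fixed point, this amounts to showing $J(G_K)=\Jpre(G_K)$ for a finitely generated M\"obius semigroup with finite Julia set, which requires an actual construction of such a word (non-normality of the semigroup at $w$ gives a sequence of elements, not immediately a single infinite word whose partial compositions are non-normal there). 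Your observation that this case is vacuous in every application in the present paper is accurate --- wherever the lemma is invoked all elements have degree at least two, so $J(G_K)$ is infinite --- and falling back on the citation for the degenerate case matches what the paper does anyway; but as a self-contained proof the sketch is incomplete precisely there.
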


\begin{lem}
\label{lem:complement-of-prejulia-osc} Let $G=\left\langle f_{i}:i\in I\right\rangle $ be a rational semigroup with  $\card(I)<\infty$.  Suppose that $\left\{ f_{i}:i\in I\right\} $ satisfies the open
set condition. Let $I_1 \subset I$ be a non-empty  subset and set $G_1:=\left\langle f_{i}:i\in I_1\right\rangle$. Suppose that $f_i(P(G_1)) \subset F(G)$, for all $i\in I \setminus I_1$, and that there exists a $G_1$-forward invariant compact subset $L_1 \subset F(G)$. 
Then we have  $\hat{J}_{\omega,I_1}= \hat{J}_{\omega,I}$, for each $\omega \in I_1$.  \end{lem}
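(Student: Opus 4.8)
The plan is to show the two inclusions $\hat{J}_{\omega,I_1} \subset \hat{J}_{\omega,I}$ and $\hat{J}_{\omega,I_1} \supset \hat{J}_{\omega,I}$ for each $\omega \in I_1^{\N}$. The first inclusion is immediate from the definition: since $I_1 \subset I$, we have $J^{I_1} \subset J^I$, hence $\overline{J^{I_1}} \subset \overline{J^I}$, and intersecting with the fiber $\pi_1^{-1}(\{\omega\})$ and projecting preserves the inclusion. So the work is entirely in the reverse inclusion, which is also where I expect the main obstacle to lie.

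By Lemma~\ref{lem:fibrejulia-is-intersection} applied to both $K = I$ and $K = I_1$ (note $\card(I) < \infty$, so $G_{I} = G$ and $G_{I_1} = G_1$), we have
\[
\hat{J}_{\omega,I} = \bigcap_{n=1}^{\infty} f_{\omega|_n}^{-1}(J(G)) \quad \text{and} \quad \hat{J}_{\omega,I_1} = \bigcap_{n=1}^{\infty} f_{\omega|_n}^{-1}(J(G_1)).
\]
Since $\omega \in I_1^{\N}$, every $f_{\omega|_n}$ is a word in the generators of $G_1$, so this reduces the lemma to showing: if $z \in \bigcap_n f_{\omega|_n}^{-1}(J(G))$ and $z \notin \bigcap_n f_{\omega|_n}^{-1}(J(G_1))$, we reach a contradiction. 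Equivalently, there is some $n_0$ with $f_{\omega|_{n_0}}(z) \in J(G) \setminus J(G_1) \subset F(G_1)$, and I want to deduce $f_{\omega|_{n_0}}(z) \in F(G)$, contradicting $f_{\omega|_{n_0}}(z) \in J(G)$. Replacing $z$ by $f_{\omega|_{n_0}}(z)$ and $\omega$ by $\sigma^{n_0}\omega$, it suffices to show: if $x \in J(G) \cap F(G_1)$ and $x \in \bigcap_n f_{\omega|_n}^{-1}(J(G))$ for some $\omega \in I_1^{\N}$, then $x \in F(G)$ — a contradiction.

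The key mechanism here is to track the forward orbit $f_{\omega|_n}(x)$, which lies in $J(G)$ for all $n$ by assumption, and to show that the family $\{ f_{\omega|_n} \}_{n}$ is in fact normal near $x$, forcing $x \in F_{\omega} \subset$ (something disjoint from part of $J(G)$). The point is that $x \in F(G_1)$, and I would use the open set condition together with the hypotheses $f_i(P(G_1)) \subset F(G)$ for $i \in I \setminus I_1$ and the existence of the $G_1$-forward invariant compact set $L_1 \subset F(G)$ to build a $G$-forward invariant neighborhood of $P(G_1)$ inside $F(G)$. Concretely: since $L_1 \subset F(G)$ is $G_1$-forward invariant and compact, and $P(G_1) \subset \overline{\bigcup_{g \in G_1 \cup \{\id\}} g(\bigcup_{i \in I_1} \CV(f_i))}$, the postcritical set $P(G_1)$ is contained in a relatively compact open set $W$ with $\overline{W} \subset F(G_1)$, $G_1$-forward invariant, and such that $f_i(\overline{W}) \subset F(G)$ for each $i \in I \setminus I_1$ as well (using $f_i(P(G_1)) \subset F(G)$ and continuity/compactness, shrinking $W$ if needed); one then checks $\bigcup_{g \in G} g(\overline{W}) \subset F(G)$. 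Since $x \in F(G_1)$ is in the basin controlled by $P(G_1)$ (more precisely, by the expanding-type dynamics of $G_1$, or directly: the connected component of $F(G_1)$ containing $x$ eventually maps into $W$ under long words), the orbit $f_{\omega|_n}(x)$ eventually enters $W \subset F(G)$, so $x$ has a neighborhood on which $\{f_{\omega|_n}\}$ is normal with limit values in $F(G)$; this shows $x$ cannot lie in $J(G)$ and gives the contradiction.

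The main obstacle I anticipate is the bookkeeping in the last paragraph: making precise the claim that the $G_1$-orbit of $x \in F(G_1)$ eventually lands near $P(G_1)$, and arranging the $G$-forward-invariant open neighborhood $W$ of $P(G_1)$ inside $F(G)$ so that it simultaneously absorbs orbits under words from $I_1$ and maps into $F(G)$ under the first application of any generator from $I \setminus I_1$. For the first part I would invoke that $F(G_1)$ has finitely many "relevant" components meeting $P(G_1)$ and that all of $F(G_1)$ is attracted to them (using $\deg(g) \ge 2$ for $g \in G$, so in particular $\card(J(G_1)) \ge 3$ after discarding the trivial cases handled by Lemmas~\ref{lem:moebiussemigroup-expanding-implies-J-not-twoelements} and~\ref{lem:atmosttwoelements-expandingness}, together with Montel's theorem applied on $\Chat \setminus \overline{W}$); the construction of $W$ then mirrors the argument in the proof of Proposition~\ref{prop:nicelyexpanding-characterisation}, adapted so that the extra generators from $I_2 \cap I$ only ever push points from $\overline{W}$ into $F(G)$ rather than out of it. Once $W$ is in place, Montel's theorem gives normality of $\{f_{\omega|_n}\}$ near $x$, completing the proof.
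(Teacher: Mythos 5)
There is a genuine gap, and it sits exactly where you predicted the difficulty would be. Your reduction via Lemma~\ref{lem:fibrejulia-is-intersection} (applied with $K=I$ and $K=I_1$) is legitimate, but the dynamical claim you then rely on --- that the orbit $f_{\omega|_n}(x)$ of a point $x\in F(G_1)$ under words in $G_1$ eventually enters a neighborhood $W$ of $P(G_1)$ with $\overline{W}\subset F(G_1)$, $G$-forward invariant and contained in $F(G)$ --- is not available under the hypotheses of this lemma, and in the paper's intended applications it is simply false. First, the lemma does not assume $G_1$ is hyperbolic or (nicely) expanding: it only assumes $f_i(P(G_1))\subset F(G)$ for $i\in I\setminus I_1$ and the existence of a $G_1$-forward invariant compact $L_1\subset F(G)$. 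In particular $P(G_1)$ may meet $J(G_1)$ (Theorem~\ref{t:c:B1main} and Figures~\ref{fig:2cauli1circle1b}, \ref{fig:caulicircle1} explicitly allow $f_1$ to be non-hyperbolic), so a neighborhood $W\supset P(G_1)$ with $\overline{W}\subset F(G_1)$ need not exist; and nothing in the hypotheses controls images of $P(G_1)$ under arbitrary words of $G$ (only one application of a single generator outside $I_1$), so the asserted inclusion $\bigcup_{g\in G}g(\overline{W})\subset F(G)$ is unsupported --- a $G$-forward invariant compact subset of $F(G)$ is a separate hypothesis of Theorem~\ref{thm:inducing-prejulia}, not of this lemma. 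Second, even where such a $W$ exists, the statement that all of $F(G_1)$ is attracted to it is precisely Lemma~\ref{lem:fatou-forward-dynamics}, which requires nice expansion; when $F(G_1)$ contains a rotation domain (the Siegel disk case highlighted in Theorem~\ref{t:c:B1main}), the limit functions of $(f_{\omega|_n})$ near $x$ are non-constant and the orbit never approaches $P(G_1)$ or $L_1$, so your normality argument produces no contradiction.

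The structural reason your route gets stuck is that by passing immediately to the intersection formula for $\hat{J}_{\omega,I}$ you discard the approximation data encoded in the closure defining $\hat{J}_{\omega,I}$: the paper's proof takes sequences $z_n\in J_{\beta^n}$ with $\beta^n\to\omega$ and $\beta^n\in I^{\N}\setminus I_1^{\N}$, and the first letter $i_0\in I\setminus I_1$ of $\beta^n$, occurring at a position $r_n\to\infty$, is what drives both halves of the contradiction: in the constant-limit case one shows the limit value $c$ lies in $P(G_1)$ (an inverse-branch normality argument using a $G_1$-invariant neighborhood of $L_1$, which is where $L_1$ is actually used) and then plays $f_{i_0}(c)\in f_{i_0}(P(G_1))\subset F(G)$ against $f_{i_0}(c)\in J(G)$; in the non-constant-limit case the open set condition forces two distinct words, differing exactly at position $r_{n_j}$ by $i_0$ versus a letter of $I_1$, to coincide. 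Your sketch uses the hypothesis $f_i(P(G_1))\subset F(G)$ only to manufacture the invariant neighborhood $W$ (which, as above, cannot be done), and it has no mechanism at all for the non-constant-limit case. To repair the argument you would essentially have to reintroduce the approximating sequences, i.e.\ return to the paper's approach.
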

\begin{proof}
Let $\omega \in I_1^\N$ and suppose by way of  contradiction that there exists $z_\infty \in  \hat{J}_{\omega,I} \setminus \hat{J}_{\omega,I_1}$.  Then there exist  sequences $(\beta^n)_{n\in \N}$ and $(z_n)_{n\in \N}$ with $\beta^n \in I^\N \setminus I_1^\N $ and $z_n \in J_{\beta^n}$, for each $n\in \N$,  such that $\lim_n \beta^n = \omega $ and $\lim_n z_n =z_\infty$.  Since $z_\infty \notin  \hat{J}_{\omega,I_1}=\bigcap_{n=1}^\infty f_{\omega |_n}^{-1}(J(G_1))$ by Lemma \ref{lem:fibrejulia-is-intersection},   there exists $m\in \N$ such that $f_{\omega|_m}(z_\infty)\in F(G_1)$.  We may assume that $\beta_{|m}^n =\omega_{|m}$, for each $n\in \N$. Define the sequence $(r_n) \in \N^\N$, given by $r_n:=\min\{k\in \N:\beta^n_k \notin I_1 \}$. Clearly, we have $r_n >m$, for each $n\in \N$,  and  that $r_n$ tends to infinity, because $\beta^n$ tends to $\omega \in I_1^\N$, as $n$ tends to infinity. Since $f_{\omega|_m}(z_\infty)\in F(G_1)$ and  $\beta^n_{|r_n -1}\in I_1^{r_n -1}$,  we have that $(f_{\beta^n_{| r_n -1}})_{n\in \N}$ is normal in a neighborhood of $z_\infty$ in $F_\omega$.  Let  $(n_j)\in \N^\N$ be a sequence tending to infinity,  such that the sequence $(g_j) \in G_1^\N$, given by \ $g_j:=f_{\beta^{n_j}_{| r_{n_j} -1}}  $,  converges uniformly in a neighborhood  $V$ of $z_\infty$. We may also assume that there exists $i_0 \in I\setminus I_{1}$ such that $\beta^{n_j}_{ r_{n_j} }=i_0$, for all $j\in \N$. We will now distinguish two cases.

\emph{Case 1}: Suppose there exists a  constant $c\in \Chat$, such that $g_j  \rightrightarrows c$ on $V$. Since  $g_j(z_{n_j})\in J(G) $, we have $c\in J(G)$. Hence, we have that  $c\notin  L_1\subset F(G)$ and  that  there exists a $G_1$-forward invariant neighborhood $W$ of $L_1$ in $F(G)$, such that  $c \notin \overline{W}. $ (To take such $W$, let $\delta _{0}>0$ be a small number 
such that setting $A=\{ z\in F(G): d(z,L_{1})<\delta _{0}\} $, 
we have $c\not\in \overline{\cup _{g\in G_{1}\cup \{ \id\} }g(A)}.$ 
Let $W:=\cup _{g\in G_{1}\cup \{ \id\}}g(A).$) To prove that $c\in P(G_1)$, suppose on the contrary that $c\notin  P(G_1)$.  Then there exists    $\delta>0$ such that $\overline{B(c,\delta)}\cap W = \emptyset$ 
and,  for each large $j$, there exists  a well defined inverse branch $h_{j}:B(c,\delta)\rightarrow \Chat$ of $g_j$,   such that  $h_{j}(g_j (z_\infty))=z_\infty$ and $g_j \circ h_j=\id$ on $B(c,\delta)$. Since $W$ is  $G_1$-forward invariant, we  conclude that $h_j (B(c,\delta))\cap W = \emptyset$. Hence, we obtain that   $( h_{j}) $ is normal in $z_\infty$, which is a contradiction (see the argument in the proof of Lemma \ref{lem:fatou-forward-dynamics}). We have thus shown that    $c\in P(G_1)$. Consequently, we have   $\lim_j f_{\beta^{n_j}_{| r_{n_j} }}(z_\infty)=f_{i_0}(c)\in f_{i_0}(P(G_1))\subset F(G)$.  On the other hand, we have $f_{i_0}(c)=\lim_j f_{\beta^{n_j}_{| r_{n_j} }}(z_{n_j}) \in J(G)$, which gives a contradiction.\linebreak
\emph{Case 2}: Suppose there exists a  non-constant holomorphic map $\varphi:V\rightarrow \Chat$, such that $g_j  \rightrightarrows \varphi$ on  $V$.  Suppose that $\left\{ f_{i}:i\in I\right\} $ satisfies the open
set condition with open set $U$.  Since $f_{i_0}(\varphi(z_\infty))=\lim_j f_{\beta^{n_j}_{| r_{n_j} }}(z_{{n_j}}) \in J(G)\subset \overline{U}$ and $f_{i_0}\circ \varphi$ is non-constant, there exists $z$ in a neighborhood of $z_\infty$, such that $f_{i_0}(\varphi(z))\in U$. Moreover, there exists $j_0 \in \N$ such that $f_{\beta^{n_j}_{| r_{n_j} }}(z) \in U$,  for all $j\ge j_0$.  We may also assume that $r_{n_{j+1}}>r_{n_j}$, for all $j\ge j_0$. Hence, for each $j\ge j_0$,  we have 
\begin{equation*}
z \in \bigg( f^{-1}_{\beta^{n_j}_{1}} \dots  f^{-1}_{\beta^{n_j}_{r_{n_j} }}(U) \bigg)  \cap  \bigg( f^{-1}_{\beta^{n_{j+1}}_{1}} \dots  f^{-1}_{\beta^{n_{j+1}}_{r_{n_{j}} }} \dots  f^{-1}_{\beta^{n_{j+1}}_{r_{n_{j+1}} }}(U) \bigg) \subset
 \bigg( f^{-1}_{\beta^{n_j}_{1}} \dots  f^{-1}_{\beta^{n_j}_{r_{n_j} }}(U) \bigg) \cap  \bigg( f^{-1}_{\beta^{n_{j+1}}_{1}} \dots  f^{-1}_{\beta^{n_{j+1}}_{r_{n_j} }}(U) \bigg).
\end{equation*}
Since $\left\{ f_{i}:i\in I\right\} $ satisfies the open set condition, we conclude that $\beta^{n_j}_{1}\dots \beta^{n_j}_{r_{n_j} }=\beta^{n_{j+1}}_{1}\dots \beta^{n_{j+1}}_{r_{n_j} }$. This is a contradiction, because $\beta^{n_j}_{r_{n_j} }=i_0 \in I\setminus I_1$ and $\beta^{n_{j+1}}_{r_{n_j} }\in I_1$. The proof is complete.
\end{proof}

\begin{lem} \label{lem:nicelyexpanding-fibres} Suppose that  $G=\left\langle f_{i}:i\in I\right\rangle $
has an inducing structure and that $\card(I)<\infty$.  Let $\omega \in I^\N$ and suppose that there exists   $\gamma \in H_0^{\N}$ and a  sequence $(n_k)\in \N^\N$ tending to infinity, such that $f_{\omega|_{n_k}}=\gamma_k \circ \dots \circ \gamma_1$, for each $k\in \N$.  Then we have $J_\gamma = J_\omega=\hat{J}_{\omega,I}$. 
\end{lem}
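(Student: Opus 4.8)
The plan is to prove the cyclic chain of inclusions
\[
J_\gamma\subseteq J_\omega\subseteq\hat J_{\omega,I}\subseteq J_\gamma ,
\]
which at once yields the three asserted equalities. First I collect the ingredients. Since $G$ has an inducing structure, $H$ is nicely expanding by Lemma~\ref{lem:inducedsemigroup-is-nicelyexpanding}; fix an $H$-forward invariant, non-empty compact set $P_0(H)\subseteq F(H)$ as in Definition~\ref{nicelyexpanding}. By Lemma~\ref{lem:juliasetof-H-equals-G} we have $J(G)=J(H)$, hence also $F(G)=F(H)$. Moreover $\deg(g)\ge 2$ for every $g\in G$ by Definition~\ref{def-setting-a}, so for any $i_0\in I_2$ the map $f_{i_0}\in H_0\subseteq H$ has degree at least two, whence $J(H)\supseteq J(f_{i_0})$ is infinite; in particular $\card(J(H))>1$, so Lemma~\ref{lem:fatou-forward-dynamics} is applicable to $H$ with its generator system $H_0$ (equipped with the discrete topology). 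Finally, since $\card(I)<\infty$, Lemma~\ref{lem:fibrejulia-is-intersection} applied with $K=I$ gives $\hat J_{\omega,I}=\bigcap_{n\in\N}f_{\omega|_n}^{-1}(J(G))$.

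The two easy inclusions go as follows. The hypothesis $f_{\omega|_{n_k}}=\gamma_k\circ\cdots\circ\gamma_1$ exhibits $\{f_{\omega|_{n_k}}:k\in\N\}$ as a subset of $\{f_{\omega|_n}:n\in\N\}$; since a subfamily of a normal family is normal, normality of $(f_{\omega|_n})_n$ near a point $z$ entails normality of $(\gamma_k\circ\cdots\circ\gamma_1)_k$ near $z$, i.e. $F_\omega\subseteq F_\gamma$, which is $J_\gamma\subseteq J_\omega$. For $J_\omega\subseteq\hat J_{\omega,I}$, if $z\in J_\omega$ then $(\omega,z)\in J^\omega\subseteq J^I\subseteq\overline{J^I}$ and $\pi_1(\omega,z)=\omega$, so $z\in\pi_{\Chat}\big(\overline{J^I}\cap\pi_1^{-1}(\{\omega\})\big)=\hat J_{\omega,I}$.

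The substantive step is $\hat J_{\omega,I}\subseteq J_\gamma$, which I prove in the contrapositive form $F_\gamma\subseteq\Chat\setminus\hat J_{\omega,I}$. Let $z\in F_\gamma$. Applying Lemma~\ref{lem:fatou-forward-dynamics} to the nicely expanding semigroup $H=\langle H_0\rangle$, to the sequence $\gamma\in H_0^\N$, and to the point $z\in F_\gamma$, we obtain $\lim_{k\to\infty}d\big(\gamma_k\circ\cdots\circ\gamma_1(z),\,P_0(H)\big)=0$, that is, $\lim_{k\to\infty}d\big(f_{\omega|_{n_k}}(z),\,P_0(H)\big)=0$. Since $P_0(H)$ is a compact subset of the open set $F(H)=F(G)$, there is $\epsilon>0$ such that the $\epsilon$-neighbourhood of $P_0(H)$ is contained in $F(G)$, hence $f_{\omega|_{n_k}}(z)\in F(G)$ for all sufficiently large $k$. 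For such a $k$ we have $z\notin f_{\omega|_{n_k}}^{-1}(J(G))$, and therefore $z\notin\bigcap_{n\in\N}f_{\omega|_n}^{-1}(J(G))=\hat J_{\omega,I}$. This establishes $\hat J_{\omega,I}\subseteq J_\gamma$ and closes the chain.

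The only genuinely delicate point is this last step: one must correctly transport Lemma~\ref{lem:fatou-forward-dynamics}, stated for a generator system indexed by a topological space, to the induced system $\{g:g\in H_0\}$ (identifying the composition $\gamma_k\circ\cdots\circ\gamma_1$ along the sequence $\gamma$ with $f_{\omega|_{n_k}}$), and then use the openness of $F(H)$ to upgrade ``$f_{\omega|_{n_k}}(z)$ converges to the compact set $P_0(H)\subseteq F(H)$'' to ``$f_{\omega|_{n_k}}(z)\in F(H)$ eventually''. Everything else is a direct unwinding of the definitions of $F_\gamma$, $F_\omega$ and $\hat J_{\omega,I}$, combined with Lemmas~\ref{lem:juliasetof-H-equals-G} and~\ref{lem:fibrejulia-is-intersection}.
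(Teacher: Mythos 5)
Your proof is correct and follows essentially the same route as the paper: both hinge on Lemma \ref{lem:inducedsemigroup-is-nicelyexpanding} (so that $H$ is nicely expanding), Lemma \ref{lem:fatou-forward-dynamics} applied along $\gamma\in H_0^{\N}$, Lemma \ref{lem:fibrejulia-is-intersection} to write $\hat{J}_{\omega,I}=\bigcap_n f_{\omega|_n}^{-1}(J(G))$, and Lemma \ref{lem:juliasetof-H-equals-G} to identify $J(G)=J(H)$. The only (inessential) difference is that you argue via the contrapositive using the distance-to-$P_0(H)$ conclusion of Lemma \ref{lem:fatou-forward-dynamics} and the openness of $F(H)$, whereas the paper derives a contradiction from a constant limit function $c\in P_0(H)\cap J(G)$.
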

\begin{proof}
Clearly, we have $J_\gamma  \subset J_\omega \subset \hat{J}_{\omega,I}$. Suppose for a contradiction that there exists $z\in \hat{J}_{\omega,I} \setminus J_\gamma$.  Since $H$ is nicely expanding by Lemma \ref{lem:inducedsemigroup-is-nicelyexpanding} ,  there exists $c\in P(H) \subset F(H)$ and a subsequence $(n_k')$ of $(n_k)$ tending to infinity, such that $f_{\omega|_{ n'_k}}\rightrightarrows c$ in a neighborhood of $z$ by Lemma \ref{lem:fatou-forward-dynamics}.   On the other hand, it follows from Lemma \ref{lem:fibrejulia-is-intersection}  that  $\hat{J}_{\omega,I}=\bigcap_{k=1}^\infty f_{\omega |_{n'_k}}^{-1}(J(G))$. Hence, we have $c=\lim_k  f_{\omega|_{ n'_k}}(z)\in J(G)$. Since $J(G)=J(H)$ by Lemma \ref{lem:juliasetof-H-equals-G}, we get the desired contradiction. 
\end{proof}

\begin{lem}
\label{lem:complement-of-prejulia}  Suppose that  $G=\left\langle f_{i}:i\in I\right\rangle $
has an inducing structure with respect to $\{ I_{1},I_{2}\}$ and that 
$\card(I)<\infty.$ Let $G_{1}=\langle f_{i}:i\in I_{1}\rangle .$   If there exists a $G_1$-forward invariant compact set $L_1\subset F(G)$ and if $\{f_{i}: i\in I\}$ satisfies the open set condition, then we have 
 \[ J\left(G\right)=\Jpre\left(H\right)\cup\bigcup_{g\in G}g^{-1}\left(J\left(G_{1}\right)\right).\]
\end{lem}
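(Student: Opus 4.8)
The plan is to prove the two inclusions separately, using the combinatorial structure of $H_0$ together with the backward self-similarity of Julia sets and the descriptions of fiberwise Julia sets from Lemmas \ref{lem:fibrejulia-is-intersection}--\ref{lem:nicelyexpanding-fibres}. Recall that $J(G)=J(H)$ by Lemma \ref{lem:juliasetof-H-equals-G}, and $\Jpre(H)=\bigcup_{\gamma\in H_0^{\N}}J_\gamma=\bigcup_{\gamma\in H_0^{\N}}\hat J_{\gamma,H_0}$ since $H$ is nicely expanding (Lemma \ref{lem:inducedsemigroup-is-nicelyexpanding}, Lemma \ref{lem:expanding-implies-skewproductjulia-is-closed}).

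For the inclusion ``$\supseteq$'', note first that $\Jpre(H)\subset J(H)=J(G)$, and that for each $g\in G$ we have $g^{-1}(J(G_1))\subset g^{-1}(J(G))\subset J(G)$ by $G$-backward invariance of $J(G)$ (using $\mathrm{card}(J(G))\ge 3$, which holds since $H$ is nicely expanding with an element of degree $\ge2$; cf.\ Lemma \ref{lem:moebiussemigroup-expanding-implies-J-not-twoelements}). Hence the right-hand side is contained in $J(G)$.

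For the inclusion ``$\subseteq$'', let $z\in J(G)$. By backward self-similarity, $\Jpre(G)=\bigcup_{\omega\in I^{\N}}J_\omega$ is dense in $J(G)$, and more to the point $J(G)=\Jpre(H)\cup(\text{complement})$ should be checked on the level of words: pick $\omega\in I^{\N}$ with $z\in J_\omega$ (if $z\notin\overline{\Jpre(G)}$ we instead use that $J(G)=\overline{\Jpre(G)}$ and a limiting argument, but I expect the cleanest route is to reduce to $z\in\Jpre(G)$ and then take closures at the end, exploiting that the right-hand side of the claimed identity, while not obviously closed, contains $\bigcup_{g\in G}g^{-1}(J(G_1))$ which together with $\Jpre(H)=J(H)$... wait, $\Jpre(H)$ need not be closed). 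Let me instead argue directly: fix $z\in J(G)=J(H)$ and $\omega\in I^{\N}$ with $z\in J_\omega$. There are two cases. If the word $\omega$ has infinitely many ``returns to $I_2$'', i.e.\ there is a sequence $n_1<n_2<\cdots$ with $\omega_{n_k}\in I_2$ for all $k$, then grouping the blocks $\omega_{n_{k-1}+1}\cdots\omega_{n_k}$ shows $f_{\omega|_{n_k}}=\gamma_k\circ\cdots\circ\gamma_1$ for some $\gamma\in H_0^{\N}$, so by Lemma \ref{lem:nicelyexpanding-fibres} we get $z\in J_\omega=J_\gamma\subset\Jpre(H)$. If instead $\omega$ has only finitely many indices in $I_2$, say $\omega_m\in I_2$ is the last one (or none at all), then $\sigma^m\omega\in I_1^{\N}$, so $f_{\sigma^m\omega|_j}(z)$... more precisely write $g:=f_{\omega|_m}\in G$ and note $f_{\omega|_m}(z)=g(z)\in J_{\sigma^m\omega}$ with $\sigma^m\omega\in I_1^{\N}$; hence $g(z)\in\bigcup_{\tau\in I_1^{\N}}J_\tau=\Jpre(G_1)\subset J(G_1)$, so $z\in g^{-1}(J(G_1))$. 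In the degenerate case where no $\omega_j$ lies in $I_2$ we have $\omega\in I_1^{\N}$ and $z\in\Jpre(G_1)\subset J(G_1)=\id^{-1}(J(G_1))$, which is covered if we allow $\id$; since $G$ contains some $g$ with $g(z)\in J(G_1)$ whenever $z\in J(G_1)$ (e.g.\ compose with any generator and use backward invariance of $J(G_1)$... no — better: $J(G_1)\subset J(G)$ and for any $h\in G_1$, $h^{-1}(J(G_1))\supset J(G_1)$ as $J(G_1)$ is $G_1$-backward invariant, so $z\in h^{-1}(J(G_1))$ with $h\in G$). This handles all $z\in\Jpre(G)$; for general $z\in J(G)=\overline{\Jpre(G)}$ I would take a sequence $z_j\to z$ with $z_j$ in the right-hand side and then argue that the right-hand side is in fact equal to $J(G)$ by noting both containments just proven force equality on the dense set $\Jpre(G)$, and then invoke that actually every point of $J(G)$ lies in $\bigcup_{g\in G}g^{-1}(J(G_1))\cup\Jpre(H)$ by applying the same word-analysis to a point $z$ directly: density of repelling fixed points gives, for each $z\in J(G)$ and $\epsilon>0$, a repelling fixed point $w$ of some $g\in G$ with $d(w,z)<\epsilon$ lying in $\Jpre(G)$, but we want $z$ itself — so the honest statement is that the identity is proved as stated once we observe that both sides are $G$-backward invariant and the left side is the closure of $\Jpre(G)$, while...

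The main obstacle will be precisely this closure issue: $\Jpre(H)$ and $\bigcup_{g\in G}g^{-1}(J(G_1))$ are in general not closed, so proving $J(G)$ is contained in their union (not just in its closure) requires the honest word-combinatorics above applied to every $z\in J(G)$, i.e.\ one needs to know that for every $z\in J(G)$ there is a coding $\omega\in I^{\N}$ with $z\in\hat J_{\omega,I}$ — this is provided by $\pi_{\Chat}(J(\tilde f))=J(G)$, which holds for finitely generated $G$ with $\mathrm{card}(J(G))\ge3$ by Proposition \ref{prop:skewproduct-facts}(\ref{enu:ratsemi-facts-3}) — and then for such $\omega$ one uses $\hat J_{\omega,I}=\hat J_{\omega,I_1}$ when $\omega\in I_1^{\N}$ (Lemma \ref{lem:complement-of-prejulia-osc}, applicable by hypothesis (3)/(4) and the inducing structure) to land in $g^{-1}(J(G_1))$, and Lemma \ref{lem:nicelyexpanding-fibres} in the infinitely-returning case. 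So the real argument is: every $z\in J(G)$ lies in $\hat J_{\omega,I}$ for some $\omega\in I^{\N}$; split on whether $\omega$ returns to $I_2$ infinitely often; in the returning case apply Lemma \ref{lem:nicelyexpanding-fibres} to conclude $z\in J_\gamma\subset\Jpre(H)$; in the non-returning case $\sigma^m\omega\in I_1^{\N}$ for some $m$, so $f_{\omega|_m}(z)\in\hat J_{\sigma^m\omega,I}=\hat J_{\sigma^m\omega,I_1}\subset J(G_1)$ by Lemma \ref{lem:complement-of-prejulia-osc} and Lemma \ref{lem:fibrejulia-is-intersection}, whence $z\in f_{\omega|_m}^{-1}(J(G_1))$ with $f_{\omega|_m}\in G$ (taking $f_{\emptyset}=\id$ and noting $\id^{-1}(J(G_1))=J(G_1)\subset g^{-1}(J(G_1))$ for any $g\in G_1$). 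This yields $J(G)\subseteq\Jpre(H)\cup\bigcup_{g\in G}g^{-1}(J(G_1))$, and combined with the easy reverse inclusion completes the proof.
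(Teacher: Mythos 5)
Your final argument is essentially the paper's own proof: both use Proposition \ref{prop:skewproduct-facts} (\ref{enu:ratsemi-facts-3}) to code each $z\in J(G)$ by some $\omega\in I^{\N}$ with $z\in\hat{J}_{\omega,I}$, then split according to whether $\omega$ contains infinitely many $I_2$-symbols (apply Lemma \ref{lem:nicelyexpanding-fibres} to land in $\Jpre(H)$) or only finitely many (apply Lemma \ref{lem:complement-of-prejulia-osc} together with Lemma \ref{lem:fibrejulia-is-intersection} to land in $f_{\omega|_\ell}^{-1}(J(G_1))$), and the easy reverse inclusion is as you state. The one slip is your treatment of the degenerate case $\omega\in I_1^{\N}$: the claim $J(G_1)\subset h^{-1}(J(G_1))$ for $h\in G_1$ is forward invariance of $J(G_1)$, which does not follow from backward invariance and is false for general semigroups; but the fix is immediate, since $z\in\hat{J}_{\omega,I_1}=\bigcap_{n}f_{\omega|_n}^{-1}(J(G_1))$ already gives $z\in f_{\omega_1}^{-1}(J(G_1))$ with $f_{\omega_1}\in G$ -- which is in effect how the paper handles it, by allowing the $I_1^{\N}$-tail to begin at position $\ell\ge 1$.
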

\begin{proof}
Let $z\in J\left(G\right)$. Since $\card\left(I\right)<\infty$, there exists $\omega \in I^\N$ such that $z\in \hat{J}_{\omega,I}$ by Proposition  \ref{prop:skewproduct-facts} (\ref{enu:ratsemi-facts-3}). We now distinguish two cases. If there exists $\ell\in \N$ and $\tau \in I_1^\N$ such that $\omega =(\omega_1,\dots,\omega_\ell, \tau_1, \dots )$, then $f_{\omega|_\ell}(z) \in \hat{J}_{\tau,I}$. By Lemma \ref{lem:complement-of-prejulia-osc}, we have   $\hat{J}_{\tau,I}=\hat{J}_{\tau,I_1}\subset J(G_1)$.  We have thus shown that $z\in f_{\omega|_\ell}^{-1}(J(G_1))$.  If no such $\ell$  exists, then there exist   $\gamma \in H_0^{\N}$ and a  sequence $(n_k)\in I^\N$ tending to infinity, such that $f_{\omega|_{ n_k}}=\gamma_k \circ \dots \circ \gamma_1$, for each $k\in \N$. Hence, we have $z\in \hat{J}_{\omega,I}=J_\gamma \subset \Jpre(H)$ by Lemma \ref{lem:nicelyexpanding-fibres}.
\end{proof}

The following two lemmata give conditions under which we can  bound  the Hausdorff dimension of  the Julia set of a polynomial semigroup from above.  
\begin{lem}   \label{decoupling-lemma} Let   $G=\left\langle f_{i}:i\in I\right\rangle $ be a rational semigroup with $\card(I)<\infty$  such that the following holds.
\begin{enumerate}
\item \label{enu:decoupling} There exists a compact $G$-forward invariant set $K\subset F(G)$, such that $f_j(P(f_i))\subset K$, for all $i,j\in I$ with $i\neq j$.
\item \label{enu:G-polynomial} $f_i$ is a polynomial of degree at least two, for each $i\in I$. 

\end{enumerate} Then we have $\dim_H (\Jpre(G)) \le \max \{ s(G),  \max_{i\in I} \{ \dim_H(J(f_i)) \} \}$.
\end{lem}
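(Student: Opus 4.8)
The plan is to decompose $G$ according to whether a given infinite word eventually consists of a single symbol or not, and to apply the main theorem (Theorem~\ref{thm:bowen-for-prejulia}) together with Proposition~\ref{prop:nicelyexpanding-characterisation} to an auxiliary nicely expanding semigroup. First I would write $\Jpre(G) = \bigcup_{\omega \in I^{\N}} J_\omega$ and split the index set $I^{\N}$ into the set $P$ of words $\omega$ which are eventually constant, say eventually equal to $i \in I$, and its complement $Q$. For $\omega \in P$ eventually equal to $i$, there is $\ell \in \N$ with $f_{\omega|_\ell}(J_\omega) = J_{(i,i,\dots)} = J(f_i)$, so $J_\omega \subset f_{\omega|_\ell}^{-1}(J(f_i))$; since there are only countably many words $\omega|_\ell$ (as $\card(I)<\infty$, finitely many of each length) and each $f_{\omega|_\ell}$ is a finite-to-one holomorphic map which does not increase Hausdorff dimension, we get $\dim_H\big(\bigcup_{\omega \in P} J_\omega\big) \le \max_{i\in I}\dim_H(J(f_i))$. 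The countable stability of Hausdorff dimension handles the union over the (countably many) preimages.

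For the words in $Q$, the idea is that these are exactly the words that, for infinitely many $n$, can be cut at $n$ into a product of blocks each of which begins (reading in composition order) with a "return" to a symbol different from the current one — but more cleanly, I would invoke the inducing-type construction already present in the paper. Set $I_2 := I$ trivially won't help; instead, for each $i \in I$ consider the block maps and observe that condition~(\ref{enu:decoupling}) together with $f_i$ being a polynomial (so $\infty$ is a superattracting fixed point and $P(f_i)\setminus\{\infty\}$ is what matters) gives: for any $i \neq j$, $f_j(P(f_i))\subset K \subset F(G)$, hence the critical values that the semigroup $H_i := \langle f_j f_i^r : j \neq i, r \in \N_0\rangle$ accumulates lie in the $G$-forward invariant compact set $K \subset F(G)$, so $P(H_i) \subset \overline{\bigcup_{h \in G\cup\{\id\}} h(K)} \subset K \subset F(H_i)$. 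By Proposition~\ref{prop:nicelyexpanding-characterisation}, $H_i$ is nicely expanding (it is hyperbolic, the Möbius issue is vacuous since $\deg \ge 2$, and $K$ serves as $P_0$). A word $\omega \in Q$ has $J_\omega$ contained in a fiberwise Julia set of $\langle f_i : i \in I\rangle$ that is "coded" by the induced system over some symbol, so $J_\omega \subset \Jpre(\langle H_0 \rangle)$ for the appropriate induced generator set $H_0$; Theorem~\ref{thm:bowen-for-prejulia}(2) then gives $\dim_H(\Jpre(\langle H_0\rangle)) \le s(\langle H_0\rangle)$, and one checks $s(\langle H_0 \rangle) = s(G)$ by comparing Poincaré series (regrouping compositions into blocks, as in the proof of Theorem~\ref{i:thm:inducing-prejulia}).

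Assembling the pieces via countable stability of $\dim_H$:
\[
\dim_H(\Jpre(G)) = \max\Big\{ \dim_H\big(\bigcup_{\omega\in P} J_\omega\big),\ \dim_H\big(\bigcup_{\omega\in Q} J_\omega\big)\Big\} \le \max\Big\{ \max_{i\in I}\dim_H(J(f_i)),\ s(G)\Big\}.
\]
The main obstacle I anticipate is the careful bookkeeping for the words in $Q$: making precise the statement that a non-eventually-constant word $\omega$ admits infinitely many cut points realizing it as an $H_0$-word, so that $J_\omega$ (or rather $\hat J_{\omega,I}$) is captured by $\Jpre(H)$ — this is essentially Lemma~\ref{lem:nicelyexpanding-fibres} and Lemma~\ref{lem:complement-of-prejulia}, but here the induced system depends on which symbol $i$ the word "fails to converge away from," so I would need to take a finite union over $i \in I$ of induced nicely expanding semigroups $H_i$, each with $s(H_i) = s(G)$, and argue $\dim_H(\Jpre(H_i)) \le s(H_i) = s(G)$. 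The verification $s(H_i)=s(G)$ and the identification $\bigcup_{\omega\in Q}J_\omega \subset \bigcup_i \Jpre(H_i)$ (up to a dimension-zero or already-controlled remainder) are the technical heart; everything else is routine use of Koebe distortion, countable stability of Hausdorff dimension, and results quoted above.
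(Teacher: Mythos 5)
Your treatment of the eventually constant words is fine and is exactly the (implicit) easy half of the paper's argument: $J_\omega\subset f_{\omega|_\ell}^{-1}(J(f_i))$ plus countable stability and the fact that preimages under rational maps do not raise dimension. The gap is in the claim that each induced semigroup $H_i=\langle f_jf_i^r : j\neq i,\ r\in\N_0\rangle$ is nicely expanding. You compute $P(H_i)$ as if the only critical values of a block $f_j\circ f_i^r$ were those coming from the inner iterates $f_i^r$; but $\CV(f_j\circ f_i^r)\supset\CV(f_j)$, and hypothesis (\ref{enu:decoupling}) gives no control whatsoever over $\CV(f_j)$ or $P(f_j)$ --- it only controls the images $f_j(P(f_i))$ of the \emph{other} generators' postcritical sets. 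Since you allow $r=0$, we even have $f_j\in H_i$, hence $P(H_i)\supset P(f_j)$ and $J(H_i)\supset J(f_j)$; so if some $f_j$ with $j\neq i$ is non-hyperbolic (e.g.\ parabolic), then $P(H_i)\cap J(H_i)\neq\emptyset$ and $H_i$ is not hyperbolic, hence not nicely expanding by Proposition \ref{prop:nicelyexpanding-characterisation}. This is not a marginal case: the lemma is designed precisely for generators that are individually non-hyperbolic --- in Theorem \ref{thm:inducing-prejulia} (\ref{enu:inducingthm-decoupling}) it is applied to $G_1$ all of whose generators may be non-hyperbolic, as in Figure \ref{fig:2cauli1circle1b}, where both outer candidates are parabolic-type maps. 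Consequently Theorem \ref{thm:bowen-for-prejulia} (2) cannot be invoked for $H_i$, and the subsequent claims $\dim_H(\Jpre(H_i))\le s(H_i)$ and $s(H_i)=s(G)$ (whose analogue in the proof of Theorem \ref{thm:inducing-prejulia} uses the bounded distortion lemma, which again needs nice expansion) are unsupported. Note that the paper's inducing structure avoids this exactly by \emph{assuming} $P(G_2)\subset L\subset F(H)$ for the outer maps; you have no such hypothesis here.

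The paper's proof of this lemma does not induce at all. Using hypothesis (\ref{enu:decoupling}) it shows that the fiberwise postcritical set $P(\tilde f)$ of the skew product meets $J(\tilde f)$ only over the constant words $i^\infty$; hence for every $\omega$ that is not eventually constant there are infinitely many $n$ with $J_{\sigma^{n-1}\omega}\subset f_i^{-1}f_j^{-1}(J(G))$ for some $i\neq j$, and these sets lie at a uniform positive distance from $P(G)$. The bound $\dim_H\le s(G)$ for the union of those fibers is then quoted from the semi-hyperbolic fibered-maps results \cite[Propositions 2.11 and 2.20]{MR2237476}. To salvage your route you would need either an argument valid for non-hyperbolic outer maps (which is precisely what those fiberwise results supply) or an induced alphabet whose blocks have \emph{all} their critical values controlled by $K$; the blocks $f_jf_i^r$ do not have this property, so as written the argument for the non-eventually-constant words fails.
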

\begin{proof}
For each $i\in I$,  let  $i^\infty = (i,i,i,\dots)\in I^\N$.  We will show that  \[ \dim_H\left(\bigcup_{\omega \notin \bigcup_{n\in \N_0} \sigma^{-n}(\bigcup_{i\in I}i^\infty)} {J}_\omega \right) \le s(G),\]  from which the lemma follows.  To prove this, let $\tilde{f}:I^\N\times \Chat \rightarrow 
I^{\N }\times \Chat $ be the skew product associated 
to $\{ f_{i}:i\in I\} $. We set 
\[
P(\tilde{f}):=\overline{ \bigcup_{n\in \N} \tilde{f}^n(\{(\omega , z):f_{\omega_1}'(z)=0\})}\subset I^{\N}\times\Chat, 
\] and we  first verify that 
\begin{equation} \label{bad-fibres}
J(\tilde{f})\cap P(\tilde{f})=\bigcup_{i\in I}\{(i^\infty,x) \in J(\tilde{f}) : x\in P(f_i)\}.
\end{equation}
Let $(\omega,x)\in J(\tilde{f}) \cap P(\tilde{f})$ be given.  By (\ref{enu:decoupling}), there exists $i\in I$ such that $x\in P(f_i)$. Since $f_{\omega_{| n}}(x)\in J(G)$, for each $n\in \N$, we have  $\omega =i^\infty$ by  (\ref{enu:decoupling}). Thus (\ref{bad-fibres}) holds.
For each $\omega \notin \bigcup_{n\in \N_0} \sigma^{-n}(\bigcup_{i\in I}i^\infty)$ and for each $n_0 \in \N$, there exists $n\ge n_0$ such that ${J}_{\sigma^{n-1}(\omega)}\subset f_i^{-1}f_j^{-1}(J(G))$, for some $i,j\in I$ with $i\neq j$.     By (\ref{bad-fibres}) we  then  have 
\[
\min \big\{ d(a,b): a\in {J}_{\sigma^{n-1}(\omega)}, b\in P(G) \big\}\ge \min  \big\{ d(a,b): i,j \in I, i\neq j, a\in f_i^{-1}f_j^{-1}(J(G)), b\in P(G)\big\}>0.
\]

Now, the claim follows from  \cite[Proposition 2.11 and Proposition 2.20]{MR2237476}.
\end{proof}

\begin{lem} \label{polynomial-semigroup-prejulia} Under the hypothesis of Lemma \ref{decoupling-lemma}, suppose that  $\{f_i:i\in I \}$ additionally satisfies the open set condition.  
Then we have $\Jpre(G)=J(G)$. In particular, we have \[ \dim_H (J(G)) \le \max \{ s(G),  \max_{i\in I} \{ \dim_H(J(f_i))\} \}.\]
\end{lem}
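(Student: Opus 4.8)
The ``in particular'' assertion is immediate from Lemma~\ref{decoupling-lemma} once we know $\Jpre(G)=J(G)$, and since $\Jpre(G)\subset J(G)$ always holds, it suffices to prove $J(G)\subset\Jpre(G)$. First I would record the basic reduction. Each $f_i$ has degree at least two, so $J(f_i)=J(\langle f_i\rangle)\subset J(G)$ is infinite and $\card(J(G))\ge 3$. Hence by Proposition~\ref{prop:skewproduct-facts}\,(\ref{enu:ratsemi-facts-3}) and the definition of $\hat{J}_{\omega,I}$ we have $J(G)=\pi_{\Chat}(J(\tilde{f}))=\bigcup_{\omega\in I^{\N}}\hat{J}_{\omega,I}$, and by Lemma~\ref{lem:fibrejulia-is-intersection} we have $\hat{J}_{\omega,I}=\bigcap_{n\ge 1}f_{\omega|_n}^{-1}(J(G))$; moreover $J_\omega\subset\hat{J}_{\omega,I}$ and, by Proposition~\ref{prop:skewproduct-facts}\,(\ref{enu:ratsemi-facts-1}), $J_\omega=f_{\omega|_\ell}^{-1}(J_{\sigma^\ell\omega})$ for each $\ell\in\N$. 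Thus the statement reduces to proving $\hat{J}_{\omega,I}=J_\omega$ for every $\omega\in I^{\N}$, and I would split this into two cases according to whether $\omega$ is eventually constant.

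\emph{Case 1: $\omega=(\omega|_\ell,i,i,i,\dots)$ for some $\ell\ge 0$ and $i\in I$.} Since $J(G)$ is $G$-backward invariant, the sets $f_{\omega|_n}^{-1}(J(G))$ decrease in $n$, so $\hat{J}_{\omega,I}=f_{\omega|_\ell}^{-1}\bigl(\bigcap_{m\ge 0}f_i^{-m}(J(G))\bigr)=f_{\omega|_\ell}^{-1}(\hat{J}_{i^\infty,I})$. Now I would apply Lemma~\ref{lem:complement-of-prejulia-osc} with $I_1=\{i\}$ and $L_1=K$; this is legitimate because $K$ is $\langle f_i\rangle$-forward invariant, $K\subset F(G)$, and $f_j(P(\langle f_i\rangle))=f_j(P(f_i))\subset K\subset F(G)$ for every $j\in I\setminus\{i\}$ by hypothesis~(\ref{enu:decoupling}) of Lemma~\ref{decoupling-lemma}. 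This yields $\hat{J}_{i^\infty,I}=\hat{J}_{i^\infty,\{i\}}$, and Lemma~\ref{lem:fibrejulia-is-intersection} applied to $\langle f_i\rangle$ gives $\hat{J}_{i^\infty,\{i\}}=\bigcap_{n\ge 1}f_i^{-n}(J(f_i))=J(f_i)=J_{i^\infty}$. Therefore $\hat{J}_{\omega,I}=f_{\omega|_\ell}^{-1}(J_{i^\infty})=J_\omega$.

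\emph{Case 2: $\omega$ is not eventually constant.} Then $\omega_{n+1}\ne\omega_{n+2}$ for infinitely many $n$, and for such $n$ one has $\hat{J}_{\sigma^n\omega,I}\subset f_{\omega_{n+1}}^{-1}\bigl(f_{\omega_{n+2}}^{-1}(J(G))\bigr)$; arguing exactly as in the proof of Lemma~\ref{decoupling-lemma}, using the identity~(\ref{bad-fibres}) and hypotheses~(\ref{enu:decoupling}), (\ref{enu:G-polynomial}) which force $f_i^{-1}\bigl(f_j^{-1}(J(G))\bigr)\cap P(G)=\emptyset$ for $i\ne j$, the constant $c_0:=\min_{i\ne j}d\bigl(f_i^{-1}(f_j^{-1}(J(G))),P(G)\bigr)$ is strictly positive. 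Since $f_{\omega|_n}(z)\in\hat{J}_{\sigma^n\omega,I}$ whenever $z\in\hat{J}_{\omega,I}$, it follows that $d(f_{\omega|_n}(z),P(G))\ge c_0$ for infinitely many $n$. Suppose now $z\notin J_\omega$; then $(f_{\omega|_n})_n$ is normal in a neighbourhood of $z$, so a subsequence $f_{\omega|_{n_k}}$ converges uniformly near $z$ to a holomorphic map $\psi$, whence $\|f_{\omega|_{n_k}}'(z)\|\to\|\psi'(z)\|<\infty$. This contradicts the fiberwise expansion estimate underlying the proof of Lemma~\ref{decoupling-lemma} (namely \cite[Propositions~2.11 and 2.20]{MR2237476}), according to which the recurrent visits of $f_{\omega|_n}(z)$ to $\{w:d(w,P(G))\ge c_0\}$ force $\|f_{\omega|_n}'(z)\|=\|(\tilde{f}^{\,n})'(\omega,z)\|\to\infty$. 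Hence $z\in J_\omega$, so $\hat{J}_{\omega,I}=J_\omega$.

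Combining the two cases, $J(G)=\bigcup_{\omega\in I^{\N}}\hat{J}_{\omega,I}=\bigcup_{\omega\in I^{\N}}J_\omega=\Jpre(G)$, and then $\dim_{H}(J(G))=\dim_{H}(\Jpre(G))\le\max\{s(G),\max_{i\in I}\dim_{H}(J(f_i))\}$ by Lemma~\ref{decoupling-lemma}. I expect the delicate point to be Case~2: passing from the purely qualitative fact that $f_{\omega|_n}(z)$ stays a definite distance away from $P(G)$ at infinitely many times to the quantitative derivative blow-up $\|(\tilde{f}^{\,n})'(\omega,z)\|\to\infty$, which is exactly the hyperbolicity-along-fibers input of \cite{MR2237476} already exploited in Lemma~\ref{decoupling-lemma}; Case~1, by contrast, is a clean consequence of Lemmas~\ref{lem:complement-of-prejulia-osc} and~\ref{lem:fibrejulia-is-intersection}.
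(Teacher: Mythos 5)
Your overall reduction (prove $\hat{J}_{\omega,I}=J_{\omega}$ for every $\omega\in I^{\N}$) and your Case~1 coincide with the paper's argument, but Case~2 contains a genuine gap, and it is exactly the step you flag as delicate. The claim that infinitely many visits of $f_{\omega|_{n}}(z)$ to $\{w:d(w,P(G))\ge c_{0}\}$ force $\Vert(\tilde{f}^{n})'(\omega,z)\Vert\to\infty$ is not established in the proof of Lemma~\ref{decoupling-lemma}, and it is not what \cite[Propositions 2.11 and 2.20]{MR2237476} are invoked for there: in that proof they are used only to bound the Hausdorff dimension of the union of the fiberwise Julia sets $J_{\omega}$ after the separation property of the tails $J_{\sigma^{n-1}\omega}$ has been checked; they concern the sets $J_{\omega}$, whereas your point $z$ lies a priori only in $\hat{J}_{\omega,I}$, and its membership in $J_{\omega}$ is precisely what is to be proved. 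Moreover, under the hypotheses of Lemma~\ref{decoupling-lemma} the generators may be badly non-hyperbolic (Siegel disks, parabolic points, critical points inside their Julia sets; cf.\ Theorem~\ref{t:c:B1main} and Example~\ref{inducing-example}), so no black-box ``recurrence to a region at definite distance from $P(G)$ implies derivative blow-up'' principle is available: between good times the orbit may pass arbitrarily close to $P(G)$ and to critical points in $J(G)$, picking up factors arbitrarily close to $0$, and the natural Schwarz--Pick estimate on $\Chat\setminus P(G)$ only yields monotonicity of the hyperbolic derivative, not divergence. In effect your Case~2 outsources a strengthening of the statement being proved to a citation that does not carry it.

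The paper closes Case~2 without any expansion, via the normal-families dichotomy: choosing switch times $n_{k}$ with $\omega_{n_{k}+1}=i\neq j=\omega_{n_{k}+2}$ and a limit $f_{\omega|_{n_{k}}}\rightrightarrows g$ near $z$, it first rules out constant $g$ by the inverse-branch argument of Lemma~\ref{lem:complement-of-prejulia-osc} (Case~1), which forces $g(z)\in P(G)$ and hence $\lim_{k}f_{\omega|_{n_{k}+2}}(z)=f_{j}f_{i}g(z)\in K\subset F(G)$, contradicting $z\in\hat{J}_{\omega,I}$; and it rules out non-constant $g$ by \cite[Lemma 2.13]{MR1827119}, which gives $\left(\{\rho\}\times\hat{J}_{\rho,I}\right)\cap P(\tilde{f})\neq\emptyset$ for $\rho=\lim_{k}\sigma^{n_{k}}(\omega)$, contradicting (\ref{bad-fibres}) because $\rho_{1}=i\neq j=\rho_{2}$. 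To repair your proof you should replace the expansion claim by this constant/non-constant dichotomy (your own constant-limit discussion would then need the inverse-branch argument, and the non-constant case genuinely needs an input such as \cite[Lemma 2.13]{MR1827119} together with (\ref{bad-fibres})); your Case~1 and the final assembly are fine as written.
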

\begin{proof}
We will  show  that $\hat{J}_{\omega, I}=J_{\omega}$, for each $\omega \in I^\N$. Suppose for a contradiction that there exists $\omega \in I^\N$ and $z\in  \hat{J}_{\omega, I} \setminus J_{\omega}$. Since we have  $\hat{J}_{i^\infty,I}=J_{i^\infty}$, for each $i\in I$ by Lemma \ref{lem:complement-of-prejulia-osc} applied to $I_1:=\{i\}$, we  conclude that  there exist $i,j \in I$ with $i\neq j$ and a sequence $(n_k)\in \N^\N$ tending to infinity, such that $\omega_{n_k + 1}=i$ and $\omega_{n_k + 2}=j$. We may assume that there exists $g:V \rightarrow \Chat$ in a neighborhood $V$ of $z$, such that $f_{\omega|_{n_k}}\rightrightarrows g$ on $V$. We show that $g$ is non-constant. Otherwise, similarly as in the proof of Lemma \ref{lem:complement-of-prejulia-osc}  (Case 1), we can show that $g(z) \in P(G)$, which then implies that  $\lim_k f_{\omega_{| n_k + 2}}(z) = f_j f_i g(z) \in  F(G)$. This contradicts that $z \in \hat{J}_{\omega, I}$. We have thus shown that $g$ is non-constant. We may assume that there exists $\rho \in I^\N$ such that $\lim_k \sigma^{n_k}(\omega)=\rho$. Clearly, we have  $\rho_1 =i$ and $\rho_2 =j$.  Now, it follows from \cite[Lemma 2.13]{MR1827119} that $(\{\rho\}\times \hat{J}_{\rho,I}) \cap P(\tilde{f}) \neq \emptyset$, which contradicts (\ref{bad-fibres}).
\end{proof}

In order to state the main result of this section, let us introduce
regularity of the pressure function  associated to  rational semigroups. We adapt the definitions from  \cite[p.78]{MR2003772} in the context of graph directed Markov systems.
\begin{defn}
Let $I$ be a finite or countable set, let $\left(f_{i}\right)_{i\in I}\in\Rat^{I}$
and let $\tilde{f}:J\left(\tilde{f}\right)\rightarrow J\left(\tilde{f}\right)$
be the associated skew product. Suppose that $\langle f_{i}:i\in I\rangle $ is nicely expanding  and denote by $\mathcal{P}(t)$ the pressure function of the system $\{f_i: i\in I \}$ for $t\in \R$.  We say that $\left\{ f_{i}:i\in I\right\} $
is regular if there exists $t\ge0$ such that $\mathcal{P}(t)=0$.
Otherwise, we say that $\left\{ f_{i}:i\in I\right\} $ is irregular.
If $\left\{ f_{i}:i\in I\right\} $ is regular and if there exists
$u\in\R$ such that $0<\mathcal{P}(u)<\infty$, 
then $\left\{ f_{i}:i\in I\right\} $ is called strongly regular,
if $\left\{ f_{i}:i\in I\right\} $ is regular and no such $u\in\R$
exists, then $\left\{ f_{i}:i\in I\right\} $ is called critically
regular. Moreover, we set $\Theta(I):=\inf\{ \beta \in \R : \sup\{ Z_1 (I,\beta,x)   : x\in J(\langle f_i: i\in I  \rangle) \}< \infty \}$. \end{defn}
\begin{thm}
\label{thm:inducing-prejulia}Suppose that  $G=\left\langle f_{i}:i\in I\right\rangle $ has an inducing structure with respect to $\{ I_{1},I_{2}\}$ and that there exists a $G$-forward invariant compact set $L_{0}\subset F\left(G\right)$.  Let $H_{0},H,G_{1}$ be as in Definition \ref{def-setting-a}. 
Let $H_{0}$ be endowed with the discrete topology.
Then, we have the following.
\begin{enumerate}
\item \label{enu:inducingthm-prejuliaset-decomposition}$\Jpre\left(G\right)\setminus\Jpre\left(H\right)= \big( \bigcup_{g\in G\cup\left\{ \id\right\} }g^{-1}\left(\Jpre\left(G_{1}\right)\right)\big) \setminus \Jpre(H)$. 
\item \label{enu:inducingthm-criticalexponents}
If $I$ is countable, then $H$ is nicely expanding and $\dim_{H}\left(\Jpre\left(H\right)\right) \le s(H)\le  t(H_0)= \inf \{ \beta \in \mathbb{R} : \mathcal{P}(\beta )<0\}$, where $\mathcal{P}$ denotes the pressure function of the system $\{ h: h\in H_0\}$. 
\item \label{enu:inducingthm-bowen-formula}If $I$ is  countable,  and if $\{f_i :i\in I \}$ satisfies the open set
condition, then we have  
$$s\left(G_{1}\right)=t(I_1) \le\Theta\left(H_{0}\right)\le t\left(H_{0}\right)=s\left(H\right)=s\left(G\right)=t(I)=\dim_{H}\left(\Jpre\left(H\right)\right)$$
and  
\[
\dim_{H}\left(\Jpre\left(G\right)\right)=\max\left\{ s\left(G\right),\dim_{H}\left(\Jpre\left(G_{1}\right)\right)\right\} .
\]
If moreover  $\card\left(I\right)<\infty$, then  we have 
\[
\dim_{H}\left(J\left(G\right)\right)=\max \{ s(G), \dim_H(J(G_1))\}.
\]

\item \label{enu:inducingthm-decoupling} If $\{f_i :i\in I \}$ satisfies the open set condition,  $\card\left(I\right)<\infty$,  $f_i$ is a polynomial for each $i\in I_1$, and if there exists a compact $G_1$-forward invariant subset $K\subset F(G_1)$, such that $f_j(P(f_i))\subset K$ for all $i,j\in I_1$ with $i \neq j$, then 
\[
\dim_{H}\left(J\left(G\right)\right)=\max\Big\{ s\left(G\right), \max_{i\in I_1} \{ \dim_{H}\left(J(f_i)\right)\} \Big\} .
\]
 \item   Suppose that  $I$ is  countable. Then we have all of the following.
\begin{enumerate}
\item \label{enu:inducingthm-a-1}$\Theta\left(H_{0}\right)=\inf\left\{ \beta:P\left(H_{0},\beta,x\right)<\infty\right\} $, 
for each $x\in J\left(H\right)$.
\item \label{enu:inducingthm-b-1}$H_{0}$ is strongly regular if $\Theta\left(H_{0}\right)<t(H_0)$. 
\item \label{enu:inducingthm-c-1}$H_{0}$ is critically regular or irregular
if $\Theta\left(H_{0}\right)=t(H_0)$. 
\end{enumerate}
\end{enumerate}
\end{thm}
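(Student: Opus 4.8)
The plan is to treat the five parts roughly in logical dependency order, starting from the structural results already assembled in Section~\ref{Inducing}. First, for part (\ref{enu:inducingthm-prejuliaset-decomposition}), I would unwind the definition of $H_0$: every $h \in H_0$ is of the form $f_i f_{j_1}\cdots f_{j_r}$ with $i \in I_2$ and the $j_k \in I_1$ (or just $f_i$, $i\in I_2$). Thus for $\gamma \in H_0^{\N}$, the composition $\gamma_n \circ \cdots \circ \gamma_1$ equals $f_{\omega|_m}$ for a word $\omega \in I^{\N}$ in which symbols from $I_2$ occur infinitely often; conversely a word $\omega \in I^{\N}$ with $I_2$-symbols occurring infinitely often factors through $H_0$. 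Hence $\Jpre(H) = \bigcup_{\gamma \in H_0^{\N}} J_\gamma = \bigcup\{ J_\omega : \omega \in I^{\N}, \ \omega_k \in I_2 \text{ infinitely often}\}$. The complement $\Jpre(G)\setminus\Jpre(H)$ then consists of points lying in some $J_\omega$ where $\omega$ eventually stays in $I_1$, i.e.\ $\omega = (\omega_1,\dots,\omega_\ell,\tau)$ with $\tau \in I_1^{\N}$; such a point $z$ satisfies $f_{\omega|_\ell}(z) \in J_\tau \subset \Jpre(G_1)$, so $z \in f_{\omega|_\ell}^{-1}(\Jpre(G_1)) \subset \bigcup_{g\in G\cup\{\id\}} g^{-1}(\Jpre(G_1))$. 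The reverse inclusion is similar, using $\Jpre(G) = \bigcup_{i\in I} f_i^{-1}(\Jpre(G))$ and the fact that points of $\Jpre(G_1)$ lie in $J_\tau$ for $\tau\in I_1^{\N}$; one must only note that removing $\Jpre(H)$ on both sides makes the identity exact.

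For part (\ref{enu:inducingthm-criticalexponents}): $H$ is nicely expanding by Lemma~\ref{lem:inducedsemigroup-is-nicelyexpanding}, and then the chain $\dim_H(\Jpre(H)) \le s(H) \le t(H_0) = \inf\{\beta : \mathcal{P}(\beta)<0\}$ is exactly Theorem~\ref{thm:bowen-for-prejulia}(2) applied to the nicely expanding semigroup $H = \langle h : h\in H_0\rangle$ with generator set $H_0$ (countable since $I$ is). For part (\ref{enu:inducingthm-bowen-formula}), the key first step is $s(H) = s(G)$. The inequality $s(H)\le s(G)$ is clear since $H \subset G$ and every $g\in G$ with a preimage of $x$ contributes to the Poincar\'e series of $G$ but the $H$-series sums over fewer maps; for $s(G)\le s(H)$ I would use that for each $g\in G$ there is $h\in I_2$ with $hg\in H$ (as in Lemma~\ref{lem:juliasetof-H-equals-G}), so $\|(hg)'(y)\| = \|h'(g(y))\|\cdot\|g'(y)\|$ and the extra factor $\|h'(\cdot)\|$ is bounded above and below on $J(G)$ by expandingness and compactness, giving comparability of the two Poincar\'e series up to the finite sum over $I_2$. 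Since $H_0$ satisfies the open set condition (Lemma~\ref{lem:inducedsemigroup-satisfies-osc}), Theorem~\ref{thm:bowen-for-prejulia}(3) upgrades the inequalities of part (2) to equalities: $\dim_H(\Jpre(H)) = s(H) = t(H_0) = \inf\{\beta : \mathcal{P}(\beta)<0\}$. Then $t(I) = s(G) = s(H)$ (the first equality by Theorem~\ref{i:thm:bowen-for-prejulia} applied to $G$, which one checks directly is nicely expanding here — actually one only needs the Poincar\'e-series comparison), and $s(G_1)=t(I_1)$ again by Theorem~\ref{thm:bowen-for-prejulia}(3) applied to $G_1$, noting $G_1$ inherits the open set condition and nice expansion. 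For the dimension formula for $\Jpre(G)$, apply countable stability of Hausdorff dimension to part (\ref{enu:inducingthm-prejuliaset-decomposition}): $\dim_H(\Jpre(G)) = \max\{\dim_H(\Jpre(H)), \sup_g \dim_H(g^{-1}(\Jpre(G_1)))\}$; since each $g$ is a finite-degree holomorphic map that is nonsingular near $J(G)$ (so locally bi-Lipschitz), $\dim_H(g^{-1}(\Jpre(G_1))) = \dim_H(\Jpre(G_1))$, and $\dim_H(\Jpre(H)) = s(H) = s(G)$ gives the claimed formula. For the statement on $J(G)$ when $\card(I)<\infty$, I would invoke Lemma~\ref{lem:complement-of-prejulia}, which writes $J(G) = \Jpre(H) \cup \bigcup_{g\in G} g^{-1}(J(G_1))$, and again apply countable stability together with $\dim_H(\Jpre(H)) = s(G)$ and the bi-Lipschitz invariance to reduce to $\max\{s(G), \dim_H(J(G_1))\}$. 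Part (\ref{enu:inducingthm-decoupling}) then follows by combining this with Lemma~\ref{polynomial-semigroup-prejulia} applied to $G_1$, which under the stated extra hypotheses gives $\dim_H(J(G_1)) = \dim_H(\Jpre(G_1)) \le \max\{s(G_1), \max_{i\in I_1}\dim_H(J(f_i))\}$, and one checks $s(G_1)\le s(G)$ while each $\dim_H(J(f_i)) \le \dim_H(J(G_1))$ gives the reverse, yielding equality.

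For part (5): (\ref{enu:inducingthm-a-1}) is the identity $\Theta(H_0) = \inf\{\beta : \sup_{x\in J(H)} Z_1(H_0,\beta,x)<\infty\} = \inf\{\beta : P(H_0,\beta,x)<\infty\}$, which is precisely the ``finite iff finite'' equivalence in Lemma~\ref{lem:bounded_distortion_lemma} (second assertion): $P(H_0,\beta,x)<\infty \iff \sup_y Z_1(H_0,\beta,y)<\infty$, valid for every $x\in J(H)$, and this set of $\beta$ is a half-line whose infimum is $\Theta(H_0)$ by definition. Parts (\ref{enu:inducingthm-b-1}) and (\ref{enu:inducingthm-c-1}) are then the standard regularity dichotomy: $\mathcal{P}(\beta) = P(H_0,\beta,x)$ is finite precisely for $\beta > \Theta(H_0)$ (and possibly at $\Theta(H_0)$), is strictly decreasing and convex on $\{\mathcal{P}<\infty\}$ by Proposition~\ref{prop:exhaustion_principle-etc}(\ref{enu:pressure-is-convex}) and Proposition~\ref{prop:critical-exponents}, and $\mathcal{P}(t(H_0))\le 0$; if $\Theta(H_0) < t(H_0)$ then $\mathcal{P}$ is finite and positive somewhere to the left of $t(H_0)$, i.e.\ there is $u$ with $0 < \mathcal{P}(u) < \infty$ and a zero of $\mathcal{P}$ exists by the intermediate value property of the convex function, so $H_0$ is strongly regular; if $\Theta(H_0) = t(H_0)$ then no such $u$ exists, so $H_0$ is either critically regular (if $\mathcal{P}(\Theta(H_0)) = 0$) or irregular. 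The main obstacle I anticipate is the Poincar\'e-series comparison $s(H)=s(G)$: one has to handle the multiplicities in the preimage sums carefully and control the derivative factors $\|h'\|$ uniformly, and also be careful that the open set condition for $\{f_i : i\in I\}$ genuinely transfers to the (infinite) generator system $H_0$ of $H$ — but that is exactly Lemma~\ref{lem:inducedsemigroup-satisfies-osc}, so with these two points handled the rest is assembly of already-proved results.
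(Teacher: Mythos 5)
Your overall architecture matches the paper's (structural lemmas, Bowen's formula for $H$, the decomposition of $\Jpre(G)$, $\sigma$-stability, Lemmas \ref{lem:complement-of-prejulia} and \ref{polynomial-semigroup-prejulia}), but there is a genuine gap at the heart of part (\ref{enu:inducingthm-prejuliaset-decomposition}). You treat the identity $\Jpre(H)=\bigcup\{J_{\omega}:\omega\in I^{\N},\ \omega_{k}\in I_{2}\text{ infinitely often}\}$ as bookkeeping, but only the inclusion $J_{\gamma}\subset J_{\omega}$ is trivial: if $\gamma\in H_{0}^{\N}$ is the regrouped coding of $\omega$, then $F_{\gamma}$ is defined by normality of the \emph{subsequence} $(f_{\omega|_{n_{k}}})_{k}$, and normality of a subsequence does not imply normality of the full sequence $(f_{\omega|_{n}})_{n}$; hence $J_{\omega}\subset J_{\gamma}$ (which is exactly what you need to conclude that every point of $\Jpre(G)\setminus\Jpre(H)$ comes from a word eventually in $I_{1}$) requires proof. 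This is precisely where the hypothesis of a $G$-forward invariant compact set $L_{0}\subset F(G)$ enters, and your proposal never uses $L_{0}$ at all. The paper's argument takes a $G$-forward invariant neighbourhood $V$ of $L_{0}$ relatively compact in $F(G)$; if $z\in J_{\omega}\cap F_{\gamma}$, then $f_{\omega|_{n_{k}}}(z)\in J(G)$ stays away from $V$, so normality along the subsequence forces $f_{\omega|_{n_{k}}}(B(z,\delta))\cap V=\emptyset$ for all $k$, and $G$-forward invariance of $V$ propagates this to all $n$, whence Montel gives $z\in F_{\omega}$, a contradiction. Without this step, (\ref{enu:inducingthm-prejuliaset-decomposition}) and therefore the dimension formulas in (\ref{enu:inducingthm-bowen-formula}) and (\ref{enu:inducingthm-decoupling}) are unproven.

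In part (\ref{enu:inducingthm-bowen-formula}) there are further problems. First, you never address $t(I_{1})\le\Theta(H_{0})\le t(H_{0})$, which is part of the statement (the paper gets the first by factoring $Z_{1}(H_{0},\alpha,x)$ over the $G_{1}$-tails, the second from Lemma \ref{lem:bounded_distortion_lemma}). Second, your justifications of $s(G_{1})=t(I_{1})$ and $t(I)=s(G)$ appeal to nice expandingness of $G_{1}$ resp.\ $G$; neither is nicely expanding in the intended applications (e.g.\ $f_{1}$ with a Siegel disc), and the correct reason is that the open set condition makes these semigroups free, so Lemma \ref{lem:critical-exponents} (\ref{enu:criticalexponents-freesemigroup-2}) applies. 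Third, your route to $s(G)\le s(H)$ via the injection $g\mapsto hg$, $h=f_{i_{0}}$ with $i_{0}\in I_{2}$, is genuinely different from the paper's splitting of the Poincar\'e series into pure $I_{1}$ words, $H$-words and $G_{1}\circ H$ words (controlled by the bounded distortion lemma for $H_{0}$), and it can probably be made to work; but as written it is off: since $g(y)=x$ the extra factor is the single number $\|h'(x)\|$ (no uniform bound over $J(G)$ is relevant), and $g^{-1}(x)\subset(hg)^{-1}(h(x))$, so you compare the $G$-series at $x$ with the $H$-series at $h(x)$, a change of base point you must still justify (e.g.\ via constancy of $s(H,\cdot)$ near $J(H)$, which uses that $H$ is nicely expanding); also $I_{2}$ may be infinite, so ``the finite sum over $I_{2}$'' is unavailable, though a single $h$ suffices. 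Finally, elements of $G$ may have critical points on $J(G)$, so ``nonsingular near $J(G)$'' is false; the dimension estimate should be run through local holomorphic inverse branches defined off the critical values, as in the paper. Parts (\ref{enu:inducingthm-criticalexponents}), (\ref{enu:inducingthm-decoupling}) and (5) of your sketch are in line with the paper.
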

\begin{proof}
To prove the assertion in (\ref{enu:inducingthm-prejuliaset-decomposition}),
we first  verify that $J_{\gamma}=J_{\omega}$, for all $\gamma\in H_{0}^{\N}$ and $\omega\in I^{\N}$, 
for which there exists a sequence $\left(n_{k}\right) \in\N^{\N}$
tending to infinity,  such that $\gamma_{k}\circ\gamma_{k-1}\circ\dots\circ\gamma_{1}=f_{\omega|_{n_{k}}}$, 
for each $k\in\N$.  Since the inclusion
$J_{\gamma}\subset J_{\omega}$ is obviously true, we only address
the opposite  inclusion. Since $L_{0}$ is a compact, $G$-forward invariant
subset of $F\left(G\right)$, there exists a forward $G$-invariant neighborhood
$V$ of $L_0$, such that $\overline{V}\subset F\left(G\right)$. Now,
suppose by way of contradiction that there exists $z\in J_{\omega}\cap F_{\gamma}$.
Since $z\in J_{\omega}$ we have $f_{\omega|_{n_{k}}}(z) \in J\left(G\right)$ for each $k\in \N$.
Since $V$ is a relatively  compact subset of $F\left(G\right)$,  there exists $\epsilon >0$  such that  $d\big(V,f_{\omega|_{n_{k}}}(z)\big)>\epsilon$, for each $k\in \N$.
Combining this with our assumption that $z\in F_{\gamma}$, we obtain
that there exists $\delta>0$ such that $\gamma_{k}\circ\gamma_{k-1}\circ\dots\circ\gamma_{1}\left(B\left(z,\delta\right)\right)\cap V=\emptyset$,
for all $k\in\N$. Since $V$ is $G$-forward invariant, we conclude that
$f_{\omega|_{n}}\left(B\left(z,\delta\right)\right)\cap V=\emptyset$,
for all $n\in\N$, which implies that $z\in F_{\omega}.$ This contradiction
finishes the proof of $J_{\gamma}=J_{\omega}$.    We now let $z\in \Jpre(G)\setminus \Jpre(H).$ 
Then there exists $\omega \in I^\N$ such that 
$z\in J_{\omega }.$ 
Suppose $\card(\{ k\in I:\omega _{k}\in I_{2}\})=\infty .$ 
Then there exist $\gamma \in H_{0}^\N$ and a sequence 
$(n_{k})\in \N^\N$ tending to infinity such that 
$\gamma _{k}\circ \cdots \circ \gamma _{1}=f_{\omega |_{n_{k}}}$ 
for each $k.$ By the above observation, we have 
$z\in J_{\omega }=J_{\gamma }\subset \Jpre(H).$ However 
this is a contradiction. Hence 
$\card(\{ k\in I: \omega _{k}\in I_{2}\})<\infty $ 
and $z\in \cup _{g\in G\cup \{ \id\} }g^{-1}(\Jpre(G_{1})).$ 
Thus the assertion in (1) holds.

The assertion in (\ref{enu:inducingthm-criticalexponents}) follows from Lemma \ref{lem:inducedsemigroup-is-nicelyexpanding}  and Theorem \ref{thm:bowen-for-prejulia} (2).  To prove (\ref{enu:inducingthm-bowen-formula}), first observe that, since $ \left\langle f_{i}:i\in I\right\rangle$  satisfies the open set condition, we have that  $H_0$ satisfies the open set condition by Lemma \ref{lem:inducedsemigroup-satisfies-osc}. Hence,    $G=\left\langle f_{i}:i\in I\right\rangle $, $H=\langle H_0 \rangle $ and $G_1= \left\langle f_{i}:i\in I_1\right\rangle $ are  free semigroups.  In particular, we have 
$s\left(G_{1}\right)=t(I_1)$, $t\left(H_{0}\right)=s\left(H\right)$ and $s\left(G\right)=t(I)$ by Lemma \ref{lem:critical-exponents} (\ref{enu:criticalexponents-freesemigroup-2}). To prove  $t(I_1)\le\Theta\left(H_{0}\right)$, we first note that, for each $\alpha \ge 0$, $j\in I_2$, $x\in \Chat$ and   $y\in f_j^{-1}\left(x\right)$,   
\[
\left\Vert f_j'\left(y\right)\right\Vert ^{-\alpha}  \sum_{g\in G_{1}}\sum_{z\in g^{-1}\left(y\right)}\left\Vert g'\left(z\right)\right\Vert ^{-\alpha}\le Z_{1}\left(H_{0}, \alpha,x\right).
\]
For each $x\in J(G_2)$ and $y\in f_j^{-1}\left(x\right)$ we have $\Vert f_j'\left(y\right)\Vert\neq 0$. Hence, we have 
\begin{equation} \label{G1-includedin-H}
\sum_{g \in G_1}\sum_{z\in g^{-1}\left(y\right)}\left\Vert g'\left(z\right)\right\Vert ^{-\alpha}\le \left\Vert f_j'\left(y\right)\right\Vert ^{\alpha}  Z_1(H_0,\alpha,x).
\end{equation}
By the   definition of $\Theta\left(H_{0}\right)$, we have $Z_{1}\left(H_{0}, \alpha ,x\right)<\infty$, for each  $x\in J(G_2)\subset J(H)$,  $\epsilon>0$ and   $\alpha= \Theta(H_0) +\epsilon$.  Hence,   we have  $\sum_{g \in G_1}\sum_{z\in g^{-1}\left(y\right)}\left\Vert g'\left(z\right)\right\Vert ^{-\alpha}<\infty$ by (\ref{G1-includedin-H}). 
We have thus shown that $t\left(I_{1}\right)\le t\left(I_{1},y\right)\le\Theta\left(H_{0}\right)$. In order to verify $\Theta\left(H_{0}\right)\le t\left(H_{0}\right)$,
recall that since $H$ is nicely expanding, we have for each $x\in J\left(H\right)$
that $t\left(H_{0}\right)=t\left(H_{0},x\right)$ by Lemma \ref{lem:critical-exponents}
(\ref{enu:criticalexponents-hyperbolic-independence-ofx-in-julia-3}).
Consequently, for each $\epsilon>0$ and for each $x\in J\left(H\right)$,
we have that $\sum_{n\in\N}Z_{n}\left(H_{0},t\left(H_{0}\right)+\epsilon,x\right)<\infty$.
In particular,  we have $P\left(H_{0},t\left(H_{0}\right)+\epsilon,x\right)\le0<\infty$.
By Lemma \ref{lem:bounded_distortion_lemma} we conclude that $\sup_{y\in J\left(H\right)}Z_{1}\left(H_{0},t\left(H_{0}\right)+\epsilon,y\right)<\infty$,
which proves $\Theta\left(H_{0}\right)\le t\left(H_{0}\right)+\epsilon$.
Therefore, $\Theta\left(H_{0}\right)\le t\left(H_{0}\right)$.  We now prove $s\left(H\right)=s\left(G\right)$.
It is easy to see that $s(G)\ge s(H)$. In order to show the opposite inequality, let $t > s(H)$.  Then by the Claim in the proof of Lemma \ref{lem:critical-exponents} and Lemma \ref{lem:juliasetof-H-equals-G}, there exists a point $x_0\in J(H)=J(G)$ such that $\sum_{n\in \N}Z_n(H_0,t,x_0)<\infty$. Let $h\in \{ f_i :i\in I_2\}$ and let $x_1 \in h^{-1}(x_0)\subset J(G)=J(H)$.  Then $\sum_{n\in \N}Z_n(I_1,t,x_1)<\infty$.  Moreover, we have  $\sum_{n\in \N}Z_n(H_0, t,x_1)<\infty$ by  the Claim in the proof of Lemma \ref{lem:critical-exponents} and Lemma \ref{lem:juliasetof-H-equals-G}. Furthermore, by Lemma \ref{lem:bounded_distortion_lemma}, there exists a constant $C'>1$ such that $Z_n(H_0,t,x)\le C' Z_n(H_0,t,x_0)$, for each $x\in J(G)$. Therefore,  we have 
\begin{equation} \label{poincareseries1}
\sum_{n\in \N}Z_n(I,t,x_1)=\sum_{n\in \N}Z_n(I_1,t,x_1)+\sum_{n\in \N}Z_n(H_0,t,x_1)+\sum_{g\in G_1}\sum_{a\in g^{-1}(x_1)} \Vert g'(a)\Vert^{-t} \sum_{n\in \N}Z_n(H_0,t,a)<\infty.
\end{equation}
Thus, we have $t>s(G)$, which finishes the proof of $s(G)=s(H)$. That $t(H_0)$ is equal to $\dim_{H}\left(\Jpre\left(H\right)\right)$ follows from  Theorem \ref{thm:bowen-for-prejulia} because  $H$ is nicely expanding by Lemma \ref{lem:inducedsemigroup-is-nicelyexpanding}
and  $H_0$ satisfies the open set condition by Lemma \ref{lem:inducedsemigroup-satisfies-osc}. Combining this with the fact that the  Hausdorff-dimension is $\sigma$-stable
and that Lipschitz continuous maps do not increase Hausdorff-dimension
(we can apply this fact to holomorphic inverse branches of the elements
of $G$ defined locally in the complement of the critical values),
we obtain that  $\dim_{H}\left(\Jpre\left(G\right)\right)=\max \{ s(G), \dim_H(\Jpre(G_1))\}$ by (\ref{enu:inducingthm-prejuliaset-decomposition}).
Finally, if $\card(I)<\infty$, then we have  $\dim_{H}\left(J\left(G\right)\right)=\max \{ s(G), \dim_H(J(G_1))\}$ by  Lemma \ref{lem:complement-of-prejulia}. 

The assertion in (\ref{enu:inducingthm-decoupling}) follows from (\ref{enu:inducingthm-bowen-formula}) and  Lemma \ref{polynomial-semigroup-prejulia}.  Finally, (\ref{enu:inducingthm-a-1})
follows from Lemma \ref{lem:bounded_distortion_lemma} and  the statements in  (\ref{enu:inducingthm-b-1})
and (\ref{enu:inducingthm-c-1}) are derived  from (\ref{enu:inducingthm-a-1})
and  Proposition \ref{prop:critical-exponents}.  The proof is complete.
\end{proof}

\subsection{Special cases of polynomial semigroups}
In this section, we provide a class of polynomial semigroups which
have an   inducing structure.  Moreover, we can prove further refinements of our main result. 
\vspace{-5mm}
\selectlanguage{english}%
\begin{defn}
[PB-OSC] We say that $G=\left\langle f_{1},f_{2}\right\rangle $ (or the generator system $\{ f_1, f_2 \}$)  satisfies PB-OSC if $f_{1}$ and $f_{2}$ are polynomials of
degree at least two, such that each of the following holds. 
\selectlanguage{british}%
\begin{enumerate}
\item \label{enu:postcritically-bounded}$P(G) \setminus\left\{ \infty\right\} $
is a bounded subset of $\C$.
\item \label{enu:Kg1-in-interiorKg2}$K\left(f_{1}\right)\subset\Int K\left(f_{2}\right)$
\item \label{enu:OSC}$\{ f_1, f_2 \}$ satisfies the  open set condition with the open set  $(\Int K\left(f_{2}\right))\setminus K\left(f_{1}\right)$

\item \label{enu:.CVg2-in-interiorKg1}$\CV\left(f_{2}\right)\setminus\left\{ \infty\right\} \subset\Int K\left(f_{1}\right)$.
\end{enumerate}
\end{defn}
\selectlanguage{british}%
\vspace{-2mm}
We will frequently make use of the following facts for  $G=\left\langle f_{1},f_{2}\right\rangle $ satisfying PB-OSC.   For $\omega \in \{1,2\}^\N$, we set $K_{\omega}=\big\{ z\in\C:\big(f_{\omega|_{n}}\left(z\right)\big)_{n\in\N}\mbox{ is bounded}\big\} $.

By \cite[Lemma 3.6]{MR2736899} it follows from (\ref{enu:postcritically-bounded})
that $J_{\omega}$ is connected for each $\omega\in\left\{ 1,2\right\} ^{\N}$.
We also have that the corresponding filled Julia set $K_{\omega}$
is connected. Moreover, we have that $\Chat\setminus K_{\omega}$
is a connected component of $\Chat\setminus J_{\omega}$ and that
$\Chat\setminus K_{\omega}$ is the basin of attraction of infinity
of $\big(g_{\omega|_{n}}\big)$. By  (\ref{enu:postcritically-bounded}), (\ref{enu:Kg1-in-interiorKg2}) and (\ref{enu:OSC})
we have that $J_{2,1,1,\dots}=f_{2}^{-1}\left(J\left(f_{1}\right)\right)$
and $J\left(f_{1}\right)$ are disjoint. So, by \cite[Lemma 3.9]{MR2736899}
we have that either $f_{2}^{-1}\left(J\left(f_{1}\right)\right)$
surrounds $J\left(f_{1}\right)$ or that $J\left(f_{1}\right)$ surrounds
$f_{2}^{-1}\left(J\left(f_{1}\right)\right)$, where, for two compact
connected subsets $K_{1}$ and $K_{2}$ of $\C$, we say that $K_{1}$
surrounds $K_{2}$ if $K_{2}$ is included in a bounded component
of $\C\setminus K_{1}$. The following argument shows that $J\left(f_{1}\right)$
does not surround $f_{2}^{-1}\left(J\left(f_{1}\right)\right)$: Otherwise,
we have $f_{2}^{-1}\left(J\left(f_{1}\right)\right)\subset\Int K\left(f_{1}\right)$,
which implies that $f_{2}^{-1}\left(K\left(f_{1}\right)\right)\subset K\left(f_{1}\right)$
(here we use that $f_{2}^{-1}\left(K\left(f_{1}\right)\right)$ containing
$f_{2}^{-1}\left(J\left(f_{1}\right)\right)$ is connected and that
$\Chat\setminus K\left(f_{1}\right)$ is a connected component of
$\Chat\setminus J\left(f_{1}\right)$). However, $f_{2}^{-1}\left(K\left(f_{1}\right)\right)\subset K\left(f_{1}\right)$
implies that $J\left(f_{2}\right)\subset K\left(f_{1}\right)$ contradicting
(\ref{enu:Kg1-in-interiorKg2}). We have thus shown that $f_{2}^{-1}\left(J\left(f_{1}\right)\right)$
surrounds  $J\left(f_{1}\right)$. Consequently, we have that $f_{2}\left(J\left(f_{1}\right)\right)\subset K\left(f_{1}\right)$,
so $f_{2}\left(K\left(f_{1}\right)\right)\subset K\left(f_{1}\right)$.
Since $f_{2}\left(J\left(f_{1}\right)\right)\cap J\left(f_{1}\right)=\emptyset$, it follows that $f_{2}\left(K\left(f_{1}\right)\right)\subset\Int K\left(f_{1}\right)$.
Combining with the fact that $f_{1}\left(\Int K\left(f_{1}\right)\right)\subset\Int K\left(f_{1}\right)$,
we obtain that $\Int K\left(f_{1}\right)\subset F\left(G\right)$
by Montel's Theorem. We have thus shown that 
$f_{2}\left(K\left(f_{1}\right)\right)\subset\Int K\left(f_{1}\right)\subset F\left(G\right)$.

Our first observation is that PB-OSC implies the existence of an inducing structure.  
\begin{lem}
\label{PB-OSC-inducing}
If $G=\left\langle f_{1},f_{2}\right\rangle $ satisfies PB-OSC, then 
$\{ f_1, f_2 \}$ has an inducing structure with  respect to $I_1=\{ 1\}$ and $I_2=\{ 2\}$.  Moreover, there exists a $G$-forward
invariant compact subset of $F\left(G\right)$. Furthermore, we have 
$J\left(G\right)=\Jpre\left(G\right)$.
\end{lem}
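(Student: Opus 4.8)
The plan is to verify the three assertions in turn, drawing on the standing facts about PB-OSC that were recorded just before the statement. For the \emph{inducing structure} with $I_1=\{1\}$, $I_2=\{2\}$, the only nontrivial requirement (beyond $\deg(g)\ge 2$ for all $g\in G$, which is immediate since $f_1,f_2$ are polynomials of degree $\ge 2$) is the existence of an $H$-forward invariant non-empty compact set $L\subset F(H)$ with $P(G_2)\subset L$ and $f_2(P(G_1))\subset L$. Here $G_1=\langle f_1\rangle$, $G_2=\langle f_2\rangle$, and $H=\langle H_0\rangle$ with $H_0=\{f_2\}\cup\{f_2 f_1^r:r\in\N\}$. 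Since $P(G_1)=P(f_1)$ and $P(G_2)=P(f_2)$, and by (\ref{enu:postcritically-bounded}) both are contained in $K(f_1)\cup\{\infty\}$ after noting $P(G)\setminus\{\infty\}$ is bounded; more precisely $\CV(f_2)\setminus\{\infty\}\subset\Int K(f_1)$ by (\ref{enu:.CVg2-in-interiorKg1}), and the standing facts give $f_2(K(f_1))\subset\Int K(f_1)$, so $P(f_2)\setminus\{\infty\}\subset\Int K(f_1)$ and likewise $P(f_1)\subset K(f_1)$ (filled Julia set, $f_1$ postcritically bounded). I would set $L:=\overline{K(f_1)}$ (which is compact in $\C$ since $K(f_1)$ is bounded, using the standing fact $K(f_1)\subset\Int K(f_2)$ with $K(f_2)$ bounded, or more directly $P(G)\setminus\{\infty\}$ bounded together with connectedness of $K(f_1)$). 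Then $P(G_2)=P(f_2)\subset\Int K(f_1)\subset L$, and $f_2(P(G_1))=f_2(P(f_1))\subset f_2(K(f_1))\subset\Int K(f_1)\subset L$. For $H$-forward invariance: every $h\in H_0$ has the form $f_2 f_1^r$ (or $f_2$), and $f_1(K(f_1))\subset K(f_1)$ while $f_2(K(f_1))\subset\Int K(f_1)\subset K(f_1)$, so $h(L)=f_2 f_1^r(K(f_1))\subset f_2(K(f_1))\subset\Int K(f_1)\subset L$; iterating, $g(L)\subset L$ for all $g\in H$. Finally $L\subset F(H)$: since $\Int K(f_1)\subset F(G)\subset F(H)$ (the standing facts show $\Int K(f_1)\subset F(G)$ via Montel using $f_1(\Int K(f_1))\subset\Int K(f_1)$ and $f_2(K(f_1))\subset\Int K(f_1)$, and $F(G)\subset F(H)$ because $H\subset G$ is a subsemigroup, hence the family $H$ is normal wherever $G$ is), and each point of $\partial K(f_1)=J(f_1)$ has a neighborhood mapped by every $h\in H$ into the relatively compact set $\Int K(f_1)$ which misses $J(G)$, so $H$ is normal there by Montel; thus $L\subset F(H)$.

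For the second assertion — existence of a $G$-forward invariant compact subset of $F(G)$ — I would take the same set $L=\overline{K(f_1)}$. By the standing facts $f_1(K(f_1))\subset K(f_1)$ and $f_2(K(f_1))\subset\Int K(f_1)\subset K(f_1)$, hence $g(K(f_1))\subset K(f_1)$ for every $g\in G$, giving $G$-forward invariance; and $\Int K(f_1)\subset F(G)$ together with the Montel argument at $J(f_1)=\partial K(f_1)$ (whose neighborhoods are eventually mapped into $\Int K(f_1)$, a compact subset of $F(G)$ avoiding $J(G)$) yields $L\subset F(G)$. This also supplies the hypothesis $L_0\subset F(G)$ needed to invoke Theorem \ref{thm:inducing-prejulia}.

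For the third assertion, $J(G)=\Jpre(G)$, I would apply Lemma \ref{polynomial-semigroup-prejulia} to the two-element generator system $\{f_1,f_2\}$. Its hypotheses are: the open set condition for $\{f_1,f_2\}$, which is (\ref{enu:OSC}) with open set $(\Int K(f_2))\setminus K(f_1)$; that $f_1,f_2$ are polynomials of degree $\ge 2$, which holds by definition of PB-OSC; and the ``decoupling'' condition (\ref{enu:decoupling}) of Lemma \ref{decoupling-lemma}, namely a compact $G$-forward invariant $K\subset F(G)$ with $f_j(P(f_i))\subset K$ for $i\ne j$. Take $K:=\overline{K(f_1)}$ again. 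I need $f_2(P(f_1))\subset K$ and $f_1(P(f_2))\subset K$. The first is $f_2(P(f_1))\subset f_2(K(f_1))\subset\Int K(f_1)\subset K$, as above. For the second, $P(f_2)\setminus\{\infty\}\subset\Int K(f_1)$ (standing facts: $\CV(f_2)\setminus\{\infty\}\subset\Int K(f_1)$ and $f_2(K(f_1))\subset\Int K(f_1)$, whence the forward $f_2$-orbit of the critical values stays in $\Int K(f_1)$), so $f_1(P(f_2))\subset f_1(\Int K(f_1))\subset\Int K(f_1)\subset K$; here I use that $\infty$ is a superattracting fixed point of $f_1$ lying in $F(G)$ so it causes no trouble, or simply that $P(G)\setminus\{\infty\}$ bounded lets us work in $\C$. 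With all hypotheses of Lemma \ref{decoupling-lemma} and hence Lemma \ref{polynomial-semigroup-prejulia} verified, the latter gives $\Jpre(G)=J(G)$, completing the proof.

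The main obstacle I anticipate is the careful bookkeeping around the point at infinity and around which filled Julia set to use for $L$: one must consistently choose $L$ to be compact \emph{in $\C$} (legitimate because $K(f_1)$ is bounded, being contained in $\Int K(f_2)$ with $P(G)\setminus\{\infty\}$ bounded), verify $\Int K(f_1)\subset F(G)$ and $J(f_1)\subset F(H)$ honestly via Montel's theorem rather than merely invoking it, and make sure the decoupling set $K$ in the application of Lemma \ref{polynomial-semigroup-prejulia} simultaneously satisfies $G$-forward invariance, $K\subset F(G)$, and the two containments $f_j(P(f_i))\subset K$. Each individual check is short given the standing facts, but they must be assembled with the right set in each role.
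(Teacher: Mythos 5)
Your choice $L:=K(f_{1})$ does not work, and the error is not a bookkeeping issue at $\infty$ but a genuine one: $K(f_{1})$ is \emph{not} contained in $F(H)$, nor in $F(G)$. Indeed $F(G)\subset F(f_{1})$ gives $J(f_{1})\subset J(G)$, and $J(H)=J(G)$ (apply Lemma \ref{lem:subsemi-has-samejuliaset} with $\Gamma=G$, $\Gamma_{0}=\langle f_{2}\rangle$, $\Omega=H$, or Lemma \ref{lem:juliasetof-H-equals-G}); hence $\partial K(f_{1})=J(f_{1})\subset J(G)=J(H)$. Your Montel argument at points of $J(f_{1})$ is false: for a fixed neighbourhood $U$ of $z_{0}\in J(f_{1})$ the iterates $f_{1}^{r}(U)$ blow up as $r\to\infty$ (eventually covering $\Chat$ up to at most two points), so it is not true that every $h=f_{2}f_{1}^{r}\in H_{0}$ maps $U$ into $\Int K(f_{1})$. (A smaller slip: $\infty\in P(G_{1})\cap P(G_{2})$, so $P(G_{2})\subset K(f_{1})$ and $f_{2}(P(G_{1}))\subset K(f_{1})$ already fail at $\infty$.) The paper instead takes $L:=f_{2}(K(f_{1}))\cup\CV(f_{2})$, which by the standing facts preceding the lemma is a compact subset of $\Int K(f_{1})\cup\{\infty\}\subset F(G)\subset F(H)$, is $H$-forward invariant, and contains $P(G_{2})$ and $f_{2}(P(G_{1}))$; for the second assertion it simply uses the $G$-forward invariant set $\{\infty\}\subset F(G)$.

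The third assertion is also wrong in approach, not merely in the choice of $K$: the decoupling hypothesis of Lemma \ref{decoupling-lemma} (hence of Lemma \ref{polynomial-semigroup-prejulia}), namely a $G$-forward invariant compact $K\subset F(G)$ with $f_{1}(P(f_{2}))\subset K$, can genuinely fail under PB-OSC. For instance with $f_{1}(z)=z^{2}+\tfrac14$ (as in Figure \ref{fig:caulicircle2}), one has $f_{1}(P(f_{2}))\setminus\{\infty\}\subset\Int K(f_{1})$, which is the parabolic basin of $f_{1}$; the $f_{1}$-orbit of any such point accumulates at the parabolic fixed point $\tfrac12\in J(f_{1})\subset J(G)$, so every $G$-forward invariant compact set containing $f_{1}(P(f_{2}))$ meets $J(G)$. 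Since PB-OSC is designed precisely to allow non-hyperbolic $f_{1}$, this route cannot give $J(G)=\Jpre(G)$ in general. The paper instead derives it from Lemma \ref{lem:complement-of-prejulia} (using the inducing structure, the open set condition, $\card(I)<\infty$, and a $G_{1}$-forward invariant compact subset of $F(G)$ such as $\{\infty\}$), which yields $J(G)\subset\Jpre(H)\cup\bigcup_{g\in G}g^{-1}(J(f_{1}))\subset\Jpre(G)$, the reverse inclusion being automatic.
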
 \vspace{-5mm}
\begin{proof}
We verify that $G=\left\langle f_{1},f_{2}\right\rangle $ has an inducing structure.  Let $G_{i}=\langle f_{i}\rangle $,  for $i=1,2$,   
let $H_{0}:=\{ f_{2}\} \cup \{ f_{2}\circ f_{1}^{r}: r\in \N \}$ and let 
$H=\langle H_{0}\rangle $. Set $L:=f_{2}\left(K\left(f_{1}\right)\right)\cup\CV\left(f_{2}\right)$
and note that we have shown above that $L$ is an  $H$-forward invariant
compact subset of $F\left(G\right)$. Moreover,
we have shown that $P\left(H\right)\subset L$, which implies that
$P\left(G_{2}\right)\subset P\left(H\right)\subset L$. Furthermore,
we have  that $\CV\left(f_{1}\right)\setminus\left\{ \infty\right\} \subset K\left(f_{1}\right)$.
Hence, $f_{2}\left(\overline{\bigcup_{n\in\N}f_{1}^{n}\left(\CV\left(f_{1}\right)\setminus\left\{ \infty\right\} \right)}\right)\subset f_{2}\left(K\left(f_{1}\right)\right)\subset L$.
Thus, $f_2\left(P\left(G_{1}\right)\right)\subset L$. We have thus
shown that $G$ has  an inducing structure.  To finish the proof, note that since $G$ is a finitely generated
polynomial semigroup, we have  $\infty \in F(G)$ and that  $\{ \infty\}$ is $G$-forward invariant.  Consequently, by Lemma \ref{lem:complement-of-prejulia}, we have 
\[J(G)\subset \Jpre(H)\cup \bigcup_{g\in G}g^{-1}(J(f_1)) \subset \Jpre(G).
\]
\end{proof} \vspace{-3mm}
The main result of this section is the following corollary of Theorem
\ref{thm:inducing-prejulia}.
 \vspace{-1mm}
\begin{cor}
\label{c:B1main}
If  $G=\left\langle f_{1},f_{2}\right\rangle $ satisfies PB-OSC, then we have 
$\dim_{H}\left(J\left(G\right)\right)=\max \{ s\left(G\right), \dim _{H}(J(f_{1}))\}$. Moreover, all  assertions in (1)(2)(3)(5) of Theorem  \ref{thm:inducing-prejulia} hold, where 
$I_{1}=\{ 1\}, I_{2}=\{ 2\} .$
\end{cor}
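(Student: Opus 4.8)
The plan is to deduce the corollary from Theorem~\ref{thm:inducing-prejulia}, applied with $I_1=\{1\}$ and $I_2=\{2\}$, once the hypotheses of that theorem are checked; all of the geometric work needed for this check has already been carried out in Lemma~\ref{PB-OSC-inducing}. First I would invoke Lemma~\ref{PB-OSC-inducing}: since $G=\langle f_1,f_2\rangle$ satisfies PB-OSC, the system $\{f_1,f_2\}$ has an inducing structure with respect to $\{I_1,I_2\}$, where $H_0=\{f_2\}\cup\{f_2\circ f_1^{r}:r\in\N\}$, $H=\langle H_0\rangle$, $G_1=\langle f_1\rangle$ and $G_2=\langle f_2\rangle$ are the objects of Definition~\ref{def-setting-a}; the same lemma also supplies a $G$-forward invariant compact set $L_0\subset F(G)$. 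These are exactly the standing assumptions of Theorem~\ref{thm:inducing-prejulia}. Furthermore, the third condition of PB-OSC states that $\{f_1,f_2\}$ satisfies the open set condition (with open set $(\Int K(f_2))\setminus K(f_1)$), and $\card(I)=2<\infty$, which are the additional hypotheses used in parts (3) and (4) of Theorem~\ref{thm:inducing-prejulia}.

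With the hypotheses verified, Theorem~\ref{thm:inducing-prejulia} applies directly, so its assertions (1), (2), (3) and (5) hold as stated --- this is precisely the ``moreover'' clause of the corollary. To obtain the dimension formula, I would quote the final display of Theorem~\ref{thm:inducing-prejulia}(3), which is available because $\card(I)<\infty$:
\[
\dim_{H}(J(G))=\max\{s(G),\dim_{H}(J(G_1))\}.
\]
Since $G_1=\langle f_1\rangle$ one has $J(G_1)=J(f_1)$, and therefore $\dim_{H}(J(G))=\max\{s(G),\dim_{H}(J(f_1))\}$, as claimed.

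I do not expect a genuine obstacle here: the corollary is really just a repackaging of Lemma~\ref{PB-OSC-inducing} and Theorem~\ref{thm:inducing-prejulia}. The only points deserving explicit mention are the dictionary between the conditions defining PB-OSC and the hypotheses of Theorem~\ref{thm:inducing-prejulia}, and the trivial identity $J(\langle f_1\rangle)=J(f_1)$. The substantive difficulties --- constructing the $H$-forward invariant compact set $L=f_2(K(f_1))\cup\CV(f_2)\subset F(G)$, establishing the nesting $f_2(K(f_1))\subset\Int K(f_1)\subset F(G)$, and proving $J(G)=\Jpre(G)$ --- are all contained in Lemma~\ref{PB-OSC-inducing}, while the thermodynamic-formalism content rests on Theorem~\ref{thm:inducing-prejulia} and, through it, on Theorem~\ref{thm:bowen-for-prejulia}.
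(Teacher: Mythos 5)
Your proposal is correct and follows exactly the route the paper intends: Lemma~\ref{PB-OSC-inducing} supplies the inducing structure (with $I_1=\{1\}$, $I_2=\{2\}$) and the $G$-forward invariant compact set $L_0\subset F(G)$, PB-OSC condition (3) gives the open set condition, and then Theorem~\ref{thm:inducing-prejulia} (in particular the final display of part (3), using $\card(I)=2<\infty$ and $J(G_1)=J(f_1)$) yields $\dim_H(J(G))=\max\{s(G),\dim_H(J(f_1))\}$ together with assertions (1)(2)(3)(5). No gaps; this is the same argument as in the paper.
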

 \vspace{-3mm}
\section{Remarks on the Cone Condition}
\label{Remarks}
 \vspace{-2mm}
We comment on the cone condition used  in the context of conformal iterated
function systems (\cite{MR1387085}). 
\begin{rem}
For the results of this paper, the cone condition is not needed. We
have seen in Section \ref{ExampleSection}  that there are many
examples of rational semigroups  which do not satisfy the cone condition, and
for which our results can be applied. 
\end{rem}

In \cite[Theorem 3.15]{MR1387085} it is proved that, for the Hausdorff
dimension of the limit set $J\left(\Phi\right)$ of an infinitely
generated conformal iterated function system $\Phi$ satisfying the cone condition, we have 
\begin{equation}
\dim_{H}J\left(\Phi\right)=\inf\left\{ \delta:P\left(\delta\right)<0\right\} =\sup_{\Phi_F}\left\{ J\left(\Phi_{F}\right)\right\} .\label{eq:cifs-bowen}
\end{equation}
Here,  $P$ refers to the  associated pressure function
and $\Phi_{F}$ runs over all finitely generated subsystems of $\Phi$.
\begin{rem}
By the methods employed in the proof of Theorem \ref{thm:bowen-for-prejulia},
one can show that (\ref{eq:cifs-bowen}) holds, even if the cone condition
is not satisfied.  Instead of the cone condition (2.7) in \cite{MR1387085}, we need to assume that $\vert\phi_i'(x)\vert \le s$, for each $x\in X$ in the notation of  \cite{MR1387085}.  Since the upper bound for the Hausdorff dimension
is straightforward, we only comment on the lower bound of the Hausdorff
dimension. Since the pressure satisfies an exhaustion principle (see
\cite[Theorem 2.15]{MR2003772} or Proposition \ref{prop:exhaustion_principle-etc}
(\ref{enu:exhaustion-principle})) it suffices to verify (\ref{eq:cifs-bowen})
for finitely generated conformal iterated function systems, which  can
be obtained by extending the proof of \cite[Theorem 4.3]{falconerfractalgeometryMR2118797}
via  the bounded distortion property of the conformal iterated function
system. For finitely generated expanding rational semigroups, the dimension formula in (\ref{eq:cifs-bowen})
was proved in \cite{MR2153926}. 
\end{rem}
\vspace{-0.25cm}
\section{Acknowledgements}
The authors thank the referee for reading this paper carefully and 
giving many valuable comments. The authors thank R.  Stankewitz for valuable comments. The first author thanks Osaka University for its kind hospitality during his postdoctoral fellowship 2011--2014 funded by the research fellowship JA 2145/1-1 of the German Research Foundation (DFG). The research of the first author was partially supported by the JSPS postdoctorial research fellowship for foreign researchers (P14321) and the JSPS KAKENHI 15H06416. The research of the second author was partially
supported  by  JSPS KAKENHI 24540211. 
\vspace{-0.25cm}
\providecommand{\bysame}{\leavevmode\hbox to3em{\hrulefill}\thinspace}
\providecommand{\MR}{\relax\ifhmode\unskip\space\fi MR }
\providecommand{\MRhref}[2]{%
  \href{http://www.ams.org/mathscinet-getitem?mr=#1}{#2}
}
\providecommand{\href}[2]{#2}

\end{document}